\documentclass[10pt]{article} 
\textwidth 166mm
\textheight 229mm
\topmargin -5mm
\oddsidemargin 2mm

\usepackage{amsmath,amssymb,amsfonts,amsthm,amscd,graphicx,psfrag,epsfig}
\usepackage{color}
\definecolor{blue}{rgb}{0,0,0.7}
\definecolor{red}{rgb}{0.75, 0, 0}
\usepackage[titletoc,toc]{appendix}

\newtheorem{theorem}{Theorem}[section]

\newtheorem{theorem-definition}[theorem]{Theorem-Definition}
\newtheorem{theorem-construction}[theorem]{Theorem-Construction}
\newtheorem{lemma-definition}[theorem]{Lemma--Definition}
\newtheorem{lemma-construction}[theorem]{Lemma--Construction}
\newtheorem{lemma}[theorem]{Lemma}
\newtheorem{proposition}[theorem]{Proposition}
\newtheorem{corollary}[theorem]{Corollary}
\newtheorem{conjecture}[theorem]{Conjecture}
\newtheorem{definition}[theorem]{Definition}

\newcommand{\old}[1]{}
\newcommand{\lr}{{\rm L}}

\newcommand{\Z}{{\mathbb Z}}
\newcommand{\R}{{\mathbb R}}
\newcommand{\Q}{{\mathbb Q}}
\newcommand{\C}{{\mathbb C}}

\newcommand{\A}{{\rm A}}

\newcommand{\B}{{\rm B}}
\newcommand{\G}{{\rm G}}
\newcommand{\U}{{\rm U}}

\newcommand{\wt}{\widetilde}
\newcommand{\val}{{\rm val}}

\newcommand{\lms}{\longmapsto}
\newcommand{\lra}{\longrightarrow}
\newcommand{\hra}{\hookrightarrow}
\newcommand{\ra}{\rightarrow}
\newcommand{\be}{\begin{equation}}
\newcommand{\ee}{\end{equation}}
\newcommand{\bt}{\begin{theorem}}
\newcommand{\et}{\end{theorem}}
\newcommand{\bd}{\begin{definition}}
\newcommand{\ed}{\end{definition}}
\newcommand{\bp}{\begin{proposition}}
\newcommand{\ep}{\end{proposition}}
\newcommand{\blc}{\begin{lemma-construction}}
\newcommand{\elc}{\end{lemma-construction}}

\newcommand{\bl}{\begin{lemma}}
\newcommand{\el}{\end{lemma}}
\newcommand{\bc}{\begin{corollary}}
\newcommand{\ec}{\end{corollary}}
\newcommand{\bcon}{\begin{conjecture}}
\newcommand{\econ}{\end{conjecture}}
\newcommand{\la}{\label}

\begin{document}
\date{}

\title{Geometry of canonical bases and mirror symmetry}
\author{Alexander Goncharov, Linhui Shen}

\maketitle

\begin{abstract}\begin{footnotesize}
A {\it decorated surface}  $S$ is an oriented surface 
with boundary 
and a finite, possibly empty,  set  
of {\it special} 
points on the boundary,  considered modulo isotopy. 
Let ${\rm G}$ be a split reductive group over $\Q$.

A pair $({\rm G}, S)$ gives rise to a moduli space ${\cal A}_{{\rm G}, S}$, closely related to the moduli space of 
${\rm G}$-local systems on $S$. It is equipped 
with a positive structure \cite{FG1}.
So 
a set ${\cal A}_{{\rm G},  S}(\Z^t)$ of its integral tropical points  is defined. 
We introduce a rational positive function ${\cal W}$ on the space ${\cal A}_{{\rm G},   S}$, called the 
{\it potential}. 
Its tropicalisation is a  function 
${\cal W}^t: {\cal A}_{{\rm G}, S}(\Z^t) \to \Z$. 
 The condition ${\cal W}^t\geq 0$ defines a subset of {\it positive integral tropical points} 
${\cal A}^+_{{\rm G}, S}(\Z^t)$. 
For ${\rm G=SL}_2$, we recover the set of positive integral ${\cal A}$-laminations on $  S$  
 from \cite{FG1}.

We prove
 that when $S$ is a disc with $n$ special points on the boundary, 
the set 
${\cal A}^+_{{\rm G}, S}(\Z^t)$ parametrises top dimensional components of the 
 fibers of the convolution maps. Therefore, 
via the geometric Satake correspondence \cite{L4}, \cite{G}, \cite{MV}, \cite{BD} they 
provide a canonical basis in the  tensor product  invariants 
of irreducible modules of the Langlands dual group ${\rm G}^L$:
\be \la{tmima}
(V_{\lambda_1}\otimes\ldots\otimes V_{\lambda_n})^{{\rm G}^L}.
\ee 
When ${\rm G=GL}_m$, $n=3$, there is a special coordinate system on 
${\cal A}_{{\rm G},  S}$ \cite{FG1}. 
We show that it identifies the set ${\cal A}^+_{{\rm GL_m}, S}(\Z^t)$ 
with Knutson-Tao's hives \cite{KT}. 
Our  result generalises a theorem of 
 Kamnitzer \cite{K1}, who  used hives to parametrise top 
components of convolution varieties for ${\rm G=GL}_m$, $n=3$.   
For ${\rm G=GL}_m$, $n>3$, we prove Kamnitzer's conjecture \cite{K1}. 
Our parametrisation  is naturally cyclic invariant. 
We show that  for any ${\rm G}$ and $n=3$ it agrees with Berenstein-Zelevinsky's  
 parametrisation \cite{BZ}, whose cyclic invariance is obscure. 

\vskip 2mm

 We define more general positive spaces with potentials $({\cal A}, {\cal W})$,
 parametrising mixed configurations of flags. Using them, 
we define a generalization of Mirkovi\'{c}-Vilonen cycles \cite{MV},   
 and a canonical basis in $V_{\lambda_1}\otimes\ldots\otimes V_{\lambda_n}$, 
generalizing the Mirkovi\'{c}-Vilonen  basis in  $V_{\lambda}$. 
Our construction comes naturally with a parametrisation of the generalised MV cycles. 
For the classical MV cycles it is equivalent to the one discovered by Kamnitzer \cite{K}. 
\vskip 2mm

We prove that  the set ${\cal A}^+_{{\rm G}, S}(\Z^t)$   parametrises 
top dimensional components of a new moduli space,  {\it surface affine Grasmannian}, generalising 
the fibers of the convolution maps. 
These components are usually infinite dimensional. 
We define their dimension being
an element of a $\Z$-torsor, rather then an integer. 
We define a new moduli space ${\rm Loc}_{G^L, S}$, which reduces to the moduli spaces of 
$G^L$-local systems on $S$ if $S$ has no special points. 
The set ${\cal A}^+_{{\rm G}, S}(\Z^t)$ parametrises a basis in the linear space of regular functions on 
${\rm Loc}_{G^L, S}$.

\vskip 2mm

We suggest that the potential ${\cal W}$ itself, not only its tropicalization, is important -- 
it should be viewed as the potential for a Landau-Ginzburg 
model on ${\cal A}_{{\rm G},   S}$. 
We conjecture that the pair $({\cal A}_{{\rm G}, S}, {\cal W})$ is 
the mirror dual to ${\rm Loc}_{G^L, S}$. 
In a special case, 
we recover Givental's description of the quantum cohomology connection 
for flag varieties and its generalisation \cite{GLO2}. \cite{R2}.  
We formulate equivariant homological mirror symmetry conjectures 
parallel to our parametrisations of canonical bases. 

\end{footnotesize}

\end{abstract}

\tableofcontents

\section{Introduction}
\la{sec1}

\subsection{Geometry of canonical bases in representation theory}
\la{sec1.1}

\subsubsection{Configurations of flags and parametrization of canonical bases}
\la{sec1.1.1}

Let ${\rm G}$ be a split semisimple simply-connected  algebraic group over $\Q$. 
There are several basic vector spaces studied in representation theory of the Langlands dual
 group ${\rm G}^L$:

\begin{enumerate}
\item The weight $\lambda$ component $U({\cal N}^L)^{(\lambda)}$ in 
the universal enveloping algebra $U({\cal N}^L)$ of the maximal nilpotent Lie subalgebra in 
the Lie algebra of ${\rm G}^L$. 

\item The weight  
$\mu$ subspace $V^{(\mu)}_\lambda$ in the highest weight $\lambda$ representation $V_\lambda$ of ${\rm G}^L$.

\item The tensor product invariants $
(V_{\lambda_1}\otimes ... \otimes V_{\lambda_n})^{{\rm G}^L}.$

\item The weight  
$\mu$ subspaces in the tensor products $V_{\lambda_1}\otimes ... \otimes V_{\lambda_n}$. 

\end{enumerate}
Calculation of
the dimensions of these spaces, in the cases 1)-3), 
is a fascinating classical problem, which led to Weyl's character formula and 
Kostant's partition function. 

\vskip 3mm 

The first examples of special bases 
in finite dimensional representations are Gelfand-Tsetlin's bases \cite{GT1}, \cite{GT2}. Other examples of special bases were given by De Concini-Kazhdan 
\cite{DCK}.

The {\it canonical bases} in the spaces above were constructed by Lusztig \cite{L1}, \cite{L3}, \cite{L5}. 
Another approach to canonical bases  was developed  
 by Kashiwara \cite{Ka}. 
Canonical bases in representations of  ${\rm GL}_3, {\rm Sp}_4$ were defined 
by Gelfand-Zelevinsky-Retakh 
 \cite{GZ}, \cite{RZ}.

Closely related, 
but in general different bases were considered by 
Nakajima \cite{N1}, \cite{N2}, Malkin \cite{Ma}, Mirkovi\'{c}-Vilonen \cite{MV}, 
and extensively studied afterwords. 
Abusing terminology, we also call 
them  canonical bases. 

It was discovered by Lusztig \cite{L}  that, in the cases 1)-2), 
the sets parametrising canonical  bases 
in representations of the group ${\rm G}$ are 
intimately related to the Langlands dual group ${\rm G}^L$. 

Kashiwara discovered in the cases 1)-2) an additional {\it crystal structure} on these sets, 
and Joseph proved a rigidity theorem \cite{J} asserting that, equipped with the crystal structure, 
the sets of parameters 
are uniquely determined.  

\vskip 3mm
One of the results of this paper is
 a uniform geometric construction of the sets parametrizing all of these canonical bases, which 
leads to a natural uniform construction of  canonical 
bases parametrized by these sets in the cases 2)-4).  
In particular, we get a new canonical bases in the case 4), generalizing 
the Mirkovi\'{c}-Vilonen (MV)  basis in $V_\lambda$.  
To explain our set-up let us recall some basic notions.

\vskip 3mm
A {\it positive space} ${\cal Y}$ is a space, 
which could be a stack whose generic part is a variety, equipped with a {\it positive atlas}. 
The latter is a collection of 
rational coordinate systems 
with subtraction free transition functions between 
any pair of the coordinate systems. 
Therefore the set ${\cal Y}(\Z^t)$ of the {\it  integral tropical points} of ${\cal Y}$ is well defined. 
We review all this in Section \ref{sec2.1.2}.

 Let $({\cal Y}, {\cal W})$ be a {\it positive pair} given by a positive space ${\cal Y}$ equipped with a 
positive rational function ${\cal W}$. 
Then one can tropicalize the function ${\cal W}$, getting a  $\Z$-valued function
$$
{\cal W}^t: {\cal Y}(\Z^t)\lra \Z.
$$
Therefore a positive pair $({\cal Y}, {\cal W})$ determines a set of {\it positive integral tropical points}:
\be \la{postropintp}
{\cal Y}_{\cal W}^{+}(\Z^t):=\{l\in {\cal Y}(\Z^t)~|~{\cal W}^t(l)\geq 0\}.
\ee
We usually omit ${\cal W}$ in the notation and denote the set by ${\cal Y}^{+}(\Z^t)$.  

To introduce the positive pairs $({\cal Y}, {\cal W})$ which play the basic role 
in this paper, we need to review some basic facts about flags and decorated flags in ${\rm G}$.

\paragraph{Decorated flags and associated characters.} Below 
${\rm G}$ is a split reductive group over $\Q$. 
Recall that the {\it flag variety}  ${\cal B}$ parametrizes Borel subgroups in {\rm G}. 
Given a Borel subgroup {\rm B}, one has an isomorphism ${\cal B} = {\rm G/B}$. 

Let ${\rm G'}$ be the adjoint group of ${\rm G}$. 
The group ${\rm G}'$ acts by conjugation on pairs $({\rm U}, \chi)$, 
where $\chi: {\rm U} \to {\Bbb A}^1$ is an additive
 character of a maximal unipotent subgroup ${\rm U}$ in ${\rm G}'$. 
The subgroup ${\rm U}$ stabilizes each pair $({\rm U}, \chi)$. 
A character $\chi$ is {\it non-degenerate}  
if ${\rm U}$ is the stabilizer of  $({\rm U}, \chi)$.  
  The {\it principal affine space}\footnote{Inspite of the name, it is not an affine variety.}   
${\cal A}_{\rm G'}$ parametrizes pairs $({\rm U}, \chi)$ where 
$\chi$ is a non-degenerate additive character of a maximal unipotent group ${\rm U}$. 
Therefore   
there is an isomorphism 
$$
i_{\chi}: {\cal A}_{\rm G'}\stackrel{\sim}{\lra} {\rm G'/U}.
$$ 
This isomorphism is not canonical: the coset $[{\rm U}] \in {\rm G'/U}$
does not determine a point of ${\cal A}_{\rm G'}$. To specify a point one needs 
to choose a non-degenerate character $\chi$. 
One can determine uniquely the character by using  a {\it pinning}, 
 see Sections 
\ref{sec2.1.1}-\ref{sec4.1}. 
So 
writing ${\cal A}_{\rm G'} = {\rm G'}/{\rm U}$ we abuse notation, 
keeping in mind a choice of the character ${\chi}$, or a pinning. 

Having said this,  one defines 
the principal affine space ${\cal A}_{\rm G}$ for the group ${\rm G}$ by 
$
{\cal A}_{\rm G}:= {\rm G/U}. 
$ 
We often write ${\cal A}$ instead of ${\cal A}_{\G}$.
The points of ${\cal A}$  are called {\it decorated flags} in {\rm G}.
The group $\G$ acts on ${\cal A}$ from the left.
For each $\A\in {\cal A}$,  let $\U_\A$ be its stabilizer. It is a maximal unipotent subgroup of $\G$.
There is a canonical projection
\be \la{5.29.12.111}
\pi: {\cal A} \lra {\cal B}, ~~~~\mbox{\rm $\pi (\A):=$ the normalizer of $\U_{\A}$}.
\ee

The projection ${\rm G}\to {\rm G'}$ gives rise to a map 
$p: {\cal A}_{\rm G} \lra {\cal A}_{\rm G'}$ whose fibers are torsors over the center of ${\rm G}$.  
Let $p(\A) = ({\rm U}_\A, \chi_\A)$. Here ${\rm U}_\A$ is a maximal unipotent subgroup of ${\rm G}'$. 
It is identified with 
a similar
subgroup of ${\rm G}$, also denoted by ${\rm U}_\A$.  
So a decorated flag $\A$ in ${\rm G}$ 
 provides a non-degenerate 
character of the maximal unipotent subgroup ${\rm U}_\A$ in ${\rm G}$:
\be \la{11.20.11.10a}
\chi_{\rm A}: {\rm U_\A} \lra {\Bbb A}^1.
\ee
Clearly,  if $u\in \U_{\A}$, then $gug^{-1}\in \U_{g\cdot \A}$, and 
\be \la{obvious}
\chi_{\A}(u)=\chi_{g\cdot \A}(gug^{-1}).
\ee

\paragraph{Example.} A flag for ${\rm SL}_m$ is a nested collection of subspaces 
in an $m$-dimensional vector space $V_m$ equipped with a volume form $\omega \in {\rm det}V_m^*$: 
$$
F_\bullet = F_0 \subset F_1 \subset \ldots \subset F_{m-1} \subset F_m, ~~~~ {\rm dim}F_i=i. 
$$
A decorated flag for ${\rm SL}_m$ is a flag $F_\bullet$ with a choice of non-zero vectors  
$f_i \subset  F_{i}/F_{i-1}$ for each $i=1, \ldots, m-1$, called {\it decorations}. 
For example, ${\cal A}_{SL_2}$ parametrises non-zero vectors in 
a symplectic space $(V_2, \omega)$. The subgroup preserving a 
vector $f \in V_2 -\{0\}$ is given by transformations 
$u_{f}(a): v \lms v+a\omega(v, f) v$. Its character $\chi_f$ is given by 
$\chi_f(u_{f}(a))=a$.

\vskip 2mm
Our  basic geometric objects are the following three types of configuration spaces: 
\be \la{mixedconf}
{\rm Conf}_n({\cal A})= {\rm G} \backslash {\cal A}^n, ~~~~
{\rm Conf}({\cal A}^n, {\cal B}):= {\rm G} \backslash ({\cal A}^n\times {\cal B}), ~~~~
{\rm Conf}({\cal B}, {\cal A}^n, {\cal B}):= {\rm G} \backslash ({\cal B}\times {\cal A}^n\times  {\cal B}).
\ee

\paragraph{The potential ${\cal W}$.} A key observation is that there is a natural rational function
$$
\chi^o: {\rm Conf}({\cal B}, {\cal A}, {\cal B}) = {\rm G} \backslash ({\cal B} 
\times {\cal A}\times {\cal B}) \lra {\Bbb A}^1. 
$$
Let us explain its definition. A pair of Borel subgroups 
$\{{\rm B}_1, {\rm B}_2\}$ is  {\it generic} if 
${\rm B}_1 \cap {\rm B}_2$ is a Cartan subgroup in ${\rm G}$. A pair $\{{\rm A}_1, {\rm B}_2\}\in 
{\cal A} \times {\cal B}$ is generic if 
the pair $(\pi({\rm A}_1), {\rm B}_2)$ is generic. Generic pairs $\{{\rm A}_1, {\rm B}_2\}$ form a principal homogeneous 
${\rm G}$-space. 
Thus, given a triple $\{ {\rm B}_{1}, {\rm A}_2,{\rm B}_{3}\} \in {\cal B} \times {\cal A} \times {\cal B}$ such that 
$\{{\rm A}_2, {\rm B}_3\}$ and $\{{\rm A}_2, {\rm B}_1\}$ are generic, 
there is a unique $u\in {\rm U}_{{\rm A}_2}$  such that
\be
\la{7.20.9.8}
\{ {\rm A}_2,{\rm B}_{3}\} = u \cdot \{ {\rm A}_2, {\rm B}_{1}\}.
\ee
So we define
$
\chi^o({\rm B}_{1}, {\rm A}_2,{\rm B}_{3}):= \chi_{{\rm A}_2}(u).  
$ 
Using it as a building block, we define a  positive rational function ${\cal W}$ on each 
of the spaces (\ref{mixedconf}). 

For example, to define the ${\cal W}$ on the space 
${\rm Conf}_n({\cal A})$ we start with a generic collection $\{\A_1, ..., \A_n\} \in {\cal A}^n$, 
set $\B_i:= \pi(\A_i)$, and define ${\cal W}$ as a sum, 
with the indices modulo $n$:
\be \la{theWp}
{\cal W}: {\rm Conf}_n({\cal A}) \lra {\Bbb A}^1, ~~~~
{\cal W}(\A_1, ..., \A_n):= \sum_{i=1}^n\chi^o(\B_{i-1}, \A_i, \B_{i+1}).
\ee
Note that the potential ${\cal W}$ is well-defined when each adjacent pair $\{\A_i, \A_{i+1}\}$ is generic, meaning that $\{\pi(\A_i), \pi(\A_{i+1})\}$ is generic. 
 Assigning the (decorated) flags  
to the vertices of a polygon, we picture  the potential ${\cal W}$ 
as a sum of the contributions $\chi_{\rm A}$ at the 
$\A$-vertices (shown boldface) of the polygon, see Fig \ref{polygon}. 

By construction, the potential ${\cal W}_{\G}$ on the space ${\rm Conf}_n({\cal A}_{{\rm G}})$
 is the pull back of the potential ${\cal W}_{\G'}$ 
for the adjoint group ${\rm G}'$ 
via the natural projection $p_{{\rm G}\to {\rm G'}}: {\rm Conf}_n({\cal A}_{{\rm G}}) \to {\rm Conf}_n({\cal A}_{{\rm G}'})$:
\be \la{potentialadj}
{\cal W}_{\G} = p_{{\rm G}\to {\rm G'}}^*{\cal W}_{\G'}.
\ee

\begin{figure}[ht]
\centerline{\epsfbox{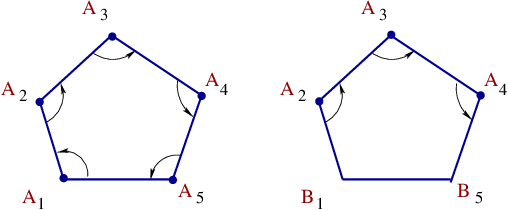}}
\caption{The potential ${\cal W}$ is a sum of the contributions $\chi_{\rm A}$  at the $\A$-vertices (boldface). 
\label{polygon}}
\end{figure}

Potentials for the other two spaces in (\ref{mixedconf}) are defined similarly, as the sums of the characters 
assigned to the 
decorated flags of a configuration. A formula similar to (\ref{potentialadj}) evidently holds.  

\paragraph{Parametrisations of canonical bases.} 
It was shown in \cite{FG1} that all of the spaces (\ref{mixedconf}) 
 have natural positive structures. We show that the potential ${\cal W}$ is a positive rational function. 

We prove that the sets parametrizing 
canonical bases admit a uniform description as the  
sets ${\cal Y}_{\cal W}^{+}(\Z^t)$ of positive integral tropical points assigned to the following positive pairs 
$({\cal Y}, {\cal W})$. To write the potential 
${\cal W}$ we use 
an abbreviation $\chi_{\A_i}:= \chi^o(\B_{i-1},\A_i, \B_{i+1})$, with indices mod $n$:
\begin{enumerate}
\item The canonical basis in $U({\cal N}^L)$: 
$$
{\cal Y}={\rm Conf}({\cal B}, {\cal A},{\cal B}), ~~~{\cal W}({\B}_1, {\A}_2, {\rm B}_3):=\chi_{\A_2}.
$$
\item The canonical basis in $V_\lambda$:
$$
{\cal Y}={\rm Conf}({\cal A}, {\cal A},{\cal B}), ~~~{\cal W}({\rm A}_1, {\rm A}_2, {\rm B}_3):=\chi_{\A_1}
+ \chi_{\A_2}.
$$
\item The canonical basis in invariants of tensor product of $n$ irreducible 
${\rm G}^L$-modules:
\be \la{ppoottee}
{\cal Y}={\rm Conf}_n({\cal A}), ~~~{\cal W}(\A_1,\ldots,\A_n) :=\sum_{i=1}^n\chi_{\A_i}.
\ee

\item The canonical basis in tensor products of $n$ irreducible ${\rm G}^L$-modules:
\be \la{ppoott}
{\cal Y}={\rm Conf}({\cal A}^{n+1}, {\cal B}), ~~~{\cal W}(\A_1,\ldots,\A_{n+1}, \B) :=\sum_{i=1}^{n+1}\chi_{\A_i}.
\ee
\end{enumerate}
 Natural decompositions of these sets, like decompositions into weight subspaces  
in 1) and 2),  are easily described in terms of the corresponding configuration space, see Section \ref{sec2.3.2}.

Let us emphasize that the canonical bases in tensor products are 
not the tensor products of canonical bases in irreducible representations. 
Similarly, in spite of the natural decomposition
$$
V_{\lambda_1}\otimes ... \otimes V_{\lambda_n} = \oplus_{\lambda} V_{\lambda} \otimes (V^*_{\lambda} \otimes V_{\lambda_1}\otimes ... \otimes V_{\lambda_n})^{{\rm G}^L},
$$
the canonical basis on the left is not a product of the canonical bases on the right.

\vskip 2mm
Descriptions of the sets parametrizing the canonical bases 
were known in different but equivalent formulations in the following cases: 

In the cases 1)-2) there is  the 
original parametrization of Lusztig \cite{L}.  
 
In the case 3) for $n=3$, 
there is Berenstein-Zelevinsky's parametrization \cite{BZ}, referred to as the BZ data. 
We produce in Appendix 
 an isomorphism between our parametrization and the 
BZ data. The cyclic symmetry, 
evident in our approach, 
is obscure for the BZ data.

The description in the $n>3$ case in 3) seems to be new.  

The cases 1), 2) and 4) were investigated by Berenstein and Kazhdan \cite{BK1},\cite{BK2}, who  introduced and studied 
{\it geometric crystals} as algebraic-geometric avatars of Kashiwara's {\it crystals}. 
In particular, they describe the sets 
parametrizing canonical bases in  the form (\ref{postropintp}), without using, however, 
configuration spaces. 
We show in Appendix \ref{sec9} that the space of generic configurations ${\rm Conf}^*({\cal A}^n,{\cal B})$ 
with the potential ${\cal W}$  
is a {\it positive decorated geometric crystal} in the sense of  
\cite{BK3}. 
Interpretation of geometric crystals relevant to representation theory 
as moduli spaces of  mixed configurations of  flags 
makes, to our opinion, the story more transparent. 

\vskip 2mm
To define canonical bases in representations, one needs to choose a maximal torus 
in ${\rm G}^L$ 
and a positive Weyl chamber. 
Usual descriptions of the sets parametrizing canonical bases require the same choice. 
Unlike this, working with configurations we do not require  such choices.\footnote{{
We would like to stress that the 
positive structures and potentials on configuration spaces which we employ 
for parametrization of canonical bases 
do not depend on any extra choices, like pinning etc., in the group. See Section \ref{proofmth1}.}}

\vskip 2mm

Most importantly, our parametrization of the canonical basis in tensor products invariants 
leads immediately to a similar set which parametrizes a linear basis in the space of functions on the moduli space 
${\rm Loc}_{{\rm G^L}, S}$ of 
${\rm G}^L$-local systems on a decorated surface $S$. Here the approach via configurations of decorated flags, 
and in particular its transparent  cyclic 
invariance, are essential.  See the example when $G=SL_2$  in Section \ref{LAMCB}. 

\vskip 2mm
Summarizing, we understood the sets parametrizing 
canonical bases as the sets of positive integral tropical points of 
various configuration spaces. Let us show now how this, combined 
with the geometric Satake correspondence \cite{L4}, \cite{G}, \cite{MV}, \cite{BD}, leads  
to a natural uniform construction of canonical bases in the cases 2)-4).  

We explain in Section \ref{sec1.1.2} the construction in the case of tensor products invariants.  
A cyclically invariant canonical basis  in this case was defined  
by Lusztig  \cite[Chapter 28]{L5}. However Lusztig's construction does not provide a 
description of the set parametrizing the basis. 
Our basis in tensor products is new -- it generalizes the MV basis in $V_\lambda$. 
We explain this in Section \ref{tensor}.

\subsubsection{Constructing canonical bases in tensor products invariants}
\la{sec1.1.2}

We start with a simple general construction. 
Let ${\cal Y}$ be a positive space, understood just 
as a collection of split tori glued by positive birational maps \cite{FG1}. 
 Since it is a birational notion, there is no set of $F$-points of ${\cal Y}$, where $F$ is a field.  
Let ${\cal K}:=\C((t))$. 
In Section \ref{sec2.2.1} we introduce a set 
$
{\cal Y}^\circ({\cal K}). 
$ 
We call it the set of 
{\it transcendental ${\cal K}$-points of ${\cal Y}$}. 
It is a set making sense of ``generic ${\cal K}$-points of ${\cal Y}$''. In particular, 
if ${\cal Y}$ is given by  a variety $Y$ with a positive rational atlas, 
then ${\cal Y}^\circ({\cal K})\subset Y({\cal K})$. 
The set ${\cal Y}^\circ({\cal K})$  comes with a natural {\it valuation map}:
$$
{\rm val}: {\cal Y}^\circ({\cal K})\lra {\cal Y}(\Z^t). 
$$ 
For any $l\in {\cal Y}(\Z^t)$, we define the {\it transcendental cell} ${\cal C}^\circ_l$ assigned to $l$:
$$
{\cal C}^\circ_l:= {\rm val}^{-1}(l)\subset {\cal Y}^\circ({\cal K}), ~~~~
{\cal Y}^\circ({\cal K}) = \coprod_{l\in {\cal Y}(\Z^t)}{\cal C}^\circ_l.
$$

Let us now go to canonical bases   in invariants of tensor products of 
${\rm G}^L$-modules  (\ref{tmima}). 
The relevant configuration space is  
$
{\rm Conf}_n({\cal A}).
$ 
The tropicalized potential 
$
{\cal W}^t: {\rm Conf}_n({\cal A})(\Z^t) \to \Z 
$  
  determines the subset of 
{positive integral tropical points}:
\be \la{121212as}
{\rm Conf}^+_n({\cal A})(\Z^t):= \{l \in {\rm Conf}_n({\cal A})(\Z^t)~|~ {\cal W}^t(l)\geq 0\}.
\ee
We construct 
a canonical basis in (\ref{tmima})  parametrized by the  set (\ref{121212as}).

Let  ${\cal O}:=\C[[t]]$. In Section \ref{sec2.2.2} we introduce a moduli subspace
\be \la{O-int}
{\rm Conf}^{\cal O}_n({\cal A})\subset {\rm Conf}_n({\cal A})({\cal K}).
\ee
We call it the space  of {\it ${\cal O}$-integral configurations 
of decorated flags}. 
Here are its crucial properties: 

\begin{enumerate}

\item A transcendental cell ${\cal C}^\circ_l$ of ${\rm Conf}_n({\cal A})$ is contained in   ${\rm Conf}_n^{\cal O}({\cal A})$ 
if and only if it corresponds to a positive tropical point. Moreover, 
given a point $l\in {\rm Conf}_n({\cal A})(\Z^t)$, one has 
\be \la{features}
l\in {\rm Conf}^+_n({\cal A})(\Z^t)  ~{\Longleftrightarrow} ~{\cal C}^\circ_l \subset {\rm Conf}^{\cal O}_n({\cal A}) 
~{\Longleftrightarrow}~ {\cal C}^\circ_l \cap {\rm Conf}^{\cal O}_n({\cal A})\not =\emptyset. 
\ee

\item Let ${\rm Gr}:= {\rm G}({\cal K})/{\rm G}({\cal O})$ 
be the affine Grassmannian. 
It follows immediately from the very definition of the subspace (\ref{O-int}) that there 
 is a canonical map
$$
\kappa: {\rm Conf}^{\cal O}_n({\cal A})\lra {\rm Conf}_n({\rm Gr}):= 
{\rm G}({\cal K})\backslash ({\rm Gr})^n.
$$
\end{enumerate}

These two properties of ${\rm Conf}_n^{\cal O}({\cal A})$ allow to 
transport points $l\in {\rm Conf}^+_n({\cal A})(\Z^t)$ 
into the top components of the stack ${\rm Conf}_n({\rm Gr})$. 
Namely, given a point $l\in {\rm Conf}^+_n({\cal A})(\Z^t)$, we define a cycle 
$$
{\cal M}_l := \mbox{closure of ${\cal M}_l ^\circ\subset {\rm Conf}_n({\rm Gr}),~~~~\mbox{where } {\cal M}_l^\circ:= \kappa({\cal C}^\circ_l)$}.
$$
The  cycle ${\cal C}^\circ_l$ is defined for any 
$l\in {\rm Conf}_n({\cal A})(\Z^t)$. However, as it clear from (\ref{features}),
 the map $\kappa$ can be applied to it if and only if $l$ is positive: otherwise 
${\cal C}^\circ_l$ is not in the domain of the map $\kappa$. 

We prove that  the map $l\lms {\cal M}_l$ 
provides a bijection
\be \la{ltoml}
{\rm Conf}^+_n({\cal A})(\Z^t) \stackrel{\sim}{\lra} \{\mbox{\rm closures of the top dimensional components of 
the stack ${\rm Conf}_n({\rm Gr})$}\}. 
\ee
Here the very notion of a ``top dimensional'' component of a stack requires clarification. 
For now, we will bypass this question in a moment by passing to more traditional varieties. 

We use a very general argument to show the injectivity
of the map $l \lms {\cal M}_l$. 
Namely, given a positive rational function $F$ on ${\rm Conf}_n({\cal A})$, we define a 
$\Z$-valued function $D_F$ on ${\rm Conf}_n({\rm Gr})$. 
It generalizes the function on the affine Grassmannian for $\G={\rm GL}_m$ and its products 
defined  by Kamnitzer \cite{K}, \cite{K1}. We prove that 
the restriction of $D_F$ to ${\cal M}^\circ_l$ is equal to the value 
$F^t(l)$ of the tropicalization $F^t$ of $F$ at the  point $l\in  {\rm Conf}^+_n({\cal A})(\Z^t)$. 
Thus the map (\ref{ltoml}) is injective.

\vskip 2mm
Let us reformulate our result in a more traditional language.  The orbits of ${\G}({\cal O})$ acting on ${\rm Gr}\times {\rm Gr}$ are labelled by dominant weights of ${\G}^L$. We write ${\lr_1}\stackrel{\lambda}{\lra}\lr_2$ if $(\lr_1, \lr_2)$ is in the orbit labelled by $\lambda$. Let $[1]$ be the identity coset in ${\rm Gr}$.
 A set $\underline{\lambda}=(\lambda_1,\ldots, \lambda_{n})$ of dominant weights of ${\rm G}^L$ 
determines  a {\it cyclic convolution variety}, better known as a {\it fiber of the convolution map}:
\be \la{convvarvar}
{\rm Gr}_{c(\underline{\lambda})}:=\{(\lr_1,\ldots, \lr_{n})~|~ \lr_1\stackrel{\lambda_1}{\lra }\lr_2 \stackrel{\lambda_2}{\lra }\ldots \stackrel{\lambda_{n}}{\lra} \lr_{n+1},~\lr_1=\lr_{n+1}=[1]\} \subset 
[1]\times {\rm Gr}^{n-1}.
\ee
These varieties  provide a ${\rm G}({\cal O})$-equivariant decomposition
\be \la{cvcvcv}
[1] \times {\rm Gr}^{n-1} = \coprod_{\underline{\lambda}=(\lambda_1, ..., \lambda_{n})}{\rm Gr}_{c(\underline{\lambda})}.
\ee
Since ${\rm G}({\cal O})$ is connected, it preserves each component of ${\rm Gr}_{c(\underline{\lambda})}$. Thus the components of ${\rm Gr}_{c(\underline{\lambda})}$ live naturally on the stack
$$
{\rm Conf}_{n}({\rm Gr}) = {\rm G}({\cal O})\backslash ([1]\times {\rm Gr}^{n-1}).
$$
We prove that the  cycles ${\cal M}_l$ assigned to the points  $l \in {\rm Conf}^+_n({\cal A})(\Z^t)$ 
are closures of the top dimensional components 
of the cyclic convolution varieties. The latter, due to the geometric Satake correspondence, 
give rise to a canonical basis in (\ref{tmima}). 
We already know that the map (\ref{ltoml}) is injective. 
We show that the $\underline{\lambda}$-components of the sets related by the map (\ref{ltoml}) are
 finite sets of the same cardinality, respected by the map. 
Therefore the map (\ref{ltoml}) is an isomorphism.  
\vskip 3mm

Our  result generalizes a theorem of 
 Kamnitzer \cite{K1}, who  used hives \cite{KT} to parametrize top 
components of convolution varieties for ${\rm G=GL}_m$, $n=3$.   

Our construction generalizes Kamnitzer's  construction of 
parametrizations of Mirkovi\'{c}-Vilonen cycles \cite{K}. 
At the same time, it gives a coordinate free description of Kamnitzer's  construction. 

When ${\rm G=GL}_m$, there is a special coordinate system on the space ${\rm Conf}_3({\cal A})$,  
introduced in Section 9 of \cite{FG1}. We show in Section \ref{KT} that it provides an isomorphism of sets
$$
{\rm Conf}^+_3({\cal A})(\Z^t) ~\stackrel{\sim}{\lra} ~\{\mbox{Knutson-Tao's hives \cite{KT}}\}. 
$$ 
Using this, we get a one line proof 
of Knutson-Tao-Woodward's theorem \cite{KTW} in Section \ref{sec2.1.6}.

For ${\rm G=GL}_m$, $n>3$, we prove Kamnitzer conjecture \cite{K1}, describing the top 
components of convolution varieties
via a generalization of hives -- we identify the latter with the set ${\rm Conf}^+_n({\cal A})(\Z^t)$ via
the special positive coordinate systems on   ${\rm Conf}_n({\cal A})$
from \cite{FG1}.

\subsection{Positive tropical points and top components}
\la{sec1.2}

\subsubsection{Our main example}\la{sec1.2.2}

Denote by 
${\rm Conf}^\times_n({\cal A})$ the subvariety of ${\rm Conf}_n({\cal A})$ 
parametrizing configurations of decorated flags $(\A_1, ..., \A_n)$ such that the flags 
$(\pi(\A_i), \pi(\A_{i+1}))$ are in generic position for each $i=1, ..., n$ modulo $n$. 
The potential ${\cal W}$ was defined in (\ref{theWp}). It 
is evidently a regular function on ${\rm Conf}^\times_n({\cal A})$. 

Let ${\rm P}^+$ be the cone of dominant coweights.
There are canonical 
isomorphisms
\be \la{agr}
{\alpha}: {\rm Conf}^\times_2({\cal A}) \stackrel{\sim}{\lra} {\rm H}, ~~~~ {\rm Conf}_2({\rm Gr}) = {\rm P}^+.
\ee

Configurations $(\A_1, ..., \A_n)$ sit at the vertices of a polygon, 
as on Fig \ref{polygon1}. 
Let $\pi_{E}: {\rm Conf}_n({\cal A}) \to {\rm Conf}_2({\cal A})$ be the projection 
corresponding to a side $E$ of the polygon. 
Denote by $\pi^\times_{E}$ its restriction to ${\rm Conf}^\times_n({\cal A})$.  
The collection of the maps $\{\pi^\times_{E}\}$, followed by
 the first isomorphism  in (\ref{agr}) provides a map
$$
\pi: {\rm Conf}^\times_n({\cal A}) \lra {\rm Conf}^\times_2({\cal A})^n \stackrel{{\alpha}}{=} {\rm H}^n.
$$
Using similarly the second isomorphism in (\ref{agr}), we get a map




\begin{figure}[ht]
\centerline{\epsfbox{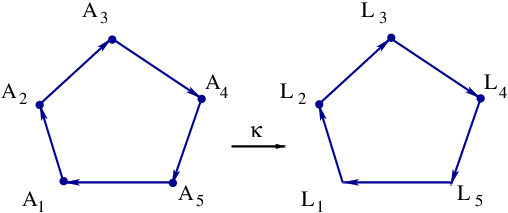}}
\caption{Going from an ${\cal O}$-integral  configuration of decorated flags to a configuration of lattices.}
\label{polygon1}
\end{figure}

$$
\pi_{\rm Gr}: {\rm Conf}_n({\rm Gr}) \lra {\rm Conf}_2({\rm Gr})^n {=}  ({\rm P}^+)^n.
$$

Let $\{\omega_i\}$ be a basis of the cone of positive dominant weights  of ${\rm H}$.  
The functions $\pi_{E}^*\omega_i$ 
are equations of the irreducible components of the divisor 
$D:= {\rm Conf}_n({\cal A}) - {\rm Conf}^\times_n({\cal A})$:
$$
D:= {\rm Conf}_n({\cal A}) - {\rm Conf}^\times_n({\cal A}) = \cup_{E, i}D^E_{i}. 
$$
Equivalently, the component $D^E_{i}$ is determined by the condition that the 
pair of flags at the endpoints of the edge $E$ 
belongs to the codimension one $\G$-orbit corresponding to the simple reflection $s_{i}\in W$. 
\footnote{Indeed,  $\omega_i({\alpha}(\A_1, \A_2))=0$ 
if and only if the corresponding pair of flags belongs to the codimension 
one $\G$-orbit corresponding to a simple reflection $s_i$.}

The space ${\rm Conf}_n({\cal A})$ has a cluster ${\cal A}$-variety structure, described for 
$\G=SL_m$ in  \cite{FG1}, Section 10. 
An important fact \cite{FG5} is that any cluster ${\cal A}$-variety  ${\cal A}$ 
has a canonical cluster volume form $\Omega_{\cal A}$, which in any cluster ${\cal A}$-coordinate system 
$(\A_1, \ldots , \A_n)$ is given by 
$$
\Omega_{\cal A} = \pm d\log \A_1 \wedge \ldots \wedge d\log \A_n. 
$$

The functions $\pi_{E}^*\omega_i$  are  
the {\it frozen ${\cal A}$-cluster coordinates} in the sense of Definition \ref{9.12.14.2}. 
This is equivalent to the claim 
that the canonical  volume form $\Omega_{\cal A}$ on 
${\rm Conf}_n({\cal A})$ has non-zero residues 
precisely at the irreducible components of the divisor $D$.\footnote{Indeed, it follows from 
Lemma \ref{nonfr} and an explcit description of cluster structure on ${\rm Conf}_n({\cal A})$ 
 that the form $\Omega_{\cal A}$ can not have non-zero residues anywhere 
else the divisors $D^E_i$. One can show that the residues at these divisors are non-zero.} 

All this data is defined for any split semi-simple group ${\rm G}$ over $\Q$. 
Indeed, the form $\Omega$ on ${\rm Conf}_n({\cal A})$ for the simply-connected group 
is invariant under the action of the center the group, and thus its integral multiple descends to a 
form on ${\rm Conf}_n({\cal A}_{\rm G})$. The potential ${\cal W}_{\rm G}$ 
is defined by pulling back the potential ${\cal W}_{\rm G'}$ for the adjoint group ${\rm G'}$. 
We continue discussion of this example in Section \ref{sec1.4}, where it is casted as an example 
of the mirror symmetry.

\paragraph{The simplest example.} Let $(V_2, \omega)$ be a two dimensional vector space 
with a symplectic form. Then ${\rm SL_2} = {\rm Aut}(V_2, \omega)$, and ${\cal A}_{SL_2}=V_2 -\{0\}$. 
Next, ${\rm Conf}_n({\cal A}_{\rm SL_2}) = {\rm Conf}_n(V_2)$ is the space of configuration $(l_1, ..., l_n)$ of $n$ non-zero vectors 
in $V_2$. Set $\Delta_{i,j}:= \langle \omega, l_i \wedge l_j\rangle$. Then the potential is given by the following 
formula, where the indices are mod $n$: 
\be \la{potsl2}
{\cal W}:= \sum_{i=1}^n\frac{\Delta_{i, i+2}}{\Delta_{i, i+1}\Delta_{i+1, i+2}}. 
\ee
The boundary divisors are given by equations $\Delta_{i, i+1}=0$. 
To write the volume form, pick a triangulation $T$ of the polygon whose vertices a labeled by the vectors. 
Then, up to a sign,  
$$
\Omega:= \bigwedge_{E} d\log \Delta_E.
$$
where $E$ are the diagonals and sides of the $n$-gon, and $\Delta_E:= \Delta_{i, j}$ if $E=(i,j)$.  
The function (\ref{potsl2}) is invariant under $l_i\to -l_i$, and thus descends to 
${\rm Conf}_n({\cal A}_{\rm PGL_2}) = {\rm Conf}_n(V_2/\pm 1)$.

\subsubsection{The general framework}\la{sec1.2.1}

Let us explain main features of 
the geometric picture
 underlying our construction in most general 
terms, which we later on elaborate in details in every particular situation.   
First, there are three main ingredients: 

\begin{enumerate}

\item A positive space ${\cal Y}$ with a positive rational 
function ${\cal W}$ called the {\it potential}, 
 and a volume form $\Omega_{\cal Y}$ 
with logarithmic singularities. This determines  
the set ${\cal Y}^+_{\cal W}(\Z^t)$ of positive integral tropical points  -- the set parametrizing a canonical basis.\footnote{The set ${\cal Y}(\Z^t)$, the tropicalization ${\cal W}^t$, and thus the subset 
${\cal Y}^+_{\cal W}(\Z^t)$  can also be determined by the volume form 
$\Omega_{\cal Y}$, without using the positive structure on ${\cal Y}$.}

\item A subset of ${\cal O}$-integral points ${\cal Y}^{\cal O} \subset {\cal Y}^\circ({\cal K})$. 
Its key feature is that, given an $ l\in {\cal Y}(\Z^t)$,
\be \la{kf1}
l \in {\cal Y}^+_{\cal W}(\Z^t) ~{\Longleftrightarrow} ~ {\cal C}^\circ_l \subset {\cal Y}^{\cal O}~{ \Longleftrightarrow} ~
{\cal C}^\circ_l \cap {\cal Y}^{\cal O}\not = \emptyset.
\ee

\item A moduli space ${\rm Gr}_{{\cal Y}, {\cal W}}$, together with  a canonical map 
\be \la{kf2}
\kappa: {\cal Y}^{\cal O} \lra {\rm Gr}_{{\cal Y}, {\cal W}}.
\ee
\end{enumerate}


These ingredients are related as follows:

\begin{itemize}

\item Any  positive rational function $F$ on ${\cal Y}$ gives rise to a $\Z$-valued function $D_F$ on 
${\rm Gr}_{{\cal Y}, {\cal W}}$, such that for any $l\in  {\cal Y}^+_{\cal W}(\Z^t)$, the 
restriction of $D_F$ to $\kappa({\cal C}^\circ_l)$ equals  $F^t(l)$.  

\end{itemize}

So we arrive at a collection of irreducible cycles 
$$
{\cal M}^\circ _l := \kappa({\cal C}^\circ_l)\subset {\rm Gr}_{{\cal Y}, {\cal W}}, ~~~~
{\cal M}_l := \mbox{closure of ${\cal M}^\circ_l$},~~~l \in {\cal Y}^+_{\cal W}(\Z^t). 
$$
Thanks to the $\bullet$, the assignment $l \lms {\cal M}_l$ is injective.

\vskip 3mm
Consider the set $\{D_c\}$  of all irreducible divisors in ${\cal Y}$ such that the residue 
of the form $\Omega_{\cal Y}$ at $D_c$ is 
non-zero. We call them 
the {\it boundary divisors} of ${\cal Y}$.  
We define 
\be \la{Yspace}
{\cal Y}^\times:= {\cal Y} - \cup D_c. 
\ee
By definition, the form $\Omega_{\cal Y}$ is regular on ${\cal Y}^\times$.  
In all examples the potential ${\cal W}$ is regular on ${\cal Y}^\times$.

There is a split torus ${\Bbb H}$, and a positive regular surjective projection 
$$
\pi: {\cal Y}^\times\lra  {\Bbb H}.
$$ 
The  map $\pi$ is determined by  the form $\Omega_{\cal Y}$. 
For example,  assume that each boundary divisor $D_c$ is defined by a global equation $\Delta_c=0$. 
Then the regular functions $\{\Delta_c\}$ define the map $\pi$, i.e. $\pi(y) = \{\Delta_c(y)\}$. 

\vskip 2mm
Next, there is  a semigroup ${\Bbb H}^{\cal O}\subset {\Bbb H}({\cal K})$ containing ${\Bbb H}({\cal O})$, 
defining a cone
$$
{\Bbb P}:= {\Bbb H}^{\cal O}/{\Bbb H}({\cal O}) \subset {\Bbb H}(\Z^t):= {\Bbb H}({\cal K})/{\Bbb H}({\cal O}) = X_*({\Bbb H}), 
$$
such that the tropicalization of the map $\pi$  provides a map 
$\pi^t: {\cal Y}^+_{\cal W}(\Z^t) \to {\Bbb P}$, and there is a surjective map 
$\pi_{\rm Gr}: {\rm Gr}_{{\cal Y}, {\cal W}} \to {\Bbb P}$. 
Denote by $\pi^{\cal O}$ 
restricting of  $\pi\otimes {\cal K}$  
to ${\cal Y}^{\cal O}$. 
These maps fit into a commutative diagram
\be \la{mcdia}
\begin{array}{ccccc}
{\cal Y}^+_{\cal W}(\Z^t)&\stackrel{\rm val}{\longleftarrow} &{\cal Y}^{\cal O} 
& \stackrel{\kappa}{\lra}&{\rm Gr}_{{\cal Y}, {\cal W}}  \\
&&&&\\
\pi^t \downarrow &&\pi^{\cal O}\downarrow &&\downarrow  \pi_{\rm Gr}\\
&&&&\\
{\Bbb P}&\stackrel{\rm val}{\longleftarrow} &{\Bbb H}^{\cal O} &\stackrel{\rm val}{\longrightarrow} &{\Bbb P}
\end{array}
\ee
We define  ${\rm Gr}_{{\cal Y}, {\cal W}}^{(\lambda)}$ and ${\cal Y}^+_{\cal W}(\Z^t)_\lambda$ as the fibers of the maps 
$\pi_{\rm Gr}$ and $\pi^t$ over a 
$\lambda \in {\Bbb P}$.  
So we have
\be \la{stratilam}
{\rm Gr}_{{\cal Y}, {\cal W}} = \coprod_{\lambda \in {\Bbb P}}{\rm Gr}_{{\cal Y}, {\cal W}}^{(\lambda)}, ~~~~ 
{\cal Y}^+_{\cal W}(\Z^t) = \coprod_{\lambda \in {\Bbb P}}{\cal Y}^+_{\cal W}(\Z^t)_\lambda.
\ee

The following is a key property of our picture:
\begin{itemize}

\item $\bullet$
 The map $l \lra {\cal M}_l $ provides a bijection
$$
{\cal Y}^+_{\cal W}(\Z^t)_\lambda~~{ \longleftrightarrow} ~~
\{\mbox{\rm Closures of top dimensional components of  ${\rm Gr}_{{\cal Y}, {\cal W}}^{(\lambda)}$}\}.
$$
\end{itemize}
Although the space ${\rm Gr}_{{\cal Y}, {\cal W}}$ is usually infinite dimensional, it is 
nice. 
The map $\pi_{\rm Gr}: {\rm Gr}_{{\cal Y}, {\cal W}} \to {\Bbb P}$ slices it into 
highly singular and reducible pieces. However the slicing makes the perverse sheaves geometry clean and beautiful. 
It allows to relate the 
positive integral tropical points  to the top components of the slices.

\paragraph{Example.} In our main example, discussed in Section \ref{sec1.1} we have
$$
{\cal Y} = {\rm Conf}_n({\cal A}), ~~~~{\cal Y}^\times = {\rm Conf}^\times_n({\cal A}), ~~~~
{\cal Y}^{\cal O} = {\rm Conf}^{\cal O}_n({\cal A}), ~~~~
{\rm Gr}_{{\cal Y}, {\cal W}} = {\rm Conf}_n({\rm Gr}), ~~~~ {\Bbb H}= {\rm H}^n, 
~~~~ {\Bbb P}= ({\rm P}^+)^n.
$$ 
The potential ${\cal W}$ is defined in (\ref{theWp}), and 
decomposition
(\ref{stratilam}) is described by cyclic convolution varieties 
(\ref{cvcvcv}).

\subsubsection{Mixed configurations and a generalization of Mirkovi\'{c}-Vilonen cycles}
\la{sec1.2a}

Let us briefly discuss other examples relevant to representation theory. 
All of them follow the set-up of Section \ref{sec1.2}. The obtained cycles ${\cal M}_l$
 can be viewed as generalisations of Mirkovi\'{c}-Vilonen  cycles. Let us list first
the spaces ${\cal Y}$ and  ${\rm Gr}_{{\cal Y}, {\cal W}}$. 
The notation ${\rm Conf}_{w_0}$ indicates that the pair of the first and the last flags in configuration is in generic position. 

i)  {\it Generalized Mirkovi\'{c}-Vilonen  cycles}: 
$$
{\cal Y}:= {\rm Conf}_{w_0}({\cal A}, {\cal A}^n,{\cal B}), ~~~~ 
{\rm Gr}_{{\cal Y}, {\cal W}}:= {\rm Conf}_{w_0}({\cal A}, {\rm Gr}^n,{\cal B}) = {\rm Gr}^n.
$$
If $n=1$, we recover the   
Mirkovi\'{c}-Vilonen cycles in the affine Grassmannian \cite{MV}. 

ii) {\it Generalized stable Mirkovi\'{c}-Vilonen  cycles}: 
$$
{\cal Y}:= {\rm Conf}_{w_0}({\cal B}, {\cal A}^n,{\cal B}), ~~~~ 
{\rm Gr}_{{\cal Y}, {\cal W}}:= {\rm Conf}_{w_0}({\cal B}, {\rm Gr}^n,{\cal B}) = {\rm H}({\cal K}) \backslash {\rm Gr}^n.
$$
If $n=1$, we recover the 
stable Mirkovi\'{c}-Vilonen cycles in the affine Grassmannian. In our interpretation  they are 
top components of the stack 
$$
{\rm Conf}_{w_0}({\cal B}, {\rm Gr}, {\cal B}) = {\rm H}\backslash {\rm Gr}.
$$

iii) {\it The cycles providing canonical bases in tensor products}  
$$
{\cal Y}:= {\rm Conf}({\cal A}^{n+1}, {\cal B}), ~~~~ 
{\rm Gr}_{{\cal Y}, {\cal W}}:= {\rm Conf}({\rm Gr}^{n+1}, {\cal B}) = {\rm B}^-({\cal O}) \backslash {\rm Gr}^n.
$$

The spaces ${\cal Y}$ in examples i) and iii) are essentially the same. 
However the potentials are different: in the case iii) it is the sum of contributions of all 
decorated flags, while in the case i) we skip the first one. 
Passing from ${\cal Y}$ to ${\rm Gr}_{{\cal Y}, {\cal W}}$ we replace those 
${\cal A}$'s which contribute to the potential by ${\rm Gr}$'s, but keep the ${\cal B}$'s and the  
${\cal A}$'s  which do not contribute to the potential intact. 

We picture 
configurations at the vertices of a convex polygon, as on Fig \ref{polygon}. 
Some of the ${\cal A}$-vertices are shown boldface. 
The potential ${\cal W}$
is a sum of the characters assigned to the boldface ${\cal A}$-vertices, generalizing (\ref{theWp}). 
The decorated polygons in the cases ii) and iii) are depicted on the right of Fig \ref{tpm20.5} and  on Fig \ref{tpm21.5}.  
We discuss these examples in detail in Sections \ref{sec2.3} - \ref{tensor}. 

\subsection{Examples related to decorated surfaces}

\subsubsection{Laminations on decorated surfaces and canonical basis for $\G=SL_2$} \la{LAMCB}

\paragraph{1. Canonical basis in the tensor products invariants.} This example can be traced back to XIX century. 
We relate it to laminations on a polygon. 

\bd An integral lamination $l$ on {an} $n$-gon $P_n$ is a collection $\{\beta_j\}$ of simple nonselfintersecting 
intervals  
ending on the boundary of $P_n- \{\mbox{vertices}\}$, modulo isotopy.  
\ed

\begin{figure}[ht]
\centerline{\epsfbox{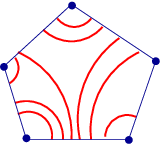}}
\caption{An integral lamination on a pentagon of type $(4,4,1,6,3)$.}
\label{polygon2}
\end{figure}

Pick a vertex of $P_n$, and number the sides clockwise. 
Given a collection of positive integers $a_1, ..., a_n$, consider the set ${\cal L}_n(a_1, ..., a_n)$ 
of all integral laminations $l$ on the polygon $P_n$ 
such that the number of endpoints of $l$ on the $k$-th side is $a_k$. 
Let $(V_2, \omega)$ be a two dimensional $\Q$-vector space with a symplectic form. 
Let us assign to an $l\in {\cal L}_n(a_1, ..., a_n)$ an $SL_2$-invariant map 
$$
{\Bbb I}_l: (\otimes^{a_1}V_2) \otimes \ldots \otimes (\otimes^{a_n}V_2) \lra \Q.
$$
We assign the factors in the tensor product to the endpoints of $l$, so that the order of the factors 
match the clockwise order of the endpoints. 
 Then  
for each interval $\beta$ in   $l$ we evaluate  
the form $\omega$ on the pair of vectors in the two factors of the tensor product 
labelled by the endpoints of $\beta$, and take the product over all intervals $\beta$ in $l$. 
Recall that the $SL_2$-modules $S^{a}V_2$, $a> 0$, provide all non-trivial irreducible 
finite dimensional $SL_2$-modules up to isomorphism.

\bt \la{9.19.13.19} Projections of the maps ${\Bbb I}_l$,  $l\in {\cal L}_n(a_1, ..., a_n)$, 
to $S^{a_1}V_2 \otimes \ldots \otimes S^{a_n}V_2$  
form a basis in ${\rm Hom}_{SL_2}(S^{a_1}V_2 \otimes \ldots \otimes S^{a_n}V_2, \Q)$.
\et

\paragraph{2. Canonical basis in the space of functions on the moduli space of $SL_2$-local systems.} 

\bd \la{9.19.13.1} Let $S$ be a surface with boundary. An integral lamination $l$ on $S$ 
is a collection of simple, mutually non intersecting,  
non isotopic 
loops $\alpha_i$ 
 with positive integral multiplicities
$$
l = \sum_i n_i[\alpha_i]  ~~~~ n_i \in \Z_{>0}, 
$$
considered modulo isotopy. The set of all integral laminations on $S$ is denoted by ${\cal L}_\Z({S})$.\footnote{Laminations on 
decorated surfaces were investigated in \cite{FG1}, Section 12, and \cite{FG3}. 
However the two types of laminations considered there, 
the ${\cal A}$- and ${\cal X}$-laminations, are different then the ones 
in Definition \ref{9.19.13.1}. Indeed, they parametrise canonical bases in ${\cal O}({\cal X}_{PGL_2, S})$ and, respectively,  
${\cal O}({\cal A}_{SL_2, S})$, 
while the latter parametrise a canonical basis in ${\cal O}({\rm Loc}_{SL_2, S})$. Notice that a lamination 
in Definition \ref{9.19.13.1} can not end on a boundary circle.} 
\ed
\begin{figure}[ht]
\epsfxsize130pt
\centerline{\epsfbox{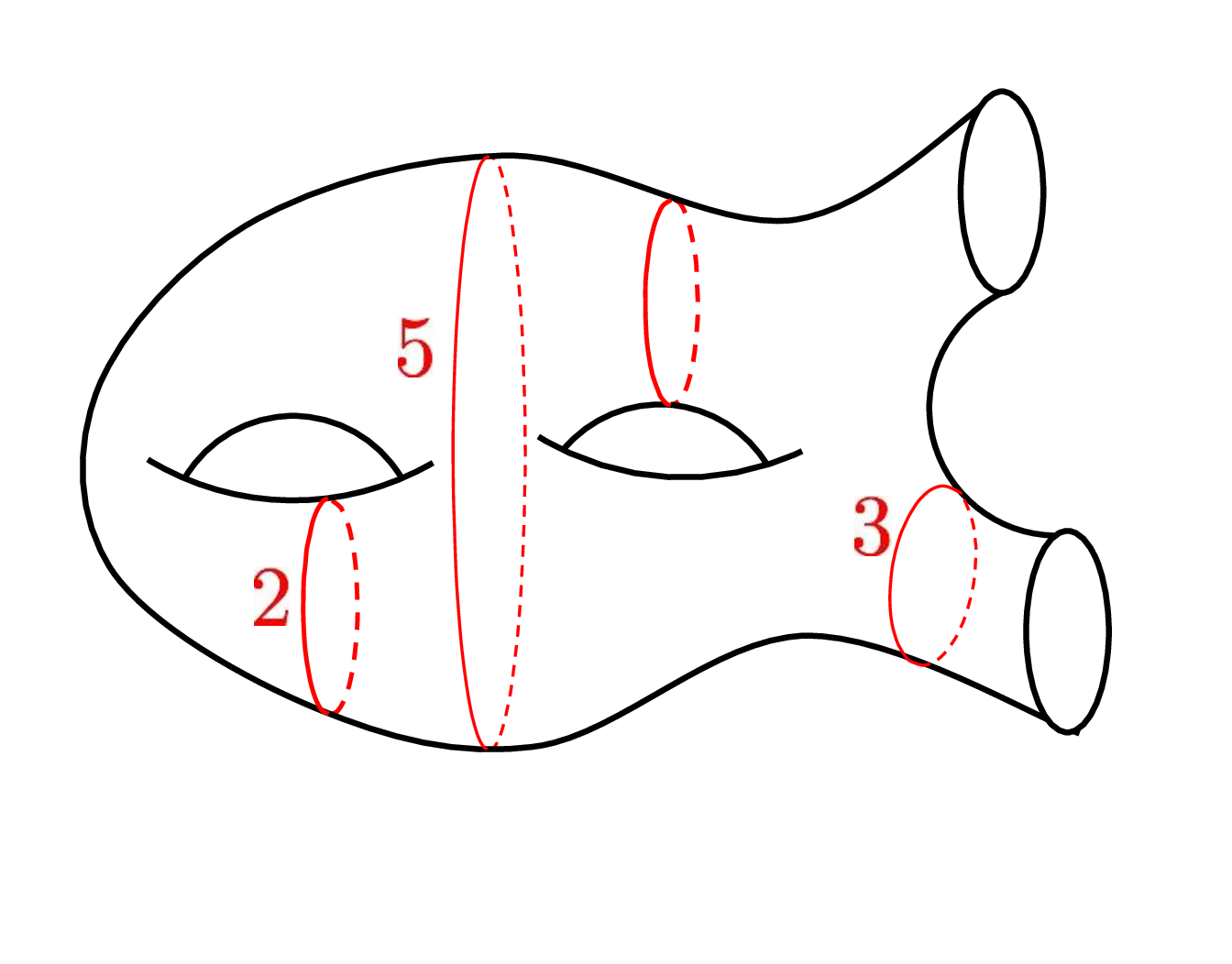}}
\caption{An integral lamination on a surface with two holes, and no special points.}
\label{fish}
\end{figure}

In the case when $S$ is a surface without boundary we get Thurston's integral 
laminations.

Given an integral lamination $l$ on $S$, let us define a regular function $M_l$ on the 
moduli space ${\rm Loc}_{SL_2, S}$ of $SL_2$-local systems on $S$. Denote by 
${\rm Mon}_{\alpha}({\cal L})$ the monodromy of an $SL_2$-local system ${\cal L}$ over a loop $\alpha$ on $S$. 
The value of the function $M_l$ on ${\cal L}$ is given by 
$$
M_l({\cal L}):= \prod_i {\rm Tr} ({\rm Mon}^{n_i}_{\alpha_i}({\cal L})).
$$

\bt \la{9.19.13.20} (\cite{FG1}, Proposition 12.2). The functions $M_l$, $l\in {\cal L}_\Z({S})$, form a linear basis 
in the space ${\cal O}({\rm Loc}_{SL_2, S})$. 
\et

Recall that a {\it decorated surface}  $S$ is an oriented surface  with boundary, 
and a finite, possibly empty, collection $\{s_1, ..., s_n\}$ of {\it special} 
points on the boundary,  considered modulo isotopy. 

 We define a 
moduli space ${\rm Loc}_{SL_2, S}$ for any decorated surface $S$, so that laminations on $S$ provide a canonical basis 
${\cal O}({\rm Loc}_{SL_2, S})$, generalising both Theorem \ref{9.19.13.19} (when $S$ is a polygon) and 
Theorem \ref{9.19.13.20}, see Section \ref{sec10.3n}. 

Let us discuss now how to generalize constructions 
of Section \ref{sec1.1.2} to the  decorated surfaces.

\subsubsection{Positive $\G$-laminations and top components of surface affine Grassmannians}
\la{sec1.3}

A pair $({\rm G}, S)$ gives rise to a moduli space ${\cal A}_{{\rm G}, S}$ \cite{FG1}. 
Here are two basic examples.
\begin{itemize}

\item 
When $S$ is a disc with $n$ special points on the boundary, we recover the space 
${\rm Conf}_n({\cal A})$. 
 
\item 
When $S$ is just a surface, without special points, the moduli 
space ${\cal A}_{{\rm G}, S}$ is a twisted version 
of the moduli space of {{\rm G}-local systems with unipotent monodromy around boundary components} on $S$ equipped with a covariantly constant
 decorated flag near every boundary component of $S$. 
\end{itemize}
The space ${\cal A}_{{\rm G},   S}$ has a positive structure \cite{FG1}. 
We define in Section \ref{sec11} a {\it potential} ${\cal W}$ on the space ${\cal A}_{{\rm G},   S}$. It is a  
rational positive function, with the  tropicalization 
$
{\cal W}^t: {\cal A}_{{\rm G}, S}(\Z^t) \lra \Z.
$ 
 
The condition ${\cal W}^t\geq 0$ determines a subset of {\it positive integral  {\rm G}-laminations on $  S$}:
\be \la{121212a}
{\cal A}^+_{{\rm G}, S}(\Z^t):= \{l \in {\cal A}_{{\rm G}, S}(\Z^t)~|~ {\cal W}^t(l)\geq 0\}.
\ee

For any decorated surface $S$, the set 
${\cal A}^+_{SL_2, S}(\Z^t)$ is canonically isomorphic to the set of integral laminations on $S$, see Section \ref{sec10.3n}. An interesting approach 
to a geometric definition of laminations for $\G=SL_m$, which 
employs the affine Grassmannian, was suggested by Ian Le \cite{Le}.

There is a canonical volume form $\Omega$ on the 
space ${\cal A}_{{\rm G}, S}$, which can be defined by using an ideal triangulation of $S$ 
and the volume forms on ${\rm Conf}_n({\cal A})$. When $G$ is simply-connected, it is also 
the cluster volume form $\Omega_{\cal A}$. 

We also assign to a  pair $({\rm G}, S)$ a stack ${\rm Gr}_{{\rm G}, S}$, which we call
 the {\it surface affine Grassmannian}. 
When $S$ is a disc with $n$ special points on the boundary, 
we recover the stack ${\rm Conf}_n({\rm Gr})$. 
In general it is an infinite dimensional stack.

The components of the punctured boundary  $\partial S - \{s_1, ..., s_n\}$ isomorphic to intervals are called 
boundary intervals. We define the torus ${\Bbb H}$  and the lattice ${\Bbb P}$ by
$$
{\Bbb H}:= {\rm H}^{\{\mbox{boundary intervals on $S$}\}}, ~~~~{\Bbb P}:= ({\rm P}^+)^{\{\mbox{boundary intervals on $S$}\}}. 
$$
The map $\pi$ is defined by assigning to a boundary interval ${\rm I}$ the element 
$i(\A_+, \A_-)\in {\rm H}$, see (\ref{agr}),  
where $(\A_-,\A_+)$ are the decorated flags at the 
ends of the interval ${\rm I}$, ordered by the orientation of $S$, 
provided by the very definition of the space ${\cal \A}_{\G, S}$. 

Given a point $l\in {\cal A}^+_{\G, S}(\Z^t)$, we define a cycle 
$
{\cal M}^o_l \subset {\rm Gr}_{G, S}.
$ 
Given an element $\lambda \in {\Bbb P}$, we prove that the map $l\lms {\cal M}^\circ_l$  gives rise to a bijection of sets 
\be \la{map}
{{\cal A}}^+_{\G, S}(\Z^t)_\lambda \stackrel{\sim}{\lra} \{\mbox{\rm closures of top dimensional components of 
${\rm Gr}^{(\lambda)}_{\G, S}$}\}. 
\ee
However in this case we can no longer bypass the question 
what are the ``top components'' of an infinite dimensional 
stack, as we did in Section \ref{sec1.1.2}. 
So we define in Section \ref{sec11.3.1} ``dimensions'' of certain relevant stacks with values in certain {\it dimension $\Z$-torsors}. 
As a result, although the ``dimension'' is no longer an integer, 
the difference of two ``dimensions'' from the same dimension $\Z$-torsor is an integer, and so the 
notion of ``top dimensional components'' does make sense.

\vskip 2mm
To define the analog of the space of tensor product invariants for a decorated surface $S$, we introduce 
in Section \ref{sec11} a moduli space ${\rm Loc}_{\G^L, S}$. 
If $S$ has no special points, it 
is the moduli space of  ${\rm G}^L$-local systems on $S$. 
If $S$ is a disc with $n$ points on the boundary, it is the space 
${\rm Conf}_n({\cal A}_{\G^L})$.  
We prove there
that the set ${\cal A}^+_{{\rm G}, S}(\Z^t)$ parametrizes a linear basis in ${\cal O}({\rm Loc}_{\G^L, S})$.

\subsection{Canonical bases, canonical pairings, and homological mirror symmetry} \la{sec1.4}

 Below we write 
 ${\cal A}$ for ${\cal A}_{{\rm G}}$ etc., and use notation ${\cal A}_L$ for ${\cal A}_{{\rm G^L}}$ etc. 

For any split reductive group ${\rm G}$, the space ${\cal O}({\cal A}_{L})$ of regular functions on the 
principal affine space ${\cal A}_{L}$ of $\G^L$ is a model of 
representations of ${\rm G}^L$: every irreducible ${\rm G}^L$-module appears there once.  
This allows us to organize the direct sum of all vector spaces of a given kind 
where the canonical bases live into a vector space of regular functions on a single space. 
For example:
\be \la{model}
\bigoplus_{ (\lambda_1, \ldots , \lambda_n) \in ({\rm P}^+)^n}
V_{\lambda_1} \otimes \ldots \otimes V_{\lambda_n}
= {\cal O}({\cal A}_{L}^n). 
\ee
\be \la{model1}
\bigoplus_{ (\lambda_1, \ldots , \lambda_n)\in ({\rm P}^+)^n}
\Bigl(V_{\lambda_1} \otimes \ldots \otimes V_{\lambda_n}\Bigr)^{{\rm G}^L} 
= {\cal O}({\cal A}_{L}^n)^{{\rm G}^L} = {\cal O}({\rm Conf}_n({\cal A}_{L})). 
\ee
Using this, let us interpret the statement that a canonical basis of a given kind is parametrized 
by positive integral tropical points of a certain space as existence of a {\it canonical pairing}. 

\subsubsection{Tensor product invariants and mirror symmetry} \la{s1.4.1}

For any split reductive group ${\rm G}$, 
the set ${\rm Conf}^+_n({\cal A})(\Z^t)$  
parametrizes a canonical basis in the space (\ref{model1}). 
So there are canonical pairings
\be \la{canpar}
{\bf I}_{\rm G}: {\rm Conf}^+_n({\cal A})(\Z^t) \times {\rm Conf}_n({\cal A}_{L})\lra 
{\Bbb A}^1.
\ee
\be \la{canparL}
{\bf I}_{\rm G^L}: {\rm Conf}_n({\cal A})\times {\rm Conf}^+_n({\cal A}_{L})(\Z^t) \lra 
{\Bbb A}^1.
\ee
So the story becomes completely symmetric. The idea that the set parametrizing canonical bases in tensor product invariants is a subset 
of ${\rm Conf}_n({\cal A})(\Z^t)$  goes back to 
Duality Conjectures from \cite{FG1}. 
It is quite surprising that taking into account the potential we get a canonical basis in the 
space of regular functions on 
the {\it same kind of space}, ${\rm Conf}_n({\cal A}_{L})$, 
for the Langlands dual group.


\begin{figure}[ht]
\epsfxsize100pt
\centerline{\epsfbox{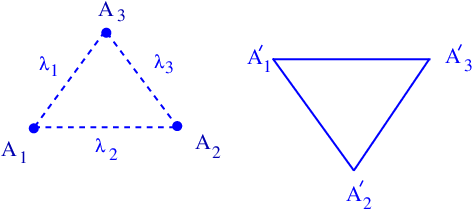}}
\caption{Duality between configurations spaces of decorated flags for ${\rm G}$ and ${\rm G}^L$. 
The potential is a sum of contributions at the boldface vertices. 
Pairs of decorated flags at the dashed sides are in generic position. 
No condition on the pairs of decorated flags at the solid sides.}
\label{tpm5.5}
\end{figure}

To picture this symmetry, consider  a convex $n$-gon $P_n$ on the left of Fig \ref{tpm5.5}, and 
assign a configuration $(\A_1, ..., \A_n)\in {\rm Conf}^\times_n({\cal A})$ to its vertices. 
The potential ${\cal W}$ is a sum of the vertex contributions; so the vertices are shown boldface. 
The pair of decorated flags at each side is generic; so all sides are dashed. 
Tropicalizing the data at the vertices, and using the isomorphism 
${\rm Conf}^+_2({\cal A})(\Z^t) ={\rm P}^+$, 
we assign a dominant weight $\lambda_k$  of $\G^L$ to each side of the left polygon. 
Consider now the dual $n$-gon $\ast P_n$ on the right, 
and a configuration of decorated flags $(\A'_1, ..., \A'_n)$ in ${\rm G}^L$ 
at its vertices. 
The dominant weight $\lambda_k$ on the left  corresponds to the irreducible representation $V_{\lambda_k}$,  
realised in the model ${\cal O}({\cal A}_{L})$ assigned to the dual vertex of $\ast P_n$. 



\vskip 3mm
Tropical points live naturally at the  
boundary of a positive space, compactifying the set of its real positive points \cite{FG4}. 
An example is given 
by  Thurston's  
boundary of Teichm\"uller spaces, realized as
 the space of projective measured laminations. 

It is tempting to think that canonical pairings (\ref{canpar}) and  (\ref{canparL}) are 
 manifestations of a symmetry involving both spaces simultaneously, 
rather then relating the tropical points of one space to 
the regular functions on the other space. 
We conjecture that this elusive symmetry is the mirror symmetry, and 
the function ${\cal W}$ 
is the potential for the Landau-Ginzburg 
model. 

To formulate precise conjectures, let us 
start with a  general set-up.

\paragraph{The A-model.} Let 
 ${\cal M}$ be a complex affine variety. So it has an affine embedding  $i: {\cal M}\hra \C^N$.
 The Kahler form $\sum_i dz_id\bar z_i$ on $\C^N$ induces a Kahler form  on 
${\cal M}(\C)$ with  an exact 
symplectic form $\omega$. The wrapped Fukaya category 
${\cal F}_{\rm wr}({\cal M}, \omega)$ \cite{AS} 
does not depend on the embedding $i$. We denote it by 
${\cal F}_{\rm wr}({\cal M})$. 
A potential ${\cal W}$ on $({\cal M}, \omega)$ allows to define the 
wrapped Fukaya-Seidel category 
$
{\cal F}{\cal S}_{\rm wr}({\cal M}) = {\cal F}{\cal S}_{\rm wr}({\cal M}, \omega, {\cal W}). 
$ The case of a potential with only Morse singularities is treated in \cite{S08}.
It also does not depend on the choice of  affine embedding. 
A volume form $\Omega$ provides a $\Z$-grading on ${\cal F}{\cal S}_{\rm wr}({\cal M})$ \cite{S}.

\paragraph{The positive A-brane.} In our examples 
${\cal M}$ is a {positive space} over $\Q$. So it  
has a submanifold ${\cal M}(\R_{>0})$ of real positive points. 
It is a Lagrangian submanifold for the symplectic form $\omega$ induced by any affine embedding. 
The form $\Omega$ is defined over $\Q$, 
and so 
${\cal M}(\R_{>0})$ is a special Lagrangian submanifold since it 
restricts to a real volume form on ${\cal M}(\R_{>0})$. The potential 
${\cal W}$ is a positive function on ${\cal M}$. 
So the special Lagrangian submanifold ${\cal M}(\R_{>0})$ 
should give rise to an 
 object of the wrapped Fukaya-Seidel category of ${\cal M}$, which we call the {\it 
positive $A$-brane}, denoted by ${\cal L}_+$.

\paragraph{The projection / action data.} 
In all our examples we have a mirror dual pair ${\cal M} \leftrightarrow {\cal M}_L$ equipped with the 
following data: a projection $\pi: {\cal M} \lra {\Bbb H}$ onto a split torus ${\Bbb H}$, an action of the 
split torus ${\Bbb T}$ on ${\cal M}$ preserving the volume form and the potential, 
and a similar pair of tori ${\Bbb H}_L, {\Bbb T}_L$ for ${\cal M}_L$. These tori are in duality: 
$$
X_*({\Bbb T}_L) = X^*({\Bbb H}), ~~~~X_*({\Bbb H}_L) = X^*({\Bbb T}). 
$$
This projection / action data gives rise to the following additional structures 
on the categories. 

\vskip 2mm

i) The group ${\rm Hom}(X_*({\Bbb H}), \C^*) = \widehat {\Bbb H}(\C)$ 
of $\C^*$-local systems on the complex torus ${\Bbb H}(\C)$ acts 
on the category ${\cal F}{\cal S}_{\rm wr}({\cal M})$. 
Namely, we assume that the objects of the category are given by Lagrangian 
submanifolds in ${\cal M}(\C)$ with ${\rm U}(1)$-local systems. 
Then a ${\rm U}(1)$-local system ${\cal L}$ on ${\Bbb H}(\C)$ acts by 
 the tensor product with $\pi^*({\cal L})$, providing an action 
of the subgroup ${\rm Hom}(X_*({\Bbb H}), {\rm U}(1))$ on the category. 
We assume that the action extends to an algebraic action of 
the complex torus 
$$
{\rm Hom}(X_*({\Bbb H}), \C^*) = X^*({\Bbb H})\otimes \C^* = X_*(\widehat 
{\Bbb H})\otimes \C^* = \widehat {\Bbb H}(\C). 
$$

\vskip 2mm

ii) Let  ${\Bbb T}_K$  be the maximal compact subgroup 
of the torus ${\Bbb T}(\C)$. We assume that the 
action of the group ${\Bbb T}_K$  on the symplectic manifold 
$({\cal M}, \omega)$ is Hamiltonian.\footnote{In our main examples the symplectic structure 
is exact, $\omega = d\alpha$. So avereging the form $\alpha$ by the action of the compact group ${\Bbb T}_K$ 
we can assume that it is ${\Bbb T}_K$-invariant. Therefore the action is Hamiltonian: 
the Hamiltonian at $x$ for a
 one parametric subgroup $g^t$  is given by the 
formula $\alpha(\frac{d}{dt}g^t(x))$.}
Then any subgroup $S^1\subset {\Bbb T}_K$ provides a family of symplectic maps 
$r_t$, $t \in \R/\Z = S^1$. The map 
$r_1$ provides an invertible 
functorial automorphism of Hom's of the  category 
${\cal F}{\cal S}_{\rm wr}({\cal M})$, and thus 
an invertible element of the center of the category. 
So the group algebra $\Z[X_*({\Bbb T})]= {\cal O}(\widehat {\Bbb T})$ is mapped into the center:
$$
{\cal O}(\widehat {\Bbb T})\lra {\rm Center}({\cal F}{\cal S}_{\rm wr}({\cal M})). 
$$
\vskip 2mm

iii) Clearly, there is a map 
${\cal O}({\Bbb H})\lra {\rm Center}(D^b{\rm Coh}({\cal M}))$, and the group ${\Bbb T}$ 
acts on $D^b{\rm Coh}({\cal M})$.

\paragraph{The potential / boundary divisors.} It was anticipated by Hori-Vafa \cite{HV} and 
Auroux \cite{Au1} 
that adding a potential on a space ${\cal M}$ amounts to a partial compactification 
of its mirror  ${\cal M}_L$ by a divisor. More precisely,
denote by ${\cal M}^\times$ and ${\cal M}_L^\times$ the regular loci  of the 
forms $\Omega$ and $\Omega_{L}$. 
The potential is  a sum 
${\cal W} = \sum_c{\cal W}_c$. 
Its components ${\cal W}_c$ are expected to match the irreducible divisors  
$D_c$ of  ${\cal M}_L - {\cal M}_L^\times$. 
The divisors $D_c$ are defined as the divisors 
on ${\cal M}_L$ where  ${\rm Res}_{D_c}(\Omega_{L})$ is non-zero. So we should have
\be \la{12.1.14.10}
{\cal W} = \sum_c{\cal W}_c, ~~~ {\cal M}_L - {\cal M}_L^\times= \cup_cD_c, ~~~{\cal W}_c
\stackrel{?}{\leftrightarrow} D_c.
\ee

There are several ways to explain how  this correspondence should work. 
\vskip 2mm
i) The potential ${\cal W}_c$ determines an element
$
[{\cal W}_c] \in {\rm H}{\rm H}^0({\cal M}), 
$ 
which defines a deformation 
of the category $D^b{\rm Coh}({\cal M})$ 
as a $\Z/2\Z$-category. On the dual side it corresponds to 
a deformation of the Fukaya category obtained by adding to the symplectic form on ${\cal M}_L$ 
a multiple of the 2-form $\omega_c$,  
whose cohomology class is the cycle class  $[D_c]\in H^2({\cal M}_L, \Z(1))$ of the divisor $D_c$.
\vskip 2mm

ii) The Landau-Ginzburg potential ${\cal W}_c$ should be obtained 
by counting the holomorphic discs touching the divisor $D_c$, 
as was demonstrated by Auroux \cite{Au1} in examples. 
\vskip 2mm

iii) In the cluster variety set up the correspondence is much more precise, see Section \ref{seccluster}. 

\vskip 3mm

{\bf Example}. To illustrate the set-up, let us specify the data on the moduli space 
${\rm Conf}_n({\cal A}_{})$. 

\begin{itemize}

\item A regular positive function, the potential ${\cal W}: {\rm Conf}^\times_n({\cal A}) \lra {\Bbb A}^1$. 

\item A regular 
volume form $\Omega$ on  ${\rm Conf}^\times_n({\cal A})$, with logarithmic singularities at infinity. 

\item A regular projection $\pi: {\rm Conf}^\times_n({\cal A}) \lra {\Bbb H}$ onto a torus $
{\Bbb H}:=  {\rm H}^{\{\mbox{sides of the $n$-gon $P_n$}\}}.
$

\item An action $r$ of the torus ${\Bbb T}:= {\rm H}^{\{\mbox{vertices of $P_n$}\}}$ on 
${\rm Conf}_n({\cal A})$ by rescaling decorated flags. 

\end{itemize}

Changing $\G$ to $\G^L$ we interchanges the action with the projection: 
\begin{itemize}
\item The torus ${\Bbb T}_L$ is dual to the torus ${\Bbb H}$, i.e.  there is a canonical isomorphism 
$X_*({\Bbb T}_L) = X^*({\Bbb H})$. 
\end{itemize}

By construction, 
the potential is  a sum 
\be \la{12.1.14.11}
{\cal W} = \sum_v\sum_{i\in {\rm I}}{\cal W}^v_{i} 
\ee
over the vertices $v$ of the polygon $P_n$, 
parametrising configurations $(\A_1, ..., \A_n)$, and 
 the set ${\rm I}$ of simple positive roots for $\G$.  
Indeed, a non-degenerate character $\chi$ of ${\rm U}$ is naturally a sum 
$\chi =\sum_i\chi_i$. 

 On the other hand, the set of irreducible components of the divisor 
${\rm Conf}_n({\cal A}_L) - {\rm Conf}^\times_n({\cal A}_L)$ 
is parametrised by the pairs $(E, i)$ 
where $E$ are the edges of the dual polygon $\ast P_n$, see Section \ref{sec1.2.2}: 
\be \la{12.1.14.12}
{\rm Conf}_n({\cal A}_L) - {\rm Conf}^\times_n({\cal A}_L) = 
\cup_{E}\cup_{i\in {\rm I}} D^E_{i}.
\ee 

Since vertices of the polygon $P_n$ match the sides of the dual polygon $\ast P_n$, 
the components of the potential (\ref{12.1.14.11}) match the 
irreducible components of the divisor at infinity (\ref{12.1.14.12}) on the  dual space. 

\vskip 3mm
We start with the most basic form of our mirror conjectures, which does 
not involve the potential.

\bcon \la{MIRRORDUAL} For any split semisimple group 
${\rm G}$ over $\Q$, there is  a mirror duality 
\be \la{8.6.13.2} 
({\rm Conf}^\times_n({\cal A}), \Omega) ~~\mbox{is mirror dual to}~~ 
({\rm Conf}^\times_n({\cal A}_L), \Omega_{L}). 
\ee
This means in particular that one has an equivalence of $\A_\infty$-categories
\be \la{8.6.13.3} 
{\cal F}_{\rm wr}({\rm Conf}^\times_n({\cal A})(\C)) 
\stackrel{\sim}{\lra} 
D^b{\rm Coh}({\rm Conf}^\times_n({\cal A}_L)). 
\ee

This equivalence maps the positive $A$-brane  ${\cal L}_+$ to the structure sheaf ${\cal O}$. 

It 
identifies the action of the group 
$\widehat {\Bbb H}(\C)$ on the category ${\cal F}_{\rm wr}({\rm Conf}^\times_n({\cal A})(\C))$ with the 
action of the group ${\Bbb T}_L(\C)$ on $D^b{\rm Coh}({\rm Conf}^\times_n({\cal A}_L))$, and identifies the subalgebras 
$$
{\cal O}(\widehat {\Bbb T})\subset 
{\rm Center}({\cal F}_{\rm wr}({\rm Conf}^\times_n({\cal A})(\C))) ~~\mbox{\rm and}~~
{\cal O}({\Bbb H}_L)\subset 
{\rm Center}(D^b{\rm Coh}({\rm Conf}^\times_n({\cal A}_L))).
$$ 
\econ
The projection / action data for the pair (\ref{8.6.13.2}) is given by 
$$
{\Bbb H} = {\rm H}^{n}, ~~~~{\Bbb H}_L = {\rm H}^{n}_L, ~~~~
{\Bbb T} = {\rm H}^{n}, ~~~~{\Bbb T}_L = {\rm H}_L^{n}. 
$$

The pair (\ref{8.6.13.2}) is symmetric:  
 interchanging the group $\G$ with the Langlands dual group $\G^L$ amounts 
to exchanging the  $\A$-model with the  $\B$-model. 

\vskip 2mm

Using the mirror pair (\ref{8.6.13.2}) as a starting point, 
we can now turn on the  potentails at all  vertices of the left polygon $P_n$. 
This amounts to a partial compactification of the dual space. Namely, 
we take the space ${\rm Conf}_n({\cal A}_L)$, and consider its affine closure 
${\rm Conf}_n({\cal A}_L)_{\bf a}:= {\rm Spec}\Bigl({\cal O}({\cal A}^n_L)^{\G^L}\Bigr)$. 

Since the action of the group ${\rm H}^{n}$ on ${\rm Conf}^\times_n({\cal A})$ alters the potential ${\cal W}$, 
and the projection $\pi_L$ onto ${\rm H}_L^n$ does not extend to ${\rm Conf}_n({\cal A}_L)_{\bf a}$, 
the projection / action data for the pair (\ref{8.6.13.2b}) is 
$$
{\Bbb H} = {\rm H}^{n}, ~~~~{\Bbb H}_L = \{e\}, ~~~~
{\Bbb T} = \{e\}, ~~~~{\Bbb T}_L = {\rm H}_L^{n}. 
$$
Therefore by turning on the potentials we arrive at the following Mirror Conjecture:

\bcon \la{MIRRORDUALa} For any split semisimple group ${\rm G}$ over $\Q$, 
there is  a mirror duality 
\be \la{8.6.13.2a} 
({\rm Conf}^\times_n({\cal A}), {\cal W}, \Omega) ~~\mbox{is mirror dual to}~~ 
{\rm Conf}_n({\cal A}_L)_{\bf a}. 
\ee
This means in particular that there is an equivalence of $\A_\infty$-categories
\be \la{8.6.13.1b} 
{\cal F}{\cal S}_{\rm wr}({\rm Conf}^\times_n({\cal A})(\C), {\cal W}, \Omega) 
~~\stackrel{\sim}{\lra}~~ 
D^b{\rm Coh}({\rm Conf}_n({\cal A}_L)_{\bf a}). 
\ee
It maps the positive $A$-brane  ${\cal L}_+$ to the structure sheaf ${\cal O}$, and 
identifies the action of the group 
$\widehat {\Bbb H}(\C)$ on the category ${\cal F}{\cal S}_{\rm wr}({\rm Conf}^\times_n({\cal A})(\C))$ with the 
action of ${\Bbb T}_L(\C)$ on $D^b{\rm Coh}({\rm Conf}_n({\cal A}_L)_{\bf a})$.  
\econ

\vskip 2mm
The geometry  of mirror dual objects in Conjectures \ref{MIRRORDUAL} and \ref{MIRRORDUALa} 
is {\it essentially} dictated by representation theory. 
Indeed, the tropical points are determined by birational types of the spaces, and  
canonical bases tell the algebras of functions on the dual affine varieties:
\be \la{dcI}
\mbox{ The set ${\rm Conf}^+_n({\cal A})(\Z^t)$ parametrises a canonical basis in ${\cal O}({\rm Conf}_n({\cal A}_{L}))$}.
\ee
\be \la{dcII}
\mbox{ The set ${\rm Conf}_n({\cal A}_L)(\Z^t)$ should parametrise a 
canonical basis in ${\cal O}({\rm Conf}^\times_n({\cal A}))$}.\footnote{Although the claim (\ref{dcII}) is 
not addressed in the paper, it can be deduced from (\ref{dcI}).} 
\ee

The potential ${\cal W}$ and the projection $\pi$ define a regular map 
$
(\pi, {\cal W}): {\rm Conf}^\times_n({\cal A}) \lra {\Bbb H}\times {\Bbb A}^1.
$ 
The form $\Omega$ on ${\rm Conf}^\times_n({\cal A})$ and the canonical volume forms 
on ${\Bbb H}$ and ${\Bbb A}^1$ provide a volume form $\Omega^{(a,c)}$ at the 
fiber $F_{a, c}$ of this map over a generic point $(a,c) \in {\Bbb H}\times {\Bbb A}^1$.  

\vskip 3mm
More generally, we can turn on only partial potentials at the vertices 
of the polygon $P_n$, which amounts on the dual side 
to taking partial compactifications, and then considering their affine closures. 
This way we get an array of conjecturally dual pairs, described as follows. 

For each vertex $v$ of the polygon $P_n$ 
parametrising configurations $(\A_1, ..., \A_n)$ choose an arbitrary subset 
${\rm I}_v \subset {\rm I}$ of the set parametrising the simple positive roots of ${\rm G}$. 
It determines a partial potential
\be \la{WPART}
{\cal W}_{\{{\rm I}_v\}} = \sum_v{\cal W}_{{\rm I}_v}, ~~~~{\cal W}_{{\rm I}_v}:= \sum_{i\in {\rm I}_v}{\cal W}^v_{i}. 
\ee
On the dual side, subsets $\{{\rm I}_v\}$ determine a partial compactification 
of the space ${\rm Conf}^\times_n({\cal A}_L)$, obtained by adding the divisors 
$D^{E_v}_{i}$ where $i \in {\rm I}_v$. 
 Here $E_v$ is the side of the polygon 
$\ast P_n$ dual to the vertex $v$ of $P_n$:
\be \la{SCOMPACTI}
{\rm Conf}_n({\cal A}_L)_{\{{\rm I}_v\}}:= {\rm Conf}^\times_n({\cal A}_L) 
\bigcup \cup_v\cup_{i\in {\rm I}_v} D^{E_v}_{i}.
\ee

For each vertex $v$ of $P_n$ there is a subgroup 
${\rm H}_{{\rm I}_v} \subset {\rm H}$ preserving the partial potential 
${\cal W}_{{\rm I}_v}$ at $v$. On the dual side, let ${\rm H}^{{\rm I}_v}_L$ be the dual 
quotient of the Cartan group ${\rm H}_L$. 
So we arrive at the projection / action data  
\be \la{actproj}
{\Bbb H} = {\rm H}^{n}, ~~~~{\Bbb H}_L = \prod_v{\rm H}^{{\rm I}_v}_L, ~~~~
{\Bbb T} = \prod_v{\rm H}_{{\rm I}_v}, ~~~~{\Bbb T}_L = {\rm H}_L^{n}. 
\ee
So turning on partial potentials we arrive at  Conjecture \ref{MIRRORDUALab}, interpolating  
Conjectures \ref{MIRRORDUAL} and \ref{MIRRORDUALa}:
\bcon \la{MIRRORDUALab}
For any split semisimple group ${\rm G}$ over $\Q$, 
there is  a mirror duality 
\be \la{8.6.13.2ab} 
({\rm Conf}^\times_n({\cal A}), {\cal W}_{\{{\rm I}_v\}}, \Omega) ~~\mbox{is mirror dual to the affine closure of}~~ 
{\rm Conf}_n({\cal A}_L)_{\{{\rm I}_v\}}. 
\ee
Its action / projection data is given by (\ref{actproj}).
\econ
Needless to say, the positive integral tropical points of the left space 
parametrise a basis in the space of functions on the right space. 


\vskip 3mm 
Here is another general principle to generate new mirror dual pairs.  
We start with a mirror dual pair $({\cal M}, \Omega, {\cal W}) \leftrightarrow 
 {\cal M}_L$, equipped with 
the projection / action data which involves a dual pair  
$({\Bbb T}, {\Bbb H}_L)$. So  ${\Bbb T}$ acts by automorphisms of the triple 
$({\cal M}, \Omega, {\cal W})$, and there is a dual projection $\pi_L: {\cal M}_L \to {\Bbb H}_L$. 

Choose any subgroup ${\Bbb T}' \subset {\Bbb T}$, and consider the corresponding 
${\Bbb T}'$-equivariant category. 
If the group ${\Bbb T}'$ acts freely, 
this amounts to 
taking the quotient 
of the space with potential $({\cal M}, {\cal W})$ by the action of ${\Bbb T}'$. 
A volume form on ${\Bbb T}'$ gives rise to a volume form on the quotient, obtained by contructing 
the volume form $\Omega$ with the dual polyvector field on ${\Bbb T}'$. 
The subgroup ${\Bbb T}'\subset {\Bbb T}$ determines by the duality a quotient group 
${\Bbb H}_L\lra {\Bbb H}_L'$, and therefore a projection 
$\pi_L': {\cal M}_L \to {\Bbb H}_L'$. 

\begin{itemize}

\item {\it The quotient  stack $({\cal M}/{\Bbb T}', {\cal W})$ is mirror dual to the family  
$\pi_L': {\cal M}_L \to {\Bbb H}_L'$}. 

In the examples below $({\cal M}/{\Bbb T}', {\cal W})$ is just dual to a fiber 
${\pi_L'}^{-1}(a) \subset {\cal M}_L$, $a\in {\Bbb H}_L'$.

\end{itemize}

In particular, starting from a mirror dual pair 
(\ref{8.6.13.2ab}), we can choose any subgroup ${\Bbb T}' \subset 
{\Bbb T} = \prod_v{\rm H}_{{\rm I}_v}$ acting on the space with potential on the left. 
All examples below are obtained this way. 

\vskip 3mm
{\bf Example.} We start with the space ${\rm Conf}^\times({\cal A}^{n+1})$ 
with the potential ${\cal W}_{1, ..., n}$ given by the sum of 
the full potentials at all vertices but one, the vertex ${\rm A}_{n+1}$. 
The action of the group ${\rm H}$ on the decorated flag ${\rm A}_{n+1}$ 
preserves the potential  ${\cal W}_{1, ..., n}$. 
Applying the above principle, we get  a dual pair illustrated on 
Fig \ref{tpm21.5}.  
The fiber over $a$, illustrated by the middle picture on Fig \ref{tpm21.5}, is canonically 
isomorphic to the less symmetrically defined space illustrated on the right.

\begin{figure}[ht]
\epsfxsize300pt
\centerline{\epsfbox{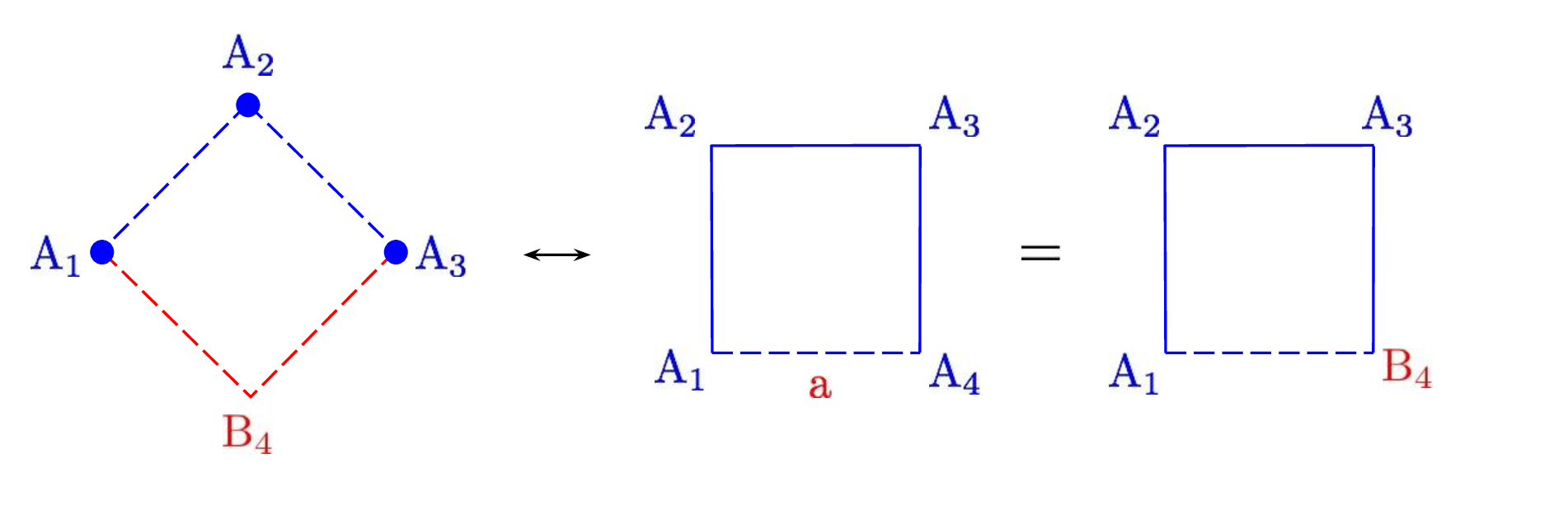}}
\caption{Dual pairs $({\rm Conf}^\times( {\cal A}^3, {\cal B}), {\cal W}_{1,2,3})$  and 
${\rm Conf}_{w_0}( {\cal A}_L^3, {\cal B}_L) = {\cal A}_L^2$. 
The ${\rm H}$-components of the projection $\lambda$ sit at 
the $\A$-decorated blue dashed edges on the left. The projection $\mu$ to ${\rm H}$ is assigned to 
the red $\A_3\B_4\A_1$. }
\label{tpm21.5}
\end{figure}

In the next Section we consider this example from a different point of view, starting 
from representation-theortic picture, just as we did with our basic example, and
 arrive to the same dual pairs. 

\subsubsection{Tensor products of representations and mirror symmetry} \la{s1.4.1a}

The set ${\rm Conf}^+({\cal A}^{n+1}, {\cal B})(\Z^t)$  defined using the potential 
${\cal W} $ from (\ref{ppoott}) 
parametrises canonical bases in $n$-fold tensor products of simple ${\rm G}^L$-modules.  
So using (\ref{model})  
we arrive  at a canonical pairing 
\be \la{canpar1}
{\bf I}: {\rm Conf}^+({\cal A}^{n+1}, {\cal B})(\Z^t) \times {\cal A}_{L}^n\lra 
{\Bbb A}^1.
\ee

Let us present  ${\cal A}_{L}^n$ as a configuration space. Recall that 
${\rm Conf}_{w_0}({\cal A}^{n+1}_{L}, {\cal B}_{L})$ parametrises configurations 
$(\A_{1}, \ldots , \A_{n+1},  \B_{n+2})$ such  that the pair
 $(\A_{n+1}, \B_{n+2})$  is generic. 
Generic pairs $\{\A, \B\}$ form a ${\rm G}^L$-torsor. Let  
 $\{\A^+,  \B^-\}$  be a standard generic pair. Then there is an isomorphism 
\be \la{mduala1a2}
{\cal A}_{L}^n \stackrel{=}{\lra} {\rm Conf}_{w_0}({\cal A}^{n+1}_{L}, {\cal B}_{L}), ~~~~\{\A_{1}, \ldots , \A_{n}\} 
\lms (\A_{1}, \ldots , \A_{n}, \A^+,  \B^-).
\ee

The subspace ${\rm Conf}^\times({\cal A}^{n+1}_{L}, {\cal B}_{L})$ parametrises configurations 
$(\A_{1}, \ldots , \A_{n+1},  \B_{n+2})$ such  that the consecutive pairs of flags are generic. 
It is the quotient of 
${\rm Conf}_{n+2}^\times({\cal A})$ by the action of the group ${\rm H}$ on the last decorated flag. 
The projection ${\rm Conf}_{n+2}^\times({\cal A}) \to {\rm H}^{n+2}$ induces a map, see (\ref{agr}),
\be \la{project}
\pi= (\lambda, \mu): {\rm Conf}^\times({\cal A}^{n+1}, {\cal B})  \lra {\rm H}^{n}\times {\rm H}.
\ee
$$
(\A_{1}, \ldots , \A_{n+1},  \B_{n+2})\lms \Bigl(\alpha(\A_1, \A_2), ..., \alpha(\A_n, \A_{n+1})\Bigr) \times 
\alpha(\A_{n+1}, \B_{n+2}) \alpha(\A_{1}, \B_{n+2})^{-1}.
$$

Then the symmetry is restored, and we can view (\ref{canpar1}) as a manifestation of a mirror duality: 
\be \la{mduala2}
({\rm Conf}^\times({\cal A}_{}^{n+1}, {\cal B}_{}), {\cal W}_{}, \Omega_{}, \pi) ~~\mbox{is mirror dual to}~~ 
({\rm Conf}_{w_0}({\cal A}^{n+1}_{L}, {\cal B}_{L}), \Omega_{L}, r_L). 
\ee
Here  $r_L$ is the action of 
 ${\rm H}^{n+1}_{L}$ by rescaling  of the decorated flags. The projection/action data is 
$$
{\Bbb H} = {\rm H}^{n+1}, ~~~~{\Bbb H}_L = \{e\}, ~~~~
{\Bbb T} = \{e\}, ~~~~{\Bbb T}_L = {\rm H}_L^{n+1}, ~~~~
$$

The analog of mirror dual pair (\ref{8.6.13.2}) and its projection/action data are given by, see Fig \ref{tpm21.75}, 
\be \la{8.6.13.2b} 
({\rm Conf}^\times({\cal A}^{n+1}, {\cal B}), \Omega) ~~\mbox{is mirror dual to}~~ 
({\rm Conf}^\times({\cal A}^{n+1}_L, {\cal B}_L), \Omega_{L}). 
\ee
$$
{\Bbb H} = {\rm H}^{n+1}, ~~~~{\Bbb H}_L = {\rm H}_L^{n+1}, ~~~~
{\Bbb T} = {\rm H}^{n+1}, ~~~~{\Bbb T}_L = {\rm H}_L^{n+1}, ~~~~
$$ 

So we arrived at the two dual pairs and (\ref{mduala2}) and (\ref{8.6.13.2b}) 
using canonical pairings as a guideline. 

As discussed in the Example in Section \ref{sec1.4}, 
we can get them 
from the basic dual pairs (\ref{8.6.13.2a}) and (\ref{8.6.13.2}) using the action / projection 
duality $\bullet$, which in this case tells that 
the quotient by the action of the group ${\rm H}$ on one side is dual to a fiber of the family of spaces 
over 
the dual group ${\rm H}_L$ over a point $a \in {\rm H}_L$.

 In particular, the dual pair (\ref{8.6.13.2}) leads to 
the  dual pair illustrated  on Fig \ref{tpm21.75}. 
Notice that configurations $(\A_1, ..., \A_{n+2})$ with $\alpha(\A_{n+1}, \A_{n+2}) =a \in {\rm H}$ are in bijection with 
configurations $(\A_1, ..., \A_{n+1}, \B_{n+2})$ where the pair $(\A_{n+1}, \B_{n+2})$ is generic. So the 
 two diagrams on the right of  Fig \ref{tpm21.75} represent isomorphic configuration spaces,  
and we get the dual pair (\ref{8.6.13.2b}) from  (\ref{8.6.13.2}). 
The dual pair (\ref{mduala2}) is obtained from (\ref{8.6.13.2b}) by adding potentials at the $\A$-vertices, 
thus
allowing arbitrary pairs of flags on the dual sides. 

We conjecture that the analogs of Conjectures \ref{MIRRORDUAL} and \ref{MIRRORDUALa} hold for the pairs (\ref{8.6.13.2b}) 
and (\ref{mduala2}).

\begin{figure}[ht]
\epsfxsize300pt
\centerline{\epsfbox{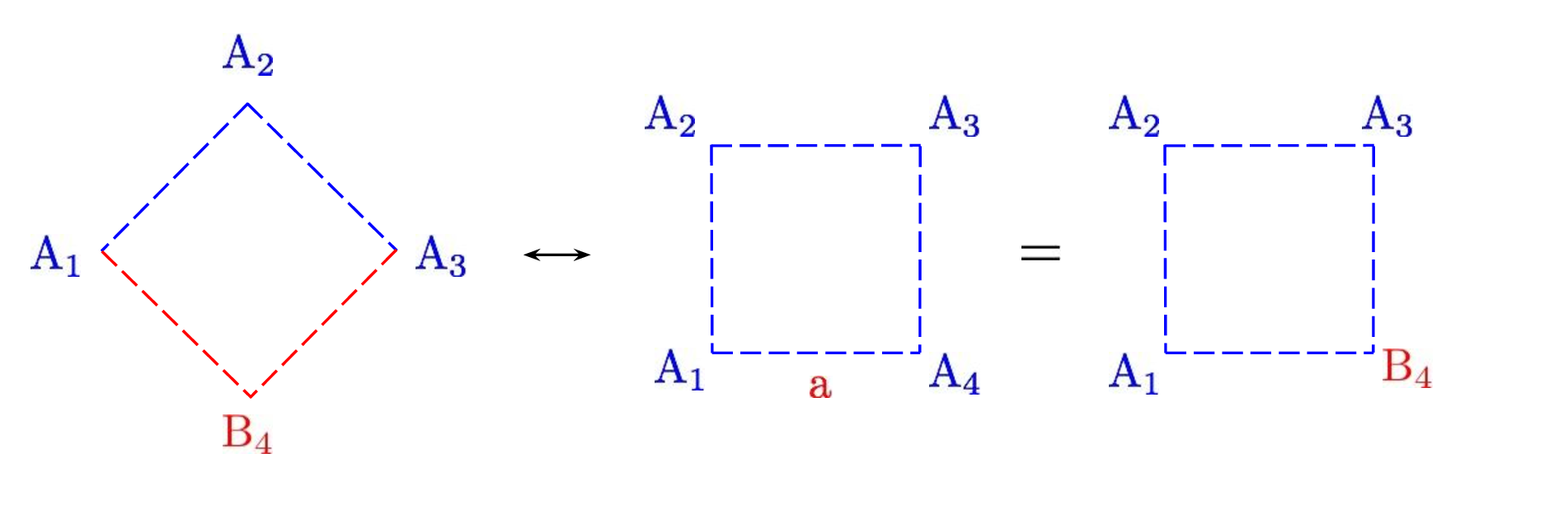}}
\caption{Dual spaces ${\rm Conf}^\times( {\cal A}^3, {\cal B})$ (left) and 
${\rm Conf}^\times( {\cal A}_L^3, {\cal B}_L) = {\cal A}_L^2$ (right). }
\label{tpm21.75}
\end{figure}

\subsubsection{Landau-Ginzburg mirror of a maximal unipotent group  ${\rm U}$ and it generalisations} \la{s1.4.1ab} 
We view  Lusztig's dual canonical basis in ${\cal O}(\U^L)$  as a 
 canonical pairing, and hence as a mirror duality:
\be \la{mduala3a}
{\bf I}: {\rm U}^+_\chi(\Z^t) \times {\rm U}^L\lra 
{\Bbb A}^1, ~~~~({\rm U}^*, \chi) ~~\mbox{is mirror dual to}~~ {\rm U}^L. 
\ee
To define ${\rm U}^*$, we realise a maximal unipotent subgroup ${\rm U}$ as a 
big Bruhat cell in the flag variety, and intersect it with the opposite big Bruhat cell. 
The $\chi$ is a non-degenerate additive character of  $\U$, restricted to ${\rm U}^*$. 
This example is explained and generalised  using configurations as follows. 

\vskip 3mm
Let ${\rm Conf}_{w_0}({\cal B}, {\cal A}^n,  {\cal B})$ be the space parametrising 
configurations $(\B_{1}, \A_{2}, \ldots , \A_{n+1}, \B_{n+2})$ such that the pairs 
{$(\B_{1}, \B_{n+2})$ and $(\A_{n+1}, \B_{n+2})$}  are generic, see the right picture on 
Fig \ref{tpm20.5}. There is an isomorphism 
\be \la{mduala1a23}
 \U_L \times{\cal A}_{L}^{n-1}= {\rm Conf}_{w_0}({\cal B}_{L}, {\cal A}^n_{L},  {\cal B}_{L}), ~~~~
\{\B_1, \A_2, ..., \A_n\} \lms (\B_1, \A_2, ..., \A_n, \A^+, \B^-).
\ee
The group 
 ${\rm H}^{n}_{L}$ acts on 
${\rm Conf}_{w_0}( {\cal B}_{L}, {\cal A}^n_{L}, {\cal B}_{L})$ by rescaling  decorated flags. 

The subspace ${\rm Conf}^\times({\cal B}, {\cal A}^n,  {\cal B})$ parametrises configurations 
where each consecutive pair of flags is generic. It is depicted on the left of Fig \ref{tpm20.5}. 
It is the quotient of ${\rm Conf}_{n+2}^\times({\cal A})$ by the action of ${\rm H}\times {\rm H}$ 
on the first and last decorated flags. 
Thus there is a map $\pi$, defined 
similarly to (\ref{project}):
\be \la{projectabb}
\pi = (\lambda, \mu): {\rm Conf}^\times( {\cal B}, {\cal A}^{n}, {\cal B}) \to {\rm H}^{n-1}\times {\rm H}.
\ee
So the projection / action data in this case is 
$$
{\Bbb H} = {\rm H}^{n}, ~~~~{\Bbb H}_L = \{e\}, ~~~~
{\Bbb T} = \{e\}, ~~~~{\Bbb T}_L = {\rm H}_L^{n}, ~~~~
$$
For example, ${\rm Conf}^\times({\cal B}, {\cal A}, {\cal B}) = {\rm U}^*$,  in agreement with 
${\rm U}^*$  in (\ref{mduala3a}). 

\bcon \la{9.22.13.1}
The set ${\rm Conf}^+({\cal B}, {\cal A}^{n},  {\cal B})(\Z^t)$ parametrises a canonical basis in 
${\cal O}(\U_L \times {\cal A}_{L}^{n-1})$. The subset  
$(\lambda^t, \mu^t)^{-1}(\lambda_1, ..., \lambda_{n-1}; \nu)$ 
parametrises a canonical basis in the 
weight $\nu$ subspace of 
$$
 {\rm U}({\cal N}^L) \otimes V_{\lambda_1}\otimes ... \otimes V_{\lambda_{n-1}}.
$$ 
  
The  analogs of 
Conjectures \ref{MIRRORDUAL} and \ref{MIRRORDUALa} hold for the following mirror dual pairs:
$$
({\rm Conf}^\times({\cal B}, {\cal A}^{n}, {\cal B}), \Omega)~~ \mbox{is mirror dual to}~~ 
({\rm Conf}^\times({\cal B}_{L}, {\cal A}^n_{L}, {\cal B}_{L}), \Omega_L),
$$ 
$$
({\rm Conf}^\times({\cal B}, {\cal A}^{n}, {\cal B}), {\cal W}, \Omega, \pi)~~ \mbox{is mirror dual to}~~ 
({\rm Conf}_{w_0}({\cal B}_{L}, {\cal A}^n_{L}, {\cal B}_{L}), r_L)
$$ 

\econ

These mirror pairs can be obtained from the basic mirror  pairs  
(\ref{8.6.13.2}) and (\ref{8.6.13.2a}) by trading, using the action / projection 
principle $\bullet$,  the quotient by  ${\rm H}^2_L$ 
to the fiber over $(a,b) \in {\rm H}^2$ on the dual side, see Fig \ref{tpm20.5}. 

\begin{figure}
\epsfxsize300pt
\centerline{\epsfbox{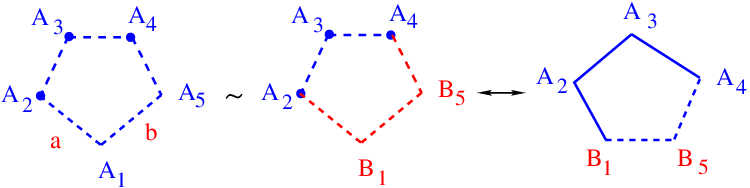}}
\caption{Duality ${\rm Conf}^\times({\cal B}^2, {\cal A}^3) \leftrightarrow {\rm Conf}_{w_0}({\cal B}_{L}^2, {\cal A}_{L}^3) = {\rm U}_{L} \times {\cal A}_{L}^2$. In the middle: 
the ${\rm H}$-components of the map $\lambda$ sit at 
the dashed blue sides. 
The map $\mu$ is assigned to $\A_2\B_1\B_5\A_4$. }
\label{tpm20.5}
\end{figure}

\subsubsection{Landau-Ginzburg mirror of a  simple split group ${\rm G}$}

In this Section we interprete a split simple group $\G$ as a configuration space, 
and using this deduce its Landau-Ginzburg mirror from Conjecture \ref{MIRRORDUALa} 
by using our standard toolbox. The  companion conjecture tells that 
the mirror of the maximal double Bruhat cell for $\G$ is the maximal double Bruhat cell for  $\G^L$. 

\vskip 3mm

Denote by ${\rm Conf}^\times({\cal B}, {\cal A}, {\cal B}, {\cal A})$ 
the space parametrising  configurations $(\B_1, \A_2, \B_3, \A_4)$ where 
all four consecutive pairs are generic. 
There is a potential  
given by the sum of the potentials at the 
$A$-vertices:
$$
{\cal W}_{2,4}(\B_1, \A_2, \B_3, \A_4):= \chi_{\A_2}(\B_1, \A_2, \B_3) + \chi_{\A_4}(\B_3, \A_4, \B_1). 
$$
The space with potential is illustrated on the left of Fig \ref{untitle}. 
Let us describe its mirror. 
\epsfxsize250pt
\begin{figure}[ht]
\centerline{\epsfbox{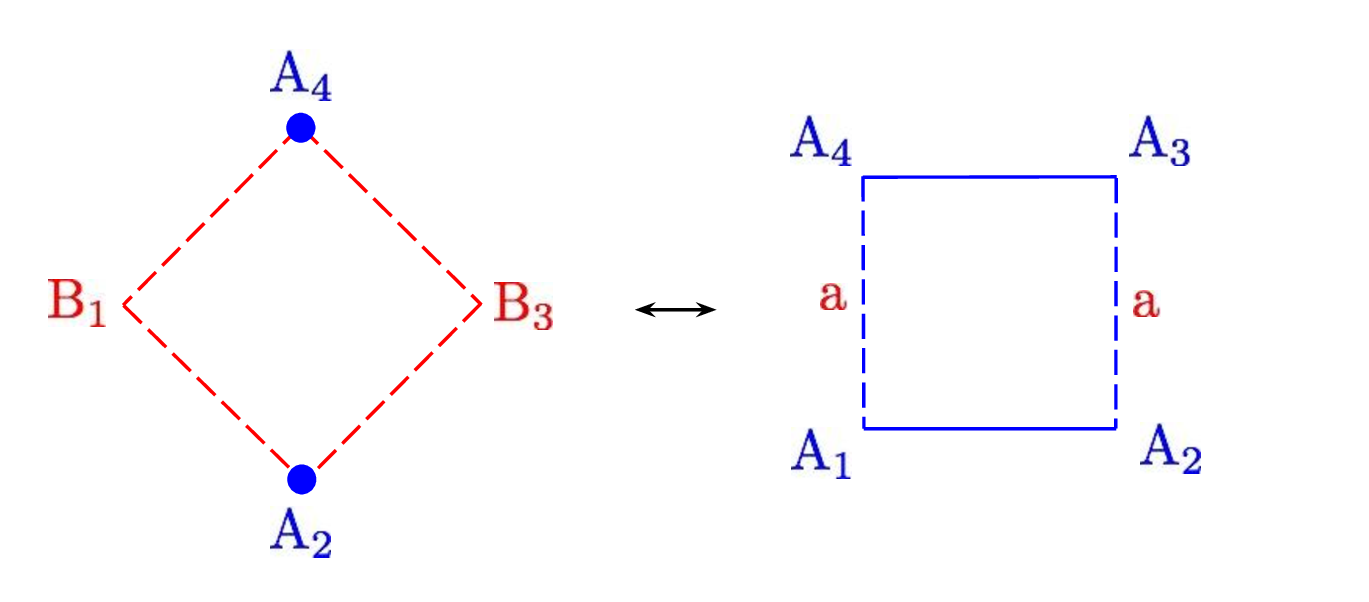}}
\caption{The Landau-Ginzburg model (left) dual to ${\G^L}$ (right).}
\label{untitle}
\end{figure}

Recall the isomorphism
$
\alpha: {\rm Conf}^\times({\cal A}, {\cal A}) \lra {\rm H}. 
$ 
Consider the moduli space of configurations
\be \la{G}
(\A_1, \A_2, \A_3, \A_4) \in {\rm Conf}_4({\cal A}_L)~|~
\mbox{$(\A_1, \A_2)$, $(\A_3, \A_4)$ are generic}; ~\alpha(\A_1, \A_2) = \alpha(\A_3, \A_4) =e.
\ee 
The picture on the right of Fig \ref{untitle} illusatrates this moduli space. 
\bl
The moduli space (\ref{G}) is isomorphic to the group $\G^L$. 
\el

\begin{proof} Pick a generic pair $\{A_1, A_2\}$ 
with $\alpha(A_1, A_2)=e$. Then 
for each $\G^L$-orbit in (\ref{G}) there is a unique representative $\{\A_1, \A_2, \A_3, \A_4\}$ 
where $\{\A_1, \A_2\}$ is the chosen pair.  
There is a unique $g \in \G^L$ such that 
$
g\{\A_1, \A_2\} = \{\A_3, \A_4\}.
$ 
The map $(\A_1, \A_2, \A_3, \A_4) \to g$ provides the isomorphism.  
\end{proof}

\bcon \la{ConG}
The mirror  to a split semisimple algebraic group $\G^L$ over $\Q$ is the pair
\be \la{G4s}
({\rm Conf}^\times({\cal B}, {\cal A}, {\cal B}, {\cal A}), {\cal W}_{2,4}).
\ee
\econ

\paragraph{Example.} Let $\G^L=PGL_2$, so $\G=SL_2$. Then ${\cal A}= {\Bbb A}^2 -\{0\}$, ${\cal B}= 
{\Bbb P}^1$, and 
\be \la{10.16.14.1}
{\rm Conf}^\times({\cal B}, {\cal A}, {\cal B}, {\cal A}) = \{(L_1, v_2, L_3, v_4)\}/SL_2. 
\ee
Here $L_1, L_3$ are one dimensional subspaces 
in a two dimensional vector space $V_2$, and $v_2, v_4$ are non-zero vectors 
in $V_2$. 
The pairs $(L_1, v_2)$, $(v_2, L_3)$, $(L_3, v_4)$, $(v_4, L_1)$ are generic, i.e. the corresponding pairs of
 lines are distinct. Pick non-zero vectors $l_1\in L_1$ and $l_3\in L_3$. 
Then 
$$
{\cal W}_{2,4} = \frac{\Delta(l_1, l_3)}{\Delta(l_1, v_2)\Delta(v_2, l_3)} + 
\frac{\Delta(l_1, l_3)}{\Delta(l_3, v_4)\Delta(l_1, v_4)}. 
$$ 
It is a regular function on (\ref{10.16.14.1}), independent of the choice of vectors $l_1, l_3$. 
To calculate it,  set
\be
l_1 = (1,0), ~~~~ v_2 = (x,1/p), ~~~~ l_3 = (1,y/p), ~~~~ v_4 = (0,1).  
\la{10.19.14h}
\ee
Then 
$$
{\rm Conf}^\times({\cal B}_L, {\cal A}_L, {\cal B}_L, {\cal A}_L) = 
\{(x, y, p) \in {\Bbb A}^1 \times {\Bbb A}^1 \times {\Bbb G}_m - (xy-1= 0)\}. 
$$
\be
\la{10.19.14.1h}
{\cal W}_{2,4} = 
\frac{y/p}{1/p\cdot (xy/p-1/p)} + 
\frac{y/p}{1\cdot 1} = \frac{yp}{xy-1} +\frac{y}{p}. 
\ee

The case $\G=PGL_2$,  $\G^L=SL_2$ is similar, except that now  
${\cal A}_{PGL_2} = {\rm A}^2-\{0\}/\pm 1$.

\vskip 3mm
Let us explain how  this conjecture can be deduced from our general conjecture. 

\epsfxsize250pt
\begin{figure}[ht]
\centerline{\epsfbox{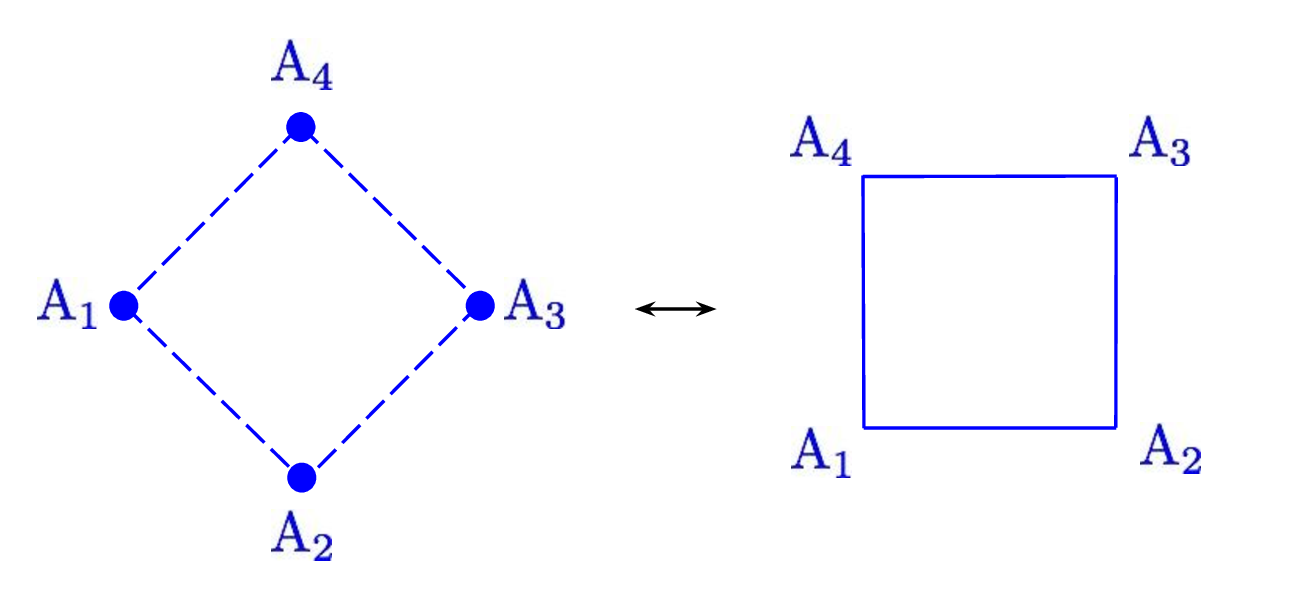}}
\caption{Duality between configurations of decorated flags for $\G$ and $\G^L$.}
\label{dual2}
\end{figure}

\paragraph{Step 1.} Conjecture \ref{MIRRORDUALa} 
tells us mirror duality, illustrated on Fig \ref{dual2}:
$$
({\rm Conf}^\times_4({\cal A}), {\cal W}_{1,2,3,4})\leftrightarrow {\rm Conf}_4({\cal A}_L).
$$

\paragraph{Step 2.} We alter the pair $({\rm Conf}^\times_4({\cal A}), {\cal W}_{1,2,3,4})$ by 
removing the potentials at the vertices $\A_1$ and $\A_3$. This reduces the potential 
${\cal W}_{1,2,3,4}$ to a new potential: 
$$
{\cal W}_{2,4}(\A_1, \A_2, \A_3, \A_4): = \chi_{\A_2}(\B_1, \A_2, \B_3) + 
\chi_{\A_4}(\B_3, \A_4, \B_1). 
$$

In the  dual picture this amounts to removing
 two divisors from  ${\rm Conf}_4({\cal A}_L)$, illustrated by two
 punctured edges on the right of Fig 
\ref{dual3}, dual to the vertices $\A_1$ and $\A_3$ on the left.  Precisely, we introduce a subspace 
$\widetilde {\rm Conf}_4({\cal A}_L)$ such that  
the pairs of decorated flags at punctured sides are generic. 
The obtained dual pair is illustrated on Fig \ref{dual3}. 
\epsfxsize250pt
\begin{figure}[ht]
\centerline{\epsfbox{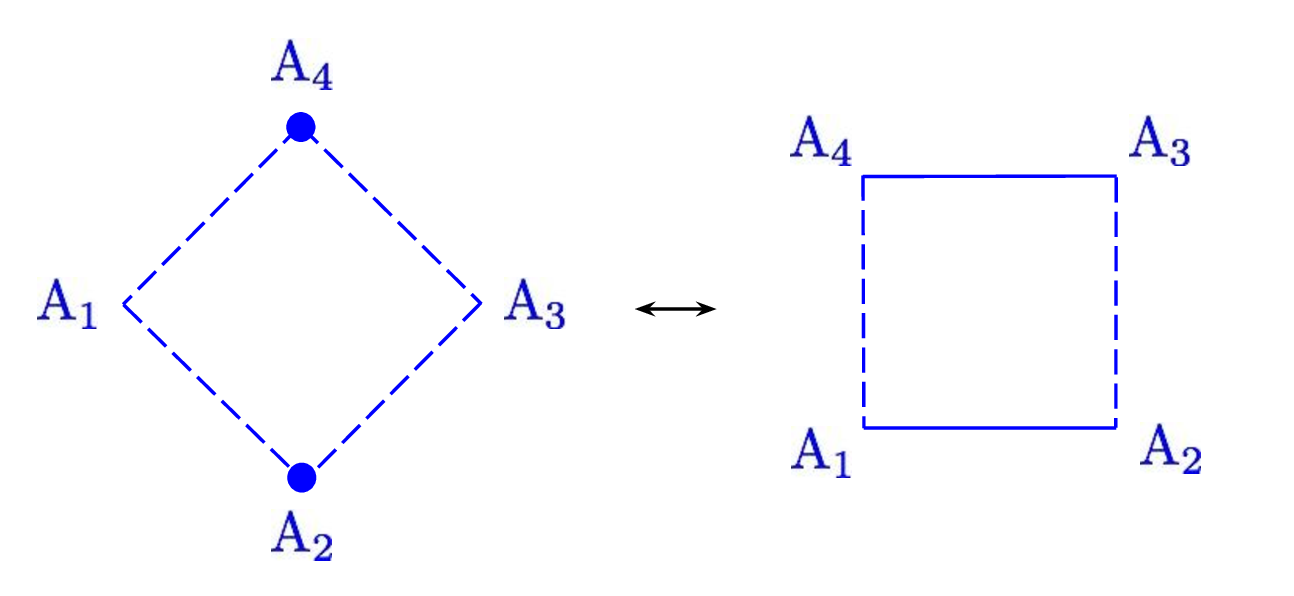}}
\caption{Dual pair of spaces obtained on Step 2.}
\label{dual3}
\end{figure}
In particular there is a projection provided by the two punctured sides:
\be \la{G2}
\widetilde {\rm Conf}_4({\cal A}_L) \lra {\rm H}_L^2. 
\ee

\paragraph{Step 3.} 
There is an action of the  group ${\rm H}\times {\rm H}$ on 
${\rm Conf}^\times_4({\cal A})$ preserves the potential ${\cal W}_{2,4}$, given by  
$(\A_1, \A_2, \A_3, \A_4) \lra (h_1 \cdot \A_1, \A_2, h_1 \cdot \A_3, \A_4).$ 
The quotient is the space (\ref{G4s}): 
$$
({\rm Conf}^\times_4({\cal A}), {\cal W}_{2,4})/ ({\rm H}\times {\rm H}) = 
({\rm Conf}^\times({\cal B}, {\cal A}, {\cal B}, {\cal A}), {\cal W}_{2,4}).
$$

\paragraph{Step 4.} The action of the group ${\rm H}\times {\rm H}$ is dual to the projection 
 (\ref{G2}). 
 The quotient by the ${\rm H}\times {\rm H}$-action   is dual to the fiber over 
$e \in {\rm H}_L\times {\rm H}_L$.  The fiber is just the space (\ref{G}). 
On the level of pictures, this is how we go from Fig \ref{dual3} to Fig \ref{untitle}. 
This way we arrived at Conjecture 
\ref{ConG}.

\paragraph{Canonical basis motivation.} Let us explain how the positive integral tropical points of the 
space from Conjecture \ref{ConG} parametrise a canonical basis in ${\cal O}(\G^L)$. 
One has 
$
{\cal O}(\G^L) = \bigoplus_{\lambda\in {\rm P}^+}V_\lambda \otimes V_{\lambda}^*. 
$ 
\epsfxsize200pt
\begin{figure}[ht]
\centerline{\epsfbox{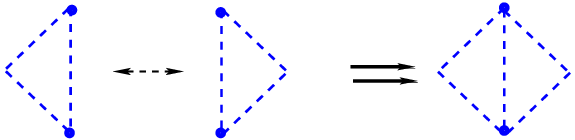}}
\caption{The (tropicalised) 
 Landau-Ginzburg model dual to $\G^L$ is obtained by gluing the two LG models dual to ${\cal A}_L$ 
along their ``vertical
sides'', as shown on the left.}
\label{G2}
\end{figure}

Recall that $
{\cal O}({\cal A}_L) = \bigoplus_{\lambda\in {\rm P}^+}V_\lambda$. 
The decomposition of ${\cal O}({\cal A}_L)$ into irreducible $\G^L$-modules 
is provided by the ${\rm H}_L$-action   on ${\cal A}_L$. According to our general picture,  
$$
{\cal A}_L = {\rm Conf}_{w_0}({\cal B}_L, {\cal A}_L, {\cal A}_L) ~~
\mbox{is mirror dual to} ~~({\rm Conf}^\times({\cal B}, {\cal A}, {\cal A}), {\cal W}_{2,3}).
$$
The canonical basis in $V_\lambda$ is parametrised 
by the fiber of the projection 
$
{\rm Conf}^+({\cal B}, {\cal A}, {\cal A})(\Z^t) \lra P^+
$ over the $\lambda \in P^+$. 
This projection is the tropicalisation of the positive rational map 
$
{\rm Conf}({\cal B}, {\cal A}, {\cal A}) \lra {\rm Conf}({\cal A}, {\cal A}).
$ 
Therefore the tensor product of canonical basis in $V_\lambda \otimes V_\lambda^*$ 
is parametrised by the fiber over $\lambda$ of the tropicalisation of the positive rational map 
$
{\rm Conf}({\cal B}, {\cal A}, {\cal B}, {\cal A}) \lra {\rm Conf}({\cal A}, {\cal A}). 
$

\bl
The space ${\rm Conf}^\times({\cal B},{\cal A}, {\cal B},{\cal A})$ is isomorphic to the open double Bruhat cell of $\G$.
\el

\begin{proof}
Note that  ${\rm Conf}^\times({\cal B},{\cal A}, {\cal B},{\cal A})$ is isomorphic to the moduli space parametrizing the configurations
$(\A_1,\A_2, \A_3, \A_4)\in {\rm Conf}_4({\cal A})$ such that 
$\alpha(\A_1, \A_2)=\alpha(\A_4, \A_3)=e$ and each consecutive pair $(\A_i, \A_{i+1})$ is generic. 
There is a unique element $g\in \G$ such that
$
\{g\cdot\A_1, g\cdot\A_2\}= \{\A_4, \A_3\}. 
$ 
Let  $\pi(\A_1)= \B$ and $\pi(\A_2)=\B^-$. Then we have
$$
\{\A_1, \A_4\}= \{\A_1, g\cdot \A_1\} \mbox{ is generic} ~~~ \Longleftrightarrow ~~~g\in \B w_0\B, 
$$
$$
\{\A_2, \A_3\}= \{\A_2, g\cdot \A_2\} \mbox{ is generic} ~~~\Longleftrightarrow ~~~g\in \B^- w_0\B^-. 
$$
So the space is isomorphic to the open double Bruhat cell $\B w_0\B\bigcap \B^- w_0\B^-$.
\end{proof}

\bcon The open double Bruhat cell of $\G$ is mirror to the open double Bruhat cell of $\G^L$. 
\begin{figure}[ht]
\epsfxsize300pt
\centerline{\epsfbox{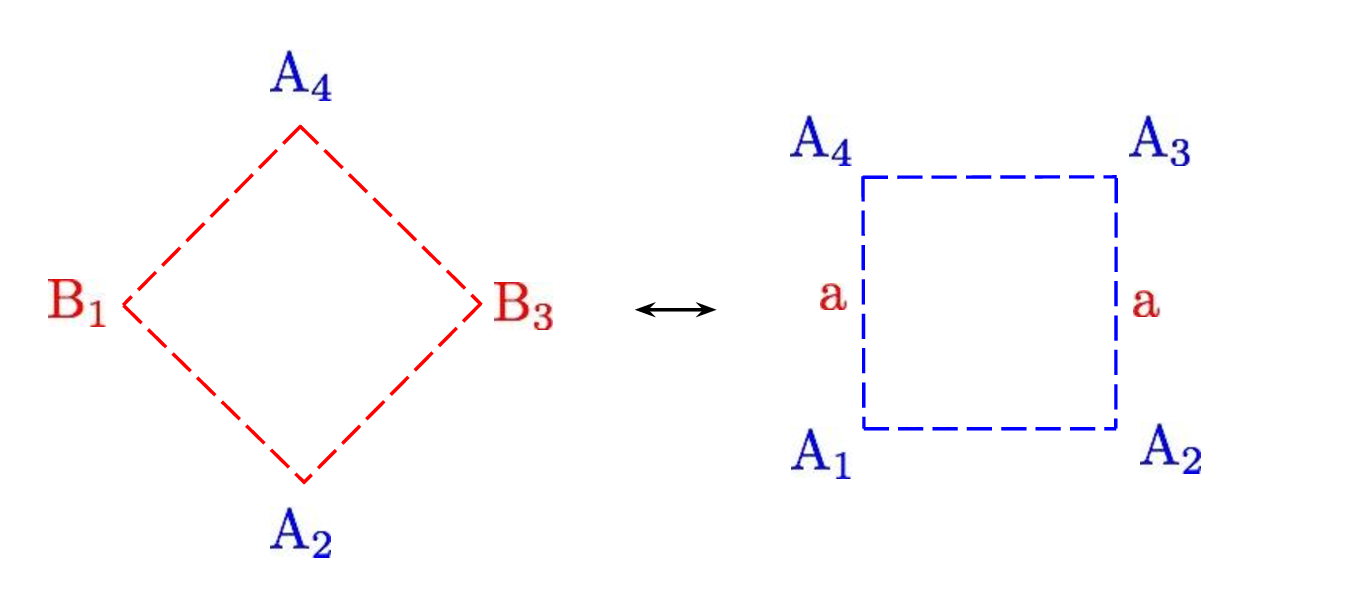}}
\label{bruhat}
\end{figure}
\econ
Below we investigate the case when $\G={\rm SL}_2$.

\paragraph{An example: the open double Bruhat cell of ${\rm SL}_2$.} It consists of elements
$$
   \begin{bmatrix} 
      x & p \\
      q & y \\
   \end{bmatrix} \in {\rm SL}_2,  \quad \quad xy-1=pq, ~~p.q \in {\Bbb G}_m.
$$
Let 
$$
D=\{xy-1=0\}\subset {\Bbb A}^2, ~~~~X={\Bbb A}^2\backslash D.
$$
The open Bruhat cell of ${\rm SL}_2$ is isomorphic to $X\times {\Bbb G}_m$. 
It is a cluster ${\cal A}$-variety with 
two seeds $p \longleftarrow x \longrightarrow q$ and $p \longleftarrow y \longrightarrow  q$. 
The  $\{p,q\}$ are the frozen variables.  The seeds are related by 
the cluster transformation
$$
x=\frac{1+pq}{y}.
$$
Note that the coordinates here are compatible with \eqref{10.19.14h}. In particular, the potential function \eqref{10.19.14.1h} becomes
$$
{\cal W}=\frac{y}{q}+\frac{y}{p}
$$

\paragraph{An example: the open double Bruhat cell of ${\rm PGL}_2$.} It consists of elements
$$
  \begin{bmatrix} 
      px & 1 \\
      p & y \\
   \end{bmatrix} \in {\rm GL}_2,  \quad \quad xy-1\neq 0,~~ p\in {\Bbb G}_m.
$$
It is again isomorphic to $X\times {\Bbb G}_m$. 
So we expect that 
$ 
{X\times {\Bbb G}_m} \mbox{ is mirror to itself.}
$ 

This open double Bruhat cell admits a cluster ${\cal X}$-structure.
We set
$$
x_1=px, \quad\quad x_2=xy-1, \quad \quad x_3=x.
$$
It gives rise to a cluster ${\cal X}$-structure
$
x_1\longleftarrow x_2 \longrightarrow x_3.
$
Mutation at $x_2$ delivers 
$
x_1'\longleftarrow x_2' \longrightarrow x_3'.
$ 
Here
$$
x_1'=x_1(1+x_2)^{-1}=p/y,
$$
$$
x_2'=x_2^{-1}=\frac{1}{xy-1},
$$
$$
x_3'=x_3(1+x_2)^{-1}=1/y.
$$

The example $X={\Bbb A}^2\backslash D$ is considered by Auroux in Section 5 of \cite{Au1}. 
See also Section 2 of \cite{P}

Finally, let us remind that in general, 
when $\G$ is simply connected, then the open double Bruhat cell is a cluster ${\cal A}$-variety.
The open double Bruhat cell of $\G^L$ is a cluster ${\cal X}$-variety.

\subsubsection{Landau-Ginzburg mirror of $G^n$} 
Since $G^n$ is a semi-simple group, the previous discussion applies. 
However our general approach leads to a slightly different mirror dual, 
which has an additional symmetry:  the group $\Z/(n+1)\Z$ acts naturally 
on each of the spaces. 
It starts  from the 
dual pair
$$
{\rm Conf}^\times_{2n+2}({\cal A}) ~~\mbox{mirror dual to}~~ {\rm Conf}^\times_{2n+2}({\cal A}_L). 
$$ 

Let
${\rm Conf}^\times_{2n+2}({\cal A}, {\cal B}, \ldots , {\cal A}, {\cal B})$  be the space parametrising  
configurations 
$(\A_1, \B_2, \A_3, \B_4, \ldots , \A_{2n+1}, \B_{2n+2})$  
such that any consequtive pair is generic. There is a potential 
$$
{\cal W}_{1,3, ..., 2n+1}(\A_1, \B_2, \ldots , \A_{2n+1}, \B_{2n+2}):= 
\sum_{i=1, 3, ..., 2n+1}\chi_{\A_i}(\B_{i-1}, \A_i, \B_{i+1}).
$$

The dual space parametrises configurations 
\be \la{Gcon2}
(\A_1, \ldots , \A_{2n+2}) \in {\rm Conf}_{2n+2}({\cal A}_L)~~\mbox{such that 
$(\A_{2k+1}, \A_{2k+2})$ are generic, and  $\alpha(\A_{2k+1}, \A_{2k+2})=1$}. 
\ee
The group $\Z/(n+1)\Z$ acts by automorphisms of this pair of spaces. 
The dual pair is illustrated on Fig \ref{ababab}.

\bl
The space (\ref{Gcon2}) is isomorphic to $(G^L)^n$.
\el

\begin{proof} For any given collection $\{\A_1, \ldots , \A_{2n+2}\}$ representing a point in the moduli space 
(\ref{Gcon2}) there is unique $g_k\in G^L$ such that $\{\A_{2k+1}, \A_{2k+2}\} = g_k \{\A_{1}, \A_{2}\}$. 
So picking a representative with the first pair $\{\A_{1}, \A_{2}\}$ provided by 
a pinning in $\G^L$, we get an isomorphism with $(G^L)^n$. 

\end{proof}

 \bcon \la{ConG}
The mirror  to $(\G^L)^n$  is the pair
\be \la{G4s}
({\rm Conf}^\times_{2n+2}({\cal A}, {\cal B}, \ldots , {\cal A}, {\cal B}), {\cal W}_{1,3, ..., 2n+1}).
\ee
\econ

As in the $n=1$ case, Conjecture \ref{ConG} can be deduced from Conjecture \ref{MIRRORDUALa} telling that 
\be \la{G4s1}
({\rm Conf}^\times_{2n+2}({\cal A}), {\cal W}_{1,2,..., 2n+2}) ~~\mbox{mirror dual to}~~ 
{\rm Conf}_{2n+2}({\cal A}_L). 
\ee
Indeed, starting with duality (\ref{G4s1}), we turn off the potentials at the even vertices. 
Then the group $H^{n+1}$ acts by automorphisms of the space with the new potential. 
The quotient is the pair (\ref{G4s}). 

On the other hand, turning off the potentials at the even vertices 
amounts on the dual side to 
removing the divisors from ${\rm Conf}_{2n+2}({\cal A}_L)$ assigned to the sides of the 
$(2n+2)$-gon dual to those vertices. The obtained space 
is fibered over ${H_L}^{n+1}$. The fiber over $e$ is the space (\ref{Gcon2}). 
\epsfxsize300pt
\begin{figure}[ht]
\centerline{\epsfbox{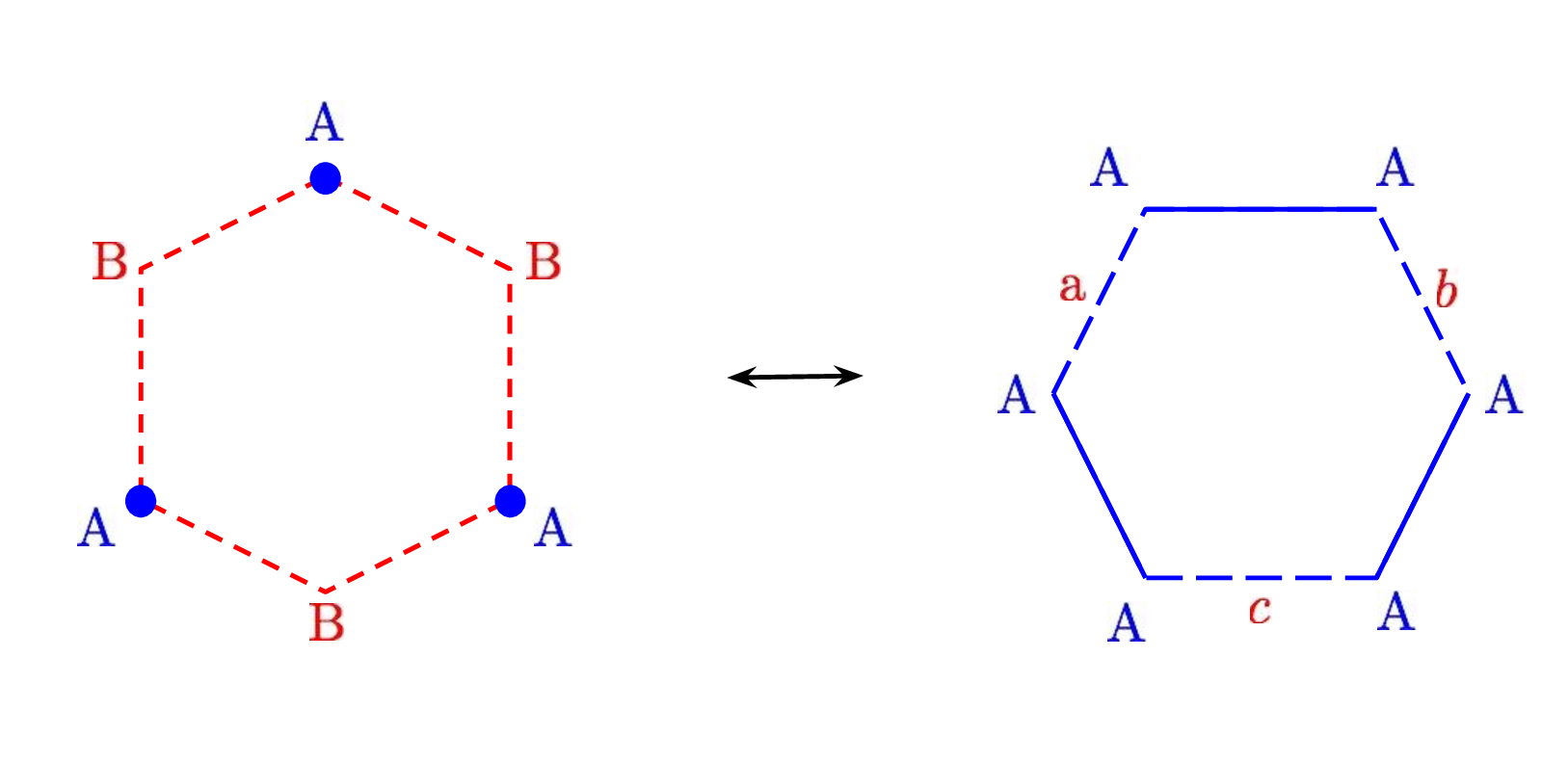}}
\caption{Getting the mirror dual for $G^n$ from configurations.}
\label{ababab}
\end{figure}

\subsection{Representation theory and examples of homological mirror symmetry for stacks} \la{sec4.2stack}

As soon as our space ${\cal M}$ is fibered over a split torus ${\Bbb H}$, 
the mirror dual space ${\cal M}_L$ acquires an action of the dual torus ${\Bbb T}_L$. 
Thus we 
want to find the mirror of the  stack ${\cal M}_L/{\Bbb T}_L$. 
Let us discuss two examples corresponding to the examples in 
Section \ref{s1.4.1a} and \ref{s1.4.1ab}. 

Let us look first at the dual pair (\ref{mduala2}). The subgroup $1\times {\rm H}_L^n$ acts freely  on the last $n$ 
decorated flags in ${\rm Conf}_{w_0}({\cal B}_L, {\cal A}^{n+1}_L)$, 
and the quotient  is ${\cal B}^{n}_L$. So one has 
\be \la{8.11.13.110}
{\rm Conf}_{w_0}({\cal B}_L, {\cal A}^{n+1}_L)/({\rm H}_L\times {\rm H}^n_L)= {\rm H}_L\backslash {\cal B}^{n}_L.
\ee
We start with the problem reflecting the A-model to this stack.

  \paragraph{1. Equivariant quantum cohomology of 
products of flag varieties.} 
There is a  way to understand 
 mirror symmetry  as 
an isomorphism of two modules over the algebra of $\hbar$-differential operators ${\cal D}_\hbar$:  
one provided by the quantum cohomology connection, 
and the other by the integral for the mirror dual Landau-Ginzburg model: 
$$
\mbox{The quantum cohomology ${\cal D}_\hbar$-module of a projective (Fano) variety ${\cal M}$}  = 
$$
$$
\mbox{The ${\cal D}_\hbar$-module for the Landau-Ginzburg mirror $(\pi: {\cal M}^\vee\to {\Bbb H}, {\cal W}, \Omega)$, 
defined by 
$\int e^{-{\cal W}/\hbar}\Omega$}. 
$$
Here the space ${\cal M}^\vee$ 
 is fibered over a torus ${\Bbb H}$, the 
$\Omega$ is a volume form on ${\cal M}^\vee$, and ${\cal W}$ is a function on ${\cal M}^\vee$,
 called the Landau-Ginzburg potential. 
The form $\Omega$ and the canonical volume form  on the torus ${\Bbb  H}$ define a volume form 
$\Omega^{(a)}$ on the fiber of the map $\pi$ over an $a\in {\Bbb  H}$. 
The integrals $\int e^{-{\cal W}/\hbar}\Omega^{(a)}$ over  cycles in the fibers are 
solutions of the ${\cal D}_\hbar$-module $\pi_*( e^{-{\cal W}/\hbar}\Omega)$ on ${\Bbb  H}$.

This approach to mirror symmetry was originated by Givental \cite{Gi}, see also Witten \cite{W} and   
\cite{EHX}, and developed further in \cite{HV} and many other works.  
See \cite{Au1}, \cite{Au2} for a discussion of examples of mirrors for   the
complements to anticanonical divisors on Fano varieties.

In our situation ${\cal M}$ is a positive space and ${\cal W}$ is a positive function, 
so there is an integral
\be \la{keyfunct}
{\cal F}_{{\cal M}}(a; \hbar):= \int_{\gamma^+(a)} e^{-{\cal W}/\hbar}\Omega^{(a)}, ~~~~\gamma^+(a):= \pi^{-1}(a) \cap {\cal M}(\R_{>0}).
\ee
If {it} converges, it defines a function on ${\Bbb H}(\R_{>0})$. 
This function as well as its partial Mellin transforms is a very important object to study. 
It plays a key role in the story. 
Below we elaborate some examples related to representation theory.   


Let $\psi_s$ be the character of ${\rm H}(\R_{>0})$ corresponding to an element 
$s \in {\rm H}_{L}(\R_{>0})$. Recall the projection
 $\mu: {\rm Conf}^\times({\cal A}^{n+1}, {\cal B})\to {\rm H}$ from (\ref{project}). 
Consider the integral
 \be \la{integralfoq}
 {\cal F}_{{\rm Conf}^\times
({\cal A}^{n+1}, {\cal B})}(a, s; \hbar):= \int_{\gamma^+(a)}\mu^*(\psi_s) e^{-{\cal W}/\hbar}\Omega^{(a)}, ~~~~
(a, s) \in ({\rm H}^{n}\times {\rm H}_L)(\R_{>0}).
\ee
It is the Mellin transform of the function (\ref{keyfunct}) along the torus 
$1 \times {\rm H}\subset {\rm H}^{n+1}$.  If $n=1$, one can identify integral (\ref{integralfoq}) with an 
integral presentation for the Whittaker-Bessel function of the 
principal series representation of $\G(\R)$ corresponding to the character $\psi_s$. 
The latter solves the quantum Toda lattice integrable system \cite{Ko}.

Therefore  it provides, generalising 
Givental's work  \cite{Gi2} for $\G=GL_m$ in non-equivariant setting,  
the integral presentation 
of the special solution of 
equivariant quantum cohomology ${\cal D}_\hbar$-module for the flag variety ${\cal B}_L$ 
studied in \cite{GiK}, \cite{Gi3}, \cite{GKLO}, \cite{GLO1}-\cite{GLO3}, \cite{L}, \cite{R1}, \cite{R2}.

Recall the special cluster coordinate system on  ${\rm Conf}_{3}({\cal A})$ for 
$\G={\rm GL}_m$ from \cite{FG1}. 
It has a slight modification providing a rational coordinate 
system on ${\rm Conf}_{w_0}({\cal B}, {\cal A}, {\cal A})$,
 see Section \ref{KT}. 
\bt \la{4.23.13.999}
i) Let $\G=GL_m$. Then the potential ${\cal W}$, expressed in the special cluster coordinate system on 
${\rm Conf}_{w_0}({\cal B}, {\cal A}, {\cal A})$, is precisely Givental's potential from \cite{Gi2}. 

The value of the integral ${\cal F}_{{\rm Conf}^\times({\cal B}, {\cal A}, {\cal A})}(a; s, \hbar)$ at $s=e$ coincides with Givental's 
integral for a solution of the quantum 
cohomology ${\cal D}_\hbar$-module ${\rm QH}^*({\cal B}_L)$ \cite{Gi2}.

ii) For any group $\G$, the integral ${\cal F}_{{\rm Conf}^\times({\cal B}, {\cal A}, {\cal A})}(a; s)$ is a solution 
of  the ${\cal D}_\hbar$-module ${\rm QH}^*_{\rm H_L}({\cal B}_L)$. 
\et

\begin{proof} i) It is proved in Section \ref{sec4.2Gi}. 

ii) Since integral (\ref{integralfoq}) provides an integral presentation for the Whittaker function, 
it is equivalent to the results of \cite{GLO1}, \cite{R1}. Observe that the parameter 
$a \in {\rm H}(\C)$ is interpreted as the parameter on $H^2({\cal B}_L, \C^*)$, 
which is the base of the small quantum cohomology connection, while the parameter 
$s\in {\rm H}_L(\R_{>0})$ is the parameter of the ${\rm H_L}$-equivariant cohomology. 
\end{proof}

For arbitrary $n$, integral (\ref{integralfoq}) determines 
the equivariant quantum cohomology ${\cal D}_\hbar$-module of ${\cal B}_L^n$. The 
latter lives on ${\rm H}^{n}\times {\rm H}_L$, it is a ${\cal D}_\hbar$-module on 
${\rm H}^{n}$, but only ${\cal O}$-module along ${\rm H}_L$. Integral (\ref{integralfoq}) 
is a solution of this ${\cal D}_\hbar$-module.

\paragraph{2. Mirror of equivariant B-model on ${\cal B}_L^n$.} 
The integral (\ref{integralfoq}) admits an analytic continuation 
in $s$ provided by the analytic continuation of the character $\psi_s$ in the integrand. 
The complex integrand lives on an analytic space defined as follows. 
Let $\widetilde {\rm H}(\C)$ is the universal cover of ${\rm H}(\C)$. 
 Denote by $({\cal B}\times \ldots\times  {\cal B})_n^{\ast, a}$ the fiber of the map $\lambda$ in 
(\ref{project}) over an $a \in {\rm H}^n$. It is a Zariski open subset of ${\cal B}^n$. 
Consider the fibered product
$$
\begin{array}{ccc}
\widetilde {({\cal B}\times \ldots\times  {\cal B})_n^{\ast, a}}(\C)&\stackrel{\widetilde 
{\rm exp}}{\lra} &({\cal B}\times \ldots\times  {\cal B})_n^{\ast, a}(\C)\\
\widetilde\mu\downarrow  &&\mu\downarrow\\
\widetilde {\rm H}(\C)& \stackrel{{\rm exp}}{\lra}&{\rm H}(\C)
\end{array}
$$

Let $\widetilde {\cal W}$ and $\widetilde \Omega$ be the lifts of ${\cal W}$ and 
$\Omega$ by the map $\widetilde {\rm exp}$. We get a locally constant family of categories 
${\cal F}{\cal S}_{\rm wr}(\widetilde {({\cal B}\times \ldots\times  {\cal B})_n^{\ast, a}}(\C),
 \widetilde {\cal W}, \widetilde \Omega)$ over ${\rm H}^n(\C)$. So the fundamental 
group $\pi_1({\rm H}^n(\C))$ acts on the category for any given $a$. The group $\pi_1({\rm H}(\C))$ 
also acts on it by the deck transformations induced from the universal cover 
$\widetilde {\rm H}(\C)\lra{\rm H}(\C)$. 

On the other hand, the Picard group of the stack 
${\rm H}_L\backslash {\cal B}_L^{n}$,  
$$
{\rm Pic}({\rm H}_L\backslash {\cal B}_L^{n}) = 
X^*({\rm H}_L) \times {\rm Pic}({\cal B}_L^n) = 
X^*({\rm H}_L)\times X^*({\rm H}^n_L)
$$ acts by autoequivalences of the category $D^b{\rm Coh}_{{\rm H}_L}({\cal B}_L^{n})$.

\bcon \la{4.27.13.1aas} There is an  equivalence of $\A_\infty$-categories
\be \la{4.27.13.1as}
{\cal F}{\cal S}_{\rm wr}(
\widetilde {({\cal B}\times \ldots\times  {\cal B})_n^{\ast, a}}(\C),
 \widetilde {\cal W}, \widetilde \Omega)~~ \sim ~~
D^b{\rm Coh}_{{\rm H}_L}({\cal B}_L^{n}).
\ee
It intertwines the deck transformation action of $\pi_1({\rm H}(\C))$ $\times$ 
the monodromy action of $\pi_1({\rm H}^n(\C))$ 
on the Fukaya-Seidel category with the action of 
$X^*({\rm H}_L) \times {\rm Pic}({\cal B}_L^n)$ 
on the category $D^b{\rm Coh}_{{\rm H}_L}({\cal B}_L^{n})$. 

The integral (\ref{integralfoq}) over Lagrangian submanifolds supporting objects of the Fukaya-Seidel category 
is a central charge for a stability condition on the category.  
\econ

Kontsevich argued \cite{K13} that there is a smaller class of stability conditions, which he called 
``physical stability conditions''. 
Stability conditions above should be from that class.  

\vskip 3mm
{\bf Examples}. 1. Let $n=1$. Then 
${\cal B}_1^{\ast, a}$ is the intersection ${\cal B}^{\ast}$ of two big Bruhat cells in the flag variety ${\cal B}$. It
 parametrising flags in generic position to two 
generic flags, say $(B^+, B^-)$.  

2. Let $\G=SL_2$, $n=1$. Then 
${\cal B}_1^{\ast, a} = \C^*$ 
with the  coordinate $u$,  
$\widetilde {\cal B}_1^{\ast, a} = \C$ with the  coordinate $t$,  $u=e^t$, 
and $\widetilde {\cal W} = a^{-1}(e^{t} + e^{-t})$ where $a\in \C^*$ is a parameter. Next, 
${\cal B}_L= \C{\Bbb P}^1$, with the natural $\C^*$-action preserving $0, \infty$. 
Conjecture \ref{4.27.13.1aas} predicts an equivalence
$$
{\cal F}{\cal S}_{\rm wr}(\C; a^{-1}(e^{t}+ e^{-t}), dt)~~\stackrel{}{\sim}~~ 
D^b{\rm Coh}_{{\C^*}}(\C{\Bbb P}^1), ~~~~ a\in \C^*.
$$
The equivalence is a trivial exercise for the experts. 
It can be checked by using the Kontsevich combinatorial model \cite{K09}, \cite{A09}, \cite{STZ}, \cite{DK}  
for the Fukaya-Seidel category as a category of locally constant sheaves on the Lagrangian skeleton  
for a surface with potential in the case 
of $(\C, e^{t}+ e^{-t})$, shown on Fig \ref{gcb2}. 

Varying the parameter $a\in \C^*$ 
in the potential we get a locally constant family of the Fukaya-Seidel categories. 
Its monodromy is an autoequivalence 
corresponding to the action of a generator of the 
group ${\rm Pic}({\Bbb P}^1)$. 
The translation $t \lms t+2\pi i$ is another autoequivalence 
corresponding to the action of a generator of the character group 
$X^*(\C^*) =\Z$ on $D^b{\rm Coh}_{{\C^*}}(\C{\Bbb P}^1)$. 
\begin{figure}[ht]
\epsfxsize120pt
\centerline{\epsfbox{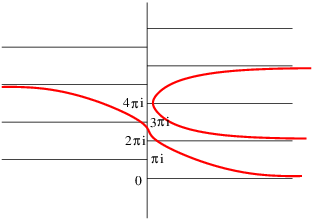}}
\caption{
Horisontal rays are the rays of fast decay of the potential. Together with the vertical line, they form 
the Lagrangian skeleton of the Kontsevich model of ${\cal F}{\cal S}_{\rm wr}(\C; a^{-1}(e^{t}+ e^{-t}), dt)$. }
\label{gcb2}
\end{figure}

Let us consider now the oscillatory integral   
$$
\int_{L} {\rm exp}(\frac{1}{\hbar}(-a^{-1}(e^{t} + e^{-t})-st))dt = 
\int_{{\rm exp}(L)} e^{-a^{-1}(u+u^{-1})/\hbar}u^{s/\hbar}\frac{du}{u}.
$$
Here $L$ is a path which goes to infinity along the line of fast decay of the integrand. 
This is an integral for the Bessel function. 
It defines a family of stability conditions on the Fukaya-Seidel category depending on $s\in \C$ -- it is
the value of the central charge  on the $K_0$-class of the object supported on  $L$. 
The parameter 
$s$ reflects the equivariant parameter for the $\C^*$-action.

\paragraph{3. Mirror of equivariant B-model on ${\cal B}_L^n\times {\rm U}_L$.} 

There is an integral very similar to  (\ref{integralfoq}): 
\be \la{integralfo}
 {\cal F}_{{\rm Conf}^\times( {\cal B}, {\cal A}^{n}, {\cal B})}(a, s):= 
\int_{\gamma^+(a)}\mu^*(\psi_s) e^{-{\cal W}/\hbar}\Omega^{(a)}, ~~~~
(a, s) \in ({\rm H}^{n-1}\times {\rm H}_L)(\R_{>0}).
\ee
Denote {by} $\lambda_{(\ref{projectabb})}$ the map $\lambda$ onto ${\rm H}_L^{n-1}$ from (\ref{projectabb}). 
The integrand has an analytic continuation in $s$ which lives on the fibered product 
$$
\begin{array}{ccc}
\widetilde {\lambda^{-1}_{(\ref{projectabb})}}(a)(\C)&\stackrel{\widetilde 
{\rm exp}}{\lra} &\lambda^{-1}_{(\ref{projectabb})}(a)(\C)\\
\widetilde\mu\downarrow  &&\mu\downarrow\\
\widetilde {\rm H}(\C)& \stackrel{{\rm exp}}{\lra}&{\rm H}(\C)
\end{array}
$$
There is a conjecture similar to Conjecture \ref{4.27.13.1aas} describing 
the category $D^b{\rm Coh}_{{\rm H}_L}({\cal B}_L^{n-1}\times {\rm U}_L)$. For example, 
when $n=1$ it reads as follows. 

\bcon \la{9.10.13.1} There is an  equivalence of $\A_\infty$-categories
\be \la{9.10.13.2}
{\cal F}{\cal S}_{\rm wr}(
\widetilde {{\rm U}^{\ast}}(\C),
 \widetilde {\cal W}, \widetilde \Omega)~~ \sim ~~
D^b{\rm Coh}_{{\rm H}_L}({\rm U}_L).
\ee
It intertwines the deck transformation action of $\pi_1({\rm H}(\C))$ 
on the Fukaya-Seidel category with the action of 
$X^*({\rm H}_L)$ 
on the category $D^b{\rm Coh}_{{\rm H}_L}({\rm U}_L)$. 
\econ
{\bf Example}. If $\G=SL_2$ and $n=1$, then ${\rm Conf}_{w_0}( {\cal B}, {\cal A}, {\cal B})=\C$ with the $\C^*$-action. 
On the dual side, ${\rm Conf}^\times( {\cal B}, {\cal A}, {\cal B})=\C^*$, 
$\pi=\mu$ is the identity map, ${\cal W}=u$,  $\Omega = du/u$. The universal cover of $\C^*$ is $\C$ with the coordinate 
$t$ such that $u=e^t$. 
The integral is 
$$
{\cal F}(s) = \int^\infty_{0}e^{-u}u^s du/u = \Gamma(s). 
$$
The equivalence of categories predicted by Conjecture \ref{9.10.13.1} is 
\be \la{9.10.13.3}
{\cal F}{\cal S}_{\rm wr}(
\C, e^t, dt)~~ \sim ~~
D^b{\rm Coh}_{\C^*}(\C).
\ee
It can be checked by using the Kontsevich combinatorial model for the Fukaya-Seidel category. 

\subsection{Concluding remarks} 

\paragraph{1. Mirror dual of the moduli spaces of ${\rm G}^L$-local systems on $S$.} The true analog of 
the moduli space of $G^L$-local systems  
for a decorated surface $S$ is the moduli space ${\rm Loc}_{{\rm G^L}, S}$. 
We view the function ${\cal W}$ on the space ${\cal A}_{{\rm G},   S}$
as the Landau-Ginzburg potential on ${\cal A}_{{\rm G},   S}$, and suggest 

\bcon
\be \la{12.1.14.1}
\mbox{\rm $({\cal A}^\times_{{\rm G},   S}, {\cal W}_{}, \Omega_{}, \pi)$ is mirror dual to 
 $({\rm Loc}_{{\rm G^L},   S}, \Omega_{L}, r_L)$}. 
\ee
\econ

It would be interesting to compare this mirror duality conjecture 
with the mirror duality  conjectures 
of Kapustin-Witten \cite{KW} and Gukov-Witten \cite{GW1}, which do not involve a potential, 
and refer to families of moduli spaces, which are somewhat different then the moduli spaces we consider.  

\vskip 3mm
Notice also that if  each boundary component of $S$ has at least one special point, 
then  ${\rm Loc}_{{\rm G^L},   S} = {\cal A}_{{\rm G^L},   S}$, and so in this case we have a more 
 symmetric picture: 
\be \la{12.1.14.2}
\mbox{\rm $({\cal A}^\times_{{\rm G},   S}, {\cal W}_{}, \Omega_{}, \pi)$ is mirror dual to 
 $({\cal A}_{{\rm G^L},   S}, \Omega_{{L}}, r_L)$}. 
\ee
\be \la{12.1.14.2a}
\mbox{\rm $({\cal A}^\times_{{\rm G},   S}, \Omega_{}, \pi)$ is mirror dual to 
 $({\cal A}^\times_{{\rm G^L},   S}, \Omega_{{L}}, r_L)$}. 
\ee


\paragraph{2. Oscillatory integrals.} 
The analog of integral (\ref{keyfunct}) in the surface case  is an integral 
\be \la{stapiS}
{\cal F}_{\G, S}(a):= \int_{\gamma^+(a)/\Gamma_S} e^{-{\cal W}/\hbar}\Omega^{(a)}.
\ee
Since the integrand  is $\Gamma_S$-invariant, 
the integration cycles are defined by intersecting
the fibers with ${\cal A}_{{\rm G},   S}(\R_{>0})/\Gamma_S$. 
Notice that ${\cal A}_{{\rm G},   S}(\R_{>0})$ is the decorated Higher 
Teichmuller space \cite{FG1}. 
If $\G=SL_2$, the integral converges. For other groups convergence is a problem.

Notice also that the three convergent oscillatory integrals 
$$
{\cal F}_{{\rm Conf}^\times_n({\cal A}, {\cal B}, {\cal B})}(s),~~~ 
{\cal F}_{{\rm Conf}^\times({\cal A}, {\cal A}, {\cal B})}(a;s), ~~~
{\cal F}_{{\rm Conf}^\times_3({\cal A})}(a_1, a_2, a_3), ~~~~a_i \in {\rm H}(\R_{>0}),~ 
s\in  {\rm H}_L(\R_{>0})
$$  are continuous analogs of the 
Kostant partition function, weight multiplicities and dimensions of triple tensor product invariants 
for the Langlands dual group $\G^L(\R)$.

\paragraph{3. Relating our dualites to cluster Duality Conjectures \cite{FG2}.} 
The latter study dual pairs $({\cal A}, {\cal X}^\vee)$, where  ${\cal A}$ is 
 a cluster ${\cal A}$-variety,  and ${\cal X}^\vee$ is the Langlands 
 dual cluster ${\cal X}$-variety:
$$
{\cal A} ~~\mbox{is dual to}~~  {{\cal X}^\vee}.
$$
There is a discrete group $\Gamma$ 
acting by automorphisms of each of the  spaces ${\cal A}$ and ${\cal X}^\vee$, 
called the {\it cluster modular group}. So it acts on the sets of tropical points 
${\cal A}(\Z^t)$ and ${\cal X}^\vee(\Z^t)$. 
Cluster Duality Conjectures predict canonical $\Gamma$-equivariant pairings
\be \la{duaIIcon}
{\bf I}_{\cal A}: {\cal A}(\Z^t) \times {\cal X}^\vee \lra  {\Bbb A}^1, ~~~~
{\bf I}_{\cal X}: {\cal A} \times {\cal X}^\vee(\Z^t) \lra {\Bbb A}^1. 
\ee
As the work \cite{GHK13} shows, in general the functions assigned to the tropical points may 
exist only as formal universally Laurent power series rather then universally Laurent polynomials.

There are cluster volume forms 
 ${\Omega}_{\cal A}$ and ${\Omega}_{\cal X}$ on the ${\cal A}$ and ${\cal X}$ spaces 
\cite{FG5}, see    Section \ref{seccluster}. 

{We suggest that, in a { rather general situation}, there is a natural $\Gamma$-invariant 
positive potential ${\cal W}_{\cal A}$ on the space ${\cal A}$, a similar potential ${\cal W}_{\cal X}$ 
on the space ${\cal X}$, and a certain ``alterations'' $\widehat {{\cal X}^\vee}$ and $\widehat {{\cal A}^\vee}$ 
of the spaces ${\cal X}^\vee$ and ${\cal A}^\vee$ providing mirror dualities underlying 
canonical pairings (\ref{duaIIcon}):
\be \la{duality321}
({\cal A}, {\cal W}_{\cal A}, {\Omega}_{\cal A}, \pi_{\cal A}) ~~\mbox{is mirror dual to}~~ 
(\widehat {{\cal X}^\vee}, {\Omega}_{{\cal X}^\vee}, r_{{\cal X}^\vee}).\ee
\be
({\cal X}, {\cal W}_{{\cal X}}, {\Omega}_{{\cal X}}, \pi_{{\cal X}}) ~~\mbox{is mirror dual to}~~ (\widehat {{\cal A}^\vee}, {\Omega}_{{\cal A}^\vee}, r_{{\cal A}^\vee}).
\ee
Canonical pairings (\ref{duaIIcon}) should induce canonical 
pairings related to the potentials and alterations:  
$$
{\bf I}_{({\cal A}, {\cal W}_{\cal A})}: {\cal A}_{{\cal W}_{\cal A}}^+(\Z^t) \times \widehat {{\cal X}^\vee} \lra {\Bbb A}^1, ~~~~
 {\bf I}_{({\cal X}, {\cal W}_{\cal X})}: {\cal X}_{{\cal W}_{\cal X}}^+(\Z^t) 
\times \widehat {{\cal A}^\vee} \lra {\Bbb A}^1. 
$$
\vskip 2mm

This should provide a cluster generalisation of our  examples. For instance, 
there is a split torus ${\Bbb H}_{{\cal A}}$ associated to a cluster variety ${\cal A}$, 
coming with a canonical basis of characters, 
given by the  {\it frozen ${\cal A}$-coordinates}. 
They describe the  projection $\pi_{\cal A}: 
{\cal A} \to {\Bbb H}_{{\cal A}}$, see   Section \ref{seccluster}. 


An alteration $\widehat {\cal A}^\vee$, given by a partial compactification of the space ${\cal A}^\vee$,   
and a conjectural definition of the potential ${\cal W}_{\cal X}$ are given in   Section \ref{potfrv}. 

\paragraph{4. Conclusion.} {\it A parametrisation of a canonical basis, casted as 
a canonical pairing ${\bf I}$, should  be understood as a manifestation of a mirror duality 
between a space with a Landau-Ginzburg potential and a similar space for the Langlands dual group}. 
\vskip2mm
Our main evidence is that canonical pairing (\ref{canpar1}) 
describing a parametrisation of canonical basis in tensor products of 
$n$ irreducible $\G^L$-modules is  related  via an integral presentation to 
the ${\cal D}_\hbar$-module describing the 
 equivariant quantum cohomology of $({\cal B}_L)^n$. 

\vskip 3mm
There is a remarkable  mirror conjecture of Gross-Hacking-Keel \cite{GHK11}, who 
start with a  maximally degenerate log Calabi-Yau $Y$ and conjecture 
that the Gromov-Witten theory of $Y$ gives rise to a commutative ring $R(Y)$, with a basis. 
Its spectrum is an affine variety which is conjectured to be the mirror of $Y$.

Notice that in our conjectures we give an {\it a priori} description of the mirror dual pair of spaces, while 
in \cite{GHK11} the mirror space is encrypted in the conjecture. For example, mirror 
conjecture (\ref{8.6.13.2}) is expected to be an example of the Gross-Hacking-Keel conjecture, but 
we do not know how to deduce, starting from the pair $({\rm Conf}^\times_n({\cal A}), \Omega)$, 
the former from the latter, and in particular why the Langlands dual group appears in the description of the mirror.  

We want to stress that in our mirror conjectures we usually 
 deal with mirror dual pairs where at least one 
is a Landau-Ginzburg model, i.e. is represented by a space with a potential. 
In particular  canonical bases in representation theory  and their generalisations 
related to moduli spaces of $\G$-local systems on decorated surfaces $S$  
always require the dual space to be 
a space with a non-trivial potential, unless $S$ is a closed surface without boundary.   

Finally, in applications to representation theory we 
are forced to deal with stacks rather then varieties, as discussed in 
Section \ref{sec4.2stack}. This is a less explored 
chapter of the homological mirror symmetry. See also 
a recent paper of C. Teleman \cite{Te14} in this direction.

\vskip 2mm
The space  ${\cal M}({\cal K})$ of ${\cal K}$-points of a space ${\cal M}$ 
is a  cousin   of the loop space $\Omega{\cal M}(\C)$. Heuristically, 
the quantum cohomology ${\cal D}_\hbar$-module is best seen in the (ill defined) 
$S^1$-equivariant Floer cohomology of the loop space $\Omega{\cal M}(\C)$ \cite{Gi}, which are sort of 
``semi-infinite cohomology'' of the loop space. 
It would be interesting to relate this to the infinite dimensional cycles 
${\cal C}^\circ_l \subset {\cal M}^\circ({\cal K})$.  
\vskip 2mm
It would be very intersecting to relate our approach to 
the construction of canonical bases via cycles ${\cal M}^\circ_l$ to the work in progress 
of Gross-Hacking-Keel-Kontsevich 
on construction of canonical bases on cluster varieties via scattering diagrams. 


\paragraph{Organization of the paper.} In Section \ref{sec2} we present 
main definitions and results relevant to representation theory. We start from a detailed discussion 
of the geometry of the tensor product invariants in Sections \ref{sec2.1}-\ref{sec2.2}. 
We discuss more general examples  in Sections \ref{sec2.3}. 
In Section \ref{tensor} we construct a canonical basis in tensor products of finite dimensional 
${\rm G}^L$-modules, and its parametrization. 
In Sections \ref{sec2} we give all definitions and 
complete descriptions of the results, 
but include a proof only if it is 
very simple. The only exception is a proof of Theorem \ref{mmvvth} in Section \ref{tensor}. 
 The rest of the proofs occupy the next Sections. 
In Section \ref{sec11} we discuss the general case related to a decorated surface. 
In the  Section \ref{seccluster}
 we discuss the volume form and the potential in the cluster set-up. 

\paragraph{Acknowledgments.} 
This work was 
supported by the  NSF grants DMS-1059129 and DMS-1301776.
A.G. is grateful to IHES and  
Max Planck Institute fur Mathematic (Bonn) for the support. 
We are grateful to  Mohammed Abouzaid, Joseph Bernstein, Alexander Braverman, 
Vladimir Fock, Alexander Givental, David Kazhdan, Joel Kamnitzer,  Sean Keel, 
Ivan Mirkovic, and Sergey Oblezin for many useful discussions. We are especially grateful 
to Maxim Kontsevich for fruitful conversations on mirror symmetry during the Summer of 2013 in IHES. 
We are very grateful to the referee for many fruitful comments, 
remarks and suggestions. 
\section{Main definitions and  results: the disc case}
\label{sec2}

\subsection{Configurations of decorated flags, the  potential ${\cal W}$, and tensor product invariants}
\label{sec2.1}

\subsubsection{Positive spaces and their tropical points} 
\la{sec2.1.2}

Below we recall  briefly the main definitions, following \cite[Section 1]{FG2}.
\paragraph{Positive spaces.} 

A positive rational function on a split algebraic torus ${\rm T}$ is a nonzero rational function on ${\rm T}$
which in a coordinate system, given by a set of characters of ${\rm T}$, can be presented as a ratio of 
two polynomials with positive integral coefficients.  

A {\it positive rational morphism} $\varphi: {\rm T}_1\ra {\rm T}_2$ of two split tori is a morphism such 
that for each character $\chi$ of ${\rm T}_2$ the function $\chi\circ \varphi$ is a positive rational function.

\vskip 2mm
A {\it positive atlas} on an irreducible space (i.e. variety / stack)  ${\cal Y}$ over $\Q$ is given by 
a non-empty collection $\{{\bf c}\}$ of birational isomorphisms over $\Q$  
$$
\alpha_{\bf c}: {\rm T} \lra {\cal Y}, 
$$
where ${\rm T}$ is a split algebraic torus, satisfying the following conditions: 
\begin{itemize}

\item
For any pair ${\bf c}, {\bf c'}$ the map 
$\varphi_{\bf c, \bf c'} := \alpha_{\bf c}^{-1}\circ \alpha_{\bf c'}$ 
is a positive birational isomorphism of ${\rm T}$. 

\item Each map $\alpha_{\bf c}$ is regular on a complement to a divisor given by 
positive rational function. 
\end{itemize}

A {\it positive space} is a space with a positive atlas. 
A split algebraic torus ${\rm T}$ is the simplest example of a positive space. It has 
a single positive coordinate system, given by the torus itself. 

A {\it positive rational function} $F$ on ${\cal Y}$ is a rational function  
given by a subtraction free rational function  in one, and hence in all  
coordinate systems of the positive atlas on ${\cal Y}$. 

A {\it positive rational map} ${\cal Y} \to {\cal Z}$ is a rational map given by positive 
rational functions in one, and hence in all positive coordinate systems. 

\paragraph{Tropical points.} The tropical semifield $\Z^t$ is the set $\Z$ equipped 
 with tropical addition and multiplication given by
$$
a+_t b=\min\{a,b\}, \quad a\cdot_t b=a+b, \quad a, b\in \Z.
$$  
This definition can be motivated as follows. 
Consider the semifield $\R_{+}((t))$ of Laurent series $f(t)$ with {\it positive} leading coefficients: there is no 
``$-$" operation in $\R_{+}((t))$. 
Then the valuation map $f(t) \mapsto {\rm val}(f)$ is a homomorphism of semifields 
${\rm val}: \R_{+}((t)) \to \Z^t$. 

Denote by $X_*({\rm T})= {\rm Hom}({\Bbb G}_m, {\rm T})$ and $X^*({\rm T})= {\rm Hom}({\rm T}, {\Bbb G}_m)$ the lattices of cocharacters and characters of a split algebraic torus ${\rm T}$. 
There is a pairing $\langle\ast, \ast\rangle: X^*({\rm T}) \times X_*({\rm T}) \to \Z$. 

The set of $\Z^t$-points of a split torus ${\rm T}$ 
is defined to be  its lattice of cocharacters: 
$$
{\rm T}(\Z^t):=  X_*({\rm T}).
$$ 
A positive rational function $F$ on ${\rm T}$ gives rise to  its tropicalization $F^t$, 
which is a $\Z$-valued  function on the set 
${\rm T}(\Z^t)$. Its definition is clear from the following example:
$$
F = \frac{x_1x_2^2 + 3 x_2x_3^5}{x_2x_4}, ~~~ F^t= {\rm min}\{x_1+2x_2, x_2+5x_3\} - {\rm min}\{x_2+x_4\}.
$$
Similarly, a positive morphism $\varphi:{\rm T}\ra {\rm S}$ of two split tori gives rise to a piecewise linear morphism $\varphi^t: {\rm T}(\Z^t)\ra {\rm S}(\Z^t)$. 

\vskip 2mm
There is a unique way to assign to a positive space ${\cal Y}$ a set ${\cal Y}(\Z^t)$ 
of its $\Z^t$-points such that 

\begin{itemize}
\item Each of the coordinate systems ${\bf c}$ provides a canonical isomorphism
$$
\alpha^t_{\bf c}: {\rm T}(\Z^t) \stackrel{\sim}{\lra}  
 {\cal Y}(\Z^t).
$$
\item These isomorphisms are related by piecewise-linear 
isomorphisms $\varphi^t_{\bf c, \bf c'}$:
$$
\alpha^t_{\bf c'}(l)  = \alpha^t_{\bf c}\circ   \varphi^t_{\bf c, \bf c'}(l).
$$
\end{itemize}

\vskip 2mm
We raise the above process to the category of positive spaces. It gives us a functor called {\it tropicalization} from the category of positive spaces to the category of sets of tropical points. For each positive morphism $f:{\cal Y}\ra {\cal Z}$, denote by $f^t: {\cal Y}(\Z^t)\ra {\cal Z}(\Z^t)$ its corresponding tropicalized morphism.

Pick a basis of cocharacters of ${\rm T}$. Then, 
assigning to each positive coordinate system ${\bf c}$
a set of integers $(l^{\bf c}_1, ..., l^{\bf c}_n)\in \Z^n$ related by piecewise-linear 
isomorphisms $\varphi^t_{\bf c, \bf c'}$, we get 
an element 
$$
l = \alpha^t_{\bf c}(l^{\bf c}_1, ..., l^{\bf c}_n)\in {\cal Y}(\Z^t).
$$

For a variety ${\cal Y}$ with a positive atlas, the set ${\cal Y}(\Z^t)$ can be interpreted as the set of 
{\it transcendental cells} of the infinite dimensional variety ${\cal Y}\big(\C((t))\big)$, as we will 
explain in Section \ref{sec2.2.1}.

\paragraph{The set of positive tropical points.} 
Let $({\cal Y}, {\cal W})$ be a pair given by a positive space ${\cal Y}$ equipped with a 
positive rational function ${\cal W}$. 
Let us tropicalize this function, getting a map
$$
{\cal W}^t: {\cal Y}(\Z^t)\lra \Z.
$$
We define the set of {\it positive tropical points}:
$$
{\cal Y}_{\cal W}^{+}(\Z^t):=\{l\in {\cal Y}(\Z^t)~|~{\cal W}^t(l)\geq 0\}.
$$

{\bf Example.} The Cartan group ${\rm H}$  of ${\rm G}$ 
is a split torus and hence has a standard positive structure. The set ${\rm H}(\Z^t)=X_*({\rm H})$ is  
the coweight lattice of   $\G$. 
Let $\{\alpha_i\}$ the set of simple positive roots indexed by $I$. 
We define
\be\la{n=2pot}
{\cal W}: {\rm H}\lra {\Bbb A}^1,~~~h\lms \sum_{i\in I}\alpha_i(h).
\ee
The set of positive tropical points is the positive Weyl chamber in $X_*({\rm H})$:
$$
{\rm H}^+(\Z^t):={\rm H}_{\cal W}^+(\Z^t)=\{\lambda\in X_*({\rm H})~|~\langle \lambda, \alpha_i\rangle\geq 0, ~\forall i\in I\}.
$$

\subsubsection{Basic notations for a split reductive group ${\rm G}$} 
\la{sec2.1.1}

Denote by ${\rm H}$ the Cartan group of ${\rm G}$, and  
by ${\rm H}^L$ the Cartan group of the Langlands dual group ${\rm G}^L$. There is a canonical isomorphism
$X^*({\rm H}^L) = X_*({\rm H}). $ 
Denote by $\Delta^+ \subset X^*({\rm H})$ the set of positive roots for ${\rm G}$, and by 
$\Pi:= \{\alpha_i\}\subset \Delta^+$ the subset of simple positive roots, indexed by a finite set ${I}$. 
We sometimes use ${\rm P}$ instead of $X_*({\rm H})$.
Denote by ${\rm P}^+$ the positive Weyl chamber  in ${\rm P}$. It is also the cone of dominant weights 
for the dual group ${\rm G}^L$. Denote by $V_\lambda$ the irreducible finite dimensional ${\rm G}^L$-modules parametrized by $\lambda\in {\rm P}^+$. 

Let ${\rm U}^{\pm}_{i}~ (i\in I)$ be the simple root subgroup of ${\rm U}^{\pm}$.   
Let $\alpha_i^{\vee}: \mathbb{G}_m \ra {\rm H}$ be the simple coroot 
corresponding to the root $\alpha_i: {\rm H}\ra \mathbb{G}_m$. 
For all $i\in I$, there are isomorphisms 
$x_i: \mathbb{G}_{a}\ra {\rm U}_{i}^{+}$ and $y_{i}: \mathbb{G}_a\ra {\rm U}_i^{-}$ such that the maps
\be \la{pinning}
   \begin{pmatrix} 
      1 & a \\
      0 & 1 \\
   \end{pmatrix}
   \lms x_i(a), \quad
      \begin{pmatrix} 
         1 & 0 \\
         b & 1 \\
      \end{pmatrix}
    \lms y_i(b),   \quad 
       \begin{pmatrix} 
          t & 0 \\
          0 & t^{-1} \\
       \end{pmatrix}
     \lms \alpha_i^{\vee}(t)  
\ee
provide homomorphisms $\phi_i: {\rm SL}_2 \ra {\rm G}.$

Let $s_i ~(i\in I)$ be the simple reflections generating the Weyl group. Set  
$\overline{s}_i:=y_i(1)x_i(-1)y_i(1).$ 
The elements $\overline{s}_i$ satisfy the braid relations. So we can associate to each $w\in W$ its representative $\overline{w}$ in such a way that for any reduced decomposition $w=s_{i_1}\ldots s_{i_k}$ one has $\overline{w}=\overline{s}_{i_1}\ldots \overline{s}_{i_k}$. 

Denote by $w_0$ be the longest element of the Weyl group.
Set $s_{\rm G}:=\overline{w}_0^2$.  It is an order two central element in {\rm G}. 
For ${\rm G=SL}_2$ it is the element $-{\rm Id}$. For an arbitrary  reductive {\rm G} the element $s_{\rm G}$ 
is the image of 
the element $s_{{\rm SL}_2}$ under a principal embedding ${\rm SL}_2 \hra {\rm G}$. 
For example, $s_{{\rm SL}_m} = (-1)^{m-1}{\rm Id}$. See \cite[Section 2.3]{FG1} for proof.

\subsubsection{Lusztig's positive atlas of $\U$ and the character $\chi_\A$}  
\la{sec4.1}
Let $w_0=s_{i_1}\ldots s_{i_m}$ be a reduced decomposition. 
It is encoded by the sequence ${\bf i}=(i_1, i_2,\ldots, i_m)$.
 It provides a regular map  
\be \la{11.20.11.191}
\phi_{\bf i}: ({\Bbb G}_m)^m \lra {\rm U}, ~~~ (a_1, ..., a_m)\lms x_{i_1}(a_1)\ldots x_{i_m}(a_m).
\ee
The map $\phi_{\bf i}$ is an open embedding \cite{L}, and a birational isomorphism. Thus it provides a rational coordinate system on ${\rm U}$. It was shown in {\it loc.cit.} that the collection of these rational coordinate systems form a positive atlas of ${\rm U}$, which we call  
{\it Lusztig's positive atlas}. There is a similar positive atlas on ${\U^{-}}$ 
provided by the maps $y_i$. 

\vskip 2mm

The choice of the maps $x_i$, $y_i$ in (\ref{pinning}) provides the 
standard character: 
\be \la{10.1.chi}
\chi: {\rm U}\lra {\Bbb A}^1,~~~x_{i_1}(a_1)\ldots x_{i_m}(a_m)\lms \sum_{j=1}^m a_j.
\ee
It is evidently a positive function in Lusztig's positive atlas. Moreover it is independent of the the sequence ${\bf i}$ chosen.
Similarly, there is a character $
\chi^{-}: \U^{-} \to {\Bbb A}^1$, $y_{i_1}(b_1)\ldots y_{i_m}(b_m) \mapsto \sum_{j=1}^m b_j$,
which is positive in the positive atlas on ${\U^{-}}$.

Let $\A:=g\cdot \U$ be a decorated flag. Its stabilizer is $\U_{\A}=g\U g^{-1}$. The associated  character is 
$$
\chi_{\A}: \U_{\A}\lra {\Bbb A}^{1}, ~~~u\lms \chi(g^{-1}ug).
$$
For example, for an $h\in {\rm H}$, the character $\chi_{h \cdot \U}$ is given by 
$
x_{i_1}(a_1)\ldots x_{i_m}(a_m) \lms \sum_{j=1}^m a_j/\alpha_{i_j}(h).
$

\subsubsection{The potential ${\cal W}$ on the moduli space ${\rm Conf}_n({\cal A})$.} 
\la{sec2.1.4}

Given a group ${\rm G}$ and  ${\rm G}$-sets $X_1, ..., X_n$,  
orbits of the diagonal ${\rm G}$-action   on $X_1\times ...\times X_n$ are called {\it configurations}. 
Denote
by $\{x_1,\ldots, x_n\}$ a collection of points, and by $(x_1,\ldots, x_n)$ its configuration.

We usually denote a decorated flag by $\A_i$ and the corresponding flag $\pi(\A_i)$ by $\B_i$. Denote the set $\{1,\ldots, n\}$ of consecutive integers by $[1,n]$.

\bd \la{6.3.12.1} 
A pair $\{\B_1, \B_2\}\in {\cal B}\times{\cal B}$ of Borel subgroups is {\em generic} if 
$\B_1 \cap \B_2$ is a Cartan subgroup in ${\rm G}$. A collection $\{\A_1,\ldots, \B_{m+n}\}\in {\cal A}^n\times {\cal B}^m$ is {\em generic} if for any distinct $i,j\in [1, m+n]$,   
the pair  $\{\B_i, \B_j\}$ is generic.
\ed

Set ${\rm Conf}({\cal A}^n, {\cal B}^m):={\G}\backslash ({\cal A}^n\times {\cal B}^m)$. 
Note that if $\{\A_1,\ldots, \B_{m+n}\}$ is generic, then so is $g\cdot \{\A_1, \ldots, \B_{m+n}\}$ for any $g\in \G$. 
Denote by ${\rm Conf}^*({\cal A}^n, {\cal B}^m)$ the subset of generic configurations.

\bd \la{torsorF} 
A frame for a split reductive algebraic group $\G$  over $\Q$ is a generic pair $\{\A,\B\}\in {\cal A}\times {\cal B}$. Denote by ${\cal F}_{{\rm G}}$ the moduli space of frames.
\ed

The space ${\cal F}_{{\rm G}}$ is a left ${\rm G}$-torsor. If ${\rm G} = {\rm SL}_m$, then a
$K$-point of 
${\cal F}_{{\rm G}}$ 
is the same thing as a unimodular frame in a vector space over $K$ of dimension $m$ with a volume form. If ${\rm G}$ is an adjoint group, then a frame is the same thing as a pinning.

\vskip 3mm

Let $\{{\rm A}_1, \ldots, {\rm A}_n\}$ be a generic collection of decorated flags. For each $j\in [1,n]$, take the triple $\{{\rm B}_{j-1}, {\rm A}_j, {\rm B}_{j+1}\}$. Since ${\cal F}_\G$ is a $\G$-torsor, 
there is a unique $u_j\in {\rm U}_{{\rm A}_j}$  such that
\be
\la{7.20.9.8}
\{ {\rm A}_j,{\rm B}_{j+1}\} = u_j \cdot \{ {\rm A}_j, {\rm B}_{j-1}\}.
\ee
Consider the following rational function on ${\cal A}^n$, whose definition is illustrated on 
Fig \ref{cal-1}:
\be \la{potential}
{\cal W}({\rm A}_1,\ldots, {\rm A}_n): = \sum_{j=1}^n \chi_{{\rm A}_j}(u_j).
\ee
\bl \la{9.21.17.00h}
For any $g\in \G$, we have ${\cal W}(g\A_1,\ldots, g\A_n)={\cal W}(\A_1,\ldots, \A_n)$.
\el
\begin{proof} Clearly $\{g\A_j, g\B_{j+1} \}=gu_jg^{-1}\cdot\{g\A_j, g\B_{j-1}\}$. 
The Lemma follows from (\ref{obvious}).
\end{proof}

\begin{figure}[ht]
\epsfxsize130pt
\centerline{\epsfbox{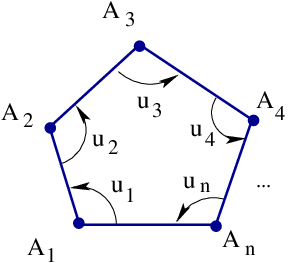}}
\caption{The potential is a sum of the contribution at the vertices.}
\label{cal-1}
\end{figure}

Since  ${\cal W}$ is invariant under the ${\rm G}$-diagonal action on ${\cal A}^n$, we define 
\bd The potential ${\cal W}$ is a rational function on ${\rm Conf}_n({\cal A})$, 
given by (\ref{potential}).
\ed

\bt \la{mth1}
The potential ${\cal W}$ is a positive rational 
function on the space ${\rm Conf}_n({\cal A})$, $n>2$.
\et

Theorem \ref{mth1} is a non-trivial result.  It is based on two facts: the character 
$\chi$ is a positive function on ${\rm U}$, and the positive structure 
on ${\rm Conf}_n({\cal A})$ is twisted cyclic invariant, see Section \ref{sec2.1.6}.
 We prove Theorem \ref{mth1} in {Section \ref{sec6.4}}.
\vskip 3mm

Therefore we arrive at the  set of positive tropical points of ${\rm Conf}_n({\cal A})$:
\be \la{11.15.11.1}
{\rm Conf}^+_n({\cal A})(\Z^t):= \{l \in {\rm Conf}_n({\cal A})(\Z^t) ~|~ 
{\cal W}^t(l) \geq 0\}, \quad n>2. 
\ee

\vskip 3mm
{\bf Example}. Let ${\rm G}={\rm SL}_2$. The space ${\rm Conf}_3({\cal A})$ parametrizes 
configurations $(v_1, v_2, v_3)$ of vectors 
in a two dimensional vector space 
with a volume form $\omega$. Set $\Delta_{i,j}:= \langle v_i \wedge v_j, \omega\rangle$. 
Then 
\be \la{FW}
{\cal W}(v_1, v_2, v_3) = \frac{\Delta_{1,3}}{\Delta_{1,2} ~\Delta_{2,3}}+ 
\frac{\Delta_{1,2}}{\Delta_{2,3}~\Delta_{1,3}}+
\frac{\Delta_{2,3}}{\Delta_{1,3}~\Delta_{1,2}}.
\ee
Therefore tropicalizing the function (\ref{FW}) we get 
$$
{\rm Conf}^+_3({\cal A}_{{\rm SL}_2})(\Z^t) = \{a,b,c\in \Z ~~|~~ a \geq b+c, ~b \geq a+c, ~c \geq a+b\}.
$$
Notice that the inequalities imply $a,b,c\in \Z_{\leq 0}$.

\subsubsection{Parametrization of  a canonical basis in 
tensor products invariants}
\la{sec2.1.5}

By Bruhat decomposition, for each $(\A_1,\A_2)\in {\rm Conf}_2^*({\cal A})$, there is a unique  $h_{\A_1,\A_2}\in {\rm H}$ such that
$$
(\A_1, \A_2)=(\U, h_{\A_1,\A_2}\overline{w}_0\cdot \U).
$$
It provides an isomorphism, which induces a positive structure on ${\rm Conf}_2({\cal A})$:
\be \la{conf2A}
\alpha: {\rm Conf}_2^*({\cal A}) \stackrel{\sim}{\lra} {\rm H}, ~~~~(\A_1,\A_2)\lra h_{\A_1,\A_2}.
\ee
We extend definition (\ref{11.15.11.1}) to  $n=2$  
using the potential \eqref{n=2pot}, 
so that one has an isomorphism
$$
\alpha^t: {\rm Conf}^+_2({\cal A})(\Z^t) \stackrel{\sim}{\lra} {\rm H}^{+}(\Z^t)={\rm P}^+.
$$
See more details in Section \ref{proofmth1}, formula (\ref{8.28.10.28h}), and \cite{FG1}.

\paragraph{The restriction maps $\pi_{ij}$.} 
We picture configurations  $({\rm A}_1, ..., {\rm A}_n)$ at the labelled vertices $[1,n]$ 
of a convex $n$-gon $P_n$. Each pair of distinct $i,j\in [1,n]$   
gives rise to a map
$$
\pi_{ij}: {\rm Conf}_n({\cal A}) \lra {\rm Conf}_2({\cal A}), ~~~ ({\rm A}_1, ..., {\rm A}_n) \lra \left\{ \begin{array}{cl}(\A_i, \A_j)~~~~&\text{if $i<j$},\\
(s_{\G}\cdot \A_i, \A_j) &\text{if $i>j$}.
\end{array} \right.
$$
The maps $\pi_{ij}$ are positive \cite{FG1}, and therefore can be tropicalized:
$$
\begin{array}{ccc}
{\rm Conf}_n({\cal A})(\Z^t) &\stackrel{\pi_{ij}^t}{\lra} &{\rm Conf}_2({\cal A})(\Z^t) = {\rm P}\\
\cup &&\cup\\
{\rm Conf}^+_n({\cal A})(\Z^t) &\stackrel{\pi_{ij}^t}{\lra} &
 {\rm Conf}^+_2({\cal A})(\Z^t) = {\rm P}^+
\end{array}
$$
The fact that $\pi_{ij}^t\big({\rm Conf}_n^+({\cal A})(\Z^t)\big)\subseteq {\rm P}^{+}$ is due to Lemma \ref{9.21.17.56h}. 

In particular, the oriented sides of the polygon $P_n$ give rise to a positive map
\be
\pi=(\pi_{12},\pi_{23},\ldots, \pi_{n,1}):{\rm Conf}_{n}({\cal A})\lra\big({\rm Conf}_2({\cal A})\big)^n\simeq {\rm H}^n.
\ee

\paragraph{A decomposition of ${\rm Conf}_n^{+}({\cal A})(\Z^t)$.}
Given $\underline{\lambda}:= (\lambda_{1}, \ldots , \lambda_n)\in ({\rm P}^{+})^n$, define
\be \la{11.20.11.1}
{\bf C}_{\underline{\lambda}} = \{l\in {\rm Conf}^+_n({\cal A})(\Z^t)~|~ \pi^t(l) = \underline{\lambda}\}. 
\ee
The weights $\underline{\lambda}$ of $\G^L$ are assigned to the oriented sides of  $P_n$, as shown on 
Fig \ref{tmp1}. 
Such sets provide a canonical decomposition
\be \la{9.21.18.17h}
{\rm Conf}_n^+({\cal A})(\Z^t)=\bigsqcup_{\underline{\lambda}\in ({\rm P}^{+})^n} {\bf C}_{\underline{\lambda}}.
\ee

\begin{figure}[ht]
\epsfxsize130pt
\centerline{\epsfbox{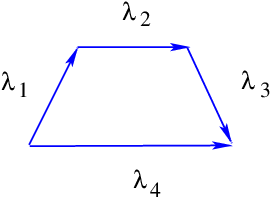}}
\caption{Dominant weights labels of the polygon sides for the set 
${\bf C}_{\lambda_1, \lambda_2, \lambda_{3}, \lambda_4}$.}
\label{tmp1}
\end{figure}
\paragraph{Tensor products invariants.} Here is one of our main results.

\bt \la{11.18.11.1} 
Let $\lambda_{1}, \ldots , \lambda_n\in {\rm P}^+$.  
The set ${\bf C}_{\lambda_1, ..., \lambda_{n}}$ parametrizes a canonical basis 
in the space of invariants
$
\big(V_{\lambda_1} \otimes \ldots \otimes V_{\lambda_n}\big)^{{\rm G}^L}.
$
\et
\noindent
Theorem \ref{11.18.11.1} follows from Theorem \ref{5.8.10.45b} and geometric Satake correspondence, 
see Section \ref{sec2.2.4}. 

\vskip 2mm

{Alternatively, there is a similar set, defined by reversing the order of the side $(1,n)$:
\be \la{11.20.11.2}
{\bf C}_{\lambda_1, ..., \lambda_{n-1}}^{\lambda_n}:= \{l\in {\rm Conf}^+_n({\cal A})(\Z^t)~|~ \pi^t_{i, i+1}(l) = \lambda_i, 
~~ i=1, ..., n-1, ~
\pi_{1, n}^t(l) = \lambda_n\}. 
\ee
Then 
$$
{\bf C}_{\lambda_1, ..., \lambda_{n}} = {\bf C}_{\lambda_1, ..., \lambda_{n-1}}^{-w_0(\lambda_n)}.
$$
The set ${\bf C}_{\lambda_1, ..., \lambda_{n-1}}^{\lambda_n}$ parametrizes 
a basis in the space of tensor product multiplicities
\be \la{11.18.11.2a}
{\rm Hom}_{{\rm G}^L}(V_{\lambda_n}, V_{\lambda_1} \otimes \ldots \otimes V_{\lambda_{n-1}}).
\ee}

\subsubsection{Some features of the set ${\rm Conf}^+_n({\cal A})(\Z^t)$.}
\la{sec2.1.6}

Here are some features of the set  ${\rm Conf}^+_n({\cal A})(\Z^t)$. All of them 
follow immediately from the  definition of the potential ${\cal W}$ 
and basic facts about the positive structure on ${\rm Conf}_n(\cal A)$. 
One of the most crucial is twisted cyclic invariance, so we start from it.

\paragraph{The twisted cyclic shift.} 

 It was proved in \cite[Section 8]{FG1}
that the defined there positive atlas on ${\rm Conf}_n({\cal A})$ is invariant under the {\it twisted cyclic shift}
$$
t: {\rm Conf}_n({\cal A}) \lra {\rm Conf}_n({\cal A}), ~~~({\rm A}_1, ...,{\rm A}_n) \lms ({\rm A}_2, ..., {\rm A}_n, {\rm A}_1\cdot s_{\rm G}). 
$$
Its tropicalization is a cyclic shift on the space of the tropical points:
\be \la{11.18.11.9}
t: {\rm Conf}_n({\cal A})(\Z^t) \lra {\rm Conf}_n({\cal A})(\Z^t). 
\ee
\begin{itemize}

\item {\bf Twisted cyclic shift invariance}. {\it The potential  ${\cal W}$ is evidently invariant  
 under the twisted cyclic shift. 
Therefore the set (\ref{11.15.11.1})  is invariant under the tropical 
cyclic shift (\ref{11.18.11.9})}. 
\end{itemize}
\vskip 2mm

Given 
a triangle $t = \{i_1< i_2<i_3\}$ inscribed into the polygon $P_n$, there is a positive map 
$$
\pi_t: {\rm Conf}_n({\cal A}) \lra {\rm Conf}_3({\cal A}), ~~~ ({\rm A}_1, \ldots, {\rm A}_n) \lms ({\rm A}_{i_1}, {\rm A}_{i_2}, {\rm A}_{i_3}).
$$
Each  triangulation  $T$ of  $P_n$ gives rise to a positive injection 
$ 
\pi_T: {\rm Conf}_n({\cal A}) \to \prod_{t \in T}{\rm Conf}_3({\cal A}), 
$ 
where the product is over all triangles $t$ of  $T$. Set its image
\be \la{5.12.31.1}
{\rm Conf}_T({\cal A}):= {\rm Im}\pi_T \subset \prod_{t \in T}{\rm Conf}_3({\cal A}). 
\ee
For each pair $(t,d)$, where $t \in T$ and $d$ is a side of $t$, there is 
a map given by obvious projections
$$
p(t,d): \prod_{t\in T}{\rm Conf}_3({\cal A})\stackrel{{\rm pr}_{t}}{\lra} {\rm Conf}_3({\cal A}) \stackrel{{\rm pr}_d}{\lra} {\rm Conf}_2({\cal A}). 
$$ 
For each diagonal $d$ of $T$, there are 
two triangles, $t^d_1$ and $t^d_2$, sharing $d$. 
A point $x$ of ${\rm Conf}_{T}({\cal A})$ is described by the condition that 
 $p(t^d_1,d)(x) = p(t^d_2,d)(x)$ for all diagonals $d$ of $T$.  
\bp {\bf \cite{FG1}} There is an isomorphism of positive moduli spaces 
$$
\pi_{T}: {\rm Conf}_n({\cal A}) \stackrel{\sim}{\lra} {\rm Conf}_{T}({\cal A}).
$$
\ep
It leads to an isomorphism of sets of their $\Z$-tropical points:
\be \la{6.1.12.1}
\pi^t_{T}: {\rm Conf}_n({\cal A})(\Z^t) \stackrel{\sim}{\lra} {\rm Conf}_{T}({\cal A})(\Z^t).
\ee
\vskip 3mm
Some important features of the potential ${\cal W}$ are the following: 

\begin{itemize}

\item {\bf Scissor congruence invariance}.  {\it For any triangulation $T$ of the polygon, 
the potential ${\cal W}_n$ on ${\rm Conf}_n({\cal A})$ is a sum 
over the triangles $t$ of $T$}:
\be \la{11.18.11.5}
{\cal W}_n = \sum_{t \in T} {\cal W}_3\circ \pi_t.
\ee
\end{itemize} 
This  follows immediately from the fact that $\chi_\A$ is a character of the subgroup $\U_\A$. 
Combining this with the isomorphism (\ref{6.1.12.1}) we get
\begin{itemize}

\item \la{DI} {\bf Decomposition isomorphism}.  {\it Given a triangulation $T$ of $P_n$, one has an isomorphism} 
$$
i^{t,+}_{T}: {\rm Conf}^+_n({\cal A})(\Z^t) \stackrel{\sim}{\lra} {\rm Conf}^+_{T}({\cal A})(\Z^t).
$$
\end{itemize}

So one can think of the data describing a 
point of ${\rm Conf}^+_n({\cal A})(\Z^t)$ 
as of a collection of similar data assigned to triangles $t$ of a 
triangulation $T$ of the polygon, 
which match at the diagonals. 
Therefore each triangulation $T$ provides a further decomposition of the set \eqref{11.20.11.1}.
By Lemma \ref{9.21.17.56h},  the weights of ${\rm G}^L$ 
assigned to the sides and edges of the polygon 
are dominant. 

\vskip 2mm
Consider an algebra with a linear basis $e_\lambda$ parametrized by  
dominant weights $\lambda$ of $\G^L$ with the 
structure constants given by the  cardinality 
of the set ${\bf C}_{\lambda_1, \lambda_2}^{\mu}$:
\be \la{11.18.11.8}
e_{\lambda_1} \ast e_{\lambda_2} = \sum_{\mu \in {\rm P}^+}|{\bf C}_{\lambda_1, \lambda_2}^{\mu}|e_{\mu}.
\ee
The following basic property is evident from our definition of the set ${\bf C}_{\lambda_1, \lambda_2}^{\mu}$:

\begin{itemize}
\item {\bf Associativity}. {\it The product $\ast$ is associative}. 
\end{itemize}
The associativity is equivalent to the fact that there are two different decompositions 
of the set  ${\rm Conf}^+_4({\cal A})(\Z^t)$  corresponding to two different triangulations of the $4$-gon. 
\begin{figure}[ht]
\centerline{\epsfbox{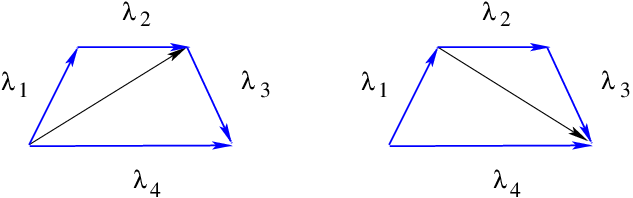}}
\caption{The associativity.}
\label{tpm2}
\end{figure}

\paragraph{A simple proof of Knutson-Tao-Woodward's theorem \cite{KTW}.} 
That theorem asserts the associativity of the similar $\ast$-product 
whose structure constants are given by  the number of hives. 
The associativity in our set-up, where the structure constant are given by 
the number of positive integral tropical points, 
 is obvious for any group ${\rm G}$. So to prove the theorem we just need to 
relate hives to positive integral tropical points for ${\rm G=GL}_m$, which is done in Section \ref{KT}.

\subsection{Parametrization of top components of fibers of convolution morphisms}
\la{sec2.2}

\subsubsection{Transcendental cells and integral tropical points} 
\la{sec2.2.1}

For a non-zero $C=\sum_{k\geq p} c_kt^k\in {\cal K}$ such that $c_p$ is not zero, we define its {\it valuation} and {\it initial term}:
$$
{\val}(C):= p, ~~~ {\rm in}(C):=c_p.
$$

\paragraph{A decomposition of ${\rm T}({\cal K})$.} For each split torus ${\rm T}$,
there is a natural projection, which we call the  valuation map:
$$
{\rm val}: {\rm T}({\cal K})\lra {\rm T}({\cal K})/{\rm T}({\cal O})={\rm T}(\Z^t).
$$
Given an isomorphism ${\rm T}=({\Bbb G}_m)^k$, the map is expressed as 
$(C_1,\ldots, C_k)\mapsto \big({\rm val}(C_1),\ldots, {\rm val}(C_k)\big)$. 

Each $l \in {\rm T}(\Z^t)$ gives rise to  a cell
$$
{\rm T}_l:= \{x\in {\rm T}({\cal K}) ~|~ \val(x) =l\}. 
$$
 It is a projective limit of irreducible algebraic varieties:
each of them is isomorphic to 
$
({\Bbb G}_m)^k \times{ \Bbb A}^N. 
$ 
Therefore ${\rm T}_l$ is an irreducible proalgebraic variety, and ${\rm T}({\cal K})$ is a disjoint union of them:
$$
{\rm T}({\cal K}) = \coprod_{l \in {\rm T}(\Z^t)} {\rm T}_l.
$$

\paragraph{Transcendental ${\cal K}$-points of ${\rm T}$.} 
Let us define an initial term map for ${\rm T}({\cal K})$ in coordinates:
$$
{\rm in}: {\rm T}({\cal K})\lra {\rm T}(\C),~~~(C_1,\ldots, C_k)\lms \big({\rm in}(C_1),\ldots, {\rm in}(C_k)\big).
$$
A subset $\{c_1,\ldots, c_q\}\subset\C$ is {\em algebraically independent} if $P(c_1,\ldots, c_q)\not =0$ 
for any $P\in \Q(X_1,\ldots, X_q)^*$. 

\bd A point $C\in {\rm T}({\cal K})$ is {\em transcendental} if its initial term 
${\rm in}(C)$ is algebraically independent as a subset of ${\C}$.
 Denote by ${\rm T}^{\circ}({\cal K})$ the set of transcendental points in ${\rm T}({\cal K})$. Set 
$$
{\rm T}_l^{\circ}:={\rm T}_l\bigcap {\rm T}^{\circ}({\cal K}).
$$ 
\ed

\bl \la{9.21.14.8h}
Let $F$ be a positive rational function on ${\rm T}$. For any $C\in {\rm T}^{\circ}({\cal K})$, we have
$$
\val \big(F(C)\big)=F^t\big(\val(C)\big).
$$
\el
\begin{proof}
It is clear.
\end{proof}

\paragraph{Transcendental ${\cal K}$-cells of a positive space ${\cal Y}$.}
 \bd
A birational isomorphism $f: {\cal Y} \ra {\cal Z}$ of positive spaces  is a 
{\em positive birational isomorphism} if it is a positive morphism, and its inverse is also a positive morphism.  
\ed

\bt \la{8.23.7.32hh}
Let $f:{\rm T}\ra{\rm S}$ be a positive birational isomorphism of split tori. Then $$
f({\rm T}_{l}^{\circ})={\rm S}_{f^t(l)}^{\circ}.
$$
\et
We prove Theorem \ref{8.23.7.32hh} in Section \ref{sec4}.
It is crucial  that the inverse of $f$ is also a positive morphism. 
As a counterexample, the map
$
f: {\Bbb G}_m\ra {\Bbb G}_m, ~x\mapsto x+1
$ is a positive morphism,
but its inverse $x\mapsto x-1$ is not.
Let $l \in {\Bbb G}_m(\Z^t) =\Z$. If $l>0$, then  Theorem \ref{8.23.7.32hh} fails: the points in $f({\rm T}_l^\circ)$ are not transcendental since  ${\rm in}(f({\rm T}_l^\circ))\equiv 1.$ 

\bd
Let $\alpha_{\bf c}:{\rm T}\ra {\cal Y}$ be a coordinate system from a positive atlas 
on  ${\cal Y}$. 
The set of transcendental ${\cal K}$-points of ${\cal Y}$ is 
$$
{\cal Y}^{\circ}({\cal K}):=\alpha_{\bf c}({\rm T}^{\circ}({\cal K})).
$$
For each $l\in {\cal Y}(\Z^t)$, the transcendental $l$-cell \footnote{By abuse of notation, such a cell will always be denoted by ${\cal C}_l^\circ$. The tropical point $l$ tells which space it lives.} of ${\cal Y}$ is
$$
{\cal C}_{l}^{\circ}:=\alpha_{\bf c}({\rm T}_{\beta^t(l)}^{\circ}),\quad \mbox{where $\beta=\alpha_{\bf c}^{-1}$}.
$$
\ed
By Theorem \ref{8.23.7.32hh}, this definition is independent of the coordinate system $\alpha_{\bf c}$ chosen. 
Similarly one can upgrade the valuation map to positive spaces: given a positive space ${\cal Y}$, 
there is a unique map 
\be \la{val12}
\val: {\cal Y}^{\circ}({\cal K})\lra {\cal Y}(\Z^t)
\ee
such that
$$
{\cal C}_l^{\circ}=\{y\in {\cal Y}^{\circ}({\cal K})~|~\val(y)=l\}.
$$
The valuation map (\ref{val12}) is functorial under positive birational isomorphisms of positive spaces. 
Therefore the transcendental cells are also  functorial under positive birational isomorphisms.

Thus there is a canonical decomposition parametrized by the set ${\cal Y}(\Z^t)$:
$$
{\cal Y}^{\circ}({\cal K})=\bigsqcup_{l\in {\cal Y}(\Z^t)} {\cal C}_{l}^{\circ}.
$$
Thanks to the following Lemma, one can identify each tropical point $l$ with ${\cal C}_l^{\circ}$.
\bl \la{thm10.1.1.2}
Let $F$ be a positive rational function on ${\cal Y}$. For any $C\in {\cal Y}^{\circ}({\cal K})$, we have
$$
\val \big (F(C)\big)=F^t\big( \val(C)\big). 
$$
\el

\begin{proof}
It follows immediately from Lemma \ref{9.21.14.8h} and Theorem \ref{8.23.7.32hh}.
\end{proof}

\subsubsection{ ${\cal O}$-integral configurations of decorated flags and the affine Grassmannian}
\la{sec2.2.2}

Recall  the affine Grassmannian
${\rm Gr}$. Recall the moduli space ${\cal F}_{\G}$ of frames from Definition \ref{torsorF}.

\blc \la{LCAG}
There is a canonical onto map
\be \la{10.13.12.1}
\lr: {\cal F}_{\G}({\cal K})\lra {\rm Gr}, ~~~~\{\A_1,\B_2\}\lms \lr(\A_1,\B_2)
\ee
\elc

\begin{proof} 
Let $\{\U, \B^-\}\in {\cal F}_{{\rm G}}(\Q)$ be a standard frame. There is a unique $g_{\{\A_1, \B_2\}} \in {\rm G}({\cal K})$ such that 
$$
\{\A_1, \B_2\}= g_{\{\A_1, \B_2\}}\cdot \{\U, \B^-\}.
$$
It provides an isomorphism ${\cal F}_\G({\cal K})\stackrel{\sim}{\ra}\G({\cal K})$. 
Composing it with the projection $[\cdot]: \G({\cal K})\ra {\rm Gr}$, 
\be \la{8.11.12.100}
\lr(\A_1, \B_2):= [g_{\{\A_1, \B_2\}}]\in {\rm Gr}.
\ee
Note that  ${\cal F}_{{\rm G}}(\Q)$  
is a ${\rm G}(\Q)$-torsor. 
So choosing a different frame in  ${\cal F}_{{\rm G}}(\Q)$ we get another
 representative of the coset $g_{\{\A_1, \B_2\}} \cdot {\rm G}(\Q)$. Since ${\rm G}(\Q)\subset {\rm G}({\cal O})$, 
the resulting lattice (\ref{8.11.12.100}) will be the same. Therefore the map $\lr$ is canonical.
\end{proof}
\paragraph{Symmetric space and affine Grassmannian.}
The affine Grassmannian is the non-archimedean version of the 
symmetric space ${\rm G}({\R})/{\rm K}$, where ${\rm K}$ is a maximal compact subgroup in $\G(\R)$. 
A generic pair of flags  $\{\B_1, \B_2\}$ over $\R$
gives rise to an ${\rm H}({\R}_{>0})$-torsor in the 
symmetric space --   
the projection of $\B_1\cap \B_2$.  \footnote{Here is a non-archimedean analog: A generic pair of flags  $\{\B_1, \B_2\}$ over ${\cal K}$ gives rise to an 
$H({\cal K})/H({\cal O})$-torsor in the affine Grassmannian -- 
the projection of $\B_1({\cal K})\cap \B_2({\cal K})$ 
to ${\rm G}({\cal K})/{\rm G}({\cal O})$.} 
 Notice that ${\rm H}({\R}_{>0}) = {\rm H}(\R)/({\rm H}(\R)\cap {\rm K}) $.
A generic pair $\{\A_1, \B_2\}$ determines a 
point\footnote{In the archimedean case, a maximal compact subgroup ${\rm K}$ 
is defined by using the Cartan involution. A generic pair $\{\A, \B\}$ determines  a pinning, and hence a Cartan involution.}  
$
Q(\A_1, \B_2)\in {\rm G}(\R)/{\rm K}.
$ 
So we get the archimedean analog of the map (\ref{10.13.12.1}):
\be \la{10.13.12.2}
Q: {\cal F}_{\G}({\R})\lra {\rm G}(\R)/{\rm K}, ~~~~\{\A_1,\B_2\}\lms Q(\A_1, \B_2).
\ee

\paragraph{Decorated flags and horospheres.} 
For the adjoint group ${\rm G}'$, the principal affine space ${\cal A}$ can be interpreted as the moduli space of horospheres
in the symmetric space ${\rm G}'(\R)/K$ in the archimedean case, or in the affine Grassmannian ${\rm Gr}$. 
The horosphere ${\cal H}_\A$ assigned to a decorated flag $\A$ 
is an orbit of the maximal unipotent subgroup ${\rm U}_\A$. 
Let ${\cal B}^*_\A$ be the open Schubert cell of flags in generic position to a given decorated flag $\A$. 
Then there is an isomorphism
$$
{\cal B}^*_{\A} \lra {\cal H}_\A, ~~~~ \B\lms \lr(\A, \B)~~ \mbox{or}~~ \B\lms Q(\A,\B).
$$

\begin{figure}[ht]
\epsfxsize80pt
\centerline{\epsfbox{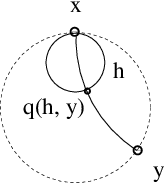}}
\caption{The metric $q(h,y)$ determined by a horocycle $h$ 
and a boundary point $y$.}
\label{tpm0}
\end{figure}

{\bf Examples}. 1. Let $\G(\R)={\rm SL}_2(\R)$.  Its maximal compact subgroup ${\rm K}={\rm SO}_2(\R)$. The symmetric space ${\rm SL}_2(\R)/{\rm SO}_2(\R)$ is the hyperbolic plane ${\cal H}^2$. A decorated flag ${\A}_1\in {\cal A}_{{\rm PGL}_2}(\R)$  corresponds to a horocycle $h$ based as a point $x$ at the boundary. A flag ${\B}_2$ corresponds to another point $y$ at the boundary. Let $g(x,y)$ be the geodesic connecting $x$ and $y$.  The point $Q(\A_1,\B_2)$ is 
the intersection of $h$ and
$g(x, y)$, see Fig \ref{tpm0}:
$$
q(h, y):= h\cap g(x, y) \in {\cal H}^2.
$$ 

2. Let $\G={\rm GL}_n$. Recall that 
a flag $F_\bullet$ in an $n$-dimensional vector space $V_n$ over a field 
is a data $F_1 \subset \ldots \subset F_n$, ${\rm dim}F_i=i$. 
A generic pair of flags $(F_\bullet, G_\bullet)$ 
in $V_n$ is the same thing as  
a decomposition of  $V_n$ into a 
direct sum of one dimensional subspaces
\be \la{L}
V_n= L_1 \oplus \ldots \oplus L_n,
\ee
where
$
L_i = F_{i} \cap G_{n+1-i}.$  Conversely,  $F_{a}= L_1 \oplus 
\ldots \oplus L_a$ and $G_{b}= L_{n-b+1} \oplus \ldots \oplus L_n.$

Over the field $\R$, this decomposition gives rise to a
$(\R^*_{>0})^n$-torsor in the symmetric space, given by a family of 
positive definite metrics on $V_n$ with the principal axes $(L_1, \ldots , L_n)$:
\be \la{M}
a_1 x_1^2 + \ldots + a_n x_n^2, \quad a_i > 0. 
\ee
Here $(x_1, ..., x_n)$ is a coordinate system  for which the 
lines $L_i$ are the coordinate lines.

A decorated flag $\A$ in $V_n$  is a flag $F_\bullet$ plus a collection 
of non-zero vectors $l_i \in F_{i}/F_{i-1}$. 
A frame in ${\cal F}_{{\rm GL}_n}$ is equivalent to a generic pair of flags $(F_\bullet, G_\bullet)$ and 
a decorated flag $\A$ over the flag $F_\bullet$.
It determines a basis $(e_1, ..., e_n)$ in $V_n$ and vice verse.  Here $e_i\in L_i$ and $e_i=l_i$ under the projection $L_i\lra F_i/F_{i-1}$. 
This basis determines a metric -- 
the positive definite metric with the principal axes 
$L_i$ such that the vectors $e_i$ are unit vectors.

3. Over the field ${\cal K}$, decomposition (\ref{L}) gives rise to an 
${\rm H}({\cal K})/{\rm H}({\cal O}) = \Z^n$-torsor in ${\rm Gr}$, given by the following collection
 of 
lattices in $V_n$. 
$$
{\cal O}t^{k_1}e_1 \oplus \ldots \oplus {\cal O}t^{k_n}e_n, ~~k_i \in \Z. 
$$
These lattices are the non-archimedean version of the unit balls of the metrics 
(\ref{M}).

\paragraph{${\cal O}$-integral configurations of decorated flags.}
\bd \la{13.3.12.502h}
A collection of decorated flags $\{\A_1,\ldots, \A_n\}$ over ${\cal K}$ is ${\cal O}$-integral if it is generic and  for any $i\in [1,n]$ the lattice $\lr(\A_i, \B_j)$ does not depend on the choice of $j$ different then $i$. 
\ed

 Let $g\in \G({\cal K})$. Note that  $\lr(g\A_i, g\B_j)=g\cdot \lr(\A_i, \B_j)$. Therefore if $\{\A_1,\ldots, \A_n\}$ is ${\cal O}$-integral, so is $g\cdot \{\A_1,\ldots, \A_n\}$.  Thus we define

\bd \la{8.8.12.1}
A  configuration in ${\rm Conf}_n({\cal A})({\cal K})$ is ${\cal O}$-integral 
if it is a ${\rm G}({\cal K})$-orbit of an ${\cal O}$-integral collection of decorated flags. 
Denote by ${\rm Conf}_n^{\cal O}({\cal A})$ the space 
of  such configurations. 
\ed

The archimedean version of Definition \ref{8.8.12.1} 
is trivial. For example, let $\G= {\rm SL}_2(\R)$. Then 
there are no  horocycles $(h_1, h_2, h_3)$ 
such that  their boundary points $(x_1, x_2, x_3)$ are distinct, 
and the intersection of the horocycle $h_i$ with the geodesic 
$g(x_i,x_j)$ do not depend on $j \not = i$. 

In contrast with this, we demonstrate below that the non-archimedean version is very rich.
The difference stems from the fact that in the archimedean case 
the intersection $K \cap {\rm U}=e$ is trivial, while in the non-archimedean 
${\rm G}({\cal O}) \cap {\rm U}({\cal K}) = {\rm U}({\cal O})$. 

\paragraph{Transcendental cells and ${\cal O}$-integral configurations.}
The following  fact is crucial.
\bt \la{8.27.17.08hh}
If $l \in {\rm Conf}_n^{+}({\cal A})(\Z^t)$, then there is an inclusion 
$
{\cal C}_l^{\circ}\subset {\rm Conf}^{\cal O}_n({\cal A}). 
$ 
Otherwise ${\cal C}_l^{\circ}\cap {\rm Conf}^{\cal O}_n({\cal A})$ is an empty set. 
\et
Theorem \ref{8.27.17.08hh} gives an alternative conceptual definition of the set of 
positive integral tropical 
points of the space ${\rm Conf}_n({\cal A})$, which {refers neither 
to the potential ${\cal W}$, nor to a specific positive coordinate system.} 
However to show that the set ${\rm Conf}_n^{+}({\cal A})(\Z^t)$ is ``big'', or even non-empty, we 
use the potential ${\cal W}$ and  its properties, which imply, for example, 
that the set ${\rm Conf}_n^{+}({\cal A})(\Z^t)$ is obtained by amalgamation of similar sets assigned to triangles 
of a triangulation of the polygon. 
We prove Theorem \ref{8.27.17.08hh} in Section \ref{sec6.4}.

\subsubsection{The canonical map $\kappa$ and cycles on ${\rm Conf}_n({\rm Gr})$}
\la{sec2.2.3}

\paragraph{The canonical map $\kappa$.}
Recall
the configuration space
$$
{\rm Conf}_n({\rm Gr}):= {\rm G}({\cal K})\backslash ({{\rm Gr}\times \ldots \times {\rm Gr}}).
$$
Given an ${\cal O}$-integral collection $\{\A_1, ..., \A_n\}$ of  decorated flags, 
we get a collection of lattices 
$\{\lr_1, ..., \lr_n\}$ by setting $\lr_i:= \lr(\A_i, \B_j)$ for some $j \not = i$. 
By definition, 
the lattice $\lr_i$  is independent of $j$ chosen. 
This construction descends to configurations, providing  a canonical map 
\be \la{4.18.12.41qx}
\kappa: {\rm Conf}^{\cal O}_n({\cal A}) \lra {\rm Conf}_n({\rm Gr}), \quad 
(\A_1, \ldots, \A_n) \lms (\lr_1, ..., \lr_n).
\ee
The map is evidently cyclic invariant, and  commutes with the restriction 
to subconfigurations: 
$$
\kappa({\rm A}_{i_1}, ..., {\rm A}_{i_k}) = 
({\rm L}_{i_1}, ..., {\rm L}_{i_k})\quad  
\mbox{for any $1\leq i_1 <  ...< i_k\leq n$}.
$$

\paragraph{The cycles ${\cal M}_l$ in ${\rm Conf}_n({\rm Gr})$.}
Let $l\in {\rm Conf}^+_n({\cal A})(\Z^t)$.  Thanks to Theorem \ref{8.27.17.08hh}, 
we can combine the inclusion there 
with the canonical map (\ref{4.18.12.41qx}): 
\be \la{4.18.12.444}
{\cal C}_l^{\circ}\subset  {\rm Conf}^{\cal O}_n({\cal A}) \stackrel{\kappa}{\lra} {\rm Conf}_n({\rm Gr}).
\ee

\bd \la{9.22.12.13h}
The cycle ${\cal M}_l \subset {\rm Conf}_n({\rm Gr})$ is a substack  
given by the closure of $\kappa({\cal C}^{\circ}_l)$:
\be \la{4.18.12.444}
{\cal M}_l:= \overline{{\cal M}_l^\circ}, ~~~~{\cal M}_l^\circ:= {\kappa({\cal C}^{\circ}_{l})} \subset {\rm Conf}_n({\rm Gr}),\qquad l \in {\rm Conf}_n^{+}({\cal A})(\Z^t).
\ee
\ed

\bl
The cycle ${\cal M}_l$ is irreducible. 
\el

\begin{proof}
For a split torus ${\rm T}$, the cycle ${\rm T}_{l}$ is irreducible. So the cycles ${\cal C}^{\circ}_{l}$ and 
${\cal M}_l$ are irreducible. 
\end{proof}

In other words, ${\cal M}_l$ is a ${\rm G}(\cal K)$-invariant closed subspace in ${\rm Gr}^n$. 
There is a bijection
\be \la{MTH11}
\{\mbox{${\rm G}(\cal K)$-orbits in ${\rm Gr}^n$}\} 
\stackrel{1:1}{\longleftrightarrow}  \{\mbox{${\rm G}(\cal O)$-orbits in $[1] \times{\rm Gr}^{n-1}$}\}. 
\ee
Therefore one can also view the cycles ${\cal M}_l$ as ${\rm G}(\cal O)$-invariant closed subspaces in 
$[1]\times{\rm Gr}^{n-1}$. 
Let us describe them using this point of view.

\subsubsection{Top components of the fibers of the convolution morphism}
\la{sec2.2.4}

Given $\underline{\lambda}=(\lambda_1,\ldots, \lambda_n)\in ({\rm P}^+)^n$, recall the cyclic convolution variety
$$
{\rm Gr}_{c(\underline{\lambda})}:=\{({\rm L}_1, \ldots, {\rm L}_n)\in {\rm Gr}^n ~|~ {\rm L}_1 \stackrel{\lambda_1}{\lra}{\rm L}_2\stackrel{\lambda_2}{\lra}\ldots\stackrel{\lambda_n}{\lra}{\rm L}_{n+1}, ~{\rm L}_1={\rm L}_{n+1} = [1]\}.
$$  
It is a finite dimensional reducible variety of top dimension
$$
{\rm ht}(\underline{\lambda}):= \langle \rho, \lambda_1+ \ldots +\lambda_n\rangle.
$$
It is the fiber of the convolution morphism, and therefore,
thanks to the geometric 
Satake correspondence \cite{L4} \cite{G} \cite{MV},
there is a canonical isomorphism 
\be \la{5.31.12.2}
{\rm IH}^{{\rm ht}(\underline{\lambda})}({\rm Gr}_{c(\underline{\lambda}) }) = 
\Bigl(V_{\lambda_1} \otimes \ldots \otimes V_{\lambda_n}\Bigr)^{{\rm G}^L}.
\ee
Each top dimensional component of ${\rm Gr}_{c(\underline{\lambda}) }$ provides an element 
in the space (\ref{5.31.12.2}). These elements form a canonical basis in (\ref{5.31.12.2}). 
Let ${\bf T}_{\underline{\lambda}}$ be the set of top dimensional components of ${\rm Gr}_{c(\underline{\lambda})}$. 
Recall the set ${\bf C}_{\underline{\lambda}}$ of positive tropical points (\ref{11.20.11.1}), and the cycle 
${\cal M}_l$ from Definition \ref{9.22.12.13h}.
 
 \bt \la{5.8.10.45b}
Let $l\in {\bf C}_{\underline{\lambda}}$. Then the cycle ${\cal M}_{l}$ is the closure of a top dimensional component of 
${{\rm Gr}_{c(\underline{\lambda})}}$. The map $l \lms  {\cal M}_l$ provides a canonical bijection from  ${\bf C}_{\underline{\lambda}}$ to  ${\bf T}_{\underline{\lambda}}$.
 \et
Theorem \ref{5.8.10.45b} is proved in Section \ref{sec9.4}. It implies Theorem \ref{11.18.11.1}.

\subsubsection{Constructible equations for the top dimensional components}
\la{sec2.2.5}

We have defined the cycles ${\cal M}_{l}$ as the closures of the images of the cells ${\cal C}_{l}^\circ$.
Now let us define the cycles ${\cal M}_{l}$ by equations,
given by certain  constructible functions on the space ${\rm Conf}_n({\rm Gr})$.
These functions  generalize Kamnitzer's functions $H_{i_1,\ldots, i_n}$  for ${\rm G=GL}_m$ (\cite{K1}).

\paragraph{Constructible function $D_F$.}{
Let $R$ be a reductive algebraic group over $\C$. We assume that there is a rational left algebraic action of $R$ on $\C^n$. Let $\C(x_1,\ldots, x_n)$ be the field of rational functions on $\C^n$. We get a right algebraic action of $R$ on $\C(x_1,\ldots, x_n)$ denoted by $\circ$.

Let ${\cal K}(x_1,\ldots, x_n)$ be the field of rational functions with ${\cal K}$-coefficients. The valuation of ${\cal K}^\times$ induces a natural valuation map
$$
\val:~~ {\cal K}(x_1,\ldots, x_n)^\times \lra \Z.
$$
Let $F, G\in {\cal K}(x_1,\ldots, x_n)^\times$. The valuation map has two basic properties
\begin{align}
\val(FG)&=\val(F)+\val(G),\la{9.7.14.1h}\\
\val(F+G)&=\val(F), \quad \mbox{if }\val(F)<\val(G).\la{9.7.14.2h}
\end{align}

The group $R({\cal K})$ acts on ${\cal K}(x_1,\ldots, x_n)$ on the right. We have the following 
\bl\la{lem1.9.7.14}
Let $F\in {\cal K}(x_1,\ldots, x_n)^\times$. If $h\in R({\cal O})$, then $\val(F\circ h)=\val(F)$.
\el
\begin{proof} For any $k\in {\cal K}^\times$, we have
$(kF)\circ h=k(F\circ h).$
Therefore it suffices to prove the case when $\val(F)=0$.

Note that the group $R$ is reductive. It is generated by
$$
x_i(a)\in \U,\quad y_i(b)\in \U^{-},\quad \alpha(c)\in {\rm H}, \quad \quad \mbox{where $i\in I$ and $\alpha\in {\rm Hom}({\Bbb G}_m, {\rm H})$.}
$$
Since the action of $R$ is algebraic, for any $f\in \C(x_1,\ldots, x_n)^\times$, we have $f\circ x_i(a)\in \C(x_1,\ldots, x_n, a)^\times$. Note that $f\circ x_i(0)=f$. Therefore we get
\be\la{9.7.14.4h}
f\circ x_i(a)=
\frac{f+af_1+\ldots+a^l f_l}{1+ag_1+\ldots+a^mg_m}, \quad \mbox{where $f_j, g_j \in \C(x_1,\ldots, x_n)$.}
\ee
If $a\in \C$, then $f\circ x_i(a)\in \C(x_1,\ldots,x_n)$. Moreover $f\circ x_i(a)$ is non zero. Otherwise, 
$
f=(f\circ x_i(a))\circ x_i(-a)=0.
$ 
If $a\in t{\cal O}$, then by the basic property \eqref{9.7.14.2h}, we get 
$\val(f\circ x_i(a))=\val(f)=0.$ 

Let $a=a_0+b=a_0+a_1t+a_2t^2+\ldots  \in {\cal O}$. Then 
$
f\circ x_i(a)=\big(f\circ x_i(a_0)\big)\circ x_i(b).
$ 

Note that $f\circ x_i(a_0)\in \C(x_1,\ldots, x_n)^\times$ and $b\in t{\cal O}$. Combining the above arguments we get
\be \la{9.7.14.40h}
\val(f\circ x_i(a))=\val(f\circ x_i(a_0))=0=\val(f), ~~~\forall a\in {\cal O}.
\ee

Now let $F\in {\cal K}(x_1, ..., x_n)^\times$ such that $\val(F)=0$. Then $F$ can be expressed as
$$
F=\frac{f_0+b_lf_1+\ldots+b_lf_l}{1+c_1g_1+\ldots+c_mg_m}.
$$
Here
$f_0, f_p, g_q\in \C(x_1,\ldots, x_n)^\times$, $b_p, c_q\in {\cal K}^\times$, $\val(b_p)>0, \val(c_q)>0, 
$. 
By definition, we have
$$
F\circ x_i(a)=\frac{f_0\circ x_i(a)+b_1f_1\circ x_i(a)+
\ldots+b_lf_l\circ x_i(a)}{1+c_1g_1\circ x_i(a)+\ldots+c_mg_m\circ x_i(a)}.
$$
Let $a\in {\cal O}$. {By \eqref{9.7.14.40h}}, we get
\begin{align}
\val(f_0\circ x_i(a))&=0,\nonumber\\
\val(b_pf_p\circ x_i(a))&=\val(b_p)+\val(f_p\circ x_i(a))=\val(b_p)>0,\nonumber\\
\val(c_qg_q\circ x_i(a))&=\val(c_q)+\val(g_q\circ x_i(a))=\val(c_q)>0. \nonumber
\end{align}
By the basic property \eqref{9.7.14.2h}, we get $\val(F\circ x_i(a))=\val(f_0\circ x_i(a))=0$. Hence we prove that
$$
\val(F\circ x_i(a))=\val(F),\quad \forall a\in {\cal O}.
$$
By the same argument, we show that
$$
\val(F\circ y_i(b))=\val(F),\quad \forall b\in {\cal O}, ~~~~
\val(F\circ \alpha(c))=\val(F),\quad \forall c\in {\cal O}^\times.
$$
Note that $R({\cal O})$ is generated by the elements
$
x_i(a), y_i(b), \alpha(c),  \quad a,b\in {\cal O}, c\in {\cal O}^\times.
$ 
The Lemma is proved.
\end{proof}

Let $\mathfrak{X}$ be rational space over $\C$, i.e.,
$\C(\mathfrak{X})\stackrel{\sim}{=}\C(x_1,\ldots, x_n).$
Similarly, there is a valuation map $\val: {\cal K}(\mathfrak{X})^\times \rightarrow \Z$. 
We assume that there is left algebraic action of $R$ on $\mathfrak{X}$. Lemma \ref{lem1.9.7.14} implies 
\bl \la{13.1.26.11.18h}
Let $F\in {\cal K}(\mathfrak{X})^\times.$ If $h\in R({\cal O})$, then $\val(F\circ h)=\val(F)$.
\el 



\paragraph{Constructible equations for top components.} 
 Let $\mathfrak{X} := {\cal A}^n$ and let $R:=\G^n$. Let $F\in \C({\cal A}^n)$ and let $(g_1,\ldots, g_n)\in \G^n$. Then $\G^n$ acts on $\C({\cal A}^n)$ on the right:
 \be \la{rightaction}(F\circ(g_1, ..., g_n))(\A_1, ..., \A_n):= F(g_1\cdot\A_1, ..., g_n\cdot \A_n),~~~\forall (\A_1,\ldots, \A_n)\in {\cal A}^n.\ee 
 By definition, a nonzero rational function $F\in \C({\rm Conf}_n({\cal A}))$ is also a $\G$-diagonal invariant function on ${\cal A}^n$
$$
F(g\A_1, ..., g\A_n) = F(\A_1, ..., \A_n).
$$
 There  is a $\Z$-valued function
\be \la{6.4.12.1}
D_F: \G({\cal K})^n\lra \Z, ~~~~
D_F(g_1(t), ..., g_n(t)):= {\rm val}\Bigl(F\circ (g_1(t), ..., g_n(t))\Bigr). 
\ee
\blc
The function $D_F$ is invariant under the left diagonal action of the group $\G({\cal K})$  
on ${\rm G}({\cal K})^n$, and the 
right action of the subgroup ${\rm G}({\cal O})^n \subset {\rm G}({\cal K})^n$. Therefore $D_F$ descends to a function ${\rm Conf}_n({\rm Gr})\ra \Z$ which we also denote by $D_F$.
\elc

\begin{proof} The first property is clear 
since $F \in \C({\cal A}^n)^{\rm G}$. 
The second property is by Lemma  \ref{13.1.26.11.18h}. \end{proof}


Let $\Q_{+}({\rm Conf}_n({\cal A}))$ be the semifield of positive 
rational functions on  ${\rm Conf}_n({\cal A})$. 
Take a non-zero function $F\in \Q_{+}({\rm Conf}_n({\cal A}))$. Therefore it gives rise to a function $D_F$ on ${\rm Conf}_n({\rm Gr})$. Meanwhile,  its tropicalization $F^t$ is a function on ${\rm Conf}_n({\cal A})(\Z^t)$.

\bt \la{5.8.10.45a}
Let $l\in{\rm Conf}_n^+({\cal A})(\Z^t)$ and $F\in {\Q}_{+}({\rm Conf}_n({\cal A}))$. Then 
$
D_F\big(\kappa({\cal C}_l^{\circ})\big)\equiv F^t(l).
$ 
\et
}
Theorem \ref{5.8.10.45a} is proved in Section \ref{sec7}. 
It implies that the map  in Theorem \ref{5.8.10.45b} is injective. It can be reformulated as follows: 
\be \la{MHV1111}
\mbox{For any $l$ and $F$ as above, 
the generic value of $D_F$ on the cycle ${\cal M}_l$ is $F^t(l)$}.
\ee

When ${\rm G=GL}_m$, one can describe the set ${\bf C}_{\underline{\lambda}}$ by using 
the special collection of functions on the space ${\rm Conf}_n({\cal A})$ defined in Section 9 of \cite{FG1}. 
The obtained description coincides with Kamnitzer's  
generalization of hives \cite{K1}. He conjectured in \cite{K1} 
that the latter set parametrizes the components of the convolution variety for ${\rm GL}_m$. 
Therefore Theorems \ref{5.8.10.45b}  and \ref{5.8.10.45a} imply Conjecture 4.3 in \cite{K1}.



\subsection{Mixed configurations and a generalization of Mirkovi\'{c}-Vilonen cycles}
\la{sec2.3}

In this Section we discuss several other examples. Each of them fits in the general scheme 
of Section \ref{sec1.2}. We show how to encode all the data in a polygon. 

\subsubsection{Mixed configurations and the map $\kappa$} 
\la{sec2.3.1}

\bd \la{10.14.12.1a} i) Given a subset ${\rm I} \subset [1,n]$, 
the moduli space ${\rm Conf}_{\rm I}({\cal A}; {\cal B})$ parametrizes
configurations $(x_1, ..., x_n)$, where $x_i\in {\cal A}$ if $i\in {\rm I}$, otherwise 
$x_i\in {\cal B}$.

ii) Given subsets ${\rm J} \subset {\rm I} \subset [1,  n]$, the moduli space ${\rm Conf}_{{\rm J} \subset {\rm I}}({\rm Gr}; {\cal A}, {\cal B})$ 
parametrizes 
configurations  $(x_1, ..., x_n)$ where 
$$
x_i\in {\rm Gr}~~\mbox{ if}~~ i\in {\rm J},~~~~ 
x_i\in {\cal A}({\cal K})~~\mbox{ if}~~ i\in {\rm I} -{\rm J}, ~~~~
x_i\in {\cal B}({\cal K}) ~~\mbox{ otherwise}. 
$$
We set ${\rm Conf}_{\rm I}({\rm Gr}; {\cal B}):= {\rm Conf}_{{\rm I} \subset {\rm I}}({\rm Gr}; {\cal B}).$ 
\ed
A positive structure on the space ${\rm Conf}_{\rm I}({\cal A}; {\cal B})$ is defined 
in Section \ref{proofmth1}. This positive structure 
is invariant under a cyclic twisted shift. See Lemma \ref{13.1.10.1h} for the precise statement. 

\bd \la{aointegral}

Let ${\rm J} \subset {\rm I} \subset [1,  n]$.
A configuration in ${\rm Conf}_{\rm I}({\cal A}; {\cal B})({\cal K})$ is called ${\cal O}$-integral relative to 
${\rm J}$ if   
\begin{enumerate}
\item For all $j\in {\rm J}$ and $k\not=j$, the pairs $(\A_j, \B_k)$ are generic. Here $\B_k=\pi(\A_k)$ if $k\in {\rm I}$.
\item The lattices $\lr_j:=\lr(\A_j, \B_k)$ given by the above pairs only depend on $j$. 
\end{enumerate}
Denote by
${\rm Conf}^{\cal O}_{{\rm J} \subset {\rm I}}({\cal A}; {\cal B})$ the moduli space of such configurations. 
\ed

By the very definition, there is a canonical map
\be \la{5.12.12.2}
\kappa: {\rm Conf}^{\cal O}_{{\rm J} \subset {\rm I}}({\cal A}; {\cal B}) \lra 
 {\rm Conf}_{{\rm J} \subset {\rm I}}({\rm Gr}; {\cal A}, {\cal B}).
\ee
It assigns to  $\A_j$ the lattice $\lr_j$ when $j\in {\rm J}$ and keeps the rest intact . 
%
\vskip 2mm

Recall $u_j\in \U_{\A_j}$ in (\ref{7.20.9.8}). 
The potential ${\cal W}_{\rm J}$ on ${\rm Conf}_{{\rm I}}({\cal A}; {\cal B})$ is a  function
\be \la{13.3.1.527h}
{\cal W}_{\rm J}:=\sum_{j\in {\rm J}}\chi_{\A_j}(u_j).
\ee
Positivity of ${\cal W}_{\rm J}$ is proved in Section \ref{sec6.4}. 

Next Theorem generalizes  Theorem \ref{8.27.17.08hh}. Its proof is the same. See Section \ref{sec6.4}.
\bt \la{13.2.22.2226h} Let $l\in {\rm Conf}_{\rm I}({\cal A}; {\cal B})(\Z^t)$.
A configuration in ${\cal C}_l^\circ$ is ${\cal O}$-integral relative to 
${\rm J}$  if and only if ${\cal W}_{\rm J}^t(l)\geq 0$.
\et

Denote by ${\rm Conf}^+_{{\rm J}\subset {\rm I}}({\cal A}; {\cal B})(\Z^t)$ the set of points $l\in{\rm Conf}_{\rm I}({\cal A};{\cal B})(\Z^t)$ such that ${\cal W}_{\rm J}^t(l)\geq 0$. 
Set 
\be \la{MVGCyyyxs}
{\cal M}^\circ_l := \kappa({\cal C}^\circ_l)\subset {\rm Conf}_{{\rm J}\subset {\rm I}}({\rm Gr};{\cal A}, {\cal B}), ~~~~ 
l\in {\rm Conf}^+_{{\rm J} \subset {\rm I}}({\cal A}; {\cal B})(\Z^t).
\ee
These cycles generalize the Mirkovi\'{c}-Vilonen cycles, as we will see in Section \ref{mdgmc}. 


\subsubsection{Basic invariants} \la{sec2.3.2}

Recall the isomorphism (\ref{conf2A}):
\be \la{conf2A2}
\alpha: {\rm Conf}^*({\cal A}, {\cal A}) \stackrel{\sim}{\lra} {\rm H}, ~~~~\alpha(\A_1\cdot h_1, \A_2\cdot h_2) = h_1^{-1}w_0(h_2)
\alpha(\A_1, \A_2). 
\ee

Given a generic triple $\{\A_1, \B_2, \A_3\}$, we choose a decorated flag $\A_2$ over the flag $\B_2$, and set 
$$
\mu(\A_1, \B_2, \A_3):= \alpha(\A_1, \A_2)\alpha(\A_3, \A_2)^{-1}\in {\rm H}.
$$
Due to (\ref{conf2A2}), it does not depend on the choice of  $\A_2$. 
We illustrate the invariant $\mu$ by a 
pair of red dashed arrows on the left in Fig \ref{tpm14}. 

Given a generic configuration $(\A_1, \B_2, \B_3, \A_4)$, see the right of Fig \ref{tpm14}, 
choose decorated flags $\A_2, \A_3$ over the flags $\B_2, \B_3$, and set
 $$
\mu(\A_1, \B_2, \B_3, \A_4):= \alpha(\A_2, \A_1)\alpha_2(\A_2, \A_3)^{-1}\alpha(\A_4, \A_3)\in {\rm H}.
$$
These invariants  coincide with a similar {\rm H}-valued $\mu$-invariants from Section \ref{sec1.4}. 

\begin{figure}[ht]
\centerline{\epsfbox{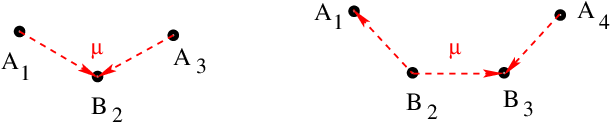}}
\caption{The invariants $\mu(\A_1, \B_2, \A_3)\in {\rm H}$ and $\mu(\A_1, \B_2, \B_3, \A_4)\in {\rm H}$.}
\label{tpm14}
\end{figure}

\vskip 3mm

There are
canonical isomorphisms:
\begin{align}\la{caniso}
&\pi_{\rm Gr}: {\rm Conf}({\rm Gr}, {\rm Gr}) \stackrel{=}{\lra}{\rm P}^+,\nonumber\\
&\alpha_{\rm Gr}: {\rm Conf}({\cal A}, {\rm Gr}) \stackrel{=}{\lra}  {\rm P},\nonumber\\
&\alpha_{\rm Gr}': {\rm Conf}({\rm Gr}, {\cal A})\stackrel{=}{\lra} {\rm P}.
\end{align}
The first map uses the decomposition  ${\rm G}({\cal K}) = {\rm G}({\cal O})\cdot {\rm H}({\cal K})\cdot {\rm G}({\cal O})$: 
$$
{\rm Conf}({\rm Gr}, {\rm Gr}) =  {\rm G}({\cal O})\backslash  {\rm G}({\cal K})/{\rm G}({\cal O}) =  
W\backslash{\rm H}({\cal K})/{\rm H}({\cal O}) = {\rm P}^{+}. 
$$
The second map uses the Iwasawa decomposition  ${\rm G}({\cal K}) = {\rm U}({\cal K})\cdot {\rm H}({\cal K})\cdot {\rm G}({\cal O})$: 
$$
{\rm Conf}({\cal A}, {\rm Gr}) = {\rm G}({\cal K})\backslash \Bigl({\rm G}({\cal K})/{\U}({\cal K}) \times 
{\rm G}({\cal K})/{\rm G}({\cal O})\Bigr) = {\rm U}({\cal K})\backslash  {\rm G}({\cal K})/{\rm G}({\cal O}) = 
{\rm H}({\cal K})/{\rm H}({\cal O}) = {\rm P}. 
$$
The third map is a cousin of the second one:
$$
\alpha_{\rm Gr}'(\lr, \A):=- w_0\big(\alpha_{\rm Gr}(\A, \lr)\big).
$$

{\bf Remark.} These isomorphisms parametrize  ${\rm G}({\cal O})$, $\U({\cal K})$ and  $\U^-({\cal K})$-orbits of ${\rm Gr}$. Each coweight $\lambda \in {\rm P}={\rm H}(\Z^t)={\rm H}({\cal K})/{\rm H}({\cal O})$ corresponds to an element $t^\lambda$ of ${\rm Gr}$. Then
\begin{align} \la{13.3.13.246h}
&\pi_{\rm Gr}([1], g\cdot t^\lambda)=\lambda, ~~~\forall g\in {\rm G}({\cal O}); \nonumber\\
&\alpha_{\rm Gr}(\U, u\cdot t^{\lambda})=\lambda,~~\forall u\in {\U}({\cal K}); \nonumber\\
&\alpha_{\rm Gr}'(v\cdot t^{-\lambda}, \overline{w}_0\cdot \U)=\lambda,~~~\forall v\in \U^{-}({\cal K}).
\end{align}

We define Grassmannian versions of $\mu$-invariants:
$$
\mu_{\rm Gr}: {\rm Conf}({\rm Gr}, {\cal B}, {\rm Gr}) \lra {\rm P}, ~~~~
\mu_{\rm Gr}: {\rm Conf}({\rm Gr}, {\cal B}, {\cal B}, {\rm Gr}) \lra {\rm P}
$$
$$
\mu_{\rm Gr}(\lr_1, \B_2, \lr_3):= \alpha_{\rm Gr}'(\lr_1, \A_2)- \alpha_{\rm Gr}'(\lr_3, \A_2)\in {\rm P}.
$$
$$
\mu_{\rm Gr}(\lr_1, \B_2, \B_3, \lr_4):= \alpha_{\rm Gr}(\A_2, \lr_1) - {\rm val}\circ \alpha(\A_2, \A_3) +\alpha_{\rm Gr}'(\lr_4, \A_3)\in {\rm P}.
$$
Let ${\rm pr}:\B^-({\cal K})\ra {\rm H}({\cal K})\ra {\rm P}$ be the composite of standard projections. The first map  has an equivalent description:
$$\mu_{\rm Gr} ([b_1],\B^{-}, [b_2])={\rm pr}(b_1^{-1}b_2),~~~b_1, b_2 \in \B^{-}({\cal K}).
$$


\vskip 3mm

More generally, take a chain of flags starting and ending by a decorated flag, pick
an alternating sequence of arrows, and write an alternating product of the $\alpha$-invariants.  
We get regular maps 
\be \la{invmu1}
\mu: {\rm Conf}^*({\cal A}, {\cal B}^{2n+1}, {\cal A}) \lra {\rm H}, 
\ee
$$
(\A_1, \B_2,  ..., \B_{2n+2}, \A_{2n+3}) \lms \frac{\alpha(\A_1, \A_2)}{\alpha(\A_3, \A_2)}
\frac{\alpha(\A_3, \A_4)}{\alpha(\A_5, \A_4)} \ldots 
\frac{\alpha(\A_{2n+1}, \A_{2n+2})}{\alpha(\A_{2n+3}, \A_{2n+2})} . 
$$
\be \la{invmu2}
\mu: {\rm Conf}^*({\cal A}, {\cal B}^{2n}, {\cal A}) \lra {\rm H},
\ee
$$
(\A_1, \B_2,  ..., \B_{2n+1}, \A_{2n+2}) \lms \frac{\alpha(\A_2, \A_1)}{\alpha(\A_2, \A_3)}
\frac{\alpha(\A_4, \A_3)}{\alpha(\A_4, \A_5)}\ldots 
\alpha(\A_{2n+2}, \A_{2n+1}). 
$$
Given a cyclic collection of an even number of flags, there is an invariant which 
for $n=2$ and ${\rm G}={\rm SL}_2$ recovers the cross-ratio: 
$$
{\rm Conf}^*_{2n}({\cal B}) \lra {\rm H}, ~~~~(\B_1, ..., \B_{2n}) \lms 
\frac{\alpha(\A_1, \A_2)}{\alpha(\A_3, \A_2)}\frac{\alpha(\A_3, \A_4)}{\alpha(\A_5, \A_4)} \ldots  
\frac{\alpha(\A_{2n-1}, \A_{2n})}{\alpha(\A_1, \A_{2n})}  . 
$$
One gets Grassmannian versions by replacing ${\cal A}$ by ${\rm Gr}$, and 
$\alpha$ by one of the maps (\ref{caniso}).

\vskip 3mm

\begin{figure}[ht]
\centerline{\epsfbox{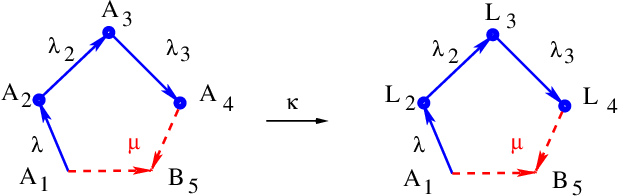}}
\caption{Generalized MV cycles 
${\cal M}_{l}\subset {\rm Gr}^3 = {\rm Conf}_{w_0}({\cal A}, {\rm Gr}^3, {\cal B})$.}
\label{tpm11}
\end{figure}

These invariants provide decompositions for both spaces in \eqref{MVGCyyyxs}.

Let us encode all the data in a polygon, as illustrated on Fig \ref{tpm11}. 
Let $l\in {\rm Conf}^+_{{\rm J} \subset {\rm I}}({\cal A}; {\cal B})(\Z^t)$. 
We show on the left an element of  ${\cal C}_{l}^{\circ}$. 
Flags or decorated flags are assigned to the vertices of a convex polygon. The vertices labeled by ${\rm J}$ are boldface. Note that although we order the vertices by choosing a reference vertex, due to the twisted cyclic invariance the story does not depend on its choice.

The solid blue sides are labeled by a pair of decorated flags. 
There is an invariant $\lambda_E \in {\rm P}$ assigned to such a side $E$. 
It is provided by the tropicalization of the isomorphism (\ref{conf2A2}) evaluated on ${l}$. 
The collection of dashed edges determines an invariant $\mu \in {\rm P}$.

{Recall the cone ${\rm R}^+ \subset {\rm P}$ generated by positive coroots}. The ${\cal O}$-integrality imposes restrictions on basic invariants, summarized in Lemma \ref{restr}, and illustrated on Fig \ref{tpm15}. 
\bl \la{restr}i) Let $(\A_1, \A_2, \B_3) \in {\cal C}_l^{\circ}\subset{\rm Conf}^{\cal O}({\cal A}, {\cal A}, {\cal B})$. Then ${\rm val}\circ \alpha(\A_1, \A_2) \in {\rm P}^+. $ 

ii) Let $(\B_1, \A_2, \B_3) \in {\cal C}_l^{\circ} \subset {\rm Conf}^{\cal O}({\cal B}, {\cal A}, {\cal B})$. Then 
${\rm val}\circ \mu(\A_2, \B_1, \B_3, \A_2) \in {\rm R}^+. $ 
\el
{\begin{proof}
Here i) follows from Lemma \ref{9.21.17.56h}, and ii) follows from Lemmas \ref{8.13.1.20h} \& \ref{12.12.11.h}(4). 
\end{proof}
}

\begin{figure}[ht]\centerline{\epsfbox{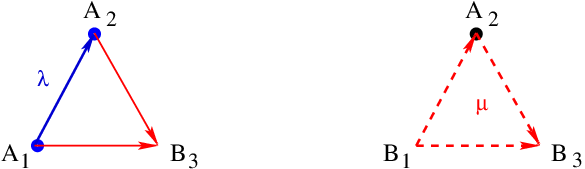}}
\caption{One has $\lambda \in {\rm P}^+$ and $\mu \in {\rm R}^+$.}\label{tpm15} \end{figure}

Applying the map $\kappa$, we replace the  
decorated flag at each boldface vertex by the corresponding lattice. 
Others remain intact. 
We use the notation $\underline{\cal A}$ for the  decorated flags 
which do not contribute the character $\chi_{\A}$ to the potential -- they are 
assigned to the unmarked vertices. 
For example, we associate to the polygons on Fig \ref{tpm11} 
the following maps
$$
\kappa: {\cal C}_{l}^\circ \lra {\rm Conf}({\cal A}, {\rm Gr}^3, {\cal B}), ~~l\in {\rm Conf}^+(\underline{\cal A}, {\cal A}^3, {\cal B})(\Z^t).
$$
$$
\pi: {\rm Conf}^*(\underline{{\cal A}}, {\cal A}^3, {\cal B}) \lra {\rm H}^3, ~~~
\mu: {\rm Conf}^*(\underline{{\cal A}}, {\cal A}^3, {\cal B}) \lra {\rm H},
$$
\be \la{restr1}
(\pi^t, \mu^t): {\rm Conf}^{+}(\underline{{\cal A}}, {\cal A}^3, {\cal B})(\Z^t) \lra {\rm P}\times ({\rm P}^+)^2 \times {\rm P},
\ee
$$
(\pi_{\rm Gr}, \mu_{\rm Gr}): {\rm Conf}(\underline{{\cal A}}, {\rm Gr}^3, {\cal B}) \lra {\rm P}\times ({\rm P}^+)^2\times {\rm P}.
$$
It is easy to check  that the targets of the invariants 
assigned to configurations of flags are the same as the targets 
of their Grassmannian counterparts.

\subsubsection{Generalized Mirkovi\'{c}-Vilonen cycles}
\la{mdgmc}
Let us recall the standard definition of  {\it Mirkovi\'{c}-Vilonen cycles}  following \cite{MV}, \cite{A}, \cite{K}. 

For $w\in W$, let ${\rm U}_{w}=w{\rm U}w^{-1}$. For $w\in W$ and $\mu\in {\rm P}$
define the {\it semi-infinite cells} 
\be
{\rm S}_{w}^{\mu}:={\rm U}_w({\cal K})t^{\mu}.
\ee 
Let $\lambda, \mu \in {\rm P}$. The closure 
 $\overline{{\rm S}_{e}^{\lambda}\cap {\rm S}_{w_0}^{\mu}}$ is non-empty  if and only if  $\lambda-\mu\in {\rm R}^+$. 
In that case, it is also well known that  $\overline{{\rm S}_{e}^{\lambda}\cap {\rm S}_{w_0}^{\mu}}$ has pure dimension ${\rm ht}(\lambda-\mu):=\langle\rho, \lambda-\mu\rangle$.
\bd
A component of $\overline{{\rm S}_{e}^{\lambda}\cap {\rm S}_{w_0}^{\mu}} \subset {\rm Gr}$ 
is called an {\it MV cycle} of coweight $(\lambda,\mu)$. 
\ed
Since ${\rm H}$ normalizes ${\rm U}_w$, for each $h\in {\rm H}({\cal K})$ such that $[h]=t^{\nu}$, we have $h \cdot {\rm S}_{w}^{\mu}={\rm S}_{w}^{\mu+\nu}$. Therefore if $V$ is an MV cycle of coweight $(\lambda,\mu)$, then $h\cdot V$ is an MV cycle of coweight $(\lambda+\nu,\mu+\nu)$. The ${\rm H}({\cal K})$-orbit of  an MV cycle of coweight $(\lambda, \mu)$ is called a {\it stable MV cycle} of coweight $\lambda-\mu$. 

\vskip 2mm

Let $\underline{\lambda}=(\lambda_1,\ldots, \lambda_n)\in ({\rm P}^+)^n$. Consider the convolution variety 
\be \la{con.var.247}
{\rm Gr}_{\underline{\lambda}}=\{(\lr_1, \lr_2, \ldots, \lr_n)~|~[1]\stackrel{\lambda_1}{\lra}\lr_1\stackrel{\lambda_2}{\lra}\ldots\stackrel{\lambda_n}{\lra}\lr_n\}\subset {\rm Gr}^n.
\ee 
Let ${\rm pr}_n: {\rm Gr}^n\ra {\rm Gr}$ be the projection onto the last factor. Set
\be \la{13.3.13.407h}
{\rm Gr}_{\underline{\lambda}}^{\mu}:={\rm Gr}_{\underline{\lambda}}\cap {\rm pr}_n^{-1}\big({\rm S}_{w_0}^\mu\big).
\ee 
When n=1, under the geometric Satake correspondence, the components of ${\rm Gr}_{\lambda}^\mu$ give a basis (the MV basis) for the weight space $V_{\lambda}^{(\mu)}$, see \cite[Corollary 7.4]{MV}. It is easy to see that they are precisely  MV cycles of coweight $(\lambda,\mu)$ contained in $\overline{{\rm Gr}_{\lambda}}$, see \cite[Proposition 3]{A}.

\vskip 3mm

Now we restrict constructions in preceding subsections to four main examples associated to an (n+2)-gon.  
The $n=1$ case recovers the above three versions of {\rm MV} cycles. In this sense,  the following can be viewed as a generalization of ${\rm MV}$ cycles.

\paragraph{Example 1: ${\rm J}=[2,n+1]\subset{\rm I}=[1,n+1]$.}
Let ${\rm Conf}_{w_0}({\cal A}, {\rm Gr}^n, {\cal B})\subset {\rm Conf}_{{\rm J}\subset {\rm I}}({\rm Gr};{\cal A}, {\cal B})$ be the substack parametrizing configurations $(\A_1,\lr_2,\ldots, \lr_{n+1}, \B_{n+2})$ where $(\A_1, \B_{n+2})$ is generic. 

Recall ${\cal F}_{\G}$ in Definition \ref{torsorF}. Then
$$
{\rm Conf}_{w_0}({\cal A}, {\rm Gr}^n,  {\cal B})={\rm G}({\cal K})\backslash\big({\cal F}_{\rm G}({\cal K})\times {\rm Gr}^n\big).
$$
Since ${\cal F}_{\rm G}$ is a ${\rm G}$-torsor, we get an isomorphism
\be \la{10.9.12.2a}
i: {\rm Gr}^n \stackrel{=}{\lra}{\rm Conf}_{w_0}({\cal A}, {\rm Gr}^n, {\cal B}),~~~(\lr_1, \ldots, \lr_{n})\lms ({\rm U}, \lr_1,\ldots,\lr_{n}, \B^-). 
\ee
From now on we identify ${\rm Gr}^n$ with ${\rm Conf}_{w_0}({\cal A}, {\rm Gr}^n, {\cal B})$.

There is a map, whose construction is illustrated on the right of Fig \ref{tpm11}:
$$
\pi_{\rm Gr}: {\rm Conf}_{w_0}({\cal A}, {\rm Gr}^{n}, {\cal B}) \lra {\Bbb P}:= {\rm P}\times ({\rm P}^+)^{n-1}\times {\rm P}.
$$
Its fibers are finite dimensional subvarieties ${\rm Gr}_{\lambda; \underline{\lambda}}^\mu$:
\be \la{MVlm}
{\rm Gr}^n = \coprod {\rm Gr}_{\lambda; \underline{\lambda}}^\mu, ~~~~\mbox{where} ~~
(\lambda, \underline{\lambda},\mu)\in {\rm P}\times ({\rm P}^+)^{n-1}\times {\rm P}.
\ee
By \eqref{13.3.13.246h} we see that 
$$
{\rm Gr}_{\lambda;\underline{\lambda}}^{\mu}=\{(\lr_1,\ldots, \lr_n)\in {\rm Gr}^n~|~ \lr_1\stackrel{\lambda_2}{\lra}\ldots\stackrel{\lambda_n}{\lra}\lr_n,~\lr_1\in {\rm S}_e^{\lambda},~ \lr_n\in {\rm S}_{w_0}^{\mu}\},~~~~~\underline{\lambda}:=(\lambda_2,\ldots, \lambda_n).
$$
When $n=1$, it is the intersection ${\rm S}_{e}^{\lambda}\cap {\rm S}_{w_0}^{\mu}$. 
Note that the very notion of MV cycles depends on the choice of the pair ${\rm H}\subset \B$.
We transport the MV cycles to ${\rm Conf}_{w_0}({\cal A},{\rm Gr},{\cal B})$ by the isomorphism \eqref{10.9.12.2a}. It is then independent of the pair chosen.  In general we define
\bd \la{MVlm1ayxsh}
The irreducible components of $\overline{{\rm Gr}_{\lambda; \underline{\lambda}}^\mu}$ are called the {\it generalized  Mirkovi\'{c}-Vilonen cycles} of coweight $(\lambda, \underline{\lambda}, \mu)$. 
\ed

Similarly the left of Fig \ref{tpm11} provides a map
\be \la{13.2.22.852h}
\pi^t: {\rm Conf}^+(\underline{\cal A}, {\cal A}^{n}, {\cal B})(\Z^t) \lra  {\rm P}\times ({\rm P}^+)^{n-1}\times {\rm P}.
\ee
Let ${\bf P}_{\lambda; \underline{\lambda}}^{\mu}:={\rm Conf}^+(\underline{\cal A}, {\cal A}^{n}, {\cal B})(\Z^t)_{\lambda; \underline{\lambda}}^\mu$ be the fiber of map \eqref{13.2.22.852h} over $(\lambda, \underline{\lambda}, \mu)$. Then
\be \la{13.2.22.902h}
{\rm Conf}^+(\underline{\cal A}, {\cal A}^{n}, {\cal B})(\Z^t)= \coprod {\bf P}_{\lambda; \underline{\lambda}}^{\mu}~~~~\mbox{where} ~~
(\lambda, \underline{\lambda},\mu)\in {\rm P}\times ({\rm P}^+)^{n-1}\times {\rm P}.
\ee
By definition $\pi^t\circ \val$ and  $\pi_{\rm Gr}\circ \kappa$ deliver the same map from ${\cal C}_{l}^\circ$ to ${\Bbb P}$. 
Thus we arrive at
\be \la{MVlm1}
{\cal M}_l:=\overline{{\cal M}^\circ_l} \subset \overline{{\rm Gr}_{\lambda; \underline{\lambda}}^\mu}, ~~~~
l\in {\bf P}_{\lambda; \underline{\lambda}}^{\mu}:={\rm Conf}^+_{w_0}({\cal A}, {\cal A}^{n}, {\cal B})(\Z^t)_{\lambda; \underline{\lambda}}^\mu.
\ee
\bt \la{MVlm1a}
The cycles (\ref{MVlm1}) are precisely the generalized MV cycles of coweight $(\lambda, \underline{\lambda},\mu)$. 
\et

\paragraph{Example 2: ${\rm J}={\rm I}=[2,n+1]$.} 
Let ${\rm Conf}_{w_0}({\cal B}, {\rm Gr}^n, {\cal B})\subset {\rm Conf}_{{\rm J}\subset {\rm I}}({\rm Gr};{\cal A}, {\cal B})$ be the substack parametrizing configurations $(\B_1,\lr_2,\ldots, \lr_{n+1}, \B_{n+2})$ where $(\B_1, \B_{n+2})$ is generic. 

 Similarly, we get an isomorphism of stacks
\be \la{10.9.12.2}
i_s: {\rm H}({\cal K})\backslash{\rm Gr}^n\stackrel{=}{\lra}{\rm Conf}_{w_0}({\cal B},{\rm Gr}^n, {\cal B}),~~~(\lr_1,\ldots, \lr_n)\lms (\B, \lr_1,\ldots, \lr_n, \B^-).
\ee
Here the group ${\rm H}({\cal K})$ acts diagonally  on ${\rm Gr}^n$. Let $h\in {\rm H}({\cal K})$. If $[h]=t^{\mu}$, then $h\cdot \overline {{\rm Gr}_{\lambda;\underline{\lambda}}^\nu}=\overline {{\rm Gr}_{\lambda+\mu;\underline{\lambda}}^{\nu+\mu}}$. It provides an isomorphism between the sets of components of both varieties.
\bd The ${\rm H}({\cal K})$-orbit of a generalized $MV$ cycle of coweight $(\lambda,\underline{\lambda}, \nu)$ is called a {\it generalized stable MV cycle} of coweight $(\underline{\lambda},\lambda-\nu)$. 
\ed
When $n=1$, it recovers the usual stable MV cycles.
The generalized stable MV cycles live naturally on the stack ${\rm H}({\cal K})\backslash {\rm Gr}^n$.
The isomorphism \eqref{10.9.12.2}
transports them to 
${\rm Conf}_{w_0}({\cal B},{\rm Gr}^n, {\cal B})$.

\vskip 2mm

The solid blue arrows and the triple of dashed reds on Fig \ref{tpm13a} provide  a canonical projection
$$(\pi^t,\mu^t): {\rm Conf}^+({\cal B}, {\cal A}^{n}, {\cal B})(\Z^t) \lra ({\rm P}^+)^{n-1}\times {\rm P}.$$
Let ${\bf A}_{\underline{\lambda}}^\mu:= {\rm Conf}^+({\cal B}, {\cal A}^{n}, {\cal B})(\Z^t)_{\underline{\lambda}}^\mu$ be its fiber over $(\underline{\lambda}, \mu)$. Then 
\be  
{\rm Conf}^+(\underline{\cal B}, {\cal A}^{n}, {\cal B})(\Z^t)= \coprod {\bf A}_{\underline{\lambda}}^{\mu}~~~~\mbox{where} ~~
\underline{\lambda}\in ({\rm P}^+)^{n-1}, ~~\mu \in {\rm P}.
\ee
On the other hand, our general construction provides us with the irreducible cycles
\be \la{MVlm1s}
{\cal M}_l:=\overline{{\cal M}_l^\circ} \subset {\rm H}({\cal K})\backslash{\rm Gr}^n= {\rm Conf}_{w_0}({\cal B}, {\rm Gr}^{n}, {\cal B}), ~~~~
l\in {\bf A}_{\underline{\lambda}}^\mu.
\ee

\bt \la{MVlm1sa}
The cycles (\ref{MVlm1s}) are precisely the generalized stable MV cycles of coweight $(\underline{\lambda},\mu)$. 
\et

\begin{figure}[ht]
\centerline{\epsfbox{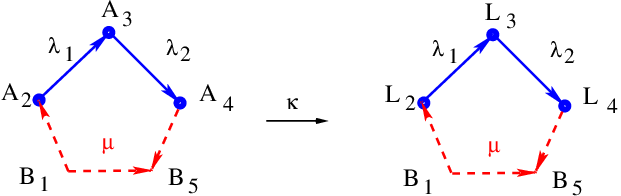}}
\caption{Generalized stable MV cycles 
${\cal M}_{l}\subset {\rm Conf}({\cal B}, {\rm Gr}^3, {\cal B}) = {\rm H}({\cal K})\backslash {\rm Gr}^3$.}
\label{tpm13a}
\end{figure}

\paragraph{Example 3:  ${\rm J}={\rm I}=[1,n+1]$.} By Iwasawa decomposition 
we get an isomorphism
\be \la{stackk1}
i_b: \B^-({\cal O})\backslash {\rm Gr}^n \stackrel{=}{\lra} {\rm Conf}({\rm Gr}^{n+1},{\cal B}),~~~(\lr_1,\ldots, \lr_n)\lms ([1], \lr_1,\ldots, \lr_n, \B^{-}).
\ee

There are two projections, illustrated on Fig \ref{tpm12}:
\be \la{lm}
(\pi_{\rm Gr}, \mu_{\rm Gr}): {\rm Conf}({\rm Gr}^{n+1}, {\cal B}) \lra ({\rm P}^+)^n \times {\rm P}, 
\ee
\be \la{mpp}
(\pi^t, \mu^t): {\rm Conf}^{+}({\cal A}^{n+1}, {\cal B})(\Z^t) \lra ({\rm P}^+)^n \times {\rm P}.
\ee
Their fibers over $(\underline \lambda, \mu)\in ({\rm P}^+)^n\times {\rm P}$ provide decompositions
\be \la{decompo}
{\rm Conf}({\rm Gr}^{n+1}, {\cal B}) = 
\coprod_{\underline \lambda, \mu }{\rm Conf}({\rm Gr}^{n+1}, {\cal B})^\mu_{\underline \lambda}.
\ee
\be
{\rm Conf}^+({\cal A}^{n+1}, {\cal B})(\Z^t) = 
\coprod_{\underline \lambda, \mu }{\rm Conf}^+({\cal A}^{n+1}, {\cal B})(\Z^t)^\mu_{\underline \lambda}.
\ee
By definition, these decompositions are compatible under the map $\kappa$.
We get irreducible cycles
\be\la{MVn+1bb}
{\cal M}_l:=\overline{{\cal M}^\circ_l} \subset  \B^-({\cal O})\backslash {\rm Gr}^n={\rm Conf}({\rm Gr}^{n+1}, {\cal B}), ~~~~
l \in {\bf B}^\mu_{\underline \lambda}:={\rm Conf}^+({\cal A}^{n+1}, {\cal B})(\Z^t)^\mu_{\underline \lambda}.
\ee
\begin{figure}[ht]
\centerline{\epsfbox{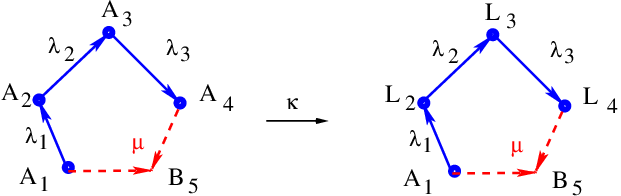}}
\caption{Generalized MV cycles 
${\cal M}_{l}\subset {\rm Conf}({\rm Gr}^4, {\cal B}) = {\rm B}^-({\cal O})\backslash {\rm Gr}^3$.}
\label{tpm12}
\end{figure}

The connected group ${\rm B}^-({\cal O})$ acts diagonally ${\rm Gr}^n$. It preserves components of  subvarieties $\overline{{\rm Gr}_{\underline{\lambda}}^\mu}$  in \eqref{13.3.13.407h}. 
Hence these components live naturally on the stack $\B^-({\cal O})\backslash {\rm Gr}^n$. We transport them to ${\rm Conf}({\rm Gr}^{n+1},{\cal B})$ by  \eqref{stackk1}. 
\bt \la{MVn+1bbb}
The cycles  (\ref{MVn+1bb}) are precisely the components of  $\B^{-}({\cal O})\backslash\overline{{\rm Gr}_{\underline{\lambda}}^\mu}$. 
\et

\paragraph{Example 4: ${\rm J}={\rm I}=[1,n+2]$.} 
There is an isomorphism
\be
i_g: {\rm G}({\cal O})\backslash {\rm Gr}^{n+1}\stackrel{=}{\lra}{\rm Conf}_{n+2}({\rm Gr}),~~~(\lr_1,\ldots, \lr_{n+1})\lra ([1],\lr_1,\ldots, \lr_{n+1}).
\ee
 We arrive at irreducible cycles defined in Definition \ref{9.22.12.13h}:
$$
{\cal M}_l:=\overline{{\cal M}^\circ_l} \subset  {\rm G}({\cal O})\backslash {\rm Gr}^{n+1}= {\rm Conf}_{n+2}({\rm Gr}) ~~~~
l \in {\bf C}_{\underline \lambda}:= {\rm Conf}^{+}_n({\cal A})(\Z^t)_{\underline \lambda}.
$$ 
This example recovers Theorem \ref{5.8.10.45b}.

\vskip 2mm

\begin{figure}[ht]
\centerline{\epsfbox{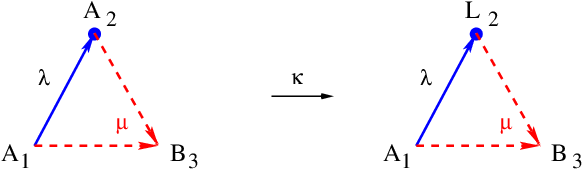}}
\caption{Mirkovi\'{c}-Vilonen cycles ${\cal M}_{l}\subset {\rm Conf}_{w_0}({\cal A}, {\rm Gr}, {\cal B}) =  {\rm Gr}$.}
\label{tpm7}
\end{figure}

\begin{figure}[ht]
\centerline{\epsfbox{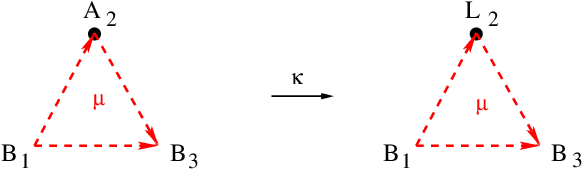}}
\caption{Stable Mirkovi\'{c}-Vilonen cycles 
${\cal M}_{l}\subset {\rm Conf}_{w_0}({\cal B}, {\rm Gr}, {\cal B}) = {\rm H}({\cal K})\backslash {\rm Gr}$.}
\label{tpm8}
\end{figure}

\begin{figure}[ht]
\centerline{\epsfbox{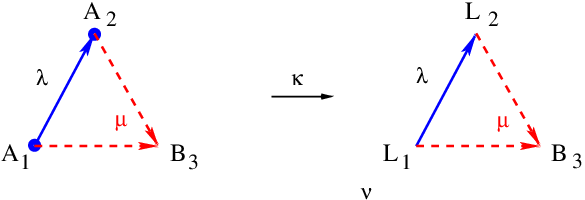}}
\caption{MV cycles which lie in ${\rm Gr}_\lambda$ are the cycles 
${\cal M}^\circ_{l}\subset {\rm Conf}({\rm Gr}, {\rm Gr}, {\cal B})_\lambda = {\rm B}^-({\cal O})\backslash {\rm Gr}_\lambda$.}
\label{tpm9}
\end{figure}

\begin{figure}[ht]
\centerline{\epsfbox{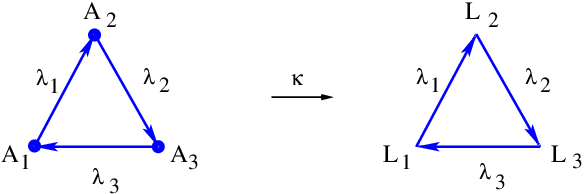}}
\caption{Generalized MV cycles 
${\cal M}_{l}\subset {\rm Conf}({\rm Gr}, {\rm Gr}, {\rm Gr})$.}
\label{tpm10}
\end{figure}

Specializing Theorems \ref{MVlm1a}-\ref{MVn+1bbb} to $n=1$, we get
\bt \la{kth}
1) Mirkovi\'{c}-Vilonen cycles of coweight $(\lambda, \mu)$ are precisely the cycles
$$
{\cal M}_l \subset {\rm Gr}, ~~~~l\in 
{\bf P}_{\lambda}^{\mu}:={\rm Conf}^+(\underline {\cal A}, {\cal A},  {\cal B})(\Z^t)^\mu_\lambda~~\mbox{ for}~~~ {\cal W} = \chi_{\A_2}.$$

2) Stable Mirkovi\'{c}-Vilonen cycles of coweight $\mu$ are precisely the cycles 
$$
{\cal M}_l \subset {\rm H}({\cal K})\backslash {\rm Gr}, ~~~~l\in {\bf A}_{\mu}:=
{\rm Conf}^+({\cal B}, {\cal A},  {\cal B})(\Z^t)^\mu~~\mbox{ for}~~~ {\cal W} = \chi_{\A_2}
.$$

3) Mirkovi\'{c}-Vilonen cycles of coweight $(\lambda, \mu)$ which lie in 
$\overline {\rm Gr}_\lambda \subset {\rm Gr}$
are precisely the cycles 
$$
{\cal M}_l\subset {\rm B}^-({\cal O})\backslash {\rm Gr}, ~~~~l\in {\bf B}_{\lambda}^{\mu}:=
{\rm Conf}^+({\cal A}, {\cal A},  {\cal B})(\Z^t)_\lambda^\mu~~\mbox{ for}~~~ {\cal W} = \chi_{\A_1}+\chi_{\A_2}
$$ 
\et
Theorem \ref{kth} is proved in Section \ref{sec12.1.1}.

Note that there is a positive birational isomorphism ${\rm Conf}({\cal B}, {\cal A},   {\cal B}) \stackrel{\sim}{=} {\rm U}$. Thus we identify ${\rm Conf}^+({\cal B}, {\cal A},   {\cal B})(\Z^t)$ with the subset of 
${\rm U}(\Z^t)$ used by Lusztig \cite{L}, \cite{L1} to parametrize the canonical basis in Lemma \ref{8.20.3.42h}. 
Then Theorem \ref{kth} is equivalent to the 
main results of Kamnitzer's paper \cite{K}. Our approach, using the moduli 
space ${\rm Conf}({\cal B}, {\cal A},   {\cal B})$ rather than ${\rm U}$, 
makes  parametrization of the MV cycles more natural and transparent, and 
puts it into the general framework of this paper.  

\vskip 3mm

To summarize, there are four different versions of the  cycles relevant to representation theory 
related to mixed configurations of triples, as illustrate on Fig \ref{tpm7}-\ref{tpm10}. 


\subsubsection{Constructible equations for the cycles ${\cal M}_l^\circ$} 
\la{sec2.3.2-} 
 Let $F$ be a rational function on the stack ${\rm Conf}_{\rm I}({\cal A};{\cal B})$. We generalize the construction of $D_F$ from Section \ref{sec2.2.5}. As an application, it implies that the cycles {${\cal M}_l^{\circ}$ in \eqref{MVGCyyyxs} are disjoint. }
 
Given ${\rm J}\subset {\rm I}\subset [1,n]$, let $m$ be the cardinality of ${\rm J}$. We assume ${\rm J}=\{j_1,\ldots, j_m\}$.

Consider the space 
$$
\mathfrak{X}:=X_1\times \ldots \times X_{n},~~~\mbox{where }
X_i =\left\{\begin{array}{ll}{\rm G} ~~~&\text{ if }i\in {\rm J}, \\ {\cal A} &\text{ if } i\in {{\rm I}-{\rm J}},\\ {\cal B} &\text{ otherwise}.\end{array}\right.
$$ 
Let 
$\mathfrak{X}_*$ be its subset consisting of collections $\{x_1,\ldots, x_{n}\}$ 
 whose subcollections  $\{x_{i_1},\ldots, x_{i_{n-m}}\}$, $i_s\not \in {\rm J}$,  are generic.

Given a rational function  $F$ on ${\rm Conf}_{\rm I}({\cal A};{\cal B})$,
each $x=\{x_1,\ldots, x_{n}\}\in \mathfrak{X}_*({\cal K})$ provides a  function $F_x$ on ${\cal A}^m$, whose value  on  $\{\A_{j_1},\ldots, \A_{j_m}\} \in {\cal A}^m$ is  
\be \la{13.1.30.8.43h}
F_x(\A_{j_1},\ldots, \A_{j_m}):=F(x_1',\ldots, x_{n}')\in {\cal K},~~~x_i'=\left\{\begin{array}{cl} x_j\cdot \A_{j} &\text{ if } 
j\in {\rm J},\\ x_i &\text{ otherwise}. \end{array}\right.
\ee
Then $F_x\in {\cal K}({\cal A}^m)$

Recall the map {$\val: {\cal K}({\cal A}^m)^\times \to \Z$}. We get a $\Z$-valued function
\be \la{13.1.26.10.21hhh}
D_F: \mathfrak{X}_*({\cal K}) \lra \Z,  ~~~~D_F(x):=\val(F_x).
\ee

{Recall the right action of ${\rm G}^m$ on $\C({\cal A}^m)$}. Thanks to Lemma \ref{13.1.26.11.18h} and the fact  that $F\in \Q({\rm Conf}_{\rm I}({\cal A};{\cal B}))$, we have
\be \la{13.2.22.1854h}
\forall g\in {\rm G}({\cal K}),~\forall h\in {\rm G}({\cal O})^m,~~~\val(F_{g\cdot x}\circ h)=\val(F_x).
\ee
Thus $D_F$ descends to 
\be
D_F: {\rm Conf}_{{\rm J}\subset {\rm I}}^*({\rm Gr}; {\cal A},{\cal B})\lra \Z.
\ee
Here ${\rm Conf}_{{\rm J}\subset{\rm I}}^*({\rm  Gr};{\cal A}, {\cal B})$ is a subspace of ${\rm Conf}_{{\rm J}\subset{\rm I}}({\rm  Gr};{\cal A}, {\cal B})$ consisting of the configurations whose subconfigurations of flags and decorated flags are generic. 

By definition,  ${\cal M}_{l}^{\circ}$ in {\eqref{MVGCyyyxs}} are contained in ${\rm Conf}_{{\rm J}\subset {\rm I}}^*({\rm Gr}; {\cal A},{\cal B})$. The following Theorem is a generalization of Theorem \ref{5.8.10.45a}. See Section \ref{sec7} for its proof.
\bt \la{13.1.30.742h}
Let $l\in {\rm Conf}_{{\rm J}\subset{\rm I}}^+({\cal A}; {\cal B})(\Z^t)$. Let $F\in \Q_+({\rm Conf}_{\rm I}({\cal A};{\cal B}))$. Then  $D_{F}({\cal M}_l^{\circ})\equiv F^t(l)$.
\et
 

\subsection{Canonical bases in tensor products and ${\rm Conf}({\cal A}^{n}, {\cal B})$} \la{tensor}

Recall that a collection of dominant coweights $\underline {\lambda}= (\lambda_1, ..., \lambda_n)$ gives rise to a convolution variety ${\rm Gr}_{\underline {\lambda}}\subset {\rm Gr}^n$. 
It  is open and smooth. Its dimension is calculated inductively:
\be \la{dimgr}
{\rm dim}~{\rm Gr}_{\underline {\lambda}} = 2{\rm ht}(\underline {\lambda}):= 2\langle \rho, \lambda_1 + \ldots + \lambda_n\rangle.
\ee
The subvarieties ${\rm Gr}_{\underline {\lambda}}$ form a stratification ${\cal S}$ of ${\rm Gr}^n$. 
Let ${\rm IC}_{\underline {\lambda}}$ be the ${\rm IC}$-sheaf of 
$\overline {{\rm Gr}_{\underline {\lambda}}}$.  
By the geometric Satake correspondence, 
\be \la{gSC}
{\rm H}^*({\rm IC}_{\underline {\lambda}}) = V_{\underline{\lambda}}:= V_{\lambda_1} \otimes \ldots \otimes V_{\lambda_n}. 
\ee

Let ${\rm pr}_n:{\rm Gr}^n \to {\rm Gr}$ be the projection onto the last factor. 
Recall the point $t^\mu \in{\rm Gr}$. Set 
$$
{\rm S}_\mu:= {\rm pr}_n^{-1}({\rm U}({\cal K})t^\mu) \subset {\rm Gr}^n, ~~~~
 {\rm T}_\mu:= {\rm pr}_n^{-1}({\rm U}^-({\cal K})t^\mu) \subset {\rm Gr}^n.
$$
The sum of positive coroots is a cocharacter $2\rho^{\vee}: {\Bbb G}_m \to {\rm H}$. 
It provides an action of the  group ${\Bbb G}_m$ on ${\rm Gr}^n$ 
given by the action on the last factor. The subvarieties  ${\rm S}_\mu$ 
and ${\rm T}_\mu$ are attracting and repulsing subvarieties for this action. Set
$$
{\rm Gr}^\mu_{\underline {\lambda}}:= {\rm Gr}_{\underline {\lambda}} \cap {\rm S}_\mu.
$$

\bl
If ${\rm Gr}^\mu_{\underline {\lambda}}$ 
is non-empty, then it is a subvariety of pure dimension
\be \la{dimgrs}
{\rm dim }~{\rm Gr}^\mu_{\underline {\lambda}} = {\rm ht}(\underline {\lambda};\mu):= 
\langle \rho, \lambda_1 + \ldots + \lambda_n +\mu\rangle.
\ee
\el

Denote by ${\rm Irr}(X)$ the set of top dimensional components of a variety $X$, and by  $\Q[{\rm Irr}(X)]$ 
the vector space with the bases parametrised by the set ${\rm Irr}(X)$. 

\bt \la{mmvvth} There are canonical isomorphisms 
$$
{\rm H}^*({\rm Gr}^n, {\rm IC}_{\underline {\lambda}}) =\oplus_{\mu}{\rm H}^{2{\rm ht}(\mu)}_c({\rm S}_\mu, {\rm IC}_{\underline {\lambda}}) = \oplus_{\mu}\Q[{\rm Irr}(\overline {{\rm Gr}^\mu_{\underline {\lambda}}})].
$$
\et

\begin{proof} Theorem \ref{mmvvth} for $n=1$ is proved in \cite[Section 3]{MV}. 
The proof for arbitrary $n$  follows the same line. For convenience of the reader we provide 
a complete proof.

Let $m: \C^* \times X \to X$ be a map defining an action of the group $\C^*$ on $X$. 
Let ${\cal D}(X)$ be the bounded derived category of constructible sheaves on $X$. 
 An object ${\cal F} \in {\cal D}(X)$ is {\it weakly $\C^*$-equivariant}, 
if $m^*{\cal F} = L\boxtimes {\cal F}$ for some locally constant sheaf $L$ on $\C^*$.

Recall the action of ${\rm G}_m$ on ${\rm Gr}^n$ 
defined above. Denote by 
${\rm P}_{\cal S}({\rm Gr}^n)$ the category of weakly $\C^*$-equivariant 
perverse sheaves on ${\rm Gr}^n$ 
which are constructible with respect to the stratification ${\cal S}$. 

\bl
The sheaf ${\rm IC}_{\underline{\lambda}}$ 
is locally constant along the stratification ${\cal S}$. It belongs to the category 
${\rm P}_{\cal S}({\rm Gr}^n)$.
\el 

\begin{proof}
 Given a subgroup ${\rm G}'\subset {\rm G}$, 
denote by ${\rm G}'_{[k,n]}\subset {\rm G}^n$ the subgroup  of 
elements $(e, ..., e, g, ..., g)$, with $(n-k+1)$ of $g \in {\rm G}'$.   
Denote by ${\rm G}(L)$ the subgroup stabilising a point $L\in {\rm Gr}$. 
The group  ${\rm G}(L)_{[k,n]}$ preserves the category 
${\rm P}_{\cal S}({\rm Gr}^n)$. 
Take two collections $(L_1, ..., L_n), (M_1, ..., M_n)\in {\rm Gr}^n$, with $L_1=M_1=[1]$ and
  in the same stratum. 
We can move $(L_1, ..., L_n)$ by an element of 
${\rm G}(L_1)_{[1,n]}$, getting $(M_1, M_2, L_3', ..., L'_n)$. Then we move it by an element  
of ${\rm G}(M_2)_{[2,n]}$, getting $(M_1, M_2, M_3, ..., L''_n)$, and so on, using 
subgroups ${\rm G}(L_n)_{[k,n]}$ for $k=3, 4, ...n-1$. 
In the last step we get $(M_1, ..., M_n)$. The $\C^*$-equivariance is evident.
\end{proof}

\bp
For all ${\cal P} \in {\rm P}_{\cal S}({\rm Gr}^n)$ 
we have a canonical isomorphism
\be \la{hyploc}
{\rm H}^k_c({\rm S}_\mu, {\cal P}) \stackrel{\sim}{\lra} {\rm H}^k_{{\rm T}_\mu}({\rm Gr}^n, {\cal P}). 
\ee
Both  sides vanish if $k \not = 2{\rm ht}(\mu)$. 
The functors $F_\mu:= {\rm H}_c^{2{\rm ht}(\mu)}(S_\mu, -) : {\rm P}_{\cal S}({\rm Gr}^n) \lra {\rm Vect}$ are exact.
\ep

\begin{proof} 
Isomorphism (\ref{hyploc}) follows from the hyperbolic localisation theorem of Braden 
\cite{Br}. Let us briefly recall 
how it works. 

Let $X$ be a  normal complex variety on which the group $\C^*$ acts. 
Let $F$ be the stable points variety. It is a union of components $F_1, ..., F_k$. 
Consider the attracting and repulsing subvarieties
$$
X_k^+ = \{x\in X~|~ {\rm lim}_{t\to 0}t\cdot x \in F_k\}, ~~~~
X_k^- = \{x\in X~|~ {\rm lim}_{t\to \infty}t\cdot x \in F_k\}, 
$$
Let $X^+$ (resp. $X^-$) be the disjoint union of all the $X_k^+$ (resp. $X_k^-$). 
There are projections
$$
\pi^{\pm}: X^{\pm} \to F, ~~~~ \pi^+(x)={\rm lim}_{t\to 0}t\cdot x , ~~~\pi^-(x)={\rm lim}_{t\to \infty}t\cdot x.  
$$
Let $g^{\pm}: X^{\pm} \hra X$ be the natural inclusions. 
Given an object ${\cal F} \in {\cal D}(X)$, define hyperbolic localisation functors
$$
{\cal F}^{!*}:= (\pi^+)_!(g^+)^*{\cal F}, ~~~~{\cal F}^{*!}:= (\pi^-)_*(g^-)^!{\cal F}.
$$
Combining Theorem 1  and Section 3 of \cite{Br}, we have the following result, which implies 
(\ref{hyploc}).
\bp 
If ${\cal F}$ is weakly $\C^*$-equivariant,  the natural map 
${\cal F}^{!*}\to {\cal F}^{*!}$ is an isomorphism. 
\ep

Let us prove the vanishing. 
One has  ${\rm H}^k_c({\rm Gr}^\mu_{\underline {\lambda}}, \Q) =0$ for 
$k> 2{\rm dim}{\rm Gr}^\mu_{\underline {\lambda}} = 2{\rm ht}(\underline {\lambda}; \mu)$.
 Due to perversity, the restriction of any $ {\cal P} \in {\rm P}_{\cal S}({\rm Gr}^n)$ 
to ${\rm Gr}_{\underline {\lambda}}$ 
lies in degrees $\leq - {\rm dim}{\rm Gr}_{\underline {\lambda}} = -2{\rm ht}(\underline {\lambda})$. 
So 
\be \la{esti}
{\rm H}^k_c({\rm Gr}^\mu_{\underline {\lambda}}, {\cal P}) =0 ~~~\mbox{if $k>2{\rm ht}(\mu)$}. 
\ee
Although ${\rm S}_\mu$ is infinite dimensional, we can slice it by its intersections with 
the strata ${\rm Gr}_{\underline {\lambda}}$. Since the estimate (\ref{esti}) 
on each strata does not depend on $\underline {\lambda}$, a devissage 
using exact triangles $j_!j^* {\cal A} \to{\cal A} \to i_!i^* {\cal A}$ tells that 
$$
{\rm H}^k_c({\rm S}_\mu, {\cal P}) =0 ~~~\mbox{if $k>2{\rm ht}(\mu)$}. 
$$
Applying the duality, and using the fact that $ \ast{\cal P} = {\cal P}$, we get the dual estimate
$$
{\rm H}^k_{{\rm T}_{\mu}}({\rm Gr}^n, {\cal P}) =0 ~~~\mbox{if $k<2{\rm ht}(\mu)$}. 
$$
Combining with the isomorphism (\ref{hyploc}), we get the proof. The last claim is then obvious. 
\end{proof}

\bp \la{mainprop}
We have  natural equivalence of functors 
$$
{\rm H}^* \stackrel{\sim}{=} 
\oplus_{\mu \in {\rm P}}{\rm H}_c^{2{\rm ht}(\mu)}(S_\mu, -) : {\rm P}_{\cal S}({\rm Gr}^n) \lra {\rm Vect}.
$$
\ep

\begin{proof} The proof of Theorem 3.6 in \cite{MV} works in our case. 
Namely, the two filtrations of ${\rm Gr}^n$ by the closures of ${\rm S}_\mu$ and ${\rm T}_\mu$ 
give rise to two filtrations of ${\rm H}^*$, 
given by the kernels of ${\rm H}^* \to {\rm H}^*_c(\overline {\rm S}_\mu, -)$ and 
the images of ${\rm H}^*_{\overline {\rm T}_\mu}({\rm Gr}^n, -) \to {\rm H}^*$. The vanishing 
implies ${\rm H}^{2{\rm ht}(\mu)}_c(\overline {\rm S}_\mu, -) = {\rm H}^{2{\rm ht}(\mu)}_c({\rm S}_\mu, -)$ and 
 ${\rm H}^{2{\rm ht}(\mu)}_{\overline {\rm T}_\mu}({\rm Gr}^n, -) = {\rm H}^{2{\rm ht}(\mu)}_{{\rm T}_\mu}({\rm Gr}^n, -)$, 
and the composition ${\rm H}^{2{\rm ht}(\mu)}_{{\rm T}_\mu}({\rm Gr}^n, -) \to {\rm H}^{2{\rm ht}(\mu)} \to 
{\rm H}^{2{\rm ht}(\mu)}_c({\rm S}_\mu, -)$ is an isomorphism. So the two filtrations split each other. 
\end{proof}

\bc
The global cohomology functor ${\rm H}^*: {\rm P}_{\cal S}({\rm Gr}^n) \lra {\rm Vect}$ is faithful and exact. 
\ec

Denote by ${\rm H}^p_{\rm per}{\cal F}$ the cohomology of an ${\cal F} \in D^b_{\cal S}({\rm Gr}^n)$ 
for the the perverse $t$-structure. 
Let $j: {\rm Gr}_{\underline{\lambda}}\hra \overline {{\rm Gr}_{\underline{\lambda}}}$ be the natural embedding,  
 ${\cal J}_!(\underline {\lambda}, \Q):= {\rm H}^0_{\rm per}(j_!\Q[{\rm dim}{\rm Gr}_{\underline{\lambda}}])$, and 
${\cal J}_*(\underline {\lambda}, \Q):= {\rm H}^0_{\rm per}(j_*\Q[{\rm dim}{\rm Gr}_{\underline{\lambda}}])$. 
The following Lemma is a generalisation of Lemma 7.1 of \cite{MV}. 
\bl
The category ${\rm P}_{\cal S}({\rm Gr}^n)$ is semi-simple. The sheaves ${\cal J}_!(\underline {\lambda}, \Q)$, 
${\cal J}_*(\underline {\lambda}, \Q)$, and ${\cal J}_{!*}(\underline {\lambda}, \Q)$ are isomorphic. 
\el

\begin{proof} 
Let us prove first the parity vanishing for 
the stalks of the sheaf ${\cal J}_{!*}(\underline {\lambda}, \Q)$: 
the stalks could have non-zero cohomology only at even degrees. 
For $n=1$ it is proved in \cite{L4}. It can also be proved by using the Bott-Samelson resolution 
of the Schubert cells in the affine (i.e. Kac-Moody) case, as was explained to us by A. Braverman.
 Let ${\cal F}$ be a Kac-Moody flag variety. 
Take an element $w=w_{1} ... w_{n}$ of the affine Weyl group such that 
$l(w) = l(w_{1}) + ... + l(w_{n})$. Denote by ${\cal F}_{w_1, ..., w_n}$ the variety parametrising flags 
$(F_1=[1], F_2, ..., F_n)$ such that the pair $(F_i, F_{i+1})$ is in the incidence relation $w_i$. 
Choose reduced decompositions $[w_{1}], ...  , [w_{n}]$ of the elements 
$w_{1}, ...  , w_{n}$. 
 Their product is a reduced decomposition $[w]$ of $w$.  
It gives rise to the Bott-Samelson variety $X_{[w]}$. By its very definition, it is a tower 
of fibrations
$$
X([w_{1}], ...  , [w_{n}]) \lra X([w_{1}], ...  , [w_{n-1}])  \lra \ldots \lra X([w_{1}]).  
$$
The Bott-Samelson resolution 
of the affine Schubert cell ${\rm Gr}_\lambda$ 
is a smooth projective variety $X_\lambda$ with a map $\beta_\lambda: X_\lambda \to {\rm Gr}_\lambda$ which is 
$1:1$ at the open stratum, and which, according to \cite{Gau1}, \cite{Gau2}, has the following property.  
For each of the strata ${\rm Gr}_\mu\subset {\rm Gr}_\lambda$, there exists a point 
$p_\mu\in {\rm Gr}_\mu$ such that the fiber $\beta_\lambda^{-1}(p_\mu)$ of the 
Bott-Samelson resolution has a cellular decomposition with the cells being complex vector spaces. 
Therefore the stalk of the push forward  $\beta_{\lambda\ast}\Q_{X_\lambda}$ of the constant sheaf on $X_\lambda$ 
at the point $p_\mu$ satisfies the parity vanishing.  
By the decomposition theorem \cite{BBD}, the sheaf ${\rm IC}_\lambda$ is a direct summand 
of the push forward  $\beta_{\lambda\ast}\Q_{X_\lambda}$ of the constant sheaf on $X_\lambda$. 
Indeed, the latter is a direct sum of shifts of perverse sheaves, and 
it is the constant sheaf over the open stratum. Therefore the stalk of the sheaf ${\rm IC}_\lambda$ 
at the point $p_\mu$ satisfies the parity vanishing. Since the cohomology of ${\rm IC}_\lambda$ 
is locally constant over each of the stratum ${\rm Gr}_\mu$, we get the parity vanishing. 
The general case of ${\rm Gr}_{\underline{\lambda}}$ is treated very similarly to the 
case of ${\rm Gr}_{{\lambda}}$.

The rest is pretty standard, and goes as follows. 
 The strata ${\rm Gr}_{\underline{\lambda}}$ 
are simply connected: this is 
well known for $n=1$, and the strata ${\rm Gr}_{\underline{\lambda}}$ is fibered over 
   ${\rm Gr}_{\underline{\lambda'}}$ with the fiber ${\rm Gr}_{{\lambda_n}}$, where 
$\underline{\lambda} = (\underline{\lambda'},\lambda_n)$.  
Since the strata are  even dimensional over $\R$, this plus the parity vanishing implies 
that there are no extensions between the simple objects in ${\rm P}_{\cal S}({\rm Gr}^n)$. 
Indeed, by devissage this claim reduces to calculation of extensions 
between constant sheaves concentrated on two open strata. Thus there are no extensions 
in the category  ${\rm P}_{\cal S}({\rm Gr}^n)$, i.e. it is semi-simple. 

Let us show now that 
${\cal J}_!(\underline {\lambda}, \Q) = {\cal J}_{!*}(\underline {\lambda}, \Q)$. 
Since ${\rm H}^p_{\rm per}(j_!\Q_{{\rm Gr}_{\underline{\lambda}}})=0$ for $p>0$, 
there is a map $j_!\Q_{{\rm Gr}_{\underline{\lambda}}}\to {\rm H}^0_{\rm per}(j_!\Q_{{\rm Gr}_{\underline{\lambda}}}) = 
{\cal J}_{!}(\underline {\lambda}, \Q)$. 
If ${\cal J}_!(\underline {\lambda}, \Q) \not = {\cal J}_{!*}(\underline {\lambda}, \Q)$, 
since the category ${\rm P}_{\cal S}({\rm Gr}^n)$ is semisimple, 
there is a non-zero direct summand ${\cal B}$ 
of  ${\cal J}_!(\underline {\lambda}, \Q)$ supported at a lower stratum. 
Composing these two maps,  we get a non-zero map $j_!\Q_{{\rm Gr}_{\underline{\lambda}}}\to {\cal B}$. 
On the other hand, 
given a space $X$ and complexes of sheaves  ${\cal A}$ and ${\cal B}$ supported at disjoint subsets  
$A$ and $B$ respectively, one has 
${\rm Hom}(j_!{\cal A}, {\cal B})=0$, where $j:A \hra X$.  
Contradiction. The statement about 
${\cal J}_\ast$ follows by the duality. 
\end{proof} 

\bl \la{gen3.5}
There are canonical isomorphisms
$$
F_{\mu}[{\cal J}_!(\underline {\lambda}, \Q)] = \Q[{\rm Irr}(\overline {{\rm Gr}^\mu_{\underline {\lambda}}})] = 
F_{\mu}[{\cal J}_*(\underline {\lambda}, \Q)]. 
$$
\el
 \begin{proof} We prove the first claim. The second is similar. 
We follow closely the proof of Proposition 3.10 in \cite{MV}. 
Set ${\cal F}:= {\cal J}_!(\underline {\lambda}, \Q)$. Let ${\rm Gr}_{\underline{\eta}}$ 
be a stratum in the closure of ${\rm Gr}_{\underline{\lambda}}$. Let $i_{\underline{\eta}}: {\rm Gr}_{\underline{\eta}} \hra 
\overline{{\rm Gr}_{\underline{\lambda}}}$ 
be the natural embedding. Then  
$i_{\underline{\eta}}^*{\cal F} \in D^{\leq -{\rm dim}{\rm Gr}_{\underline{\eta}} -2}({\rm Gr}_{\underline{\eta}})$. 
Indeed, we use $i_\eta^*j_!\Q=0$, and ${\rm H}^p_{\rm per}j_!\Q[{\rm dim}{\rm Gr}_{\underline{\lambda}}]=0$ for $p>0$ and 
apply $i_{\underline{\eta}}^*$ to the exact triangle 
$$
\lra {\tau}_{\rm per}^{\leq -1}(j_!\Q[{\rm dim}{\rm Gr}_{\underline{\lambda}}]) \lra j_!\Q[{\rm dim}{\rm Gr}_{\underline{\lambda}}] \lra {\rm H}_{\rm per}^{0}(j_!\Q[{\rm dim}{\rm Gr}_{\underline{\lambda}}]) \lra \ldots .
$$ 
Due to dimension counts (\ref{dimgr}) and (\ref{dimgrs}), we have 
${\rm H}^k_c({\rm Gr}_{\underline{\eta}} \cap {\rm S}_\mu, {\cal F})=0$ if $k> 2{\rm ht}(\mu)-2$. Thus the devissage 
associated to the filtration of ${\rm Gr}^n$ by ${\rm Gr}_{\underline{\eta}}$  tells 
that  
there is no contribution from the lower strata ${\rm Gr}_{\underline{\eta}}$ to ${\rm H}^{2{\rm ht}(\mu)}_c$, i.e. 
${\rm H}^{2{\rm ht}(\mu)}_c({\rm S}_\mu, {\cal F}) = {\rm H}^{2{\rm ht}(\mu)}_c({\rm Gr}_{\underline{\lambda}} \cap {\rm S_\mu}, {\cal F})$. 
Now we can conclude:
$$
{\rm H}^{2{\rm ht}(\mu)}_c({\rm Gr}_{\underline{\lambda}}^\mu, {\cal F}) = 
{\rm H}^{2{\rm ht}(\mu) +2{\rm ht}(\underline{\lambda})}_c({\rm Gr}_{\underline{\lambda}}^\mu, \Q) = 
{\rm H}_c^{2{\rm dim}({\rm Gr}^\mu_{\underline{\lambda}})}({\rm Gr}^\mu_{\underline{\lambda}}, \Q).
$$
The last cohomology group has a basis given by the top dimensional components 
of ${\rm Gr}^\mu_{\underline{\lambda}}$.
\end{proof} 
Lemma \ref{gen3.5} implies that there is a canonical isomorphism
$
{\rm H}^{2{\rm ht}(\mu)}_c({\rm S}_\mu, {\rm IC}_{\underline {\lambda}}) = \Q[{\rm Irr}(\overline {{\rm Gr}^\mu_{\underline {\lambda}}})]. 
$ 
Combined with Proposition \ref{mainprop} we arrive at Theorem \ref{mmvvth}. 
\end{proof}

\paragraph{Parametrisation of a canonical basis.} Since the group ${\B}({\cal O})$ is connected, the  
projection 
$$
p: {\rm Gr}_{\underline{\lambda}}^{\mu} \lra {\rm B}({\cal O})\backslash {\rm Gr}_{\underline{\lambda}}^{\mu}
= {\rm Conf}({\rm Gr}^{n+1}, {\cal B})^\mu_{\underline \lambda}
$$ 
identifies the top components. 
So Theorem \ref{MVn+1bbb} tells that the cycles $p^{-1}({\cal M}^\circ_l)$, 
$l \in {\bf B}^\mu_{\underline {\lambda}}$, see (\ref{MVn+1bb}),  
are the top components of ${\rm Gr}_{\underline{\lambda}}^{\mu}$.  
Theorem \ref{mmvvth} plus  (\ref{gSC}) 
implies  that they give rise to classes $[p^{-1}({\cal M}^\circ_l)] \in V_{\underline{\lambda}}$. 
Moreover, the $\mu$ is the weight of the class in $V_{\underline{\lambda}}$. 
So we get the following result. 

\bt \la{tensorproductbasis}
The set ${\bf B}^\mu_{\underline {\lambda}}$ parametrises a canonical basis in the weight $\mu$ part 
$V^{(\mu)}_{\underline{\lambda}}$ of the representation $V_{\lambda_1} \otimes \ldots \otimes V_{\lambda_n}$ of ${\rm G^L}$. 
This basis is given by the classes 
$[p^{-1}({\cal M}_l)]$, $l \in {\bf B}^\mu_{\underline {\lambda}}$.
\et

\section{The potential ${\cal W}$ in special coordinates for ${\rm GL}_m$}\la{KT}

\subsection{Potential for ${\rm Conf}_3({\cal A})$  and Knutson-Tao's rhombus inequalities}
Recall that a flag $F_\bullet$ for ${\rm GL}_m$ is a collection of subspaces 
in an $m$-dimensional vector space $V_m$:
$$
F_\bullet = F_0 \subset F_1 \subset \ldots \subset F_{m-1} \subset F_m, ~~~~ {\rm dim}F_i=i. 
$$
A decorated flag for ${\rm GL}_m$ is a flag $F_\bullet$ with a choice of non-zero vectors  
$f_i \subset  F_{i}/F_{i-1}$ for each $i=1, \ldots, m$, called {\it decorations}. 
It  determines a collection of decomposable 
$k$-vectors 
$$
f_{(1)}:= f_1, ~~f_{(2)}:= f_1\wedge f_2,~~ \ldots , ~~f_{(m)}:= f_1 \wedge ... \wedge f_m. 
$$
A decorated flag is determined by a collection of decomposable $k$-vectors 
such that each  divides the next one. A linear basis $(f_1, ..., f_m)$ in the space $V_m$ determines 
a decorated flag by setting $F_k:= \langle f_1, ..., f_k\rangle$, 
and taking the projections of $f_k$ to $F_k/F_{k-1}$ to be the decorations. 

Recall the notion of an 
$m$-triangulation of a triangle  \cite[Section 9]{FG1}. It is a graph whose vertices are parametrized by a set 
\be \la{Gamma}
\Gamma_m:=\{(a,b,c)~|~ a+b+c=m, \quad a,b,c\in \Z_{\geq 0}\}.
\ee
Let $({\rm F, G, H})\in {\rm Conf}_3({\cal A})$ be a generic configuration of three decorated flags, 
described by a triple of linear bases in the space $V_m$: 
$$
{\rm F}=(f_1,\ldots, f_m), ~{\rm G}=(g_1,\ldots,g_m),~{\rm H}=(h_1,\ldots,h_m).
$$
Let $\omega\in \det V_m^*$ be a volume form. Then 
each vertex $(a,b,c)\in (\ref{Gamma})$ gives rise to a function
$$
\Delta_{a,b,c}({\rm F, G, H})=\langle f_{(a)}\wedge g_{(b)}\wedge h_{(c)}, \omega\rangle.
$$
There is a one dimensional space ${\rm L}_a^{b,c}:={\rm F}_{a+1}\cap ({\rm G}_b\oplus {\rm H}_c)$. 

Let $e_{a}^{b,c}\in {\rm L}_a^{b,c}$ such that $e_{a}^{b,c}-f_{a+1}\in {\rm F}_a$. It is easy to see that
$
e_{a}^{b+1,c-1}-e_{a}^{b,c}\in {\rm L}_{a-1}^{b+1, c}.
$
Therefore there exists a unique scalar $\alpha_{a}^{b,c}$ such that 
$
e_{a}^{b+1,c-1}-e_{a}^{b,c}=\alpha_{a}^{b,c} e_{a-1}^{b+1,c}.
$

\begin{figure}[ht]
\epsfxsize400pt
\centerline{\epsfbox{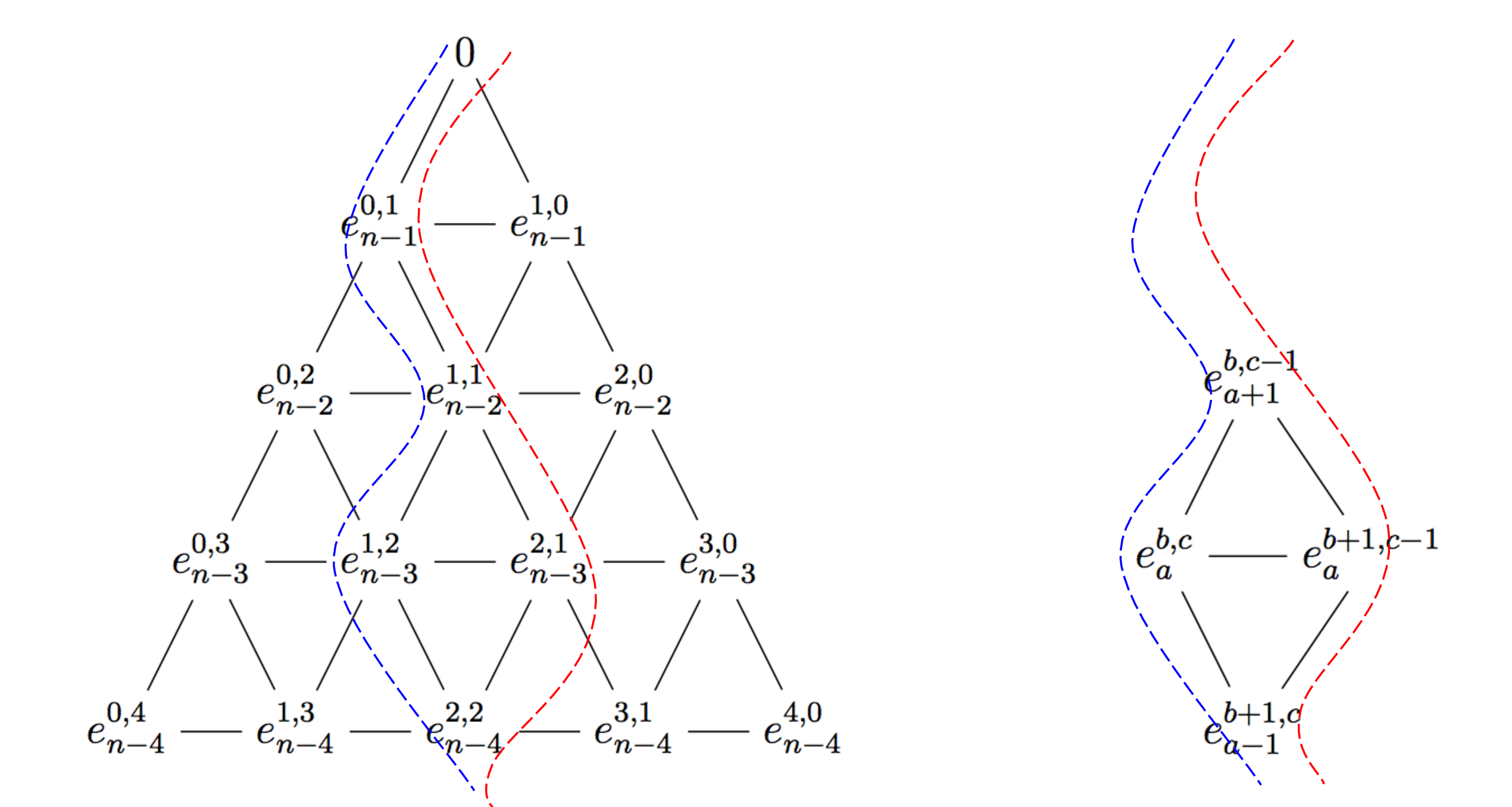}}
\caption{Zig-zag paths and bases for the decorated flag ${\rm F}$.}
\label{tri}
\end{figure}

\begin{lemma} \la{lem36}
One has 
\begin{equation}
\alpha_{a}^{b,c}=\frac{\Delta_{a-1,b+1,c}\Delta_{a+1,b,c-1}}{\Delta_{a,b,c}\Delta_{a,b+1,c-1}}.
\end{equation}
\end{lemma}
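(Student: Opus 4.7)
The plan is to extract $\alpha_a^{b,c}$ by wedging both sides of
$e_a^{b+1,c-1}-e_a^{b,c}=\alpha_a^{b,c}\,e_{a-1}^{b+1,c}$
with suitable multivectors and pairing with $\omega$, so that the right-hand side collapses to $\pm\alpha_a^{b,c}\Delta_{a,b,c}$ while the left-hand side produces a ratio of three $\Delta$'s. The only input beyond linear algebra is the normalization $e_a^{b,c}\equiv f_{a+1}\pmod{F_a}$ and the membership $e_a^{b,c}\in G_b\oplus H_c$; these, together with generic position (which makes $\dim(G_b\oplus H_c)=b+c$), drive every wedge computation below.

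First, since $e_{a-1}^{b+1,c}\equiv f_a\pmod{F_{a-1}}$ and both $e_a^{b+1,c-1},e_a^{b,c}$ lie in $F_{a+1}$ and agree with $f_{a+1}$ modulo $F_a$, wedging the defining relation on the right by $f_{(a-1)}=f_1\wedge\cdots\wedge f_{a-1}$ kills every contribution from $F_{a-1}$ and yields
$$(e_a^{b+1,c-1}-e_a^{b,c})\wedge f_{(a-1)} \;=\; (-1)^{a-1}\alpha_a^{b,c}\,f_{(a)}.$$
Wedging once more by $g_{(b)}\wedge h_{(c)}$ and pairing with $\omega$ gives $(-1)^{a-1}\alpha_a^{b,c}\Delta_{a,b,c}$ on the right. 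On the left the term $e_a^{b,c}\wedge f_{(a-1)}\wedge g_{(b)}\wedge h_{(c)}$ vanishes, because $e_a^{b,c}\in G_b\oplus H_c$ is already in the span of $g_1,\dots,g_b,h_1,\dots,h_c$. Thus
$$\langle e_a^{b+1,c-1}\wedge f_{(a-1)}\wedge g_{(b)}\wedge h_{(c)},\omega\rangle \;=\;(-1)^{a-1}\alpha_a^{b,c}\,\Delta_{a,b,c}.$$

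Next, expand $e_a^{b+1,c-1}=\sum_{i\leq b+1}\gamma_i\,g_i+\sum_{j\leq c-1}\delta_j\,h_j$; this decomposition is unique in generic position since $G_{b+1}\cap H_{c-1}=0$. In the wedge just computed, every basis vector $g_i$ with $i\leq b$ and every $h_j$ with $j\leq c-1\leq c$ lies in the span of $\{f_{(a-1)},g_{(b)},h_{(c)}\}$ and therefore contributes zero. Only the coefficient $\gamma_{b+1}$ of $g_{b+1}$ survives, producing (after counting $(-1)^{a+b-1}$ sign swaps to bring the expression to standard order)
$$(-1)^{a+b-1}\gamma_{b+1}\,\Delta_{a-1,b+1,c}\;=\;(-1)^{a-1}\alpha_a^{b,c}\Delta_{a,b,c},\qquad\text{i.e.}\qquad \alpha_a^{b,c}=(-1)^b\gamma_{b+1}\,\frac{\Delta_{a-1,b+1,c}}{\Delta_{a,b,c}}.$$

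Finally, the coefficient $\gamma_{b+1}$ is computed by exactly the same trick applied to the identity $e_a^{b+1,c-1}\wedge f_{(a)}=(-1)^a f_{(a+1)}$: wedging with $g_{(b)}\wedge h_{(c-1)}$, the same genericity argument isolates the $\gamma_{b+1}g_{b+1}$ term, giving $(-1)^{a+b}\gamma_{b+1}\Delta_{a,b+1,c-1}=(-1)^a\Delta_{a+1,b,c-1}$, hence $\gamma_{b+1}=(-1)^b\Delta_{a+1,b,c-1}/\Delta_{a,b+1,c-1}$. Substituting, the two signs $(-1)^b$ cancel and we obtain the stated formula. The only real thing to watch is bookkeeping of the signs from reordering wedges; everything else is a two-step Cramer-type extraction made clean by the two membership properties of $e_a^{b,c}$.
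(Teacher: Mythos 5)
Your proof is correct, and it is essentially the paper's argument in a different packaging: both rest on the same two facts ($e_a^{b,c}\equiv f_{a+1}\bmod F_a$ and $e_a^{b,c}\in G_b\oplus H_c$) and extract $\alpha_a^{b,c}$ by Cramer-type wedge computations pairing against $\omega$. The paper chains four wedge identities through the auxiliary ratios $\beta=\Delta_{a,b,c}/\Delta_{a+1,b,c-1}$ and $\gamma=\Delta_{a,b+1,c-1}/\Delta_{a+1,b,c-1}$ to read off $\alpha\beta\gamma$, whereas you isolate the coefficient $\gamma_{b+1}$ of $g_{b+1}$ explicitly; the sign bookkeeping in both steps of your version checks out.
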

\begin{proof}
Set
\begin{equation}
\alpha:=\alpha_{a}^{b,c}, \quad \beta:=\frac{\Delta_{a,b,c}}{\Delta_{a+1,b,c-1}},\quad \gamma:=\frac{\Delta_{a,b+1,c-1}}{\Delta_{a+1,b,c-1}}.
\la{eq34}
\end{equation}
By definition,
\begin{align}
f_{(a)}&=f_{(a-1)}\wedge e_{a-1}^{b,c+1},\nonumber\\
f_{(a+1)}&=f_{(a)}\wedge e_{a}^{b,c}=f_{(a)}\wedge e_{a}^{b-1,c+1},\nonumber\\
g_{(b)}\wedge h_{(c)}&=\beta e_{a}^{b,c}\wedge g_{(b)}\wedge h_{(c-1)},\nonumber\\
g_{(b+1)}\wedge h_{(c-1)}&= \gamma e_a^{b+1,c-1} \wedge g_{(b)}\wedge h_{(c-1)}. \nonumber
\end{align}
Therefore,
\begin{align}
g_{(b+1)}\wedge h_{(c)}&=\gamma e_a^{b+1,c-1} \wedge g_{(b)}\wedge h_{(c)}\nonumber\\
                                              &=\beta \gamma e_{a}^{b+1,c-1}\wedge e_{a}^{b,c}\wedge g_{(b)}\wedge h_{(c-1)}\nonumber\\
                                              &=\beta\gamma(e_{a}^{b+1,c-1}-e_{a}^{b,c})\wedge e_{a}^{b,c}\wedge g_{(b)}\wedge h_{(c-1)}\nonumber\\
                                              &=\beta\gamma\alpha  e_{a-1}^{b+1,c}\wedge e_{a}^{b,c}\wedge g_{(b)}\wedge {h_{(c-1)}}. \nonumber
\end{align}
So
$$
f_{(a-1)}\wedge g_{(b+1)}\wedge h_{(c)}=\alpha \beta\gamma f_{(a+1)}\wedge g_{(b)}\wedge h_{(c-1)}. 
$$
Therefore, 
$$
\alpha \beta\gamma =\frac{\Delta_{a-1,b+1,c}}{\Delta_{a+1,b,c-1}}.
$$
Go back to (\ref{eq34}), the Lemma is proved.
\end{proof}

As shown on Fig \ref{tri}, each zig-zag path $p$ provides a basis ${\rm E}_{p}$ for ${\rm F}$. For example,
$$
{\rm E}_{l}:=\{e_{0}^{0, n}, e_{1}^{0,n-1},\ldots, e_{n-1}^{0,1}\},~~~ {\rm E}_{r}:=\{e_{0}^{n,0}, e_{1}^{n-1, 1},\ldots,e_{n-1}^{1,0}\}
$$
are the bases provided by the very left and very right paths.

Given two zig-zag paths, say $p$ and $q$, there is a unique  unipotent element $u_{pq}$ stabilizing  {\rm F}, transforming ${\rm E}_{p}$ to ${\rm E}_{q}$. 
Recall the character $\chi_{\rm F}$ in section 1. For each triple $(p, q,r)$ of zig-zag paths, we have 
\begin{align}
\chi_{\rm F}(u_{pq})&=-\chi_{\rm F}(u_{qp}),\nonumber\\
\chi_{\rm F}(u_{pr})&=\chi_{\rm F}(u_{pq})+\chi_{\rm F}(u_{qr}).\nonumber
\end{align}
 If $p$, $q$ are adjacent paths, see the right of Fig \ref{tri}, then by Lemma \ref{lem36}, 
$$
\chi_{\rm F}(u_{pq})=\alpha_{a}^{b,c}=\frac{\Delta_{a-1,b+1,c}\Delta_{a+1,b,c-1}}{\Delta_{a,b,c}\Delta_{a,b+1,c-1}}.
$$

One can transform the very left path to the very right by a sequence of adjacent paths.
 Let $u\in {\rm U}_{\rm F}$ transform ${\rm E}_l$ to ${\rm E}_r$. Then
$$
\chi_{\rm F}(u)=\sum_{(a,b,c)\in \Gamma_m, a\neq 0, c\neq 0}\alpha_{a}^{b,c}=\sum_{(a,b,c)\in \Gamma_m, a\neq 0, c\neq 0}\frac{\Delta_{a-1,b+1,c}\Delta_{a+1,b,c-1}}{\Delta_{a,b,c}\Delta_{a,b+1,c-1}}.
$$
Its tropicalization 
$$
\chi_{\rm F}^t=\min_{(a,b,c)\in \Gamma_n, a\neq 0, c\neq 0}\{ \Delta_{a-1,b+1,c}^t+\Delta_{a+1,b,c-1}^t-\Delta_{a,b,c}^t-\Delta_{a,b+1,c-1}^t\}
$$
delivers 1/3 of Knutson-Tao rhombus inequalities. Clearly, same holds for the other two directions. By definition,
$$
{\cal W}({\rm F,G,H})=\chi_{\rm F}+\chi_{\rm G}+\chi_{\rm H}.
$$
Our set ${\rm Conf}_3^{+}({\cal A})(\Z^t)$ coincides with the set of hives in \cite{KT}.

\begin{figure}[ht]
\epsfxsize=3in
\centerline{\epsfbox{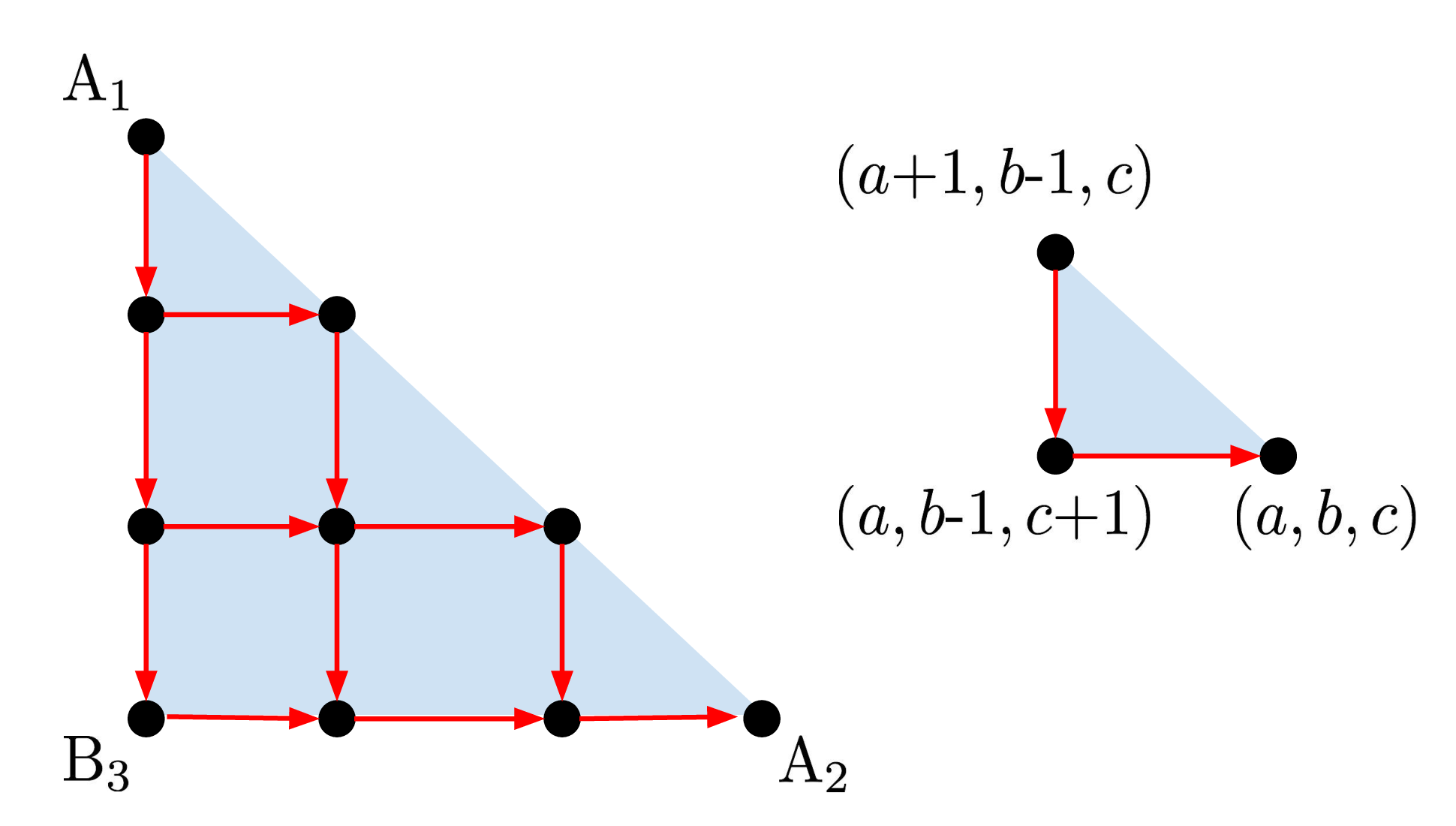}}
\caption{Calculating  the potential ${\cal W}$ on ${\rm Conf}({\cal A}, {\cal A}, {\cal B})$ 
in the special coordinates for ${\rm GL}_m$.}
\la{phase}
\end{figure}

In Sections \ref{sec4.2Gi}-\ref{sec4.2GZ} we show that the potential on the 
space ${\rm Conf}({\cal A}, {\cal A}, {\cal B})$ for ${\rm GL}_m$, written in the special coordinates there, 
recovers Givental's potential and, after tropicalization, Gelfand-Tsetlin's patterns. 

\subsection{The potential for ${\rm Conf}({\cal A}, {\cal A}, {\cal B})$ 
 and Givental's potential for ${\rm GL}_m$} \la{sec4.2Gi}

Let ${\rm G}={\rm GL}_m$. Recall the set $\Gamma_m$, see (\ref{Gamma}). 
For each triple $(a,b,c)\in \Gamma_m$, 
there is a canonical function $\Delta_{a,b,c}: {\rm Conf}_3({\cal A})\ra {\Bbb A}^1$. 
Consider the functions $\Delta_{a,b,c}$ with 
$(a,b,c)\in \Gamma_m- (0,0,m)$, illustrated by the $\bullet$-vertices on Fig \ref{giv}. 
For each triple $(a,b,c)\in \Gamma_{m-1}$, let us set
\be \la{Rratio}
{\rm R}_{a,b,c}:=\frac{\Delta_{a, b+1, c}}{\Delta_{a+1, b,c}}.
\ee

The functions ${\rm R}_{a,b,c}$ are assigned naturally to the $\circ$-vertices 
on Fig \ref{giv}. Each of them is the ratio of the $\Delta$-functions at the ends 
of the slant edge centered at a $\circ$-vertex. 
They are functions on ${\rm Conf}({\cal A},{\cal A},{\cal B})$ since 
${\rm R}_{a,b,c}(\A_1,\A_2,\A_3\cdot h)={\rm R}_{a,b,c}(\A_1,\A_2,\A_3)$ for any $h\in {\rm H}$. 
The  functions ${\rm R}_{a,b,c}$
form a coordinate system on  ${\rm Conf}({\cal A},{\cal A},{\cal B})$, 
referred to as the {\it special coordinate system}. 

The functions $\{{\rm R}_{a,b,0}\}$
provide the canonical map 
\be \la{q-map}
{\rm Conf}({\cal A},{\cal A},{\cal B}) \lra {\rm Conf}({\cal A},{\cal A}) = ({\Bbb G}_m)^{m-1}.
\ee

\begin{figure}[ht]
\epsfxsize=1.5in
\centerline{\epsfbox{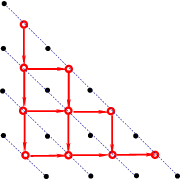}}
\caption{The Givental quiver and special coordinates 
on ${\rm Conf}({\cal A}, {\cal A}, {\cal B})$ for ${\rm GL}_4$.}
\la{giv}
\end{figure}

Consider now the {\it Givental quiver $\Gamma_{m-1}$}, whose vertices are the $\circ$-vertices, 
parametrised by the set $\Gamma_{m-1}$, 
with the arrows are going down and to the right, as shown on Fig \ref{giv}. 
For each arrow connecting two vertices, take the sourse/tail ratio 
of the corresponding functions. For example, see 
Fig \ref{phase}, the vertical arrow $\alpha$ connecting $(a+1,b-1,c)$ and $(a,b-1,c+1)$ provides
\be 
\la{13.12.28}
Q_{\alpha}=\frac{{\rm R}_{a,b-1,c+1}}{{\rm R}_{a+1, b-1,c}}=\frac{\Delta_{a,b,c+1}\Delta_{a+2,b-1,c}}{\Delta_{a+1,b-1,c+1}\Delta_{a+1,b,c}}.
\ee

\vskip 2mm 

Recall the function $\chi_{\A_1}$, $\chi_{\A_2}$ on ${\rm Conf}({\cal A},{\cal A},{\cal B})$. 
Taking the sum of $Q_{\alpha}$ over the vertical arrows $\alpha$, 
and a similar sum over the horizontal arrows $\beta$, 
and using \eqref{13.12.28}, we get
$$
\chi_{\A_1}=\sum_{\alpha \mbox{~vertical}} Q_{\alpha}, ~~~~
\chi_{\A_2}=\sum_{\beta \mbox{~horizontal}} Q_{\beta}.
$$

\paragraph{Relating to Givental's work.} 
Givental  \cite{Gi2}, pages 3-4, introduced parameters $T_{i,j}$, 
$0 \leq i\leq j \leq m$, matching the vertices of the Givental quiver: 
$$
\begin{matrix}
T_{0,0}&&&\\
T_{01}&T_{1,1}&&\\
T_{02}&T_{12}&T_{2,2}&\\
T_{03}&T_{13}&T_{2,3}&T_{3,3}\\
\end{matrix}
$$
He treats the entires on the main diagonal ${\rm a}= ({T}_{0,0}, {T}_{1,1}, ..., {T}_{m,m})$ as parameters, 
and defines the potential as a sum over the oriented edges of the quiver:
$$
{\cal W}_{\rm a} = \sum_{0 \leq i< j \leq m} \bigl({\rm exp}(T_{i,j} - T_{i,j-1}) + 
{\rm exp}(T_{i,j} - T_{i+1,j})\Bigr). 
$$
 Let $Y_{\rm a}$ be the subvariety with a given value of ${\rm a}$. Then Givental's integral is 
$$
{\cal F}({\rm a}, \hbar)= \int_{Y_{\rm a}}{\rm exp}(-{\cal W}_{\rm a}/\hbar)\bigwedge_{i=1}^n\bigwedge_{j=0}^{i-1}d T_{i.j}.
$$

Givental's variables $T_{i,j}$  match our coordinates ${\rm R}_{a,b,c}$ where $a+b+c=m-1$:
$$
{\rm R}_{m-i-1, j, i-j} = {\rm exp}(T_{i,j}).
$$
Observe that $Y_{\rm a}$ is the fiber of the map (\ref{q-map}) over a point 
${\rm a}= ({\rm R}_{m-1, 0}, {\rm R}_{m-2, 1}, ..., {\rm R}_{0, m-1})$. 
Givental's potential coincides 
with 
 $\chi_{\A_1}+\chi_{\A_2}$. Givental's volume form on $Y_{\rm a}$ coincides, up to a sign, with ours since 
$$
\bigwedge_{i=1}^n\bigwedge_{j=0}^{i-1}d T_{i.j} = \pm \bigwedge_{a+b+c=m-1, c>0}d\log {\rm R}_{a,b,c}.
$$

\subsection{The potential for ${\rm Conf}({\cal A}, {\cal A}, {\cal B})$ 
 and Gelfand-Tsetlin's patterns for ${\rm GL}_m$} \la{sec4.2GZ}

Gelfand-Tsetlin's patterns for ${\rm GL}_m$ \cite{GT1} are arrays of integers 
$\{p_{i,j}\}$, $1 \leq i \leq  j \leq m$, such that 
\be \la{ineq}
{p_{i,j+1}\leq p_{i,j}\leq p_{i+1,j+1}.}
\ee

\begin{figure}[ht]
\centerline{\epsfbox{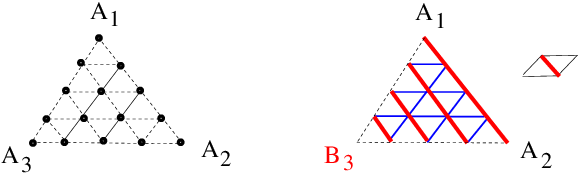}}
\caption{Gelfand-Tsetlin patterns for ${\rm GL}_4$ and the 
special coordinates for ${\rm Conf}({\cal A}, {\cal A}, {\cal B})$.}
\label{tpm50}
\end{figure}

\bt 
The special coordinate system on ${\rm Conf}({\cal A}_{{\rm GL}_m}, {\cal A}_{{\rm GL}_m}, {\cal B}_{{\rm GL}_m})$ 
together with the potential ${\cal W} = \chi_{\A_1}+\chi_{\A_2}$ 
provide a canonical isomorphism
$$
\{\mbox{\rm Gelfand-Tsetlin's patterns for ${\rm GL}_m$}\} ~~= ~~ 
{\rm Conf}^+({\cal A}_{{\rm GL}_m}, {\cal A}_{{\rm GL}_m}, {\cal B}_{{\rm GL}_m})(\Z^t). 
$$
\et

\begin{proof} 
The space ${\rm Conf}({\cal A}_{{\rm GL}_m}^3, \omega_m)$ of ${\rm GL}_m$-orbits 
on ${\cal A}_{{\rm GL}_m}^3 \times {\rm det}V_m^*$ has dimension $\frac{(m+1)(m+2)}{2}$.  
It has a coordinate system given by the functions $\Delta_{a,b,c}$, {$a+b+c=m$}, 
parametrized by the vertices of the graph $\Gamma_{m}$, shown on the left of Fig \ref{tpm50}. 
The coordinates on ${\rm Conf}({\cal A}_{{\rm GL}_m}, {\cal A}_{{\rm GL}_m}, {\cal B}_{{\rm GL}_m})$ are parametrized by 
the edges $E$ of the graph parallel to the edge $\A_1\A_2$ of the triangle. 
They are little red segments on the right of Fig \ref{tpm50}. They are given by the ratios of the 
coordinates at the ends of the edge $E$, recovering formula (\ref{Rratio}). 
 Notice that the edges $E$ are  oriented by the orientation of the side $\A_1\A_2$. The monomials of the potential 
$\chi_{\A_1}+\chi_{\A_2}$ are paramerized by the blue edges, that is by the edges of the graph 
inside of the triangle parallel to either side $\B_3\A_1$ or $\B_3\A_2$. 
We claim that the monomials of potential 
$\chi_{\A_1}+\chi_{\A_2}$ are in bijection with Gelfand-Tsetlin's inequalities. Indeed, 
a typical pair of inequalities (\ref{ineq}) is encoded by a  
part of the graph shown on Fig \ref{tpm50a}. The three coordinates $(P_1, P_2, Q)$ on 
${\rm Conf}({\cal A}_{{\rm GL}_m}, {\cal A}_{{\rm GL}_m}, {\cal B}_{{\rm GL}_m})$ assigned to the red edges are 
expressed via the coordinates $(A, B, C, D, F)$ at the vertices: 
$$
P_1 = \frac{B}{A}, ~~~~P_2 = \frac{C}{B}, ~~~~Q = \frac{E}{D}.
$$
\begin{figure}[ht]
\epsfxsize80pt
\centerline{\epsfbox{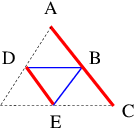}}
\caption{Gelfand-Tsetlin patterns from the potential.}
\label{tpm50a}
\end{figure}
The monomials of the potential at the two blue edges are $\frac{EA}{DB}$ and $\frac{DC}{EB}$. Their tropicalization delivers the 
inequalities 
$
p_1 \leq q, q\leq p_2.
$ 
\end{proof}

\section{Proof of Theorem \ref{8.23.7.32hh}} 
\la{9.21.12.17h}
Let ${\rm T}$ be a split torus.  Let 
$g:=\sum_{\alpha\in X^*({\rm T})} g_{\alpha} X^{\alpha}$
be a nonzero positive polynomial on ${\rm T}$, i.e.
its coefficients $g_{\alpha}\geq 0$ are non-negative. 
The integral tropical points $l\in {\rm T}(\Z^t)=X_{*}({\rm T})$ are cocharacters of ${\rm T}$.
The tropicalization of $g$ is a piecewise linear function on ${\rm T}(\Z^t)$:
$$
g^t(l)=\min_{\alpha~|~g_{\alpha}>0}\{\langle l, \alpha\rangle\}.
$$
Fix an $l\in {\rm T}(\Z^t)$. Set
$$
\Lambda_{g,l}:=\{\alpha\in X^*({\rm T})~|~ g_{\alpha}> 0, ~\langle l,\alpha\rangle =g^t(l)\}, \qquad
g_l:=\sum_{\alpha \in \Lambda_{g, l}}g_{\alpha}X^{\alpha}.
$$
The set $\Lambda_{g,l}$ is non-empty. Therefore $g_l$ is  a nonzero positive polynomial.
If $f$ and  $g$ are two such polynomials, so is the product $f\cdot g$.   We have $(f\cdot g)_l=f_{l}\cdot g_{l}$ for all $l\in {\rm T}(\Z^t).$

Let $h$ be a nonzero positive rational function on ${\rm T}$. It can be expressed as a ratio $f/g$ of two nonzero positive polynomials. Set $h_l:=f_l/g_l$. Let $h=f'/g'$ be another expression. Then
$$
f/g=f'/g' ~\Longrightarrow ~ f \cdot g'=f'\cdot g ~{\Longrightarrow} ~f_l\cdot g'_l =f'_l\cdot g_l~\Longrightarrow ~f_l/g_l =f'_l/g_l'.
$$
Hence $h_l$ is well defined.

\bl \la{8.21.14.46h} Let $h$, $l$ be as above. For each $C\in {\rm T}_l$ such that \footnote{Every transcendental point $C\in {\rm T}^{\circ}_l$ automatically satisfies such conditions.} $h_l({\rm in}(C))\in \C^*$, we have 
 \be \la{9.12.12.1}
{\val}(h(C))=h^t(l),~~~~~{\rm in}(h(C))=h_{l}({\rm in}(C)). 
\ee
\el

\begin{proof} Assume that $h$ is a nonzero positive polynomial. By definition 
$$
\forall C\in {\rm T}_l,~~~~h(C)=h_l({\rm in}(C))t^{h^t(l)}+\text{ terms with higher valuation}.
$$ 
If $h_l({\rm in}(C))\in \C^*$, then \eqref{9.12.12.1} follows. 
The argument for  a positive rational function is similar.
\end{proof}

Let $f=(f_1, \ldots, f_k):{\rm T}\ra {\rm S}$ be a positive birational isomorphism of split tori.  Let $l\in {\rm T}(\Z^t)$. We generalize the above construction by setting 
$f_l:=(f_{1,l},\ldots,f_{k,l}): {\rm T}\lra {\rm S}.$

\bl \la{9.20.13.54h} Let $f$, $l$ be as above. Let $C\in {\rm T}_l^{\circ}$. Then
\be \la{9.20.13.54h1}
\val(f(C))=f^t(l),~~~~~ {\rm in}(f(C))=f_l({\rm in}(C)). 
\ee
Let $h$ be a nonzero positive rational function on ${\rm S}$. Then 
\be \la{9.20.13.54h2}
{\rm in}\big(h\circ f (C)\big)=h_{f^t(l)}\big({\rm in}(f(C))\big).
\ee
\el

\begin{proof}
Here \eqref{9.20.13.54h1} follows directly from Lemma \ref{8.21.14.46h}. 
Note that $h_{f^t(l)}\circ f_l$ is a nonzero positive rational function on {\rm T}. Since $C$ is transcendental, we get
$$
h_{f^t(l)}\big( {\rm in}(f(C))\big)=h_{f^t(l)}\circ f_l({\rm in }(C))\in \C^*.
$$
Thus \eqref{9.20.13.54h2} follows from Lemma \ref{8.21.14.46h}.
\end{proof}

\paragraph{Proof of Theorem  \ref{8.23.7.32hh}.}
It suffices  to prove  $f({\rm T}_{l}^{\circ})\subseteq{\rm S}_{f^t(l)}^{\circ}$. The other direction is the same.
 
Let $C=(C_1,\ldots,C_k)\in {\rm T}_l^{\circ}$. Let $f(C):=(C_1',\ldots, C_k')$. 
By \eqref{9.20.13.54h1}, we get 
$
f(C)\in {\rm S}_{f^t(l)}$ 
and the field extension
$
\Q({\rm in}(C_1'),\ldots, {\rm in}(C_k'))\subseteq \Q({\rm in}(C_1),\ldots, {\rm in}(C_k)).
$

Let $g=(g_1,\ldots, g_k): {\rm S}\ra {\rm T}$ be the inverse morphism of $f$. Then $C_j=g_j\circ f(C)$ for $j\in [1,k]$.
The functions  $g_j$ are  nonzero positive rational functions on ${\rm S}$. Therefore 
$$
{\rm in}(C_j)={\rm in}(g_j\circ f(C))\stackrel{\eqref{9.20.13.54h2}}{=}g_{j,f^t(l)}({\rm in}( f(C)))\in \Q({\rm in}(C_1'),\ldots, {\rm in}(C_k')).
$$
Therefore 
$
\Q({\rm in}(C_1),\ldots, {\rm in}(C_k))\subseteq \Q({\rm in}(C_1'),\ldots, {\rm in}(C_k')).
$ 
Summarizing, we get
\be \la{9.20.2.12}
\Q({\rm in}(C_1),\ldots, {\rm in}(C_k))=\Q({\rm in}(C_1'),\ldots, {\rm in}(C_k')).
\ee
Therefore $f(C)$ is transcendental.  Theorem  \ref{8.23.7.32hh} is proved.


\section{Positive structures on the unipotent subgroups $\U$ and $\U^{-}$}
\la{sec4}

\subsection{Lusztig's data and MV cycles}
\paragraph{Lusztig's data.} 
Fix a reduced word ${\bf i}=(i_m,\ldots, i_1)$ for ${w}_0$. There are positive functions 
\be \la{13.3.27.1827h}
F_{{\bf i}, j}: \U\lra {\Bbb A}^1,~~~~x_{i_m}(a_m)\ldots x_{i_1}(a_1)\lms a_j.
\ee
 Their tropicalizations induce an isomorphism
$f_{\bf i}: \U(\Z^t)\stackrel{=}{\ra} \Z^m,~p\mapsto \{F_{{\bf i}, j}^t(p)\}.$

Let ${\Bbb N}=\{0,1,2,\ldots\}$. Lusztig proved \cite{L1} that the subset
\be \la{8.20.1.20h}
f_{\bf i}^{-1}({\Bbb N}^m)\subset \U(\Z^t)
\ee
does not depend on {\bf i}, and parametrizes the canonical basis in the quantum enveloping algebra of the Lie algebra of a maximal unipotent subgroup of the Langlands dual group ${\rm G}^L$.

\vskip 2mm
 
\bl \la{8.20.3.42h}
The subset $\U_{\chi}^{+}(\Z^t):=\{l\in \U(\Z^t)~|~ \chi^t(l)\geq 0\}$ is identified with the set \eqref{8.20.1.20h}.
\el
\begin{proof} Note that $\chi=\sum_{j=1}^mF_{{\bf i},j}$. It tropicalization is $\min_{1\leq j\leq m}\{F_{{\bf i},j}^t\}$. 
Let $l\in \U(\Z^t)$. Then 
$$
\chi^t(l)\geq 0 \Longleftrightarrow F_{{\bf i},j}^t(l)\geq 0,~ \forall j\in[1,m] \Longleftrightarrow f_{\bf i}(l)\in {\Bbb N}^m.
$$
\end{proof}

Let $l\in \U(\Z^t)$. Recall the transcendental cell ${\cal C}_{l}^{\circ}\subset \U({\cal K})$.

\bl \la{Lema10.1.1} 
Let $u\in {\cal C}_{l}^{\circ}$. Then $u\in {\U}({\cal O})$ if and only if $l\in {\U}_{\chi}^+(\Z^t)$.
\el

\begin{proof} Set $u=x_{i_m}(a_m)\ldots x_{i_1}(a_1)\in {\cal C}_l^{\circ}$. {Note that $u$ is transcendental.} Using Lemma \ref{thm10.1.1.2}, we get 
$$
\chi^t(l)=\val(\chi(u));~~~~ {F}_{{\bf i},j}^t(l)=\val(a_j), ~\forall j\in [1,m].
$$

If $l\in {\U}_\chi^+(\Z^t)$, then $\val(a_j)={F}_{{\bf i},j}^t(l)\geq 0$. 
Therefore $a_j\in {\cal O}$. Hence $u\in \U({\cal O})$. 

{Note that $\chi$ is a regular function of $\U$. So $u\in \U({\cal O})$, then $\chi(u)\in {\cal O}$.} Therefore $\chi^t(l)=\val(\chi(u))\geq 0$. Hence $l\in \U_\chi^+(\Z^t)$.
\end{proof}

\paragraph{The positive morphism $\beta$.}

Let $[g]_0:=h$ if $g=u_{+}hu_{-}$, where $u_{\pm}\in {\rm U}^{\pm}$, $h\in {\rm H}$. 
Define
\be \la{8.16.10.40h}
\beta: {\rm U}\lra {\rm H},~~u\lms [\overline{w}_0u]_0.
\ee

Let ${\bf i}=(i_m,\ldots, i_1)$ as above.  Let $w_{k}^{\bf i}:=s_{i_1}\ldots s_{i_k}\in W$. 
Let $\beta_{k}^{\bf i}:=w_{k-1}^{\bf i}(\alpha_{i_k}^{\vee})\in {\rm P}$.  {The following Lemma shows that $\beta$ is a positive map.
\bl[{\cite[Lemma 6.4]{BZ}}]
\la{8.13.1.20h}
For each $u=x_{i_m}(a_m)\ldots x_{i_1}(a_1)\in {\rm U}$, we have
$[\overline{w}_0u]_0=\prod_{k=1}^m \beta_{k}^{\bf i}(a_k^{-1}).$
\el
}
Let $l\in \U(\Z^t)$. The tropicalization $\beta^t$ becomes
$\beta^t(l)=-\sum_{k=1}^mF_{{\bf i},k}^t(l)\beta_{k}^{\bf i}.$

Note that $\beta_k^{\bf i}\in {\rm P}$ are positive coroots. If $l\in \U_{\chi}^{+}(\Z^t)$, then $-\beta^t(l) \in {\rm R}^+$. Hence
\be \la{8.23.10.22h}
\U_{\chi}^{+}(\Z^t)=\bigsqcup_{\lambda\in {\rm R}^+} {\bf A}_{\lambda}, ~~~~
{\bf A}_{\lambda}:=\{l\in \U_{\chi}^{+}(\Z^t)~|~ -\beta^t(l)=\lambda\}.
\ee
The set ${\bf A}_\lambda$ is identified with Lusztig's set parametrizing  the canonical basis of weight $\lambda$ 
\cite{L1}.

\paragraph{Kamnitzer's parametrization of MV cycles.}
Kamnitzer \cite{K} constructs a canonical bijection between Lusztig's data (i.e. $\U_{\chi}^{+}(\Z^t)$ in our set-up) and the set of stable MV cycles. Let us briefly recall Kamnitzer's result for future use.

Let $\U_{*}:={\rm U}\cap {\rm B}^{-}w_0{\rm B}^{-}$ and let $\U^{-}_{*}=\U^{-}\cap {\rm B}w_0{\rm B}$. There is an well-defined isomorphism
 \be \la{13.map.eta}
 \eta: {\rm U}_{*}\ra {\rm U}^{-}_{*},~~~u\lms \eta(u).
 \ee 
such that $\eta(u)$ is the unique element in  ${\rm U}^{-}\cap {\rm B}{w}_0u$. 
The map $\eta$ was used in  \cite{FZ1}.  Set
\be \la{kappa.kam}
\kappa_{\rm Kam}: {\rm U}_{*}({\cal K})\lra {\rm Gr},~~~u\lra [\eta(u)].
\ee

Let $l\in \U(\Z^t)$. Then ${\cal C}_l^{\circ}\subset {\rm U}_{*}({\cal K})$. Define 
\be \la{thm.kam.a.l}
{\rm MV}_l:=\overline{\kappa_{\rm Kam}({\cal C}^\circ_l)}\subset {\rm Gr}.
\ee
The following Theorem is a reformulation of Kamnitzer's result.
\bt [{\cite[Theorem 4.5]{K}}] 
\la{thm.kam}
Let $l\in {\bf A}_{\lambda}$. Then ${\rm MV}_l$ is an MV cycle of coweight $(\lambda,0)$. It gives a bijection between ${\bf A}_{\lambda}$ and the set of such MV cycles.
\et

A stable MV cycle of coweight $\lambda $ has a unique representative of coweight $(\lambda,0)$. 
Therefore 
Theorem \ref{thm.kam} tells that the set ${\bf A}_{\lambda}$ parametrizes the set of stable MV cycles of coweight $\lambda$.

\subsection{Positive functions $\chi_i, {\cal L}_i, {\cal R}_i$ on ${\U}$.}
Let $i\in  I$. We introduce positive rational functions $\chi_i$, ${\cal L}_{i}$, ${\cal R}_{i}$
on  ${\rm U}$, and $\chi_i^{-}$, ${\cal L}_i^{-}$, ${\cal R}_i^{-}$ on ${\rm U}^{-}$. 

Let  ${\bf i}=(i_1, \ldots, i_m)$ be a reduced word for $w_0$. Let
$$
x=x_{i_1}(a_1)\ldots x_{i_m}(a_m)\in {\rm U},~~~y=y_{i_1}(b_1)\ldots y_{i_m}(b_m) \in {\rm U}^{-}.
$$

Using above decompositions of $x$ and $y$, we set
$$
\chi_{i}(x):=\sum_{p~|~ i_p=i} a_p,
~~~~~
\chi_i^{-}(y):=\sum_{p~|~i_p=i}b_p.
$$
By definition the characters $\chi$ and $\chi^{-}$ have decompositions
$\chi=\sum_{i\in I}\chi_i$ and $\chi^{-}=\sum_{i\in I}\chi_i^{-}$. 

We take ${\bf i}$ 
which starts from $i_1=i$. Define the  ``left" functions:
$$
{\cal L}_{i}(x):=a_1,
~~~~~
{\cal L}_{i}^{-}(y):=b_1.
$$

We take ${\bf i}$ 
which ends by $i_m=i$. Define the ``right" functions:
$$
{\cal R}_{i}(x):=a_m,
~~~~~
{\cal R}_{i}^{-}(y):=b_m. 
$$

 It is  easy to see that the above functions are well-defined and independent of ${\bf i}$ chosen.

\vskip 2mm

For each simple reflection $s_i\in W$, set $s_{i^*}$ such that $w_0s_{i^*}=s_iw_0$.

Set ${\rm Ad}_v(g):=vgv^{-1}$. For any $u\in \U$, set $\wt{u}:={\rm Ad}_{\overline{w}_0}(u^{-1})\in \U^{-}$.

\bl 
\la{13.1.11.31h.1}
The map $u\mapsto \wt{u}$ is a positive birational isomorphism from  $\U$ to $\U^{-}$. Moreover, 
\be \la{13.1.11.31h}
\chi_i(u)=\chi_{i^*}^{-}(\wt{u}),~~~{\cal L}_i(u)={\cal R}_{i^*}(\wt{u}),~~~{\cal R}_i(u)={\cal L}_{i^*}(\wt{u})~~~~
\forall i\in I. 
\ee
\el

\begin{proof} Note that
$
{\rm Ad}_{\overline{w}_0}(x_i(-a))=y_{i^*}(a).
$ 
Let $u=x_{i_1}(a_1)\ldots x_{i_m}(a_m)\in \U$. Then
$$
\wt{u}={\rm Ad}_{\overline{w}_0}(u^{-1})
=y_{i_m^*}(a_m)\ldots y_{i_1^*}(a_1).
$$
Clearly it is a positive birational isomorphism. Identities in \eqref{13.1.11.31h} follow by definition. 
\end{proof}

\bl
\la{13.1.11.31h.2}
Let $h\in {\rm H}$, $x\in \U$ and $y\in \U^{-}$. For any $i\in I$, we have
\be \la{13.1.11.33h}
\chi_i\big({\rm Ad}_h(x)\big)=\chi_i(x)\cdot\alpha_i(h),~~~{\cal L}_i\big({\rm Ad}_h(x)\big)={\cal L}_i(x)\cdot\alpha_i(h),~~~ {{\cal R}_i\big({\rm Ad}_h(x)\big)={\cal R}_i(x)\cdot\alpha_i(h)}.
\ee
\be \la{13.1.11.34h}
{\chi_i^{-}\big({\rm Ad}_h(y)\big)=\chi_i(y)/\alpha_i(h),~~~{\cal L}_i^{-}\big({\rm Ad}_h(y)\big)={\cal L}_i^{-}(y)/\alpha_i(h),~~~  {\cal R}_i^{-}\big({\rm Ad}_h(y)\big)={\cal R}_i^{-}(y)/\alpha_i(h).}
\ee
\el

\begin{proof} Follows from the identities
$
{\rm Ad}_h\big(x_i(a)\big)=x_i(a\alpha_i(h))$ and ${\rm Ad}_h\big(y_i(a)\big)=y_i(a/\alpha_i(h)).
$ 
\end{proof}

\subsection{The positive morphisms $\Phi$ and $\eta$}\la{sec6.2.2}
We show that each $\chi_i$ is closely related to ${\cal L}_i^{-}$ by the following morphism.
\bd There exists a unique morphism $\Phi: {\rm U}^{-}\lra {\rm U}$ such that 
\be \la{13.map.Phi}
u_{-}{\rm B}= \Phi(u_{-})w_0{\rm B}.
\ee
\ed
\begin{lemma} \la{lem2}
For each $i \in I$,  one has
\be 1/{\cal L}_{i}^{-}=\chi_{i} \circ \Phi,~~~~1/\chi^{-}_i={\cal L}_i\circ \Phi
\ee
\end{lemma}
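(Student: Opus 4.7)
My plan is to reduce the general statement to the rank-one case $G=\phi_i(\mathrm{SL}_2)$ via a factorization adapted to a reduced word for $w_0$ starting (respectively ending) at $i$.

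First I would dispose of the rank-one case: the identity $y_i(b) = x_i(1/b)\,\overline{s_i}\,\alpha_i^\vee(b)\,x_i(1/b)$, already used implicitly in Section \ref{sec6.2.2}, places $y_i(b)$ in the big Bruhat cell and reads off $\Phi(y_i(b)) = x_i(1/b)$ directly from the defining relation $y_i(b)\,{\rm B} = x_i(1/b)\,\overline{s_i}\,{\rm B}$; both identities of the lemma then follow immediately from $\chi_i^-(y_i(b))={\cal L}_i^-(y_i(b))=b$ and $\chi_i(x_i(1/b))={\cal L}_i(x_i(1/b))=1/b$.

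For the first identity $\chi_i\circ\Phi = 1/{\cal L}_i^-$ in the general case, I would choose a reduced word ${\bf i}=(i_1,\ldots,i_m)$ for $w_0$ with $i_1 = i$, so that $w_0 = s_i\cdot w'_0$ with $\ell(w'_0)=m-1$ and $\overline{w_0} = \overline{s_i}\,\overline{w'_0}$. Writing $u_- = y_i(b_1)\,\tilde u$ with $b_1 = {\cal L}_i^-(u_-)$ and applying the rank-one identity inside, one obtains
\[
  u_- = x_i(1/b_1)\,\overline{s_i}\,\alpha_i^\vee(b_1)\,x_i(1/b_1)\,\tilde u.
\]
Comparing with $u_- = \Phi(u_-)\,\overline{s_i}\,\overline{w'_0}\,b$ (for the $b\in {\rm B}$ afforded by the definition of $\Phi$) and cancelling the leading $x_i(1/b_1)$, one obtains a factorization $\Phi(u_-) = x_i(1/b_1)\cdot Z$ for a uniquely determined element $Z\in {\rm U}$. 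The key claim is that $Z$ belongs to the subgroup ${\rm U}' \subset {\rm U}$ generated by the root subgroups $\{x_\alpha : \alpha\in\Delta^+\setminus\{\alpha_i\}\}$: this is justified using that $s_i$ permutes $\Delta^+\setminus\{\alpha_i\}$, so conjugation by $\overline{s_i}$ preserves ${\rm U}'$, and then tracking that all residual $\alpha_i$-directions on the right-hand side have been absorbed into the explicit factor $x_i(1/b_1)$. Since by definition $\chi_i$ vanishes identically on ${\rm U}'$ (the $\alpha_i$-coefficient in any reduced-word expression of an element of ${\rm U}'$ must be zero, as one checks in rank two via the braid relation $x_i(a)x_j(b)x_i(c)=x_j(\cdot)x_i(a+c)x_j(\cdot)$ restricted to $a+c=0$) and since $\chi_i(x_i(t)\cdot u)=t+\chi_i(u)$ for $u\in {\rm U}$, we conclude
\[
  \chi_i(\Phi(u_-)) = 1/b_1 + \chi_i(Z) = 1/b_1 = 1/{\cal L}_i^-(u_-).
\]

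For the second identity $1/\chi_i^- = {\cal L}_i\circ\Phi$, I would invoke Lemma \ref{13.1.11.31h.1} (the positive birational isomorphism $u\mapsto\widetilde u={\rm Ad}_{\overline{w_0}}(u^{-1})$ exchanging $\chi_i\leftrightarrow\chi_{i^*}^-$ and ${\cal L}_i\leftrightarrow{\cal R}_{i^*}$), together with a dual version of the same factorization argument for a reduced word of $w_0$ ending at $i$. The main obstacle will be a rigorous verification of the claim $Z\in {\rm U}'$: this amounts to understanding how $\Phi$ interacts with the parabolic ${\rm P}_i = {\rm B}\cup {\rm B} s_i {\rm B}$, and is cleanest to establish by inducting on the length of $w_0$ via the reduced-word factorization ${\bf i}=(i,i_2,\ldots,i_m)$, using the braid compatibility of Lusztig's positive atlas; once this claim is in hand, both identities of the lemma fall out.
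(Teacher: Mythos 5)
Your reduction to the rank-one identity $y_i(b)=x_i(1/b)\,\overline{s}_i\,\alpha_i^\vee(b)\,x_i(1/b)$ is the right starting point and is essentially the same germ as the paper's proof, which runs the commutation identities \eqref{10.1.bac1}--\eqref{10.1.bac2} along a reduced word. But your argument has a genuine gap at the step you yourself flag: the ``key claim'' that $Z:=x_i(-1/b_1)\,\Phi(u_-)$ lies in ${\rm U}'=\ker\chi_i$ is not a lemma on the way to the statement --- it \emph{is} the statement, since ${\rm U}'$ is exactly the kernel of the homomorphism $\chi_i:{\rm U}\to{\Bbb A}^1$, so $Z\in{\rm U}'$ is equivalent to $\chi_i(\Phi(u_-))=1/b_1$. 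The justification you offer (``all residual $\alpha_i$-directions have been absorbed into the explicit factor $x_i(1/b_1)$'') is false as a statement about the factorization of $\Phi(u_-)$: writing $\Phi(u)=x_{i_m}(c_m)\cdots x_{i_2}(c_2)x_{i_1}(1/c_1)$ as in the paper's computation, every letter with $i_p=i$, $p>1$, contributes $c_p\neq 0$ to $\chi_i$, and moreover $c_1\neq b_1$ in general (it accumulates factors $(1+c_1c_s)$ each time $y_i(c_1)$ is pushed past an $x_i(c_s)$). What is true is only that the \emph{total} $1/c_1+\sum_{p>1,\,i_p=i}c_p$ collapses to $1/b_1$, and this requires the cancellation $\frac{1}{c_1(1+c_1c_s)}+\frac{c_s}{1+c_1c_s}=\frac{1}{c_1}$ at every elementary move --- precisely the conserved quantity $h$ that the paper's proof exhibits and that your sketch omits. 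The one example you could have checked ($\G={\rm SL}_3$, given in the paper right after the lemma) already shows $\Phi(y)=x_1(\frac{1}{b_1+b_3})x_2(\cdot)x_1(\frac{b_3}{b_1(b_1+b_3)})$ with two nontrivial $\alpha_1$-letters whose sum, not whose leading term, equals $1/b_1$.

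The fallback you propose --- induction on the length of $w_0$ via $w_0=s_iw_0'$ --- is also not set up to close: $w_0'=s_iw_0$ is not the longest element of a standard parabolic, so the inductive hypothesis is not an instance of the same statement for a smaller group; you would need to formulate and prove a relative version of $\Phi$ and of the identity for arbitrary $w\in W$, which is essentially the bookkeeping the paper does explicitly with the intermediate states $f_s$. Finally, for the second identity your appeal to Lemma \ref{13.1.11.31h.1} relates ${\cal L}_i(\Phi(y))$ to ${\cal R}_{i^*}(\widetilde{\Phi(y)})$, which does not obviously connect to $\chi_i^-(y)$; the clean route, used in the paper, is to observe that $1/\chi_i^-={\cal L}_i\circ\Phi$ is the first identity applied to the inverse morphism $\Phi^{-1}:{\rm U}\to{\rm U}^-$ defined by $u\,{\rm B}^-=\Phi^{-1}(u)\,w_0{\rm B}^-$, with the roles of ${\rm U}$ and ${\rm U}^-$ exchanged.
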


${\bf Example.}$ Let ${\rm G=SL}_3$. We have 
$$
y=y_1(b_1)y_2(b_2)y_1(b_3)=y_2(\frac{b_2b_3}{b_1+b_3})y_1(b_1+b_3)y_2(\frac{b_1b_2}{b_1+b_3}).
$$ 
$$
\Phi(y)=x_1(\frac{1}{b_1+b_3})x_2(\frac{b_1+b_3}{b_2b_3})x_1(\frac{b_3}{b_1(b_1+b_3)})=x_2(\frac{1}{b_2})x_1(\frac{1}{b_1})x_2(\frac{b_1}{b_2b_3}).
$$
$$
1/{\cal L}_{1}^{-}(y)=\chi_{1}(\Phi(y))=\frac{1}{b_1}, ~~~
1/{\cal L}_{2}^{-}(y)=\chi_{2}(\Phi(y))=\frac{b_1+b_3}{b_2b_3}.
$$
$$
1/{\chi}_{1}^{-}(y)={\cal L}_{1}(\Phi(y))=b_1+b_3, ~~~
1/{\chi}_{2}^{-}(y)={\cal L}_{2}(\Phi(y))=b_2.
$$

\vskip 2mm

The proof was suggested by the  proof of Proposition 3.2 of \cite{L2}.
\begin{proof} 
We prove the first formula. The second follows similarly by considering the inverse morphism $\Phi^{-1}:\U\ra \U^{-}$ such that
$
u\B^{-}=\Phi^{-1}(u)w_0\B^{-}.
$ 

Let $i\in I$. Let $w\in W$ such that  its length $l(w)<l(s_iw)$. We use two basic identities:
\be \la{10.1.bac1}
y_i(b)x_i(a)=x_i\big(a/(1+ab)\big)y_i\big(b(1+ab)\big)\alpha_i^\vee\big(1/(1+ab)\big).
\ee
\be \la{10.1.bac2}
y_i(b)w \B=x_i(1/b){s}_i{w} \B.
\ee
By \eqref{10.1.bac2}, one can change $y_i(b)$ on the most right to $x_i(1/b)$. By \eqref{10.1.bac1}, one can ``move" $y_i(b)$ from left to the right. After finite steps, we get 
\be
\la{phi}
y_{i_1}(b_1)y_{i_2}(b_2)...y_{i_m}(b_m){\rm B}=
y_{i_1}(b_1)x_{i_m}(a_m)x_{i_{m-1}}(a_{m-1})\ldots x_{i_{2}}(a_2){s}_{i_2}\ldots {s}_{i_m}{\rm B}.
\ee
The last step is to move the very left term $y_{i_1}(b_1)$ to the right. Let 
$$
f_s(c_1,c_2,\ldots, c_m)=x_{i_{m}}(c_m)x_{i_{m-1}}(c_{m-1})...x_{i_{s+1}}(c_{s+1})y_{i_1}(c_1)x_{i_s}(c_s)\ldots x_{i_2}(c_2){s}_{i_2}\ldots {s}_{i_m}{\rm B}.
$$
We will need the relations between $\{c_i\}$ and $\{c_i'\}$ such that 
$$ 
f_s(c_1,c_2,\ldots,c_m)=f_{s-1}(c_1',c_2',\ldots,c_m')
$$
By \eqref{10.1.bac1}-\eqref{10.1.bac2}, if $i_1\neq i_s$, then $c_p=c_p'$ for all $p$. If $i_1=i_s$, then
\begin{align}
&c_p'=c_p\quad \text{for } p=s+1,\ldots, m;\nonumber\\
& c_s'=c_s/(1+c_1c_s), \quad c_1'=c_1(1+c_1c_s);\nonumber\\
&c_p'=c_p(1+c_1c_s)^{-\langle\alpha_{i_1}^{\vee}, \alpha_{i_p}\rangle} \quad \text{for }p=2,\ldots, s-1.\nonumber
\end{align}

For each $q=f_s(c_1,c_2,\ldots,c_m)$, we set
\be \la{6.21.1.1}
h(q):=\frac{1}{c_1}+\sum_{p~|~i_p=i_1, ~p>s}c_p.
\ee
If  $i_s=i_1$, then
$$
\frac{1}{c_1'}+\sum_{p~|~i_p=i_1,~p>s-1}c_p'=\frac{1}{c_1(1+c_1c_s)}+\frac{c_s}{1+c_1c_s}+\sum_{p~|~i_p=i_1, ~p>s}c_p=\frac{1}{c_1}+\sum_{p~|~i_p=i_1,~p>s}c_p.
$$
Same is true for $i_s\neq i_1$. Therefore the function (\ref{6.21.1.1}) does not depend on $s$. 

Back to (\ref{phi}), we have
\begin{align}
u{\rm B}&=y_{i_1}(b_1)y_{i_2}(b_2)...y_{i_m}(b_m){\rm B} \nonumber\\
   &=y_{i_1}(b_1)x_{i_m}(a_m)...x_{i_{2}}(a_2){s}_{i_2}...{s}_{i_n}{\B}\nonumber\\
   &=x_{i_m}(c_m)...x_{i_2}(c_2)y_{i_1}(c_1){s}_{i_2}...{s}_{i_n}{\B}\nonumber\\
   &=x_{i_m}(c_m)...x_{i_2}(c_2)x_{i_1}(1/c_1){s}_{i_1}...{s}_{i_n}{\B}\nonumber\\
   &=\Phi(u){w}_0{\rm B}\nonumber
\end{align}
Hence $\Phi(u)=x_{i_m}(c_m)...x_{i_1}(c_2)x_{i_1}(1/c_1)$. Then
$$
\chi_{i_1}(\Phi(u))=\frac{1}{c_1}+\sum_{p~|~i_p=i_1, ~p>1}c_p=h(u{\rm B})=\frac{1}{b_1}=\frac{1}{{\cal L}_{i_1}^{-}(u)}.
$$
\end{proof}

\bl \la{10.1.phi}
The morphism $\Phi: \U^{-}\ra \U$ is a positive birational isomorphism with respect to Lusztig's positive atlases on $\U^-$ and $\U$.
\el

\begin{proof}
According to the algorithm in the proof of Lemma \ref{lem2}, clearly $\Phi$ is a positive morphism. By the same argument, one can show that $\Phi^{-1}$ is a positive morphism. The Lemma is proved.
\end{proof}

The morphism $\eta$ in \eqref{13.map.eta} is the right hand side version of $\Phi$, i.e. 
$
\B^{-} u=\B^{-} w_0 \eta(u).
$ 
Similarly, 

\begin{lemma} 
\la{10.1.eta}
The morphism $\eta: \U\ra \U^{-}$ is a positive birational isomorphism. 
Moreover,
\be \la{lem2.eta}
\forall i\in  I,~~~~~~ 1/{\cal R}_{i}=\chi_{j}^{-} \circ \eta,~~~~1/\chi_i={\cal R}_i^{-}\circ \eta.
\ee
\end{lemma}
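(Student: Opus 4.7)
The plan is to deduce Lemma \ref{10.1.eta} from the already-established Lemma \ref{10.1.phi} by reducing $\eta$ to $\Phi$ via the involution $u \lms \wt u = \overline{w}_0\, u^{-1}\,\overline{w}_0^{-1}$ of Lemma \ref{13.1.11.31h.1}. Recall that $\wt{\phantom{u}}$ is a positive birational isomorphism between ${\rm U}$ and ${\rm U}^-$ in either direction, and that its coordinate compatibilities \eqref{13.1.11.31h} read $\chi_i \leftrightarrow \chi^-_{i^*}$, ${\cal L}_i \leftrightarrow {\cal R}^-_{i^*}$, ${\cal R}_i \leftrightarrow {\cal L}^-_{i^*}$.

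First I will verify the key identity
$$
\eta(u) \;=\; \wt{\Phi(\wt{u})}, \qquad u \in {\rm U}. \qquad (\ast)
$$
The right hand side manifestly lies in ${\rm U}^-$ (since $\wt v \in {\rm U}^-$ for $v \in {\rm U}$), so by the uniqueness in the defining property of $\eta$ it suffices to check $\B^-\, u = \B^-\, \overline{w}_0\,\wt{\Phi(\wt u)}$. A direct computation gives
$$
\bigl(\overline{w}_0\,\wt{\Phi(\wt u)}\bigr)\bigl(\Phi(\wt u)^{-1}\overline{w}_0^{-1}\bigr)^{-1}
= \overline{w}_0\cdot\overline{w}_0\,\Phi(\wt u)^{-1}\overline{w}_0^{-1}\cdot\overline{w}_0\,\Phi(\wt u)
= \overline{w}_0^{\,2} = s_{\rm G} \in \B^-,
$$
so $\B^-\overline{w}_0\wt{\Phi(\wt u)} = \B^- \Phi(\wt u)^{-1}\overline{w}_0^{-1}$. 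On the other hand, inverting the defining relation $\wt u\,{\rm B} = \Phi(\wt u)\,\overline{w}_0\,{\rm B}$ of $\Phi$ and then applying the bijection $\B\backslash {\rm G} \to \B^-\backslash {\rm G}$, $\B g \lms \B^- \overline{w}_0 g \overline{w}_0^{-1}$, simplifies, using $\overline{w}_0 \wt u^{-1} \overline{w}_0^{-1} = \overline{w}_0^{\,2}\, u\, \overline{w}_0^{-2} = u$ by centrality of $s_{\rm G}$, to $\B^- u = \B^- \Phi(\wt u)^{-1}\overline{w}_0^{-1}$. Combining the two identities establishes $(\ast)$.

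Once $(\ast)$ is in hand, the first claim of the lemma is immediate: $\eta$ is the composition of three positive birational isomorphisms $\wt{\phantom{u}}\colon {\rm U} \to {\rm U}^-$, $\Phi\colon {\rm U}^-\to {\rm U}$ and $\wt{\phantom{u}}\colon {\rm U}\to {\rm U}^-$ (Lemmas \ref{13.1.11.31h.1} and \ref{10.1.phi}), hence itself a positive birational isomorphism. The formulas \eqref{lem2.eta} follow by a three-step chain combining $(\ast)$ with \eqref{13.1.11.31h} and Lemma \ref{lem2}; for example,
$$
\chi_i^-(\eta(u)) = \chi_i^-\bigl(\wt{\Phi(\wt u)}\bigr) = \chi_{i^*}\bigl(\Phi(\wt u)\bigr) = \frac{1}{{\cal L}^-_{i^*}(\wt u)} = \frac{1}{{\cal R}_i(u)},
$$
and the parallel computation gives ${\cal R}_i^-(\eta(u)) = 1/\chi_i(u)$. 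The main obstacle in this plan is the coset bookkeeping in the proof of $(\ast)$, where one must carefully distinguish $w_0 \in W$ from its lift $\overline{w}_0 \in {\rm G}$ and exploit the centrality of $s_{\rm G} = \overline{w}_0^{\,2}$; once $(\ast)$ is established, both the positivity and the explicit identities are essentially automatic.
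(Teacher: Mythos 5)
Your proposal is correct, but it proves the lemma by a genuinely different route than the paper. The paper offers no separate argument for $\eta$: it introduces $\eta$ as ``the right hand side version of $\Phi$'' and disposes of the lemma with ``Similarly'', meaning that one is expected to re-run the explicit algorithm from the proofs of Lemma \ref{lem2} and Lemma \ref{10.1.phi} with the roles of left and right exchanged (moving $y_i(b_1)$ in from the right instead of the left, tracking the conserved quantity $1/c_1+\sum c_p$ on the other side, etc.). You instead reduce $\eta$ to $\Phi$ formally via the identity $\eta = \wt{\phantom{u}}\circ\Phi\circ\wt{\phantom{u}}$, after which positivity is a composition of the positive birational isomorphisms of Lemmas \ref{13.1.11.31h.1} and \ref{10.1.phi}, and the identities \eqref{lem2.eta} fall out of \eqref{13.1.11.31h} together with Lemma \ref{lem2}; I checked the coset computation for $(\ast)$ and the two chains of substitutions, and they are sound (the uniqueness you invoke follows from $\U^-\cap\B=\{e\}$). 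What your route buys is that nothing is recomputed and the precise relation between $\eta$ and $\Phi$ is made explicit, which the paper leaves implicit; what it costs is the $\overline{w}_0$-versus-$s_{\rm G}$ bookkeeping in $(\ast)$, which the paper's mirror-image computation avoids entirely. Two small remarks: the index $j$ in the paper's formula $1/{\cal R}_i=\chi_j^-\circ\eta$ is a typo for $i$, and your derivation correctly yields $\chi_i^-\circ\eta=1/{\cal R}_i$; likewise the right-hand sides of the last two identities in \eqref{13.1.11.31h} should carry the superscript $-$ (as your computation implicitly assumes), since $\wt u\in\U^-$.
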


\subsection{Birational isomorphisms $\phi_i$ of $\U$}
Let $i\in I$. Define
$$
z_i(a):=\alpha_i^{\vee}(a)y_i(-a),~~~z_i^*(a):=\alpha_i^{\vee}(1/a)y_i(1/a).
$$
Clearly $z_i(a)z_i^{*}(a)=1$. 

\blc 
\la{13.1.4.26h}
There is a birational isomorphism
\be \la{map.phi_i}
\phi_i: \U\stackrel{\sim}{\lra}\U,~~~ u\lms \overline{s}_i\cdot u \cdot z_i\big(\chi_i(u)\big).
\ee
\elc

{\bf Remark.} The map $\phi_i$ is not a positive birational isomorphism.

\begin{proof}
We need the following identities: 
\be \la{13.1.4.21h}
\overline{s}_ix_i(a)z_i(a)=x_i(-1/a).
\ee
\be
z_i^{*}(a) x_i(b-a) z_i(b)=x_i(1/a-1/b).
\ee
If $j\neq i$, then
\be \la{13.1.4.22h}
z_i^*(a)x_j(b)z_i(a)=x_j(ba^{-\langle \alpha_i^{\vee}, \alpha_j\rangle})
\ee

Let ${\bf i}=(i_1,i_2,\ldots, i_m)$ be a reduced word for $w_0$ such that $i_1=i$. For each $s\in [1,m]$, define
$$
I_s^{{\bf i},i}:=\{p\in [1,s]~|~i_p=i\}.
$$
Let $u=x_{i_1}(a_1)\ldots x_{i_m}(a_m)\in \U$. Set 
$d_s:=\sum_{k\in I_s^{{\bf i},i}} a_k.$
In particular, $d_1=a_1$, $d_m=\chi_i(u)$.

Let us assume that $u\in \U$ is generic, so that $d_s\neq 0$ for all $s\in [1,m]$. By \eqref{13.1.4.21h}-\eqref{13.1.4.22h}, we get
\begin{align}
\phi_i(u)=&\overline{s}_i\cdot x_{i_1}(a_1)x_{i_2}(a_2)\ldots x_{i_m}(a_m) \cdot z_i\big(\chi_i(u)\big)\nonumber\\
=&\big(\overline{s}_ix_{i_1}(a_1)z_i(d_1)\big)\cdot \big(z_i^*(d_1)x_{i_2}(a_2)z_i(d_2)\big)\cdot \ldots \cdot \big(z_i^{*}(d_{m-1})x_{i_m}(a_m)z_i(d_m)\big)\nonumber\\
=&x_{i_1}(a_1')x_{i_2}(a_2')\ldots x_{i_m}(a_m').
\end{align}
Here $a_1'=-1/d_1$. For $s>1$,  
\be \la{13.1.4.23h}
a_s'=\left\{\begin{array} {cc}1/d_{s-1}-1/d_s,&\text{if }i_s=i,\\a_sd_s^{-\langle \alpha_i^{\vee}, \alpha_{i_s}\rangle}, &\text{if }i_s\neq i .\\ \end{array}\right.
\ee
Thus $\phi_i(u)\in \U$. The map $\phi_i$ is well-defined.
By \eqref{13.1.4.23h}, we have $\chi_i(\phi_i(u))=-1/\chi_i(u)$. Therefore
$$
\phi_i\circ \phi_i(u)=\overline{s}_i\cdot \overline{s}_i\cdot u \cdot z_i(\chi_i(u))\cdot z_i(-1/\chi_i(u))=\overline{s}_i^2\cdot u \cdot \overline{s}_i^2.
$$
Since $\overline{s}_i^4=1$, we get $\phi_i^4={\rm id}$. Therefore $\phi_i$ is birational.
\end{proof}

Let $\lambda\in {\rm P}^+$. Recall $t^{\lambda}\in {\rm Gr}$. Recall the  ${\G}({\cal O})$-orbit ${\rm Gr}_{\lambda}$ of $t^{\lambda}$ in ${\rm Gr}$.

\bl \la{13.1.8.53h}
Let $l\in \U(\Z^t)$. For any $u\in {\cal C}_l^{\circ}$, the element
$u\cdot t^{\lambda} \in \overline{{\rm Gr}_{\lambda}}$ if and only if  $l\in \U_{\chi}^+(\Z^t).$
\el

\begin{proof}
If $l\in \U_{\chi}^+(\Z^t)$, by Lemma \ref{Lema10.1.1}, we see that $u\in \U({\cal O})$. 
Hence $u\cdot t^{\lambda} \in \overline{{\rm Gr}_{\lambda}}$.

If  $\chi^t(l)=\min_{i\in I}\{\chi_i^t(l)\}<0,$ 
then pick $i$  such that $\chi_i^{t}(l)<0$. Set $\mu:=\lambda-\chi_i^t(l)\cdot\alpha_i^{\vee}$. 
Since $y_i(t^{\langle \alpha_i, \lambda\rangle}/\chi_i(u))\in {\rm G}({\cal O})$, we get
\be \la{13.1.4.25h}
z_i^*(\chi_i(u))\cdot t^{\lambda}=\alpha_i^{\vee}(1/\chi_i(u))\cdot t^{\lambda}\cdot y_i(t^{\langle \alpha_i, \lambda\rangle}/\chi_i(u))=\alpha_i^{\vee}(1/\chi_i(u))\cdot t^{\lambda}=t^{\mu}.
\ee
Recall  the $\U_{w}({\cal K})$-orbit  ${\rm S}_w^{\nu}$ of $t^\nu$ in ${\rm Gr}$.
We have
\be
u\cdot t^{\lambda}=u z_i(\chi_i(u))\cdot z_i^*(\chi_i(u)) t^{\lambda}\stackrel{\eqref{13.1.4.25h}}{=} u z_i(\chi_i(u)) \cdot t^{\mu}\stackrel{\eqref{map.phi_i}}{=}\overline{s}_i^{-1}\phi_i(u) \overline{s}_i\cdot  t^{s_i(\mu)}\in {\rm S}_{s_i}^{s_i(\mu)}.
\ee
 It is well-known that the intersection
 ${\rm S}_{w}^{\nu}\cap \overline{{\rm Gr}_{\lambda}}$ is  nonempty if and only if  $t^{\nu}\in \overline{{\rm Gr}_{\lambda}}.$
In this case $t^{s_i(\mu)}\notin \overline{{\rm Gr}_{\lambda}}$.
Therefore  ${\rm S}_{s_i}^{s_i(\mu)}\cap \overline{{\rm Gr}_{\lambda}}$ is empty. Hence $u\cdot t^{\lambda}\notin \overline{{\rm Gr}_{\lambda}}$.
\end{proof}



\section{A positive structure on the configuration space ${\rm Conf}_{\rm I}({\cal A}; {\cal B})$}
\la{sec5}

\subsection{Left  ${\rm G}$-torsors} 
\la{sec5.1}

Let ${\rm G}$ be a group. 
Let ${X}$ be a left principal homogeneous ${\rm G}$-space, also known as a left {\rm G}-torsor. Then for any  $x, y \in {X}$ there exists a unique 
 $g_{x,y}\in {\rm G}$ such that
$
x= g_{x,y}y.
$
Clearly, 
\be \la{5.251210}
g_{x,y}g_{y,z} = g_{x,z}, \qquad g_{gx,y}=gg_{x,y},~g_{x,gy}=g_{x,y}g^{-1}, ~~g\in {\rm G}.
\ee
Given a reference point $p\in {X}$, one defines a ``$p$-distance from $x$ to $y$'':
\be \la{5.19.12.1as}
g_p(x,y):= g_{p, x}g_{y, p} \in {\rm G}. 
\ee
If $i_p: X \to {\rm G}$ is a unique isomorphism of ${\rm G}$-sets  such that $i_p(p)=e$, 
then $g_p(x,y)= i_p(x)^{-1}i_p(y)$. 

\bl \la{LEMMA.4.2}
One has:
\begin{align} 
g_p(x,y)g_p(y, z) &= g_p(x,z). \la{5.25.12.1}\\
g_p(gx,gy) &= g_p(x,y),~~~~~~g\in \G. \la{5.25.12.1.ll}\\ 
y&=g_{p}(p,y)\cdot p. \la{ll.5.25.12.1}
\end{align}
\el

\begin{proof} Indeed, 
$$
g_p(x,y)g_p(y, z) = g_{p,x}g_{y,p}g_{p,y}g_{z,p}=g_{p,x}g_{z,p}=g_{p}(x,z),$$
$$
g_p(gx,gy)   = g_{p, gx}g_{gy, p} \stackrel{(\ref{5.251210})}{=} 
g_{p, x}g^{-1} gg_{y,  p} =  g_{p, x}g_{y,  p} =  g_p(x,y),
$$
$$
y=g_{y,p}\cdot p=g_{p,p}g_{y,p}\cdot p=g_{p}(p,y)\cdot p.
$$
\end{proof}

Recall  ${\cal F}_{{\rm G}}$ 
in Definition \ref{torsorF}. 
From now on, we apply the above construction  in the set-up
$$X={\cal F}_{\rm G},\quad p=\{\U,\B^{-}\}.$$

Pick a collection $\{{\rm A}_1, ..., {\rm A}_n\}$ representing a configuration  in ${\rm Conf}_n({\cal A})$. 
We assign  ${\rm A}_i$
 to the vertices of a convex n-gon, so that they go 
clockwise around the polygon. 
Each oriented pair  $\{{\rm A}_i, {\rm A}_j\}$ provides  a 
frame $\{{\rm A}_i, {\rm B}_j\}$, shown on Fig \ref{frame} by an arrow with a white dot.

\begin{figure}[ht]
\epsfxsize=2in
\centerline{\epsfbox{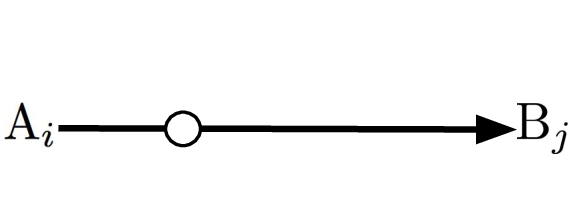}}
\caption{A frame $\{\A_i,\B_j\}$.}
\label{frame}
\end{figure}

\subsection{Basic invariants associated to a generic configuration}
\la{sec5.2}

We introduce several invariants that will be useful in the rest of this paper.
We employ $\cdot$ to denote the action of $\G$ on (decorated) flags.

\paragraph{The invariant $u^{\A_2}_{\B_1, \B_3} \in {\rm U}$.} 
Let $(\B_1, \A_2, \B_3)\in {\rm Conf}({\cal B,A,B})$ be a generic configuration. Set
\be \la{8.28.10.30h}
u_{{\B}_1,\B_3}^{\A_2}:=g_{\{\U, \B^-\}}(\{\A_2, \B_1\},\{\A_2,\B_3\}).
\ee
By \eqref{5.25.12.1.ll}, the invariant $u_{\B_1, \B_3}^{\A_2}$ is independent of  the representative chosen. 
Clearly, $u_{\B_3, \B_2}^{\A_1}\in \U$.

\paragraph{The invariant $h_{{\rm A}_1, {\rm A}_2}\in {\rm H}$.} Let $(\A_1, \A_2)$ be a generic configuration. There is a unique element $h_{{\rm A}_1, {\rm A}_2}\in {\rm H}$ such that 
\be \la{4.25.12.2}
({\rm A}_1, {\rm A}_2) = ({\rm U}, ~h_{{\rm A}_1, {\rm A}_2}\overline w_0\cdot {\rm U}).
\ee
Using the notation ({\ref{5.19.12.1as}}), we have
\be \la{8.28.10.28h}
h_{\A_1,\A_2}\overline{w}_0=g_{\{\U,\B^-\}}(\{\A_1,\B_2\},\{\A_2,\B_1\}).
\ee

\paragraph{The invariant $b_{\rm B_3}^{\rm A_1, A_2}\in {\rm B^{-}}$.}
Let $({\rm A_1, A_2, B_3})$ be a generic configuration. Define
$$
b_{\rm B_3}^{\rm A_1, A_2}:=g_{\{\U, \B^{-}\}}(\{\A_1,\B_3\},\{\A_2,\B_3\})\in {\rm B}^{-}. 
$$

\paragraph{Relations between basic invariants.} 
Let $({\rm A}_1, ..., {\rm A}_n) \in {\rm Conf}^*_n({\cal A})$. Set
\be \la{3.23.12.1}
h_{ij}:= h_{{\rm A}_i, {\rm A}_j}\in {\rm H}, \quad u_{ik}^j := u^{\A_j}_{\B_i, \B_k}\in  {\rm U}, \quad b_{k}^{ij}:=b_{\B_k}^{\A_i, \A_j}\in \B^{-}.
\ee 
We denote these invariants by dashed arrows, see Fig \ref{cal8}.

  \begin{figure}[ht]
\centerline{\epsfbox{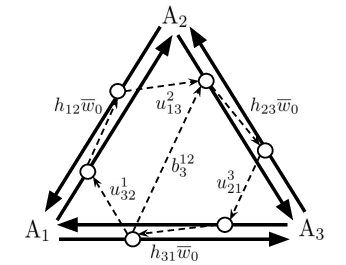}}
\caption{Invariants of a configuration.}
\label{cal8}
\end{figure}
{
\bl \la{3.23.12.1aa} The data (\ref{3.23.12.1}) satisfy the following relations:
\begin{enumerate}
\item $h_{12}\overline{w}_0h_{21}\overline{w}_0=1$.
\item $u_{23}^1u_{34}^1=u_{24}^1$, in particular $u_{23}^1u_{32}^1=1$.
\item $b_{4}^{12}b_{4}^{23}=b_{4}^{13}$.
\item $b_{3}^{12}=u^{1}_{32}h_{12}\overline{w}_0u^{2}_{13}=h_{13}\overline{w}_0u_{12}^3\overline{w}_0^{-1}h_{23}^{-1}.$
\item $u_{32}^1h_{12}\overline{w}_0u_{13}^2h_{23}\overline{w}_0u_{21}^3h_{31}\overline{w}_0=1.$ 
\end{enumerate}
\el

\begin{proof} 
We prove the first identity of 4. The others follow similarly. Let $p=\{\U,\B^{-}\}$.
Let $$x_1=\{\A_1, \B_3\},~x_2=\{\A_1, \B_2\}, ~x_3=\{\A_2, \B_1\}, ~x_4=\{\A_2, \B_3\}.$$
As illustrated by Figure \ref{cal8},
$$
b_{3}^{12}=g_{p}(x_1,x_4),~u^{1}_{32}=g_p(x_1,x_2),~h_{12}\overline{w}_0=g_p(x_2, x_3),~u_{13}^2=g_p(x_3, x_4).
$$
By \eqref{5.25.12.1},  we get $g_{p}(x_1,x_4)=g_p(x_1,x_2)g_p(x_2,x_3)g_p(x_3,x_4)$. 
\end{proof}

\bl \la{8.28.10.39hhh} 
 Let $x\in {\rm Conf}({\cal A}, {\cal A}, {\cal B})$ be a generic configuration. Then it has a unique representative $\{\A_1,\A_2,\B_3\}$  with $\{\A_1, \B_3\}=\{\U, \B^{-}\}$. Such a representative is
  \be \la{8.28.10.39hh}
  \{\U, u_{32}^1h_{12}\overline{w}_0\cdot \U, \B^{-}\}.
  \ee
\el
  \begin{figure}[ht]
\centerline{\epsfbox{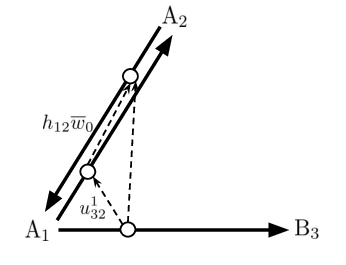}}
\caption{Invariants of a configuration $({\A}_1, {\A}_2, {\B}_3)$.}
\label{inv2}
\end{figure}

\begin{proof}
The existence and uniqueness are clear. It remains to show that it is $\eqref{8.28.10.39hh}$. 
By Fig \ref{inv2}, 
\be \la{8.28.10.39h}
g_{\{\U, \B^{-}\}}(\{\A_1,\B_3\},\{\A_2,\B_1\})=u_{32}^1h_{12}\overline{w}_0.
\ee

If $\{\A_1,\B_3\}=\{\U,\B^{-}\}$, then by \eqref{ll.5.25.12.1}, we get
$$
\{\A_2,\B_1\}=g_{\{\U, \B^{-}\}}(\{\A_1,\B_3\},\{\A_2,\B_1\})\cdot \{\U, \B^{-}\}=\{u_{32}^1h_{12}\overline{w}_0\cdot\U, \B\}.
$$
\end{proof}
}

Each $b\in \B^{-}$ can be decomposed as $b=y_l\cdot h=h\cdot y_r$ where $h\in {\rm H}$, $y_l,y_r\in {\U^{-}}$. Thus $\B^{-}$ has a positive structure induced by  positive structures  on  $\U^{-}$ and ${\rm H}$. 
There are three positive maps
\be \la{13.1.12.51h}
\pi_l, \pi_r: \B^{-}\lra \U^{-},~\pi_h : \B^{-} \lra {\rm H}, ~~\pi_l(b)=y_l, ~\pi_r(b)=y_r,~ \pi_h(b)=h.
\ee

These maps give rise to three more invariants.

\paragraph{The invariant $\mu_{\B_3}^{\A_1,\A_2}\in {\rm H}$.} For each generic $(\A_1,\A_2,\B_3)$, we define
\be
\mu_{\B_3}^{\A_1,\A_2}:= \pi_{h}(b_{\B_3}^{\A_1,\A_2}).
\ee

\paragraph{The invariant $r_{\B_3}^{\B_1,\A_2}\in \U^{-}$.} 
 For any $h\in {\rm H}$, we have
\be \la{12.12.15.1h}
b_{\B_3}^{\A_1\cdot h^{-1}, \A_2}= h\cdot b_{\B_3}^{\A_1, \A_2}.
\ee
Thus we can define 
\be \la{inv.r}
r_{\B_3}^{\B_1,\A_2}:=\pi_r(b_{\B_3}^{\A_1\cdot h^{-1}, \A_2})=\pi_r(b_{\B_3}^{\A_1, \A_2})\in \U^{-}.
\ee

\paragraph{The invariant $l_{\B_3}^{\A_1, \B_2}\in \U^{-}$.}
 For any $h\in {\rm H}$, we have
\be \la{12.12.15.2h}
b_{\B_3}^{\A_1, \A_2\cdot h}=  b_{\B_3}^{\A_1, \A_2}\cdot h.
\ee
Define 
\be
l_{\B_3}^{\A_1,\B_2}:=\pi_l(b_{\B_3}^{\A_1, \A_2\cdot h})=\pi_l(b_{\B_3}^{\A_1, \A_2})\in \U^{-}.
\ee

{
For simplicity, we set
\be \la{13.3.16.1101}
\mu_{k}^{ij}:=\mu_{\B_k}^{\A_i, \A_j}\in {\rm H}, ~~~r_{k}^{ij}:=r_{\B_k}^{\B_i, \A_j}\in \U^{-},~~~l_{k}^{ij}:=l_{\B_k}^{\A_i, \B_j}\in\U^{-}.
\ee
Recall that $\wt{u}=\overline{w}_0u^{-1}\overline{w}_0^{-1}$.
By Relations 3, 4 of Lemma \ref{3.23.12.1aa}, we get
\be
\mu_{4}^{12}\mu_{4}^{23}=\mu_{4}^{13}.
\ee
\be \la{12.12.hh}
b_{3}^{12}=l_3^{12}\mu_{3}^{12}=\mu_{3}^{12} r_{3}^{12}=u_{32}^1 h_{12} \overline{w}_0 u_{13}^2=h_{13}\wt{u_{21}^3}h_{23}^{-1}.
\ee

Recall the morphisms $\Phi$, $\eta$ and $\beta$ in Section \ref{sec4}. By the definition of these morphisms, we get
\bl \la{12.12.11.h}
We have
\begin{enumerate}
\item $u_{32}^1=\Phi\big(l_3^{12}\big)$.
\item $r_{3}^{12}=\eta\big(u_{13}^2\big)$.
\item $\wt{u_{21}^{3}}={\rm Ad}_{h_{13}^{-1}}\big(l_3^{12}\big)={\rm Ad}_{h_{23}^{-1}}\big(r_3^{12}\big)$.
\item $\mu_{3}^{12}=h_{12}\beta(u_{13}^2)=h_{13}h_{23}^{-1},~~~\beta(u_{13}^2)=h_{13}h_{23}^{-1}h_{12}^{-1}$.
\end{enumerate}
\el
\begin{proof}
By \eqref{12.12.hh}, we have
$$
l_3^{12}\mu_3^{12}=u_{32}^1(h_{13}\overline{w}_0u_{13}^2).
$$
The first identity follows. Similarly, the second identity follows from 
$$
\mu_{3}^{12} r_{3}^{12}=(u_{32}^1h_{13}\overline{w}_0)u_{13}^2
$$
The third identity follows from
$$
l_{3}^{12}\mu_{3}^{12}=h_{13}\wt{u_{21}^3}h_{23}^{-1}={\rm Ad}_{h_{13}}(\wt{u_{21}^3}) h_{13}h_{23}^{-1},\quad \quad \mu_{3}^{12}r_{3}^{12}=h_{13}h_{23}^{-1}{\rm Ad}_{h_{23}}(r_{3}^{12}).
$$
The identity $\mu_{3}^{12}=h_{12}\beta(u_{13}^2)$ follows from
$$
\mu_{3}^{12}r_{3}^{12}=u_{32}^1h_{12}\cdot (\overline{w}_0u_{13}^2).
$$
The identity $\mu_3^{12}=h_{13}h_{23}^{-1}$ follows from
$$
l_{3}^{12}\mu_{3}^{12}={\rm Ad}_{h_{13}}(\wt{u_{21}^3}) h_{13}h_{23}^{-1}.
$$
\end{proof}

\bl \la{lem1}
We have
\be \la{8.13.4a}
\chi(u_{21}^3)=\sum_{i\in I}\frac{\alpha_i(h_{13})}{{\cal L}_i(u_{32}^1)}=\sum_{i\in I}\frac{\alpha_i(h_{23})}{{\cal R}_i(u_{13}^2)}.
\ee
\be \la{13.1.12.1h}
\alpha_i(h_{12})=\alpha_{i^*}(h_{21}),~~~\forall i\in I.
\ee
\el

\begin{proof}
Use Lemmas \ref{13.1.11.31h.1}, \ref{12.12.11.h}, \ref{13.1.11.31h.2} and \ref{lem2} , we get
$$
\chi(u_{21}^3)=\chi^{-}(\wt{u_{21}^3})=\chi^{-}\big({\rm Ad}_{h_{13}^{-1}}(l_3^{12})\big)=\sum_{i\in I}\alpha_i(h_{13})\chi_i^{-}(l_{3}^{12})=\sum_{i\in I}\frac{\alpha_i(h_{13})}{{\cal L}_i(u_{32}^1)}.
$$
By the same argument, we get the other identity in \eqref{8.13.4a}. 
By Relation 1 of Lemma \ref{lem2}, we get
$$
h_{12}=\overline{w}_0h_{21}^{-1}\overline{w}_0^{-1}\cdot s_{\rm G}. 
$$
Then \eqref{13.1.12.1h} follows. 
\end{proof}
}

\subsection{A positive structure on ${\rm Conf}_{\rm I}({\cal A; {\cal B}})$}
\la{proofmth1}

Let ${\rm I}\subset [1,n]$ be a nonempty subset of cardinality $m$.  Following \cite[Section 8]{FG1}, there is a positive structure on the configuration space ${\rm Conf}_{\rm I}({\cal A};{\cal B})$. We briefly recall it below.

\vskip 3mm

Let $x=(x_1,\ldots, x_n)\in {\rm Conf}_{\rm I}({\cal A}; {\cal B})$ be a generic configuration such that 
\be \la{13.1.12.10h}
 x_i=\A_i\in {\cal A}\mbox{ when } i\in{\rm I}, \text{ otherwise }  x_i=\B_i\in{\cal B}. 
\ee

Set $\B_j:=\pi(\A_j)$ when $j\in {\rm I}$. 
Let $i\in {\rm I}$. For each $k\in [2,n]$, set
\be
u_{k}^i(x):=u_{\B_{i+k}, \B_{i+k-1}}^{\A_i},~~~~~\mbox{ where the subscript is modulo $n$}.\ee

 For each pair $i, j\in {\rm I}$, recall
\be \la{13.3.15.321h}
\pi_{ij}(x):=\left\{\begin{array}{lc}h_{\A_i, \A_j}, ~~&\text{ if } i<j,\\
h_{s_{\G}\cdot \A_i, \A_j},& \text { if }i>j.\end{array}\right.
\ee




\bl \la{7.22.1.1s}
Fix $i\in {\rm I}$. The following morphism is birational
$$
\alpha_{i}: ~{\rm Conf}_{\rm I}({\cal A}; {\cal B})\lra {\rm H}^{m-1}\times \U^{n-2},~~~x\lms (\{\pi_{ij}(x)\}, \{u_k^i(x)\}),~~j\in {\rm I}-\{i\},~k\in[2,n-1].
$$
\el

{\bf Example.} Fig \ref{alpha} illustrates the map $\alpha_1$ for ${\rm I}=\{1,3,5\}\subset[1,6]$.

 \begin{figure}[ht]
 \epsfxsize=2in
\centerline{\epsfbox{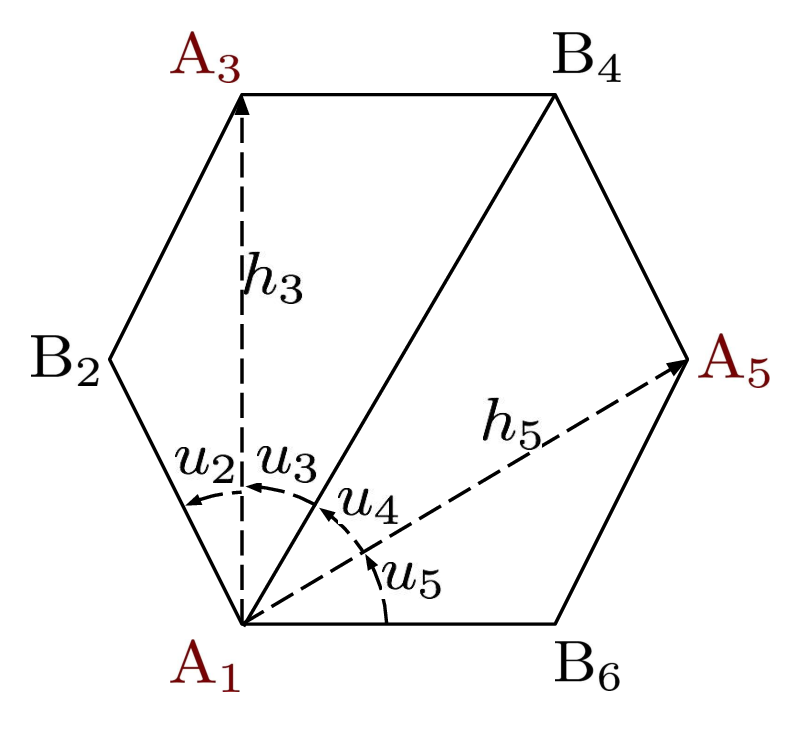}}
\caption{The map $\alpha_1$ for ${\rm I}=\{1,3,5\}\subset [1,6]$.}
\label{alpha}
\end{figure}

\begin{proof} 
Assume that $i=1\in {\rm I}$. 
Clearly $\alpha_1$ is well defined on the subspace
$$
\wt{\rm Conf}_{\rm I}({\cal A}; {\cal B}):=\{(x_1,\ldots, x_{n})~|~ (x_1, x_k) \text{ is generic for all } k\in[2,n]\}.
$$
Note that $\wt{\rm Conf}_{\rm I}({\cal A}; {\cal B})$ is dense in ${\rm Conf}_{\rm I}({\cal A}; {\cal B})$. We prove the Lemma by showing that $\alpha_1$ is a bijection from $\wt{\rm Conf}_{\rm I}({\cal A}; {\cal B})$ to ${\rm H}^{m-1}\times \U^{n-2}$,
\vskip 2mm

Let $y=(\{h_j\}, \{u_k\})\in {\rm H}^{m-1}\times \U^{n-2}$. Set $u_n':=1$. Set
$u_k':=u_{n-1}\ldots u_k$ for  $k\in [2,n-1].$ 
Let $x=(x_1,\ldots, x_n)\in \wt{\rm Conf}_{\rm I}({\cal A}; {\cal B})$ such that
\be
\la{9.3.2014.1.41h}
x_1:=\U; ~~~~ x_j:=u_j'h_j\overline{w}_0\cdot \U\in {\cal A},~ j\in {\rm I}-\{1\};~~~~x_k:=u_k'\cdot \B^{-}\in {\cal B},~ k\notin {\rm I}.
\ee
Clearly $\alpha_1(x)=y$. Hence $\alpha_1$ is a surjection. 

{Let $x\in \wt{\rm Conf}_{\rm I}({\cal A}; {\cal B})$ such that $\alpha_1(x)=y$.
Note that $x$ has a unique representative $\{x_1,\ldots, x_n\}$ such that $\{x_1, x_n\}=\{\U, \B^-\}$ if $n\notin {\rm I}$, and $\{x_1, \pi(x_n)\}=\{\U, \B^-\}$ if $n\in {\rm I}$.
By Lemma \ref{8.28.10.39hhh}, each $x_i$ is uniquely expressed by \eqref{9.3.2014.1.41h}. The injectivity of $\alpha_1$ follows.}
\end{proof}

The product ${\rm H}^{m-1}\times {\U}^{n-2}$ has a positive structure induced by the ones on ${\rm H}$ and $\U$. 

When ${\rm I}=[1,n]$, we first  introduce a positive structure on ${\rm Conf}_n({\cal A})$ such that the map $\alpha_1$ is a positive birational isomorphism. 
Such a positive structure is twisted cyclic invariant: 

\bt [{\cite[Section 8]{FG1}}] \la{12.9.FG1}
The following map is  a positive birational isomorphism
$$t: {\rm Conf}_n({\cal A})\stackrel{\sim}{\lra} {\rm Conf}_n({\cal A}), ~~~ (\A_1,\ldots, \A_n)\lms (\A_2, \ldots, \A_{n}, \A_1\cdot s_{\rm G}).$$
\et

Each $\alpha_i$ determines a positive structure on ${\rm Conf}_n({\cal A})$. Theorem  \ref{12.9.FG1}  tells us that  these positive structures coincide. We prove the same result for ${\rm Conf}_{\rm I}({\cal A};{\cal B})$, using the following Lemmas.

\bl 
\la{13.1.10.1h}
Let  ${\cal Y}$ be a space equipped with two positive structures denoted by ${\cal Y}^1$ and ${\cal Y}^2$. If for every rational function $f$ on ${\cal Y}$, we have
$$  
f \text{ is positive on } {\cal Y}^1 \Longleftrightarrow f \text{ is positive on } {\cal Y}^2,
$$ 
then ${\cal Y}^1$ and ${\cal Y}^2$ share the same positive structure.
\el
\begin{proof}
It is clear.
\end{proof}

\bl 
\la{13.1.10.2h}
Let ${\cal Y}, {\cal Z}$ be a pair of positive spaces. 
If  there are two positive maps $\gamma: {\cal Y}\ra {\cal Z}$ and $\beta:{\cal Z}\ra{\cal Y}$ such that $\beta\circ \gamma={\rm id}_{\cal Y}$, then for every rational function $f$ on ${\cal Y}$ we have
$$
f \text{ is positive on } {\cal Y} \Longleftrightarrow \beta^*(f) \text{ is positive on } {\cal Z}.
$$
\el
\begin{proof} If $f$ is positive on ${\cal Y}$, since $\beta$ is a positive morphism, then $\beta^*(f)$ is positive on ${\cal Z}$.

If $\beta^*(f)$ is positive on ${\cal Z}$, since $\gamma$ is a positive morphism, then 
$\gamma^*(\beta^*(f))=f$ is positive.
\end{proof}

\bl 
\la{13.1.10.3h}
Every $\alpha_i$ $(i\in {\rm I})$ determines the same positive structure on ${\rm Conf}_{\rm I}({\cal A}; {\cal B})$.
\el
{\bf Remark.} Lemma \ref{13.1.10.3h} is equivalent to say that for any pair $i,j\in{\rm I}$,  the map $\phi_{i,j}:=\alpha_{i}\circ\alpha_{j}^{-1}$ is a positive birational isomorphism of ${\rm H}^{m-1}\times {\U}^{n-2}$.

\begin{proof}
 Let us temporary denote the positive structure on ${\rm Conf}_{\rm I}({\cal X}; {\cal Y})$ by ${\rm Conf}_{\rm I}^i({\cal A}; {\cal B})$ such that $\alpha_i$ is a positive birational isomorphism.

There is a projection $\beta: {\rm Conf}_n({\cal A})\ra {\rm Conf}_{\rm I}({\cal A}; {\cal B})$ which maps $\A_k$ to $\A_k$ if $k\in {\rm I}$ and maps $\A_k$ to $\pi(\A_k)$ otherwise.
By Lemma \ref{12.9.FG1}, $\beta$ is a positive morphism for all ${\rm Conf}_{\rm I}^i({\cal A};{\cal B})$.

Fix $i\in {\rm I}$. Each generic $x=(x_1,\ldots, x_n)\in {\rm Conf}_{\rm I}({\cal A}; {\cal B})$ has a unique preimage $\gamma^i(x):=(\A_1,\ldots, \A_n)\in {\rm Conf}_n({\cal A})$ such that 
$$
\mbox{$\A_j=x_j$ when $j\in {\rm I}$, otherwise $\A_j$ is the preimage of $x_j$ such  that $\pi_{ij}(\gamma^i(x))=1$.}
$$
\noindent 
Clearly $\gamma^i$ a positive morphism
from ${\rm Conf}_{\rm I}^i({\cal A}; {\cal B})$ to ${\rm Conf}_n({\cal A}).$
By definition $\beta\circ \gamma^i={\rm id}$. 

Let $f$ be a rational function on  ${\rm Conf}_{\rm I}({\cal A}; {\cal B})$. Let $i, j\in {\rm I}$. By Lemma \ref{13.1.10.1h}, 
$$
f \text{ is positive on }  {\rm Conf}_{\rm I}^i({\cal A}; {\cal B}) \Longleftrightarrow \beta^*(f) \text{ is positive on }  {\rm Conf}_{n}({\cal A}) \Longleftrightarrow f \text{ is positive on }  {\rm Conf}_{\rm I}^j({\cal A}; {\cal B}). 
$$
This Lemma follows from Lemma \ref{13.1.10.2h}.
\end{proof}
Thanks to Lemma \ref{13.1.10.3h}, we introduce a canonical positive structure on ${\rm Conf}_{\rm I}({\cal A}; {\cal B})$. From now on, we view ${\rm Conf}_{\rm I}({\cal A}; {\cal B})$ as a positive space. 

\vskip 3mm
Given $k\in \Z/n$, we define the $k$-shift of the subset ${\rm I}$ by setting 
$ 
{\rm I}(k):=\{i \in [1,n]~|~ i+k\in {\rm I}\}.
$ 
The following Lemma is clear now.
\bl \la{12.9.FG1.g}
The following map is a positive birational isomorphism
$$t: {\rm Conf}_{\rm I}({\cal A};{\cal B})\stackrel{\sim}{\lra} {\rm Conf}_{{\rm I}(1)}({\cal A};{\cal B}), ~~~ (x_1,\ldots, x_n)\lms (x_2, \ldots, x_{n}, x_1\cdot s_{\rm G}).$$
\el

{
\paragraph{An invariant definition of positive structures.} 
We have defined above positive structures on the configuration spaces using 
pinning in $\G$, which allows to make calculations.  Let us explain now how to  define 
positive structures on the configurations spaces without choosing a pinning. When $\G$ is of type $A_m$, such a definition is given in \cite[Section 9]{FG1}. In general, given a decomposition of the longest Weyl group element $w_0=s_{i_1}\ldots s_{i_n}$, for each generic pair $\{\B, \B'\}$ of flags, there exists a unique chain $$\B=\B_0\stackrel{i_1}{\lra}\B_1\stackrel{i_2}{\lra}\ldots\stackrel{i_{n-1}}{\lra}\B_{n-1}\stackrel{i_n}{\lra}\B_n=\B'.$$
Here $\B_{k-1}\stackrel{i_k}{\rightarrow}\B_k$ indicates that $\{\B_{k-1}$, $\B_k\}$ is in the 
 position $s_{i_k}$. The positive structure of ${\rm Conf}({\cal B}, {\cal A}, {\cal B})$ can be defined via the birational map
$$
{\rm Conf}({\cal B}, {\cal A}, {\cal B})\lra ({\Bbb G_m})^n, \quad (\B, \A, \B')\lms (\chi^o(\B_0, \A, \B_1), \chi^o(\B_1,\A, \B_2),\ldots, \chi^o(\B_{n-1},\A, \B_n)).
$$}{
Each generic pair $\{\A, \A'\}\in {\cal A}^2$ uniquely determines a pinning for $\G$ such that 
$$x_i(a)\in \U_{\A}, ~~~\chi_\A(x_i(a))=a, ~~~ y_i(a)\in \U_{\A'}, ~~~i\in I.$$ 
The pinning gives rise to a representative $\overline{w}_0\in \G$ of $w_0$. There is a unique element $h\in \pi(\A)\cap \pi(\A')$ such that
$$
\A'=h\overline{w}_0\cdot \A.
$$
Such an element $h$ gives rise to a birational map from ${\rm Conf}_2({\cal A})$ to the Cartan group of $\G$, determining a positive structure of ${\rm Conf}_2({\cal A})$. The positive structures of general configuration spaces are defined via the positive structures of ${\rm Conf}_2({\cal A})$ and ${\rm Conf}({\cal B}, {\cal A},{\cal B})$.}

\subsection{Positivity of the potential ${\cal W}_{\rm J}$ and proof of Theorem \ref{13.2.22.2226h}}
\la{sec6.4}

Let ${\rm J}\subset {\rm I}\subset[1,n]$.
Consider the ordered triples  $\{i,j,k\}\subset[1,n]$ such that
\be \la{13.1.11.12h}
j\in {\rm J}, \text{ and } i,j,k {\rm ~seated~clockwise}.
\ee

Let $x\in {\rm Conf}_{\rm I}({\cal A}; {\cal B})$ be presented by \eqref{13.1.12.10h}. Define
$
p_{j;i,k}(x):=u_{\B_i,\B_k}^{\A_j}.$ 
In particular, we are interested in the triples $\{j-1,j, j+1\}$. Set
\be \la{fun.u.13.1.11.h}
p_j(x):=p_{j;j-1,j+1}=u_{\B_{j-1}, \B_{j+1}}^{\A_j}, ~~\forall j\in {\rm J}.
\ee

\bl \la{12.9.pos}
The following morphisms are positive morphisms
\begin{itemize}
\item [1.] $\pi_{ij}: {\rm Conf}_{\rm I}({\cal A};{\cal B})\lra {\rm H},~\forall ~i, j\in {\rm I}$.
\item [2.] $p_{j;i,k}: {\rm Conf}_{\rm I}({\cal A}; {\cal B})\lra \U, ~\forall ~
\{i,j,k\}\in \eqref{13.1.11.12h}$.
\end{itemize}
\el

\begin{proof}
The positivity of $\pi_{ij}$ is clear. 
By Relation 2 of Lemma \ref{3.23.12.1aa}, we get
$$
u_{\B_i, \B_k}^{\A_j}=u_{\B_{i}, \B_{i-1}}^{\A_j}u_{\B_{i-1},\B_{i-2}}^{\A_j}\ldots u_{\B_{k+1},\B_{k}}^{\A_j}.
$$
The product map
$ 
\U\times \U\ra \U, ~(u_1, u_2)\mapsto u_1u_2
$ 
is positive. The positivity of $p_{j;i,k}$ follows.
\end{proof}


\paragraph{Positivity of the potential ${\cal W}_{\rm J}$.} 
Recall the positive function $\chi$ on $\U$. 
Let $x\in {\rm Conf}_{\rm I}({\cal A}; {\cal B})$ be a generic configuration presented by (\ref{13.1.12.10h}). By Lemma \ref{8.28.10.39hhh}, each generic triple $({\rm B}_{j-1}, {\rm A}_j, {\rm B}_{j+1})$ has a unique representative $\{\B^-,\U, u_{\B_{j-1},\B_{j+1}}^{\A_j}\cdot \B^-\}$. In this case $u_j$ in \eqref{7.20.9.8} becomes $p_j(x)$. Therefore
$\chi_{\A_j}(u_j)=\chi\circ p_j(x).$
The potential ${\cal W}_{\rm J}$ of ${\rm Conf}_{\rm I}({\cal A}; {\cal B})$ becomes
\be \la{pontential.p.j}
{\cal W}_{\rm J}=\sum_{j\in {\rm J}} \chi\circ p_j
\ee
Since $p_j$ are positive morphisms, the positivity of ${\cal W}_{\rm J}$ follows.

\vskip 2mm


By Relation 2 of Lemma \ref{3.23.12.1aa}, we get
\be \la{13.1.11.11h}
\chi\circ p_j=\chi\circ p_{j; j-1,i}+\chi\circ p_{j; i,k}+\chi\circ p_{j; k,j+1}
\ee
All summands on right side are positive functions. 
By \eqref{pontential.p.j},
the set ${\rm Conf}_{{\rm J}\subset{\rm I}}^{+}({\cal A};{\cal B})(\Z^t)$ of tropical points such that ${\cal W}_{\rm J}^t\geq 0$ is the set
\be \la{13.1.11.21h}
\{l\in {\rm Conf}_{\rm I}({\cal A};{\cal B})(\Z^t)~|~ p_{j;i,k}^t(l)\in \U_{\chi}^+(\Z^t) \mbox{ for all } \{i,j,k\} \in \eqref{13.1.11.12h}\}.
\ee

\paragraph{Proof of Theorem \ref{13.2.22.2226h}.} 
Recall the moduli space  ${\rm Conf}_{{\rm J}\subset{\rm I}}^{\cal O}({\cal A}; {\cal B})$ in  
Definition \ref{aointegral}. 

\bl \la{Lema1}
A generic configuration in ${\rm Conf}_{\rm I}({\cal A}; {\cal B})({\cal K})$ is ${\cal O}$-integral relative to ${\rm J}$ if and only if 
 $u^{{\rm A}_j}_{{\rm B}_i,{\rm B}_k} \in {\rm U}({\cal O})$ for all $\{i,j,k\}\in\eqref{13.1.11.12h}$.
\el

\begin{proof} 
By definition $\lr(\A_j,\B_k)=[g_{\{\A_j, \B_k\}, \{\U,\B^{-}\}}]\in {\rm Gr}$.
Let $\{i,j,k\}\in$\eqref{13.1.11.12h}. Then
$$
\lr(\A_j,\B_k)=\lr(\A_j,\B_i) \Longleftrightarrow g_{\{\A_j, \B_i\}, \{\U,\B^{-}\}}^{-1}g_{\{\A_j, \B_k\}, \{\U,\B^{-}\}}=u^{{\rm A}_j}_{{\rm B}_i,{\rm B}_k}\in {\rm G}({\cal O}).
$$
The Lemma is proved.
\end{proof}
 Let $l\in{\rm Conf}_{\rm I}({\cal A}; {\cal B})$. Let $x\in {\cal C}_{l}^{\circ}$ be  presented by \eqref{13.1.12.10h}. By Lemma \ref{Lema10.1.1}, $u^{{\rm A}_j}_{{\rm B}_i,{\rm B}_k} \in {\rm U}({\cal O})$ if and only if $p_{j;i,k}^t(l)\in \U_{\chi}^{+}(\Z^t).$
Theorem \ref{13.2.22.2226h} follows from  Lemma \ref{Lema1} and \eqref{13.1.11.21h}.

\vskip 2mm
Tropicalizing the morphism \eqref{13.3.15.321h}, we get $\pi_{ij}^t: {\rm Conf}_{\rm I}({\cal A}; {\cal B})(\Z^t)\to {\rm H}(\Z^t)={\rm P}.$ 
 \bl \la{9.21.17.56h} Let $i,j \in {\rm J}$.
If $l\in {\rm Conf}_{{\rm J}\subset{\rm I}}^{+}({\cal A}; {\cal B})(\Z^t)$, then $\pi_{ij}^t(l) \in {\rm P}^+$.
\el 

\begin{proof}
Since $\pi_{ij}^t(l)=-w_0(\pi_{ji}^t(l)),$ we can
assume that there exists $k$ such that $\{i,j, k\}\in \eqref{13.1.11.12h}$.
Otherwise  we switch $i$ and $j$. 
Set $\lambda:=\pi_{ij}^t(l)$, $u_1:=p_{i;k,j}^t(l)$, $u_2:=p_{j;i,k}^{t}(l)$.   
We tropicalize \eqref{8.13.4a}:
\be \la{8.21.10.56h}
\chi^t(u_2)=\min_{r\in I}\{\langle \lambda, \alpha_r\rangle-{\cal R}_{r}^t(u_1)\}.
\ee
If $l\in \eqref{13.1.11.21h}$, then $
\chi^t(u_1)\geq 0$, $\chi^t(u_2)\geq 0.$ {By the definition of ${\cal R}_r$ and $\chi$, we get  ${\cal R}_r^t(u_1)\geq \chi^t(u_1)$. Therefore ${\cal R}^t(u_1)\geq 0$.}
Hence
$$
\forall r\in I,~~~
\langle \lambda, \alpha_r\rangle\geq \langle \lambda, \alpha_r\rangle-{\cal R}_{r}^t(u_1) \geq \chi^t(u_2)\geq 0~\Longrightarrow ~\lambda\in {\rm P}^+.
$$

\end{proof}


\section{Main examples of configuration spaces} \la{sec7sec}

As discussed in Section \ref{sec1}, the  pairs of 
configuration spaces especially 
important in representation theory are:
$$
\big\{{\rm Conf}_n({\cal A}), ~~{\rm Conf}_n({\rm Gr})\big\},
~~~
\big\{{\rm Conf}({\cal A}^n, {\cal B}), ~~{\rm Conf}({\rm Gr}^n, {\cal B})\big\},
~~~
\big\{{\rm Conf}({\cal B}, {\cal A}^n, {\cal B}), ~~{\rm Conf}({\cal B}, {\rm Gr}^n, {\cal B})\big\}.
$$ 
In Section \ref{sec7sec} we express the potential ${\cal W}$ and the map $\kappa$ in these cases under explicit coordinates.

\subsection{The configuration spaces ${\rm Conf}_n({\cal A})$ and ${\rm Conf}_n({\rm Gr})$} 

Recall $h_{ij}, u_{ij}^k$ in \eqref{3.23.12.1}.
Recall the positive birational isomorphism 
\be \la{7.22.1.1s.h}
\alpha_1: {\rm Conf}_n({\cal A})\stackrel{\sim}{\lra} {\rm H}^{n-1}\times{\rm U}^{n-2}, 
~~~(\A_1, \ldots, \A_n)\lms(h_{12},\ldots, h_{1n}, u_{3,2}^1,\ldots, u_{n,n-1}^1).
\ee
The potential ${\cal W}$ on ${\rm Conf}_n({\cal A})$ induces a positive  function
${\cal W}_{\alpha_1}:={\cal W}\circ {\alpha_1}^{-1}$ on ${\rm H}^{n-1}\times \U^{n-2}$.
\bt \la{13.2.1.6.05h}
The function
\be \la{12.12.ptWn}
{\cal W}_{\alpha_1}(h_2,\ldots, h_n, u_2,\ldots, u_{n-1})=\sum_{j=2}^{n-1}\big(\chi (u_j)+\sum_{i\in I}\frac{\alpha_i(h_{j})}{{\cal R}_i(u_j)}+\sum_{i\in I}\frac{\alpha_{i}(h_{j+1}}{{\cal L}_{i}(u_{j})}\big).
\ee
\et

\begin{proof}
By the scissor congruence invariance \eqref{11.18.11.5}, we get ${\cal W}(\A_1,\ldots, \A_n)=\sum_{j=2}^{n-1}{\cal W}(\A_1, \A_{j}, \A_{j+1}).$
The rest follows from \eqref{pontential.p.j} and Lemma \ref{lem1}.  
\end{proof}

Let us choose a map without stable points which is not necessarily a bijection:
$$
\alpha:[1,n]\lra [1,n],~~~\alpha(k)\neq k.
$$
Let $x=(\A_1,\ldots, \A_n)\in {\rm Conf}^{\cal O}_n({\cal A})$.  Define
\be \la{13.1.24.41h}
\omega_{k}(x):=[g_{\{\U, \B^{-}\}}(\{\A_1, \B_n\}, \{\A_k, \B_{\alpha(k)}\})]\in {\rm Gr}.
\ee
By the definition of ${\rm Conf}^{\cal O}_n({\cal A})$, the map $\omega_k$ is  independent of the map $\alpha$ chosen. 
Define
\be \la{13.1.23.23hh}
\omega:=(\omega_2,\ldots, \omega_n): {\rm Conf}_n^{\cal O}({\cal A})\lra {\rm Gr}^{n-1}, ~~~~x\lms (\omega_2(x),\ldots, \omega_n(x)).
\ee

Consider the  projection 
$$
i_1: {\rm Gr}^{n-1}\lra  {\rm Conf}_n({\rm Gr}),~~~~\{\lr_2,\ldots, \lr_n\}\lms ([1],\lr_2,\ldots, \lr_n) 
$$
\bl \la{13.1.24.1.hhh}
 The map $\kappa$ in \eqref{4.18.12.41qx}  is $i_1\circ \omega$.
\el
\begin{proof}
Here
$\omega_k(x)=g_{\{\U, \B^{-}\}, \{\A_1,\B_n\}} \lr(\A_k, \B_{\alpha(k)}).$
In particular $\omega_1(x)=[1]$. The Lemma follows. 
\end{proof}

 \begin{figure}[ht]
 \epsfxsize=5in 
\centerline{\epsfbox{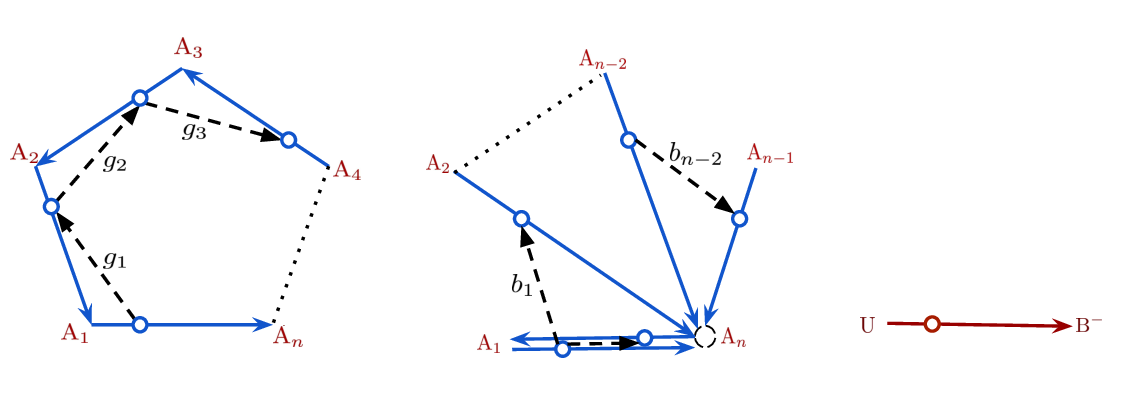}}
\caption{The map $\omega$ expressed by two different choices of frames $\{\A_{i}, \B_{\alpha(i)}\}$}
\label{cal6}
\end{figure}

 Below we give two explicit expressions of $\omega$ based on different choices of the map  $\alpha$.
We emphasize that although the expressions look entirely different from each other, they are the same map. 
As before, set  $x=(\A_1,\ldots, \A_n)\in {\rm Conf}^{\cal O}_n({\cal A})$.

\vskip 2mm

1. Let $\alpha(k)=k-1$. It provides  frames $\{\A_i, \B_{i-1}\}$, see the first graph of Fig \ref{cal6}. 
Set
\be \la{8.27.10.33h}
g_k: = g_{\{\U, \B^{-}\}}(\{{\rm A}_{k}, {\rm B}_{k-1}\}, \{{\rm A}_{k+1}, {\rm B}_{k}\})\stackrel{*}{=} {u^{{\rm A}_k}_{{\rm B}_{k-1}, {\rm B}_{k+1}}}h_{{\rm A}_k, {\rm A}_{k+1}}\overline w_0.
\ee
See Fig \ref{inv2} for proof of $*$. By \eqref{5.25.12.1}, we get 
\be
\omega_k(x)=[g_{\{\U,\B^{-}\}}(\{\A_1,\B_n\}, \{{\rm A}_{k}, {\rm B}_{k-1}\} )]=[g_1\ldots g_{k-1}],~~~ k\in [2,n]
\ee
Therefore
\be \la{3.23.12.3}
\omega(x)= 
([g_1],\ldots,
[g_1 ... g_{n-1}])\in {\rm Gr}^{n-1}.
\ee

\vskip 2mm

2. Let $\alpha(k)=n$ when $k\neq n$. Let $\alpha(n)=1$. See the second graph of Fig \ref{cal6}. 
Set
$$
b_k:=b_{\B_n}^{\A_k,\A_{k+1}}, ~k\in [1,n-2];~~~~~ h_{n}:=h_{\A_1,\A_n}.
$$
Then
\be \la{3.23.12.3.lemh.i}
\omega_{k}(x)=[g_{\{\U,\B^-\}}(\{\A_1,\B_n\},\{\A_{k},\B_n\})]=[b_1\ldots b_{k-1}], ~ k\in [2,n-1];~~~~\omega_n(x)=[h_n].
\ee
Therefore
\be \la{3.23.12.3.lemh}
\omega(x)=([b_1],\ldots, [b_1\ldots b_{n-2}], [h_n])\in {\rm Gr}^{n-1}.
\ee

\subsection{The configuration spaces ${\rm Conf}({\cal A}^n, {\cal B})$ and ${\rm Conf}({\rm Gr}^n, {\cal B})$} \la{sec7.1h}

Consider the scissoring morphism
\begin{align}
\la{8.18h.cut.map}
 s: {\rm Conf}({\cal A}^{m+n+1},{\cal B})&\lra {\rm Conf}({\cal A}^{m+1},{\cal B})\times {\rm Conf}({\cal A}^{n+1},{\cal B}),\nonumber\\
({\A_1,\ldots, \A_{m+n+1}, \B_{0}})&\lms ({\A_{1},\ldots,  \A_{m+1}, \B_{0}})\times (\A_{m+1},\ldots, \A_{m+n+1},\B_{0}).
\end{align}
By Lemmas \ref{7.22.1.1s}, \ref{13.1.10.3h}, the  morphism $s$ is a positive birational isomorphism.

 In fact, the inverse map of $s$ can be defined by  ``gluing" two configurations:
\be \la{5.7.12.1}
*: {\rm Conf}^*({\cal A}^{m+1}, {\cal B}) \times   {\rm Conf}^*({\cal A}^{n+1}, {\cal B})
\lra  {\rm Conf}({\cal A}^{m+n+1}, {\cal B}),~~~(a,b)\lms a*b.
\ee
By Lemma \ref{8.28.10.39hhh},  $a$ has a unique representative  $\{\A_1, \ldots, \A_{m}, \U, \B^{-}\}$, $b$ has a unique representative   $\{\U, \A_1',\ldots,  {\A}_{n}', {\B}^{-}\}$.  
We define the  {\it convolution product}  $a *b  := (\A_1,\ldots,\A_m, \U, \A_1',\ldots, \A_n' , \B^-).$ 
 The associativity of the convolution product is clear.

 \begin{figure}[ht]
\epsfxsize250pt
\centerline{\epsfbox{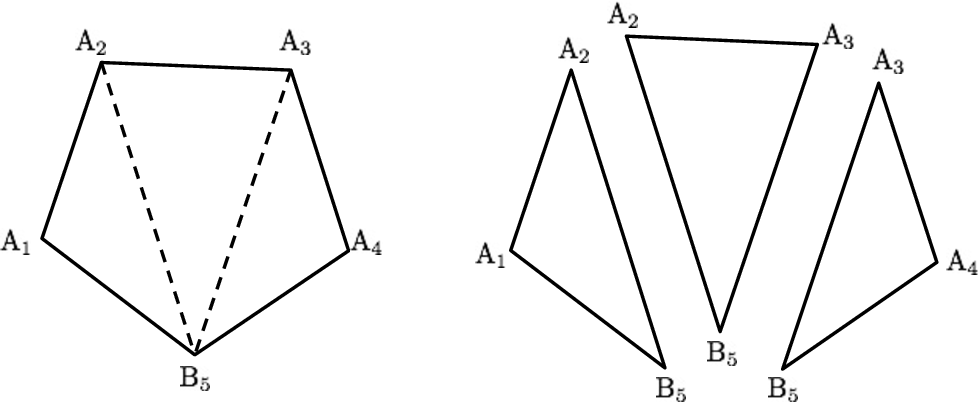}}
\caption{A map given by scissoring a convex pentagon.}
\label{cut}
\end{figure}

\vskip 2mm

Recall $b_{k}^{ij}$ in \eqref{3.23.12.1}. Recall the morphisms $\pi_r, \pi_l$ in \eqref{13.1.12.51h}.
\bt \la{12.17.thm7h}
The following morphism is a positive birational isomorphism
\be
c: {\rm Conf}({\cal A}^n, {\cal B})\lra (\B^{-})^{n-1}, ~~~(\A_1,\ldots, \A_n,\B_{n+1})\lms (b_{n+1}^{1,2},\ldots, b_{n+1}^{i,i+1},\ldots, b_{n+1}^{n-1,n}).
\ee
\et

\begin{proof}
Scissoring the convex ($n$+1)-gon along diagonals emanating from $n$+1, see Fig \ref{cut}, we get a positive birational isomorphism
${\rm Conf}({\cal A}^n, {\cal B})\stackrel{\sim}{\ra} \big({\rm Conf}({\cal A}^2, {\cal B})\big)^{n-1}.$ 
The Theorem is therefore reduced to $n=2$.
Recall  $\alpha_2$ in Lemma \ref{7.22.1.1s}. By Lemma \ref{12.12.11.h}, it is equivalent to prove that 
${\rm H}\times \U {\ra}{\rm H}\times \U^{-},~(h, u)\mapsto (\beta(u)h, \eta(u))$
is a positive birational isomorphism. Since $\eta$ is a positive birational isomorphism, {and} $\beta$ is a positive map, the Theorem follows.
\end{proof}

The potential ${\cal W}$ on ${\rm Conf}({\cal A}^n,{\cal B})$ induces a positive function
${\cal W}_{c}={\cal W}\circ {c}^{-1}$ on $({\B}^{-})^{n-1}$.

\bl
The function
\be \la{12.12.11.2h}
{\cal W}_{c}(b_1,\ldots, b_{n-1})=\sum_{j=1}^{n-1}\sum_{i\in I}\big(\frac{1}{{\cal L}_i^{-}\circ\pi_l(b_j)}+\frac{1}{{\cal R}_i^{-}\circ\pi_r(b_j)}\big)
\ee
\el
\begin{proof}
Note that
$$
{\cal W}(\A_1,\ldots, \A_n,\B_{n+1})=\sum_{j=1}^{n-1}{\cal W}(\A_{j}, \A_{j+1},\B_{n+1})=\sum_{j=1}^{n-1}\big(\chi(u_{n+1,j+1}^j)+\chi(u_{j, n+1}^{j+1})\big).
$$
The Lemma follows directly from Lemma \ref{lem2}, \eqref{lem2.eta} and Lemma \ref{12.12.11.h}.
\end{proof}

  \begin{figure}[ht]
\epsfxsize=4in  
\centerline{\epsfbox{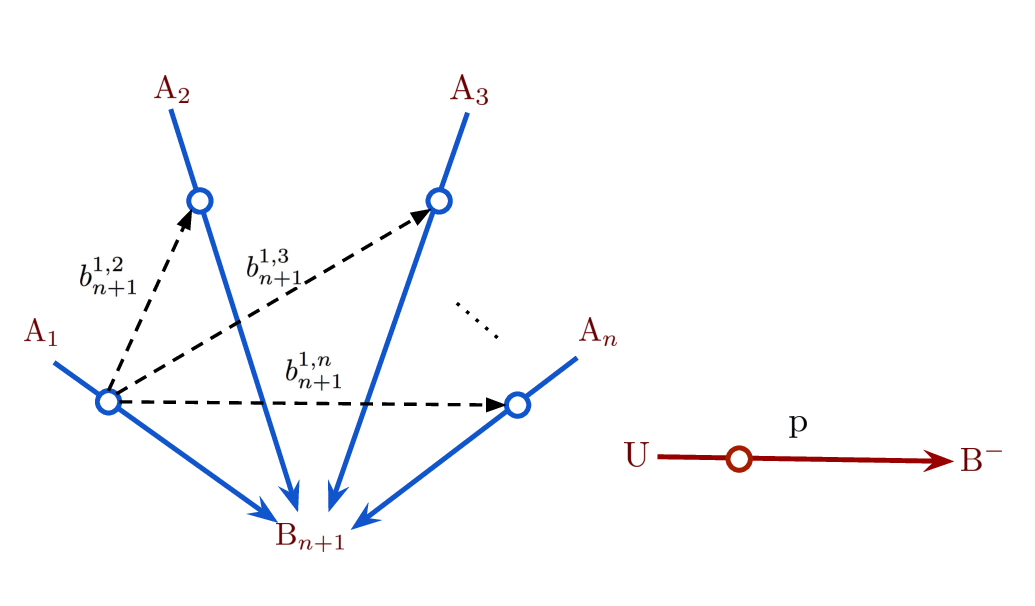}}
\caption{Frames assigned to $(\A_1,\ldots, \A_n, \B_{n+1})$.}
\label{AAB}
\end{figure}

\vskip 2mm

Define
\be \la{13.1.26.337h}
\tau: {\rm Conf}^{\cal O}({\cal A}^n, {\cal B})\lra {\rm Gr}^{n-1},~~~(\A_1,\ldots, \A_n)\lms 
\{[b_{n+1}^{1,2}],\ldots, [b_{n+1}^{1,n}]\}. 
\ee

Consider the projection
$$
i_b: {\rm Gr}^{n-1}\lra{\rm Conf}({\rm Gr}^{n}, {\cal B}),~~~\{\lr_2,\ldots, \lr_{n}\}\lms ([1], \lr_2,\ldots, \lr_{n}, \B^{-}). 
$$

Recall the map ${\kappa}$ in \eqref{5.12.12.2}. 
As illustrated by Fig \ref{AAB}, we get
\bl \la{12.15.12h35m}
When ${\rm J}={\rm I}=[1,n]\subset[1,n+1]$,  we have $\kappa=i_b\circ \tau$.
\el

\subsection{The configuration spaces ${\rm Conf}({\cal B}, {\cal A}^n, {\cal B})$ and ${\rm Conf}({\cal B}, {\rm Gr}^n, {\cal B})$}

Recall $r^{ij}_k$ in \eqref{13.3.16.1101}.  Similarly, there is a positive birational isomorphism
\be
p: {\rm Conf}({\cal B}, {\cal A}^{n}, {\cal B})\lra \U^{-}\times({\B}^{-})^{n-1}, ~~~(\B_1, \A_2, \ldots,\A_{n+1},\B_{n+2})\lms (r_{n+2}^{1, 2}, b_{n+2}^{2, 3},\ldots, b_{n+2}^{n,n+1}).
\ee
The potential ${\cal W}$ on ${\rm Conf}({\cal B},{\cal  A}^{n}, {\cal B})$ induces a positive function
${\cal W}_p:={\cal W}\circ p^{-1}$ on ${\U}^{-}\times ({\B}^{-})^{n-1}$.
We have
\be \la{12.12.1.2ll}
{\cal W}_p(r_1, b_2,\ldots, b_{n})=\sum_{i\in I}\frac{1}{{\cal R}_i^{-}(r_1)}+ \sum_{2\leq j\leq n}\sum_{i\in I}\big(\frac{1}{{\cal L}_i^{-}\circ \pi_l(b_j)}+\frac{1}{{\cal R}_i^{-}\circ \pi_r(b_j)}\big).
\ee

  \begin{figure}[ht]
\epsfxsize=4in  
\centerline{\epsfbox{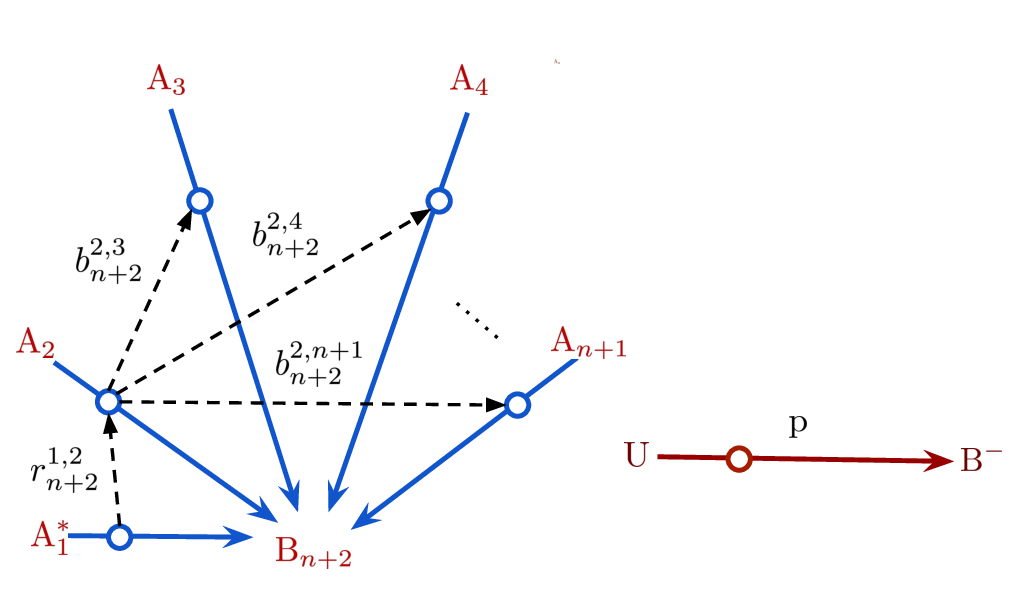}}
\caption{Frames assigned to $(\B_1,\A_2,\ldots, \A_{n+1},\B_{n+2})$. Here $\pi(\A_{1}^*)=\B_{1}$.}
\label{ABB}
\end{figure}

\vskip 2mm

Recall the map ${\kappa}$ in \eqref{5.12.12.2}. Define
\be \la{13.1.26.352h}
\tau_s: {\rm Conf}_{w_0}^{\cal O}({\cal A}, {\cal B}^n,{\cal A})\lra {\rm Gr}^{n}, ~~~~(\B_1,\A_2,\ldots, \A_{n+1}, \B_{n+2})\lms ([r_{n+2}^{1,2}],[r_{n+2}^{1,2}b_{n+2}^{2,3}],\ldots, [r_{n+2}^{1,2}b_{n+2}^{2,n+1}]).
\ee

Consider the projection
$$
i_s: {\rm Gr}^{n}\lra {\rm Conf}_{w_0}({\cal B}, {\rm Gr}^n, {\cal B}),~~~\{\lr_2,\ldots, \lr_{n+1}\}\lms (\B,\lr_2,\ldots, \lr_{n+1}, \B^{-}). 
$$

Let $x=(\B_1,\A_2,\ldots, \A_{n+1}, \B_{n+2})\in {\rm Conf}_{w_0}^{\cal O}({\cal A}, {\cal B}^n,{\cal A})$.
Let $\A_{1}^*\in {\cal A}$ be the preimage of $\B_1$ such that $b_{\B_{n+2}}^{\A_{1}^*, \A_2}=r_{n+2}^{1,2}.$
As illustrated by Fig \ref{ABB}, we get
\bl \la{12.15.kappa.j}
When ${\rm J}={\rm I}=[2,n+1]\subset[1,n+2]$,  we have $\kappa=i_s\circ \tau_s$.
\el

\section{Proof of Theorems \ref{5.8.10.45a} and \ref{13.1.30.742h}}\la{sec7}

\subsection{Lemmas}

Let ${\cal Y}={\cal Y}_1\times\ldots\times{\cal Y}_k$ be a product of positive spaces. The positive structure on ${\cal Y}$ is induced by positive structures  on ${\cal Y}_i$. Let $y_i\in {\cal Y}_i^{\circ}({\cal K})$.  Let $(y_{i,1}, \ldots, y_{i,n_i})$ be the coordinate of $y_i$ in a positive coordinate system ${\bf c}_{i}$. Define the field extension
 \be \la{13.2.12.3.38hh}
\Q(y_1,\ldots, y_k):=\Q\big({\rm in}(y_{1,1}), \ldots, {\rm in}(y_{1,n_1}),\ldots, {\rm in}(y_{k,n_k})\big).
 \ee
Thanks to \eqref{9.20.2.12}, such an extension is  independent of the positive coordinate systems chosen.

\vskip 2mm

Recall the morphisms $\pi_l$, $\pi_r$ in \eqref{13.1.12.51h}. 
\bl
\la{5.7.1}
Fix $i\in I$.
Let $(b, c)\in (\B^{-}\times {\Bbb G}_m)^{\circ}({\cal K})$. {Recall $y_i(c)\in \U^-({\cal K})$}. Then $b':=b\cdot y_i(c)\in (\B^-)^{\circ}({\cal K})$. 

Moreover, if $\val({\cal R}_i^{-}\circ \pi_r(b))\leq \val(c)$, then 
$\val(b')=\val(b)$ and $\Q(b',c)=\Q(b,c).$
\el
\begin{proof} Let $b=h\cdot y$. Fix a reduced word for $w_0$ which ends with $i_m=i$. It provides a decomposition $y=y_{i_1}(c_1)\ldots y_{i_m}(c_m)$. Then $b'=h\cdot y_{i_1}(c_1)\ldots y_{i_m}(c_m+c).$ The rest is clear. 
\end{proof}

\bl 
\la{5.7.8}
Let $(b,h)\in (\B^-\times {\rm H})^{\circ}({\cal K})$. 
Then $b':=b\cdot h\in (\B^{-})^{\circ}({\cal K})$.  

Moreover, if $h\in {\rm H}(\C)$,  then $\val(b')=\val(b)$ and $\Q(b', h)=\Q(b,h).$
\el
\begin{proof} Let $b=y\cdot h_b$. The rest is clear.
\end{proof}

\bl \la{12.12.1.1h}
Let $(b,p)\in ({\B^{-}}\times \B^{-})^{\circ}({\cal K})$.  Assume $p\in {\B^-}(\C)$.
\begin{itemize}
\item[1.] If $ \val({\cal R}_i^-\circ\pi_r(b))\leq 0$ for all $i\in I$, then $b\cdot p$ is a transcendental point. Moreover 
$$\val (b\cdot p)=\val(b),~~~~~~\Q(b\cdot p, p)=\Q(b,p).$$
\item[2.] If $\val({\cal L}_i^{-}\circ\pi_l(b))\leq 0$ for all $i\in I$, then $p^{-1}\cdot b$ is a transcendental point. Moreover 
$$\val(p^{-1}\cdot b)=\val (b),~~~~~~\Q(p^{-1}\cdot b, p) = \Q(b,p).
$$ 
\end{itemize}
\el

\begin{proof}
Combining Lemmas \ref{5.7.1}-\ref{5.7.8}, we prove 1. Analogously 2 follows.
\end{proof}

\subsection{Proof of  Theorem \ref{13.1.30.742h}.}
Our first task is to prove Theorem \ref{13.1.30.742h} for the cases when ${\rm I}=[1,n]\subset [1,n+1]$. 

Let ${\rm J}=\{j_1,\ldots, j_m\}\subset {\rm I}$. Recall
${\cal W}_{\rm J}$ in \eqref{13.3.1.527h}. Let $l\in {\rm Conf}({\cal A}^n,{\cal B})(\Z^t)$  be   such that ${\cal W}_{\rm J}^t(l)\geq 0$.

 Let ${\bf x}\in {\cal C}_l^{\circ}$.
Recall the map $c$ in Theorem \ref{12.17.thm7h}.  Set
$c({\bf x}):=(b_1,\ldots, b_{n-1})\in ({\rm B}^{-})^{n-1}({\cal K}).$ 

\bl  \la{13.1.26.10.3111h}
For every $i\in I$, we have
\begin{itemize}
\item[1.] $\val({\cal L}_i^{-}\circ \pi_l(b_j))\leq 0$ if  $j\in[1,n-1]\cap {\rm J}$,
\item[2.] $\val({\cal R}_i^-\circ \pi_r(b_{k-1}))\leq 0$ if  $k\in[2,n]\cap {\rm J}$.
\end{itemize}
\el
\begin{proof}
Let $j\in[1,n-1]\cap {\rm J}$. By definition $b_j=b_{\B_{n+1}}^{\A_j,\A_{j+1}}$. By Lemmas  \ref{lem2}, \ref{12.12.11.h}, we get
$$\val({\cal L}_i^-\circ\pi_l(b_j))=-\val(\chi_i(u_{\B_{n+1}, \B_{j+1}}^{\A_j}))\leq -\chi_{\A_j}^t(l)\leq 0.$$
The second part follows similarly. 
\end{proof}
As illustrated by  Fig \ref{AAB}, we see that
$$
{\bf x}=(g_1\cdot\U, g_2\cdot \U,\ldots, g_n\cdot \U, \B^-),~~~~g_1:=1,~g_j:=b_1\ldots b_{j-1},~j\in [2,n].
$$
If $j\in {\rm J}$, then $\lr_j:=\lr(g_j\cdot \U, \B^-)=[g_j]\in {\rm Gr}$. Therefore
 $$
\kappa({\bf x})=(x_1,\ldots, x_n, \B^-),~~~~x_j=\left\{\begin{array}{ll}[g_j] &\text{ if }j \in {\rm J},~~\\ g_j\cdot \U &\text{ otherwise}.\end{array}\right.
$$

Let  $\{\A_{j_1},\ldots, \A_{j_m}\}\in {\cal A}^m(\C)$ be a generic point  in the sense of algebraic geometry.
Define
$$
{\bf y}:=(\A_1', \A_2',\ldots, \A_n', \B^-) \in {\rm Conf}({\cal A}^n; {\cal B}),~~~~\A_j'=\left\{\begin{array}{ll}g_j\cdot \A_j &\text{ if }j \in {\rm J},~~\\ g_j\cdot \U &\text{ otherwise}.\end{array}\right.
$$
Let $F\in \Q_+\big({\rm Conf}({\cal A}^n;{\cal B})\big)$. By the very definition of $D_F$, we have
$D_F\big({\kappa({\bf x})}\big)=\val\big(F({\bf y})\big)$.

\vskip 2mm
Since $\{\A_{j_1},\ldots, \A_{j_m}\}$ is generic, it can be presented by
\be \la{13.1.26.10.4111h}
\{\A_{j_1},\ldots, \A_{j_m}\}:=\{p_{j_1}\cdot \U,\ldots, p_{j_m}\cdot \U\},~~~~{\bf p}=\{p_{j_1},\ldots, p_{j_m}\}\in (\B^-)^m(\C).
\ee
We can also assume that $({\bf x},{\bf p})$ is a transcendental point, so that
\be
(c({\bf x}), {\bf p})\in \big((\B^{-})^{m+n-1}\big)^\circ ({\cal K}).
\ee
Set $p_j=1$ for $j\notin{\rm J}$. Keep the same $p_j$ for $j\in {\rm J}$.
Then
$${\bf y}=(g_1p_1\cdot \U, \ldots, g_{n}p_n\cdot \U, \B^{-});~~~~~~ c({\bf y})=(\tilde{b}_1,\ldots, \tilde{b}_{n-1}),~\tilde{b}_j:=p_j^{-1}b_jp_{j+1}\in {\B}^{-}({\cal K}).  
$$
By Lemmas \ref{12.12.1.1h}-\ref{13.1.26.10.3111h}, we get
\begin{align}
\Q(c({\bf x}), {\bf p})&=\Q(b_1,\ldots, b_{n-1}, p_{i_1},\ldots, p_{i_m})=\Q(\tilde{b}_1,\ldots, b_{n-1}, p_{i_1},\ldots, p_{i_m})=\ldots \nonumber\\
&=\Q(\tilde{b}_1,\ldots, \tilde{b}_{n-1}, p_{i_1},\ldots, p_{i_m})=\Q(c({\bf y}), {\bf p}).\\
\val({b}_j)&=\val(\tilde{b}_j),~~~~\forall j\in [1,n-1]. \la{13.1.30.9.44h}
\end{align}
Therefore 
$(c({\bf y}), {\bf p})\in \big((\B^{-})^{m+n-1}\big)^\circ ({\cal K}).$
Thus $c({\bf y})$ is a transcendental point. Since  $\val(c({\bf y}))=\val(c({\bf x}))=c^t(l)$, we get  ${\bf y}\in {\cal C}_{l}^{\circ}$. By Lemma \ref{thm10.1.1.2},  $\val\big(F({\bf y})\big)=F^t(l)$.
Theorem \ref{13.1.30.742h}  is proved.

\vskip 2mm

Now consider the general cases when ${\rm J}\subset{\rm I}\subset[1,n]$. 
Consider the positive projection
$$
d_{\rm I}=p_{\rm I}\circ d: {\rm Conf}({\cal A}^n;{\cal B})\stackrel{d}{\lra} {\rm Conf}_n({\cal A})\stackrel{p_{\rm I}}{\lra} {\rm Conf}_{\rm I}({\cal A}; {\cal B}).
$$
Here the map $d$ kills the last flag $\B_{n+1}$. The map ${p}_{\rm I}$  keeps ${\rm A}_i$ intact  when $i\in {\rm I}$, and takes $\A_i$ to $\pi(\A_i)$ otherwise.

\bl
Let $l\in {\rm Conf}_{{\rm J}\subset{\rm I}}^+({\cal A};{\cal B})(\Z^t)$. There exists $l'\in {\rm Conf}({\cal A}^n; {\cal B})(\Z^t)$ such that ${\cal W}_{\rm J}^t(l')\geq 0$ and $d_{\rm I}^t(l')=l$.
\el
\begin{proof}
We prove the case when ${\rm J}$ contains $\{1, n\}$. In fact, the other cases are easier.  
Let $x=(\A_1,\ldots, \A_n, \B_{n+1})$. 
Consider a map
$u: {\rm Conf}({\cal A}^n;{\cal B})\ra \U$ given by $x\mapsto u_{\B_{n+1},\B_n}^{\A_1}.$  Then 
\begin{align} 
\la{13.1.14.1h04}
{\cal W}_{\rm J}(x)&={\cal W}_{\rm J}(\A_1,\ldots, \A_n)+{\cal W}(\A_1,\A_n, \B_{n+1}) ={\cal W}_{\rm J}(d_{\rm I}(x))+\chi(u_{\B_{n+1},\B_n}^{\A_1})+\chi (u_{\B_1,\B_{n+1}}^{\A_n}) \nonumber\\
                                                                      &={\cal W}_{\rm J}(d_{\rm I}(x))+\chi\big(u(x)\big)+\sum_{i\in I}\frac{\pi_{1,n}(d_{\rm I}(x))}{{\cal R}_i(u(x))}.
\end{align}

By Lemma \ref{9.21.17.56h}, we have $\lambda:=\pi_{1,n}^t(l)\in {\rm P}^+$.
Clearly there exists  $l'\in {\rm Conf}({\cal A}^n; {\cal B})(\Z^t)$ such that
$d_{\rm I}^t(l')=l$ and $u^t(l')=0\in \U(\Z^t). $ 
We tropicalize \eqref{13.1.14.1h04}:
$$
{\cal W}_{\rm J}^t(l')=\min\{{\cal W}_{\rm J}^t(l), ~\chi^t(0),~\min_{i\in I}\{\langle \lambda, \alpha_i\rangle-{\cal R}_i^t(0)\}\}
=\min\{{\cal W}_{\rm J}^t(l), ~0,~\min_{i\in I}\{\langle \lambda, \alpha_i\rangle\}\}=0.
$$
\end{proof}

Let $l, l'$ be as above.  
Let  ${\bf x}\in {\cal C}_l^\circ$.  Clearly there exists  ${\bf z}\in {\cal C}_{l'}^\circ$ such that $d_{\rm I}({\bf z})={\bf x}$.
For any $F\in {\Q}_+({\rm Conf}_{\rm I}({\cal A};{\cal B}))$,  we have
$$
D_F(\kappa({\bf x}))=D_{F\circ d_{\rm I}}(\kappa({\bf z}))=(F\circ d_{\rm I})^t(l')=F^t\circ d_{\rm I}^t(l')=F^t(l). 
$$
The second identity is due to the special cases discussed before. The rest are by definition.

\section{Configurations and generalized Mircovi\'{c}-Vilonen cycles}\la{sec11}

\subsection{Proof of Theorem \ref{kth} }
\la{sec12.1.1}
In this Section we use extensively the notation from Section \ref{sec5.2}, such as $u_{\B_1,\B_3}^{\A_2}$, $r_{\B_3}^{\B_1,\A_2}\in \U^-$. 
We identify the subset ${\bf A}_{\nu}$  in Theorem \ref{kth} with the subset ${\bf A}_{\nu}\subset \U_{\chi}^+(\Z^t)$ in \eqref{8.23.10.22h} by  tropicalizing 
\be \la{12.12.30.1hh}
\alpha: {\rm Conf}({\cal B}, {\cal A},{\cal B})\stackrel{\sim}{\lra} {\U}, ~~(\B_1, \A_2, \B_3)\lms u_{\B_1,\B_3}^{\A_2}.
\ee
Thanks to identity 4 of Lemma \ref{12.12.11.h}, the index $\nu$ for both definitions match.

\begin{proof}[Proof of Theorem \ref{kth}]

2). Let $l\in {\bf A}_\nu$. Let $x=(\B_1, \A_2, \B_3)\in {\cal C}_{l}^{\circ}$. By Lemma \ref{12.12.11.h}, 
$r_{\B_3}^{\B_1, \A_2}=\eta(u_{\B_1, \B_3}^{\A_2}).$
Recall  $\kappa_{\rm Kam}$ in \eqref{kappa.kam}. Recall $i_s$ in \eqref{10.9.12.2}. By Lemma \ref{12.15.kappa.j}, we get
\be \la{13.1.8.1h}
\kappa (x)=(\B, [r_{\B_3}^{\B_1,\A_2}], \B^{-})=(\B, \kappa_{\rm Kam}(u_{\B_1, \B_3}^{\A_2}), \B^{-})=i_s(\kappa_{\rm Kam}(\alpha(x))).
\ee
Recall ${\rm MV}_l$ in  \eqref{thm.kam.a.l}. Then
${\cal M}_l=i_s({\rm MV}_l).$
Thus 2)  is a reformulation of Theorem \ref{thm.kam}.

\vskip 2mm

1). Recall the map
\be
p_i: {\rm Conf}({\cal A, A, B})\lra \U, ~~~(\A_1,\A_2,\B_3)\lms u_{\B_{i+2},\B_{i+1}}^{\A_{i}}, ~~~i=1,2.
\ee
{Recall the map $\tau$ defined by \eqref{13.1.26.337h}}
\be \la{h13.1.16.12h}
\tau: {\rm Conf}({\cal A},{\cal A}, {\cal B})({\cal K})\lra {\rm Gr},~~~(\A_1, \A_2, \B_3)\lms [b_{\B_3}^{\A_1,\A_2}].
\ee
Note that $p_2^{t}$ induces a bijection from ${\bf P}_{\lambda}^{\mu}$ to ${\bf A}_{\lambda-\mu}$. The MV cycles of coweight $(\lambda-\mu,0)$ are 
$$
\overline{\kappa_{\rm Kam} \circ p_2({\cal C}_{l}^{\circ})}=\overline{\kappa_{\rm Kam}({\cal C}_{p_2^t(l)}^{\circ})}={\rm MV}_{p_2^t(l)},~~~~l\in {\bf P}_{\lambda}^{\mu}.
$$

Let $x=(\A_1,\A_2, \B_3)\in {\cal C}_{l}^{\circ}$. Note that
$$
\tau(x)=[b_{\B_3}^{\A_1,\A_2}]=[\mu_{\B_3}^{\A_1,\A_2}r_{\B_3}^{\B_2, \A_1}]=\mu(x)\cdot \kappa_{\rm Kam}(p_2(x)),~~~\mbox{where } [\mu(x)]=[\mu_{\B_3}^{\A_1,\A_2}]=t^{\mu}.
$$
We get $\overline{\tau({\cal C}_{l}^{\circ})}=t^{\mu}\cdot  {\rm MV}_{p_2^t(l)}$. They are precisely MV cycles of coweight $(\lambda,\mu)$. 
Recall the isomorphism $i$ in \eqref{10.9.12.2a}. Clearly ${\cal M}_l=i(\overline{\tau({\cal C}_l^\circ}))$. Thus 1) is proved.

\vskip 2mm

3). The set ${\bf B}_{\lambda}^{\mu}$ is a subset of ${\bf P}_{\lambda}^{\mu}$ such that $p_1^t({\bf B}_{\lambda}^{\mu})\subset \U_{\chi}^+(\Z^t)$.
By Lemma \ref{9.21.17.56h},  ${\bf B}_{\lambda}^{\mu}$ is empty unless $\lambda\in {\rm P}^+$. So we assume $\lambda\in {\rm P}^+$.
Let $l\in {\bf P}_{\lambda}^{\mu}$. 
Let $x=(\A_1,\A_2,\B_3)\in {\cal C}_{l}^\circ$.   
By Lemma \ref{3.23.12.1aa}, 
\be
\tau(x)=[b_{\B_3}^{\A_1,\A_2}]=[u_{\B_3,\B_2}^{\A_1}h_{\A_1,\A_2}\overline{w}_0u_{\B_1,\B_3}^{\A_2}]=p_1(x)\cdot t^{\lambda}.
\ee
The last identity is due to $p_2^t(l)\in \U_{\chi}^+(\Z^t)$ \big(hence $u_{\B_1,\B_3}^{\A_2} \in \U({\cal O})$\big).

By Lemma \ref{13.1.8.53h},
$ \tau(x)\in \overline{{\rm Gr}_{\lambda}}$ if and only if $p_1^t(l)\in \U^{+}_{\chi}(\Z^t)$.
Therefore 
 \be \la{13.1.22.168h}
\overline{\tau({\cal C}_{l}^\circ)}  \subset \overline{{\rm Gr}_{\lambda}}
 \Longleftrightarrow p_1^t(l)\in \U_{\chi}^+(\Z^t)
 \Longleftrightarrow l\in {\bf B}_{\lambda}^{\mu}.
\ee
The rest follows from  
Lemma \ref{12.15.12h35m}.
\end{proof}

\subsection{Proof of Theorems \ref{MVlm1a}, \ref{MVlm1sa}, \ref{MVn+1bbb}}
By  Theorem \ref{kth}, we have
\be \la{linlang}
{\rm S}_{w_0}^{\mu}\cap {\rm S}_e^{\lambda}=\bigcup_{l\in {\bf P}_{\lambda}^{\mu}}{\rm N}_l,~~~~~
{\rm S}_{w_0}^{\mu}\cap{\rm Gr}_{\lambda}=\bigcup_{l\in{\bf B}_{\lambda}^{\mu}} {\rm M}_{l},
\ee
Here ${\rm N}_l$ (resp. ${\rm M}_l$) are components containing $\tau({\cal C}_l^{\circ})$ as dense subsets.
They are all of dimension $\langle \rho, \lambda-\mu\rangle$. 
The closures $\overline{{\rm N}_l}=\overline{\tau({\cal C}_l^{\circ})}$ are MV cycles.

\paragraph{Proof of Theorem \ref{MVlm1a}.}
Scissoring the convex ($n$+2)-gon along diagonals emanating from the vertex labelled by $n$+2, see Fig \ref{cut}, we get a positive birational isomorphism between ${\rm Conf}({\cal A}^{n+1}, {\cal B})$ and $\big({\rm Conf}({\cal A}^2, {\cal B})\big)^n$. Its tropicalization provides a decomposition
\be \la{13.2.1.7.01h}
{\bf P}_{\lambda; \underline{\lambda}}^{\mu}=\bigsqcup_{\mu_1+\ldots+\mu_n=\mu} {\bf P}_{\lambda}^{\mu_1}\times{\bf B}_{\lambda_2}^{\mu_2} \ldots \times {\bf B}_{\lambda_n}^{\mu_n},~~~~\underline{\lambda}=(\lambda_2,\ldots, \lambda_n)\in ({\rm P}^{+})^{n-1}.
\ee
Let $l=(l_1,\ldots, l_n)\in {\bf P}_{\lambda; \underline{\lambda}}^{\mu}$. We construct an irreducible subset
$$
{\rm C}_l:=\{([b_1], [b_1b_2],\ldots, [b_1b_2\ldots b_n])\in {\rm Gr}^n~|~b_i\in {\rm B}^{-}({\cal K}), ~[b_1]\in {\rm N}_{l_1},~[b_i]\in {\rm M}_{{l}_i},~i\in [2,n]\}.
$$
By induction,  ${\rm C}_l$ is of dimension $\langle \rho, \lambda+\lambda_2+\ldots+\lambda_n-\mu\rangle.$
\bl \la{13.3.16.114h}
Recall the subvariety ${\rm Gr}_{\lambda, \underline{\lambda}}^{\mu}$ in \eqref{MVlm}.
We have
${\rm Gr}_{\lambda, \underline{\lambda}}^{\mu}=\cup {\rm C}_{l}$ where ${l\in {\bf P}_{\lambda; \underline{\lambda}}^{\mu}}.$
\el

\begin{proof}
Thanks to the isomorphism  ${\rm B}^{-}({\cal K})/{\B}^{-}({\cal O})\stackrel{\sim}{\ra}{\rm Gr}$, each  $x\in {\rm Gr}_{\lambda, \underline{\lambda}}^{\mu}$ can be presented as
$([b_1], [b_1b_2]\ldots, [b_1\ldots b_n])$, where $b_i\in {\rm B}^{-}({\cal K})$ for all $i\in[1,n].$ 
By the definition of ${\rm Gr}_{\lambda, \underline{\lambda}}^{\mu}$, we have
$$
[b_i]\in {\rm Gr}_{\lambda_i}, ~\forall i\in [2,n];~~~[b_1]\in {\rm S}_e^{\lambda},~[b_1\ldots b_n]\in {\rm S}_{w_0}^{\mu}.
$$

Let  ${\rm pr}: \B^{-}({\cal K})\ra{\rm H}({\cal K})\ra{\rm H}({\cal K})/{\rm H}({\cal O})={\rm P}$ be  the composite of standard projections. 
Set ${\rm pr}(b_i):=\mu_i$. Then $[b_i]\in {\rm S}_{w_0}^{\mu_i}$. 

When $i=1$, $[b_1]\in  {\rm S}_{w_0}^{\mu_1}\cap {\rm S}_e^{\lambda}$. Thus  $[b_1]\in {\rm N}_{l_1}$ for some $l_1\in {\bf P}_{\lambda}^{\mu_1}$.

When $i>1$,  $[b_i]\in {\rm S}_{w_0}^{\mu_i}\cap {\rm Gr}_{\lambda_i}$. Thus  $[b_i]\in {\rm M}_{l_i}$ for some $l_i\in {\bf B}_{\lambda_i}^{\mu_i}$.

Note that $\mu_1+\ldots+\mu_n={\rm pr}(b_1)+\ldots+{\rm pr}(b_n)={\rm pr}(b_1\ldots b_n)=\mu.$ Thus $l:=(l_1,\ldots, l_n)\in {\bf P}_{\lambda, \underline{\lambda}}^{\mu}$. By definition $x\in {\rm C}_l$. 
Therefore ${\rm Gr}_{\lambda, \underline{\lambda}}^{\mu}\subseteq\cup_{l\in 
{\bf P}_{\lambda; \underline{\lambda}}^{\mu}} {\rm C}_{l}$.
The other direction follows similarly.
 \end{proof}
 
 Let $l\in {\bf P}_{\lambda, \underline{\lambda}}^{\mu}$. Recall the map
$$
\tau: {\rm Conf}({\cal A}^{n+1}, {\cal B})\lra {\rm Gr}^n, ~~~~(\A_1,\ldots, \A_{n+1}, \B_{n+2})\lms ([b_{\B_{n+2}}^{\A_1,\A_2}],\ldots, [b_{\B_{n+2}}^{\A_1,\A_{n+1}}]).
$$
Clearly  $\tau({\cal C}_l^{\circ})$ is a dense subset of ${\rm C}_l$. 
Recall  the isomorphism  $i$ in \eqref{10.9.12.2a}. 
Following Lemma \ref{12.15.12h35m}, the isomorphism $i$ identifies $\tau({\cal C}_l^{\circ})$ with ${\cal M}_l^{\circ}$. By Theorem \ref{13.1.30.742h}, the cells ${\cal M}_l^{\circ}$  are disjoint.  Theorem \ref{MVlm1a} follows from Lemma \ref{13.3.16.114h}.



\paragraph{Proof of Theorem \ref{MVlm1sa}.}
The group ${\rm H}({\cal K})$ acts diagonally on ${\rm Gr}^n$. Let $h\in {\rm H}({\cal K})$ be such that $[h]=t^{\nu}$. Then
$
h\cdot {\rm Gr}_{\lambda; \underline{\lambda}}^{\mu}={\rm Gr}_{\lambda+\nu; \underline{\lambda}}^{\mu+\nu}.
$ 
One can choose $h$ such that $[h]=t^{-\mu}$. 
The rest  follows by the same argument in the proof of Theorem \ref{MVlm1a}.

\paragraph{Proof of Theorem \ref{MVn+1bbb}.}
By definition ${\bf B}_{\lambda_1,\lambda_2,\ldots, \lambda_n}^{\mu}\subset {\bf P}_{{\lambda_1}; \lambda_2, ..., \lambda_n}^{\mu}$. The Theorem follows by the same 
argument  in the proof of Theorem \ref{MVlm1a}.

%
%

  \subsection{Components of the fibers of convolution morphisms}\la{sec9.3new}
  
Let $\underline{\lambda}=(\lambda_1,\ldots, \lambda_n)\in ({\rm P}^+)^n$.  
Recall  the convolution variety ${\rm Gr}_{\underline{\lambda}}$ in \eqref{con.var.247}.
By the geometric Satake correspondence,
${\rm IH}(\overline{{\rm Gr}_{\underline{\lambda}}})
=V_{\underline{\lambda}}:=V_{\lambda_1}\otimes \ldots \otimes V_{\lambda_n}$.

Set $|\underline{\lambda}|:=\lambda_1+\ldots+\lambda_n$. Set 
 ${\rm ht}(\underline{\lambda};\mu):=\langle \rho, |\underline{\lambda}|-\mu\rangle.$
The {\it convolution morphism} $m_{\underline{\lambda}}: \overline{{\rm Gr}
_{\underline{\lambda}}}\to \overline{{\rm Gr}_{|\underline{\lambda}|}}$ 
projects $(\lr_1,\ldots, \lr_n)$ to $\lr_n$. 
It is semismall, i.e. for any $\mu\in {\rm P}^+$ such that $t^{\mu}\in 
\overline{{\rm Gr}_{|\underline{\lambda}|}}$, the fiber 
$m_{\underline{\lambda}}^{-1}(t^{\mu})$ over $t^{\mu}$ 
is of top dimension ${\rm ht}(\underline{\lambda};\mu)$. 
See \cite{MV} for proof.  

By the decomposition theorem \cite{BBD}, we have
$$
{\rm IH}(\overline{ {\rm Gr}_{\underline{\lambda}}})=\bigoplus_\mu F_{\mu}\otimes {\rm IH}(\overline{{\rm Gr}_{\mu}}).
$$
Here the sum is over $\mu\in {\rm P}^+$ such that 
$t^{\mu}\subseteq \overline{{\rm Gr}_{|\underline{\lambda}|}}$, and  
$F_{\mu}$ is the vector space spanned by the fundamental classes of  top dimensional components of $m_{\underline{\lambda}}^{-1}(t^{\mu})$. As a consequence, the number of top components of $m_{\underline{\lambda}}^{-1}(t^{\mu})$ equals the tensor product multiplicity $c_{\underline{\lambda}}^{\mu}$ of $V_\mu$ in $V_{\underline{\lambda}}$.

Recall the subsets ${\bf C}_{\underline{\lambda}}^{\mu}$ in \eqref{11.20.11.2}.
By Lemma \ref{9.21.17.56h}, the set ${\bf C}_{\underline{\lambda}}^{\mu}$ is 
empty unless $(\mu,\underline{\lambda})\in ({\rm P}^+)^{n+1}$.  
Recall the map $\omega$ in \eqref{13.1.23.23hh}.
In  this subsection we prove 

\bt \la{8.24.8.22h}
Let ${\bf T}_{\underline{\lambda}}^{\mu}$ be the set of top components of $m_{\underline{\lambda}}^{-1}(t^{\mu})$. 
For each $l\in {\bf C}_{\underline{\lambda}}^{\mu}$, the closure $\overline{{\omega({\cal C}_{l}^{\circ})}}\in {\bf T}_{\underline{\lambda}}^{\mu}$.  It gives a bijection between ${\bf C}_{\underline{\lambda}}^{\mu}$ and {${\bf T}_{\underline{\lambda}}^\mu$}.
\et

First we prove the case when $n=2$. In this case,
the fiber $m_{\lambda_1,\lambda_2}^{-1}(t^{\mu})$ is isomorphic to
$$
\{\lr\in {\rm Gr}~|~(\lr, t^{\mu})\in \overline{ {\rm Gr}_{\lambda_1,\lambda_2}}\}={\overline{{\rm Gr}_{\lambda_1}}}\cap t^{\mu}{\overline{{\rm Gr}_{\lambda_2^{\vee}}}}.
$$
Here $\lambda_2^{\vee}:=-w_0(\lambda_2)\in {\rm P}^+$. 
The following Theorem is due to Anderson.
\bt [\cite{A}] \la{Thm.A.}
 The  top 
 components of ${\overline{{\rm Gr}_{\lambda_1}}}\cap t^{\mu}{\overline{{\rm Gr}_{\lambda_2^{\vee}}}}$ are precisely  the MV cycles  of coweight $(\lambda_1,\mu-\lambda_2)$ contained in 
${\overline{{\rm Gr}_{\lambda_1}}}\cap t^{\mu}{\overline{{\rm Gr}_{\lambda_2^{\vee}}}}$.
\et
\begin{figure}[ht]
 \epsfxsize=3.5in 
\centerline{\epsfbox{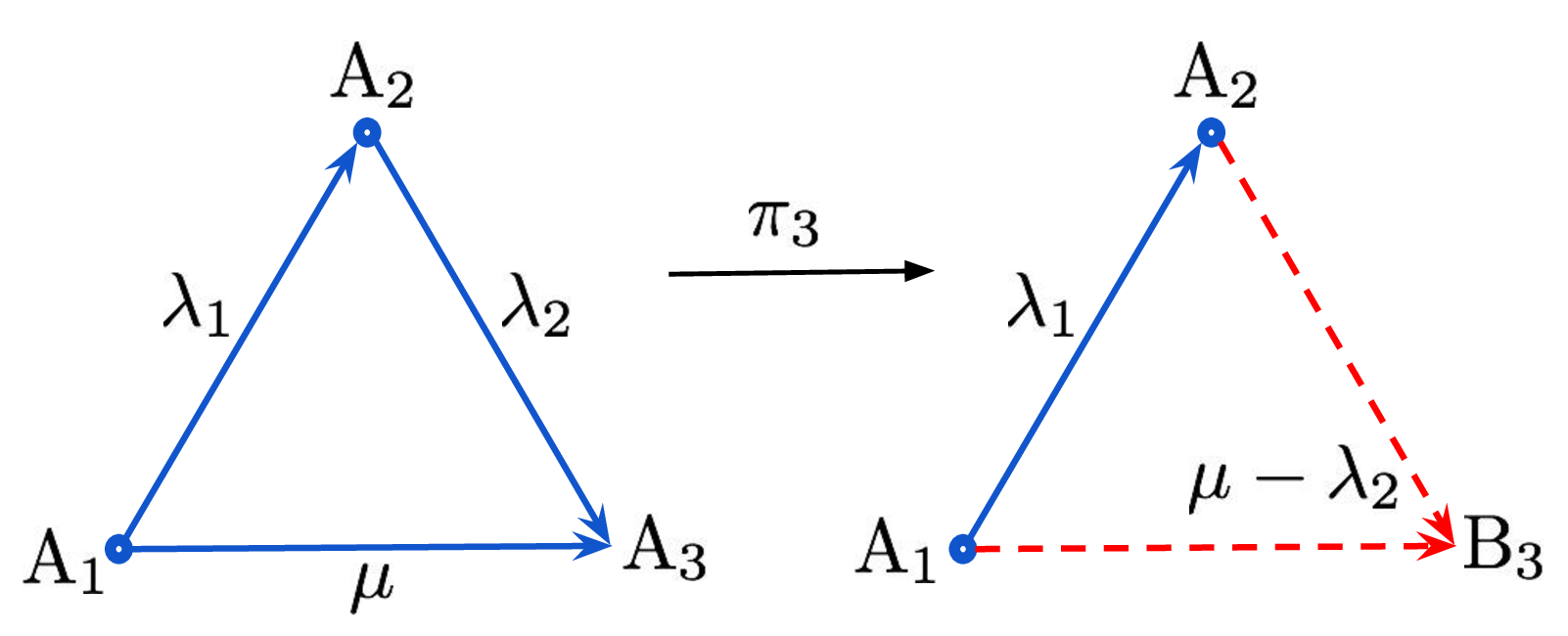}}
\caption{The projection $\pi_3$ induces a bijection 
$\pi_3^t:  \wt{\bf B}_{\lambda_1,\lambda_2}^{\mu}\ra {\bf B}_{\lambda_1}^{\mu-\lambda_2}$.}
\label{htpm}
\end{figure}
Recall the positive morphisms 
$$
p_i: {\rm Conf}_3({\cal A})\lra \U,~~~(\A_1, \A_2, \A_3)\lra u_{\B_{i-1}, \B_{i+1}}^{\A_i},~i\in \Z/3
$$ 
Let us put the potential condition on two vertices, see the left of Fig \ref{htpm}, getting
$$
 \wt{\bf B}_{\lambda_1, \lambda_2}^{\mu}:=\{l\in {\rm Conf}_3({\cal A})(\Z^t)~|~(\pi_{12}, \pi_{23},  \pi_{13})^t(l)=(\lambda_1,\lambda_2,\mu), {~p_1^t(l)\in \U^+_\chi(\Z^t),~p_2^t(l)\in \U^+_\chi(\Z^t)}\}.
$$
Consider the projection
$
\pi_3: {\rm Conf}_3({\cal A})\ra {\rm Conf}({\cal A}^2, {\cal B})$ which maps $(\A_1,\A_2,\A_3)$ to $(\A_1,\A_2,\B_3)$.
Its tropicalization $\pi_3^t$ induces  a bijection \footnote{There is a positive Cartan group action on ${\rm Conf}_3({\cal A})(\Z^t)$ defined via
$$
{\rm H}\times {\rm Conf}_3({\cal A})\lra {\rm Conf}_3({\cal A}), \quad \quad h\times (\A_1, \A_2, \A_3)\lms (\A_1, \A_2, \A_3\cdot h).
$$
Its tropicalization determines a free ${\rm H}(\Z^t)$-action on ${\rm Conf}_3({\cal A})(\Z^t)$.  By definition, one can thus identify the ${\rm H}(\Z^t)$-orbits of ${\rm Conf}_3({\cal A})(\Z^t)$ with points of ${\rm Conf}({\cal A},{\cal A}, {\cal B})(\Z^t)$. Note that each element in ${\bf B}_{\lambda_1}^{\mu-\lambda_2}$ has a unique representative in $\wt{\bf B}_{\lambda_1,\lambda_2}^{\mu}$. Hence the map $\pi_3^t$ is a bijection.} from 
$\wt{\bf B}_{\lambda_1,\lambda_2}^{\mu}$ to ${\bf B}_{\lambda_1}^{\mu-\lambda_2}$.
Recall $\omega_2$ in \eqref{3.23.12.3.lemh.i}. By \eqref{13.1.22.168h}, the cycles
$$
\overline{\omega_2({\cal C}_l^{\circ})}= \overline{\tau({\cal C}_{\pi_3^t(l)}^{\circ})},~~~~~l\in\wt{\bf B}_{\lambda_1,\lambda_2}^{\mu}
$$ 
are precisely MV cycles of coweight $(\lambda_1,\mu-\lambda_2)$ contained in $\overline{{\rm Gr}_{\lambda_1}}$.

Let $l\in\wt{\bf B}_{\lambda_1, \lambda_2}^{\mu}$.
Let $x=(\A_1,\A_2,\A_3)\in {\cal C}_{l}^{\circ}$. 
By identity 2 of Lemma \ref{3.23.12.1aa}, 
$$
\omega_2(x)=[\pi_{13}(x)\overline{w}_0 \cdot \big(p_3(x)\big)^{-1}{ \pi_{32}(x)}],~~~
\mbox{where }
 [\pi_{13}(x)]=t^{\mu},~~[\pi_{32}(x)]=t^{\lambda_2^{\vee}}.$$
Therefore 
{$$
\omega_2(x)\in t^{\mu}\overline{{\rm Gr}_{\lambda_2^{\vee}}} \Longleftrightarrow t^{-\mu}\omega_2(x)\in \overline{{\rm Gr}_{\lambda_2^{\vee}}}\Longleftrightarrow t^{-\mu}\pi_{13}(x)\overline{w}_0\cdot [\big(p_3(x)\big)^{-1}{\pi_{32}(x)}] \in \overline{{\rm Gr}_{\lambda_2^{\vee}}} \Longleftrightarrow \big(p_3(x)\big)^{-1}\cdot t^{\lambda_2^{\vee}}\in \overline{{\rm Gr}_{\lambda_2^{\vee}}}.
$$
Here the last equivalence is due to the fact that $t^{-\mu}\pi_{13}\overline{w}_0\in \G({\cal O})$. Therefore for any $l\in\wt{\bf B}_{\lambda_1,\lambda_2}^\mu$,  
$$ 
\omega_2({\cal C}_l^\circ)\subset t^\mu \overline{{\rm Gr}_{\lambda_2^{\vee}}}\Longleftrightarrow \big(p_3({\cal C}_l^\circ)\big)^{-1}\cdot t^{\lambda_2^{\vee}}\subset \overline{{\rm Gr}_{\lambda_2^{\vee}}} $$
By Lemma \ref{Lema10.1.1}, Lemma \ref{13.1.8.53h},  and the definition of ${\bf C}_{\lambda_1,\lambda_2}^\mu$, we get
$$\big(p_3({\cal C}_l^\circ)\big)^{-1}\cdot t^{\lambda_2^{\vee}}\in \overline{{\rm Gr}_{\lambda_2^{\vee}}}
\Longleftrightarrow \big(p_3({\cal C}_l^\circ)\big)^{-1}\in \U({\cal O}) \Longleftrightarrow  p_3({\cal C}_l^\circ) \in \U({\cal O})\Longleftrightarrow p_3^t(l) \in \U_{\chi }^{+}(\Z^t)\Longleftrightarrow l\in {\bf C}_{\lambda_1,\lambda_2}^{\mu}. 
$$
}
Let $l\in {\bf C}_{\lambda_1,\lambda_2}^{\mu}$. Let $x=(\A_1,\A_2,\A_3)\in {\cal C}_l^{\circ}$. Note that
$\omega_3(x)=[h_{\A_1,\A_3}]=t^{\mu}.$ 
Therefore $\omega(x)=(\omega_2(x), \omega_3(x))\in m_{\lambda_1,\lambda_2}^{-1}(t^{\mu})$.
The rest is due to  Theorem \ref{Thm.A.}.

\vskip 2mm

Now let us prove the general case. 
Consider the scissoring morphism
\begin{align} \la{13.1.18.1h}
 c=(c_1,c_2): {\rm Conf}_{n+1}({\cal A})&\lra {\rm Conf}_{n}({\cal A})\times {\rm Conf}_3({\cal A}), \nonumber\\
(\A_1,\ldots, \A_{n+1})&\lms (\A_1,\ldots, \A_{n-1}, \A_{n+1})\times (\A_{n-1},\A_n,\A_{n+1})
\end{align}
Due to the scissoring congruence invariance,  the map $c^t$ induces a decomposition
\be \la{13.1.17.101hh}
{\bf C}_{\lambda_1,\ldots, \lambda_{n}}^{\mu}=\bigsqcup_{\nu\in {\rm P}^+}   {\bf C}_{\lambda_1,\ldots, \lambda_{n-2}, \nu}^{\mu}\times {\bf C}_{\lambda_{n-1},\lambda_n}^{\nu}.
\ee

\bp \la{13.2.1.5.29h}
The cardinality of ${\bf C}_{\underline{\lambda}}^{\mu}$ is the  tensor product multiplicity $c_{\underline{\lambda}}^{\mu}$  of $V_{\mu}$ in $V_{\underline{\lambda}}$.
\ep

\begin{proof}
Decomposing  the last tensor products in $V_{\lambda_1}\otimes \ldots \otimes (V_{\lambda_{n-1}}\otimes V_{\lambda_n})$
 into a sum of irreducibles, and tensoring then each of them with $V_{\lambda_1}\otimes\ldots\otimes V_{\lambda_{n-2}}$, we get 
$$
c_{\lambda_1,\ldots, \lambda_n}^{\mu}=\sum_{\nu \in{\rm P}^+} c_{\lambda_1,\ldots, \lambda_{n-2},\nu}^{\mu}c_{\lambda_{n-1},\lambda_n}^{\nu}.
$$ 
As a consequence of $n=2$ case, 
$|{\bf C}_{\lambda, \mu}^{\nu}|=c_{\lambda, \mu}^{\nu}$. 
The Lemma follows by induction and \eqref{13.1.17.101hh}.
\end{proof}

\bl \la{13.1.18.24h}
For $l\in {\bf C}_{\underline{\lambda}}^{\mu}$, the cycles $\omega({\cal C}_{l}^{\circ})$ are disjoint.
\el
\begin{proof}
By Lemma \ref{13.1.24.1.hhh}, 
$\kappa({\cal C}_l^\circ)=i_1\circ \omega({\cal C}_l^\circ).$ 
The Lemma follows from Theorem \ref{5.8.10.45a}.  
\end{proof}

\bl \la{13.1.18.23h}
For any $l \in {\bf C}_{\underline{\lambda}}^{\mu}$, we have
$\omega({\cal C}_{l}^{\circ})\subset m_{\underline{\lambda}}^{-1}(t^{\mu}).$ 
\el
\begin{proof}
Let $x=(\A_1,\ldots, \A_{n+1})\in {\cal C}_{l}^{\circ}$. Recall the expression \eqref{3.23.12.3}. We have
$$
[g_i]:=[u_{\B_{i-1},\B_{i+1}}^{\A_i}h_{\A_i, \A_{i+1}}\overline{w}_0]=u_{\B_{i-1}, \B_{i+1}}^{\A_i}\cdot t^{\lambda_i}\in {\rm Gr}_{\lambda_i}, ~~~i\in [1,n].
$$
Thus $\omega(x)\in {{\rm Gr}_{\underline{\lambda}}}$.
Meanwhile
$m_{\underline{\lambda}}\circ \omega(x)=[h_{\A_1,\A_{n+1}}]=t^{\mu}.$ 
The Lemma is proved.
\end{proof}

\bl \la{13.1.18.25h}
Let $l\in {\bf C}_{\underline{\lambda}}^{\mu}$. The closure $\overline{\omega({\cal C}_l^{\circ})}$ is an irreducible variety of dimension 
${\rm ht}(\underline{\lambda};\mu)$.
\el

\begin{proof}
By construction, $\overline{\omega({\cal C}_{l}^{\circ})}$ is irreducible. 
 Note that $m_{\underline{\lambda}}^{-1}(t^{\mu})$ is of top dimension ${\rm ht}(\underline{\lambda};\mu)$. 
 By Lemma \ref{13.1.18.23h}, ${\rm dim}~\overline{\omega({\cal C}_{l}^{\circ})}\leq {\rm ht}(\underline{\lambda};\mu).$
To show that ${\rm dim}~\overline{\omega({\cal C}_{l}^{\circ})}\geq {\rm ht}(\underline{\lambda};\mu)$, we use induction.
\vskip 2mm

Set $\pi_{n-1,n+1}^t(l):=\nu.$
Recall  $c=(c_1,c_2)$ in \eqref{13.1.18.1h}.  Then
$c_1^t(l)\in {\bf C}_{\lambda_1,\ldots, \lambda_{n-2},\nu}^{\mu}$, $c_2^t(l)\in {\bf C}_{\lambda_{n-1},\lambda_n}^{\nu}.$ 
Consider the projection
$$
{\rm pr}: \omega({\cal C}_{l}^{\circ})\lra {\rm Gr}^{n-1},~~~(\lr_1,\ldots,\lr_{n-1}, \lr_n)\lra (\lr_1,\ldots,\lr_{n-2}, \lr_n)
$$
Its image
${\rm pr}(\omega({\cal C}_{l}^{\circ}))=\omega({\cal C}_{c_1^t({l})}^\circ).$ 
Let ${\bf b}=(\lr_1,\ldots, \lr_{n-2},\lr_n)\in \omega({\cal C}_{c_1^t(l)}^\circ)$.
The fiber over ${\bf b}$ is 
$$
{\rm pr}^{-1}({\bf b}):=\{\lr \in {\rm Gr}~|~(\lr_1,\ldots, \lr_{n-2},\lr,\lr_{n})\in \omega({\cal C}_{l}^{\circ})\}.
$$
Let $y=(\A_1,\ldots,\A_{n-1}, \A_{n+1})\in {\cal C}_{c_1^t(l)}^\circ$ such that $\omega(y)={\bf b}$. Set $b_y:=b_{\B_{n+1}}^{\A_1,\A_{n-1}}.$ 
For any $x\in {\cal C}_{l}^{\circ}$ such that $c_1(x)=y$, we have ${\rm pr}(\omega(x))=\omega(y)={\bf b}$. 
By \eqref{3.23.12.3.lemh.i}, we have
$$
\omega_{n-1}(x)=[b_{\B_{n+1}}^{\A_1,\A_n}]=b_y\cdot \omega_2(c_2(x)) \in {\rm pr}^{-1}(b).
$$
Then it is easy to see that
$
b_y\cdot \overline{\omega_2({\cal C}_{c_2^t(l)}^{\circ})}\subset \overline{{\rm pr}^{-1}(b)}.
$ 
Therefore  
$${\rm dim}~\overline{\omega({\cal C}_{l}^{\circ})}\geq {\rm dim}~\overline{\omega({\cal C}_{c_1^t(l)}^{\circ})}+{\rm dim}~\overline{\omega({\cal C}_{c_2^t(l)}^{\circ})}.$$
The case when $n=2$ is proved above. The Lemma follows by induction.
\end{proof}

\begin{proof} [Proof of Theorem \ref{8.24.8.22h}]
By Lemmas \ref{13.1.18.23h}, \ref{13.1.18.25h}, the  map 
${\bf C}_{\underline{\lambda}}^{\mu} \lra {\bf T}_{\underline{\lambda}}^{\mu}$, $l\lms\overline{\omega({\cal C}_{l}^\circ)}$ is well-defined.  
By Lemma \ref{13.1.18.24h}  and the very construction of the cell ${\cal C}_{l}^{\circ}$, it 
is injective.
Since  $|{\bf C}_{\underline{\lambda}}^{\mu}|=|{\bf T}_{\underline{\lambda}}^{\mu}|=c_{\underline{\lambda}}^{\mu}$,  the map is  a bijection.
\end{proof}

\subsection{Proof of Theorem \ref{5.8.10.45b}}
\la{sec9.4}
We focus on the case when $\mu=0$ for ${\bf C}_{\underline{\lambda}}^{\mu}$.
Consider the scissoring morphism 
\begin{align}
c=(c_1,c_2): {\rm Conf}_{n+1}({\cal A})&\lra {\rm Conf}_n({\cal A})\times {\rm Conf}_3({\cal A}),\nonumber\\
(\A_1,\ldots, \A_n, \A_{n+1})&\lms (\A_1,\ldots, \A_n)\times (\A_1,\A_n, \A_{n+1}).\nonumber
\end{align}
Due to the scissoring congruence invariance, the morphism $(c_1^t, c_2^t)$ induces a
decomposition
$$
{\bf C}_{\underline{\lambda}}^{0}=\bigsqcup _{\nu} {\bf C}_{\lambda_1,\ldots, \lambda_{n-1},\nu}\times {\bf C}_{\nu^{\vee}, \lambda_n}^{0}.
$$
Note that ${\bf C}_{\nu^{\vee}, \lambda_n}^{0}$ is empty if $\nu\neq\lambda_n$. Moreover $|{\bf C}_{\lambda_n^{\vee}, \lambda_n}^{0}|=1$. Thus  $c_1^t: {\bf C}_{\underline{\lambda}}^{0}\ra {\bf C}_{\underline{\lambda}}$ is a bijection.

\vskip 2mm

Consider the shifted projection
$$
p_s : {\rm Gr}^n\lra {\rm Conf}_n({\rm Gr}),~~~~~~\{\lr_1,\ldots, \lr_n\}\lra (\lr_n, \lr_1,\ldots, \lr_{n-1}).
$$
\bl
\la{13.1.24.47hhh}
Let $l\in {\bf C}_{\underline{\lambda}}^{0}$. Then
$p_s\circ \omega({\cal C}_{l}^\circ)=\kappa({\cal C}_{c_1^t(l)}^{\circ}).$
\el
\begin{proof}
Let $x=(\A_1,\ldots, \A_{n+1})\in {\cal C}_{l}^{\circ}$. Then $u:=u_{\B_{n+1},\B_n}^{\A_1}\in \U({\cal O})$.
Let $y:=c_1(x)\in {\cal C}_{c_1^t(l)}^\circ$.

Recall $\omega_i$ in \eqref{13.1.24.41h}.  Then $\omega_{n+1}(x)=[1]$. 
For $i\in [2,n]$, we have
$$
\omega_i(x)=[g_{\{\U,\B^{-}\}}(\{\A_1,\B_{n+1}\}, \{\A_i,\B_1\})]=u\cdot [g_{\{\U,\B^{-}\}}(\{\A_1,\B_{n}\}, \{\A_i,\B_1\})]= u\cdot \omega_i(y).
$$
Therefore
$$
p_s\circ \omega(x)=(\omega_{n+1}(x), u\cdot \omega_2(y),\ldots, u\cdot \omega_n(y))=([1], \omega_2(y),\ldots, \omega_n(y))=\kappa(y).
$$
Here the last step is due to Lemma  \ref{13.1.24.1.hhh}.
Since $c_1({\cal C}_{l}^{\circ})={\cal C}_{c_1^t(l)}^{\circ}$, the Lemma is proved.
\end{proof}

Recall  ${\rm Gr}_{c(\underline{\lambda})}$ and the set  ${\bf T}_{\underline{\lambda}}$ of its top components in Theorem \ref{5.8.10.45b}. The connected group ${\rm G}({\cal O})$ acts on ${\rm Gr}_{c(\underline{\lambda})}$. It preserves each component of ${\rm Gr}_{c(\underline{\lambda})}$. So these components live naturally on the stack
${\rm Conf}_n({\rm Gr})={\rm G}({\cal O})\backslash \big([1]\times {\rm Gr}^{n-1}\big)$.

Recall the fiber $m_{\underline{\lambda}}^{-1}([1])$ and the set ${\bf T}_{\underline{\lambda}}^0$ in Theorem \ref{8.24.8.22h}. 
Note that
$p_s\big(m_{\underline{\lambda}}^{-1}([1])\big)={\G}({\cal O})\backslash\overline{{\rm Gr}_{c(\underline{\lambda})}}\subset {\rm Conf}_{n}({\rm Gr}).$ 
It induces a bijection ${\bf T}_{\underline{\lambda}}^0\stackrel{\sim}{\lra} {\bf T}_{\underline{\lambda}}$.

\paragraph{Proof of Theorem \ref{5.8.10.45b}.}
By Theorem \ref{8.24.8.22h} and above discussions, there is a chain of bijections:
$
{\bf C}_{\underline{\lambda}}\stackrel{\sim}{\lra} {\bf C}_{\underline{\lambda}}^0\stackrel{\sim}{\lra} {\rm T}_{\underline{\lambda}}^0\stackrel{\sim}{\lra} {\rm T}_{\underline{\lambda}}.
$ 
By Lemma \ref{13.1.24.47hhh}, this chain is achieved by the map $\kappa$. The Theorem  is proved.

\section{Positive $\G$-laminations and surface affine Grassmannians}
\la{sec11}

A decorated surface $S$ comes with an unordered collection $\{s_1, ..., s_n\}$ of special points, defined up to isotopy. 
Denote by $\partial S$ the boundary of $S$. We assume that $\partial S$ is not empty. We define  {\it punctured boundary} 
\be \la{punctb}
\widehat \partial S:= \partial S - \{s_1, ..., s_n\}.
\ee
Its components are called  {\it boundary circles} and {\it boundary intervals}. 

Let us shrink all holes without special points on $S$ into {\it punctures}, 
getting a homotopy equivalent surface. Abusing notation, we denote it again by $S$. 
We say that the punctures and special points on $S$ form the set of {\it marked points} on $S$:
$$
\{\mbox{marked points}\}:= \{\mbox{special points $s_1, ..., s_n$}\} \cup \{\mbox{punctures}\}.
$$

Pick a point $\ast s_i$ in each of the boundary intervals. 
The dual decorated surface $\ast S$ is given by the same surface $S$ with 
the set of special points $\{\ast s_1, ..., \ast s_n\}$.
We have a duality: 
$\ast \ast S =S$. 

Observe that the marked points are in bijection with the components of the punctured boudary 
$\widehat \partial (\ast S)$. 

\subsection{The space ${\cal A}_{\G, S}$ with the potential ${\cal W}$}
\paragraph{Twisted local systems and decorations.} 
Let ${\rm T}'S $ be the complement to the zero section of the tangent bundle on a surface $S$. 
Its fiber ${\rm T}'_y$  at $y \in S$  is homotopy equivalent to a circle. 
Let $x\in {\rm T}'_yS$. 
The fundamental group $\pi_1({\rm T}'S, x)$ is a central extension:
\be \la{cen.ext}
0\lra \pi_1({\rm T}_y'S,x)\lra \pi_1({\rm T}'S, x)\lra \pi_1(S,y)\lra 0, ~~~~\pi_1({\rm T}_y'S,x)=\Z.
\ee

Let ${\cal L}$ be a $\G$-local system on ${\rm T}'S$ with the monodromy $s_{\G}$ around a generator of $\pi_1({\rm T}'_yS,x)$. 
Let us assume that $\G$ acts on ${\cal L}$ on the right. 
We call ${\cal L}$ a {\it twisted} $\G$-local system on $S$.  
It gives rise to the {\it associated decorated flag bundle} ${\cal L}_{\cal A}:={\cal L}\times_{\G}{\cal A}$.

Let ${\rm C}$ be a component of  $\widehat\partial  (\ast S)$. 
There is a canonical up to isotopy section $\sigma: {\rm C}\ra {\rm T}'{\rm C}$ 
given by the tangent vectors to ${\rm C}$ directed according to the orientation of ${\rm C}$.
A {\it decoration on ${\cal L}$ over ${\rm C}$} is a flat section of the 
restriction of ${\cal L}_{\cal A}$ to $\sigma({\rm C})$.

\bd[\cite{FG1}]  \la{moduliMGS}
A twisted decorated $\G$-local system on $S$ is a pair $({\cal L,\alpha})$, where ${\cal L}$ is a twisted $\G$-local system on $S$, and $\alpha$ is given by a decoration on ${\cal L}$ over each component of $\widehat\partial  (\ast S)$.

The moduli space ${\cal A}_{\G, S}$ parametrizes twisted decorated $\G$-local systems on $S$. 
\ed

Abusing terminology, a decoration is given by decorated flags at the marked points.

\paragraph{Remark.} Since the boundary $\partial S$ of $S$ is not empty,  the extension \eqref{cen.ext} splits:
$$
\pi_1({\rm T}'S, x)\stackrel{\sim}{=}\pi_1({\rm T}_y'S,x)\times \pi_1(S,y). 
$$
However the splitting is not unique. 
As a space, ${\cal A}_{\G, S}$ is isomorphic, although non canonically if $s_{\G}\neq 1$,  
to its counterpart of usual unipotent 
$\G$-local systems on $S$ with decorations. The mapping class group $\Gamma_S$ acts differently on the
 two spaces.  For example, when $S$ is a disk $D_n$ with $n$ special points on the boundary, then $\Gamma_{D_n}=\Z/n\Z$. Both moduli spaces are isomorphic to the configuration space ${\rm Conf}_n({\cal A})$. The mapping class group $\Z/n\Z$ acts on the untwisted moduli space is by the cyclic rotation $(\A_1,\ldots, \A_n)\mapsto (\A_n, \A_1,\ldots, \A_{n-1})$, while its action on ${\cal A}_{\G, D^n}$ is given by the  ``twisted" rotation

$$
(\A_1,\A_2, \ldots, \A_n)\lms (\A_n\cdot s_{\rm G}, \A_1,\ldots, \A_{n-1}).
$$ 

\bt [${\it loc.cit.}$]
The space ${\cal A}_{\G, S}$ admits a natural positive structure such that the mapping class group $\Gamma_S$ acts on ${\cal A}_{\G, S}$
by positive birational isomorphisms.
\et
Below we give two equivalent definitions of the  potential  ${\cal W}$ on ${\cal A}_{\G, S}$.

\paragraph{Potential via generalized monodromy.} A decorated flag $\A$ provides an isomorphism
\be \la{isoia}
i_{\A}: {\rm U_{\A}/[U_{\A},U_{\A}]} \stackrel{\sim}{\lra} \oplus_{\alpha \in \Pi}{\Bbb A}^1.
\ee
Let $\Sigma: \oplus_{\alpha \in \Pi}{\Bbb A}^1\to {\Bbb A}^1$ be the sum map. Then $\chi_\A = \Sigma\circ i_\A$. 
 This characterizes the map $i_{\A}$.

\vskip 2mm
Let us assign to each component  ${\rm C}$ of  $\widehat \partial (\ast S)$ a canonical rational map, called {\it generalized monodromy  at ${\rm C}$}: ${\mu}_{\rm C}: {\cal A}_{{\rm G},   S} \lra \oplus_{\alpha \in \Pi}{\Bbb A}^1.$
There are two possible cases.
\vskip 2mm
(i) The component C is a boundary circle. The decoration over C is 
a decorated flag $\A_{\rm C}$ in the fiber of  ${\cal L}_{\cal A}$  
on C, invariant under the monodromy around C. It defines a conjugacy class in the 
 unipotent subgroup ${\rm U}_{\A_{\rm C}}$ preserving $\A_{\rm C}$. So we get a regular map 
$$
{\mu}_{\rm C}: {\cal A}_{{\G},   S} \lra {\rm U_{A_C}/[U_{\A_C},U_{\A_C}]}\stackrel{i_{{\rm A}_{\rm C}}}{=}\oplus_{\alpha \in \Pi}{\Bbb A}^1.
$$

(ii) The component C is a boundary interval on a hole $h$. The universal cover of $h$ is a line. We get an infinite sequence of intervals on this line projecting to the boundary interval(s) on $h$. 
There are decorated flags assigned to these intervals. 
Take an interval ${\rm C}'$ on the cover projecting to 
C. Let ${\rm C}_{-}'$ and ${\rm C}_{+}'$ be the intervals
just before and after ${\rm C}'$.  We get a  triple of decorated flags $({\rm A}_{-}, {\rm A}, {\rm A}_{+})$ 
sitting over these intervals. There is a unique 
 $u\in {\rm U}_{{\rm A}}$  such that  
$
 {\rm B}_{+} = u \cdot {\rm B}_{-},
$ 
where $\B_{\pm}=\pi(\A_{\pm})\in {\cal B}$.
Projecting $u$ to ${\rm U}_{{\rm A}}/[{\rm U}_{{\rm A}}, {\rm U}_{{\rm A}}]$, we get a map
$\mu_{\rm C}: {\cal A}_{{\rm G},   S} \ra \oplus_{\alpha \in \Pi}{\Bbb A}^1.$
It is clear that $\mu_{\rm C}$
does not depend on the choice of ${\rm C}'$.
\vskip 2mm

Composing the generalized monodromy $\mu_{\rm C}$ with the sum map $\oplus_{\alpha \in \Pi}{\Bbb A}^1\to {\Bbb A}^1$,  
we get  
\be \la{6}
{\cal W}_{\rm C}:= \Sigma \circ \mu_{\rm C}:  {\cal A}_{{\rm G},   S} \lra {\Bbb A}^1,
\ee
called {\it the potential associated with ${\rm C}$}. 
\bd
The potential ${\cal W}$ on the space ${\cal A}_{{\rm G},   S}$ is defined as
\be \la{defpot1}
{\cal W}:= \sum_{\mbox{\rm components C of $\widehat\partial (\ast S)$}} {\cal W}_{\rm C}. 
\ee
\ed

\paragraph{Potential via ideal triangulations.}

\bd An ideal triangulation of a decorated surface $S$ is 
a triangulation of the surface whose 
 vertices are the marked points of $S$. 
\ed

Let $T$ be an ideal triangulation of $S$.
Pick a triangle $t$  of  $T$. 
The restriction to $t$ provides a projection\footnote{If the vertices of $t$ coincide, one can first pull back to a sufficient big cover $\widetilde{S}$ of $S$, and then consider the restriction to a triangle $\wt{t}\subset\wt{S}$ which projects onto $t$. Clearly the result is independent of the pair $\wt{t}\subset \wt{S}$ chosen.}
$\pi_t$ from ${\cal A}_{{\rm G}, S}$ to ${\rm Conf}_3({\cal A})$. 
Recall the potential ${\cal W}_3$ on the latter space. 
\bd The potential on the space  ${\cal A}_{{\rm G}, S}$ is defined as
\be \la{defpot2}
{\cal W}:= \sum_{\mbox{\rm triangles $t$ of $T$}} {\cal W}_3\circ \pi_t.
\ee
\ed

Changing $T$ by a flip we do not change the sum (\ref{defpot2}) since the potential on a quadrilateral is 
invariant under a flip (Section \ref{sec2}). 
Since any two ideal triangulations are related by a sequence of flips, 
the potential  (\ref{defpot2}) is independent of  the ideal triangulation $T$ chosen.

\paragraph{The above definitions are equivalent.}
There is a natural bijection between the marked points, that is the vertices of $T$,
 and the components of $\widehat \partial (\ast S)$.
Working with definition (\ref{defpot2}), 
the sum over all angles of the triangles shared by a puncture is the potential ${\cal W}_{\rm C}$
 assigned to the corresponding boundary circle. 
 A similar sum over all angles shared by a special point is the potential ${\cal W}_{\rm C}$ assigned to the corresponding boundary interval. 
Thus the potentials  (\ref{defpot1}) and (\ref{defpot2}) coincide. 

\paragraph{Positivity of the potential ${\cal W}$.}
In the positive structure of ${\cal A}_{\G,S}$ introduced in \cite{FG1}, the projection $\pi_t: {\cal A}_{\G, S}\ra {\rm Conf}_3({\cal A})$ is a positive morphism. By Theorem \ref{mth1} and \eqref{defpot2}, we get 
\bt
The potential ${\cal W}$ is a positive  function on the space ${\cal A}_{\G, S}$. 
\et

\paragraph{Positive integral $\G$-laminations.}
We define the set of {\it positive integral $\G$-laminations on $S$}:
\be
{\cal A}^{+}_{\G, S}(\Z^t)=\{l\in {\cal A}_{\G, S}(\Z^t)~|~ {\cal W}^t(l)\geq 0\}.
\ee
By tropicalization, the mapping class group $\Gamma_S$ acts on ${\cal A}_{\G, S}(\Z^t)$. 
The potential ${\cal W}$ is $\Gamma_S$-invariant. Thus $\Gamma_S$ acts on the subset ${\cal A}_{\G, S}^+(\Z^t)$. 

\paragraph{Partial potentials.} Given any simple positive root $\alpha$, 
there is a component $\chi_{A, \alpha}$ of the character $\chi_{A}$ so that 
 $\chi_{A} = \sum_{\alpha \in \Pi}\chi_{A, \alpha}$. Let $S$ be a decorated surface. 
Then to each boundary component $C \in \partial (\ast S)$ one associates 
a function $W_{C, \alpha}$. It is evidently invariant under the action of the mapping class group $\Gamma_S$  of $S$. 

\bt
Let $S$ be a surface with $n$ holes and no special points. Then 
the algebra of regular $\Gamma_S$-invariant functions on the space ${\cal A}_{G,S}$
 is a polynomial algebra in $n {\rm rk}(G)$ variables 
freely generated by the partial potentials 
$W_{C, \alpha}$, where $C$ run through all boundary circles on $S$, and $\alpha$ are
 simple positive roots. 
\et

\begin{proof}
It is well known that the action of the mapping class group $\Gamma_S$ on the moduli space 
${\rm Loc}^{\rm un}_{G, S}$ of unipotent $G$-local systems on a surface $S$ with holes 
is ergodic. So there are no non-constant $\Gamma_S$-invariant regular functions 
on this space. On the other hand, there is a canonical $\Gamma_S$-invariant 
projection given by the generalised monodromy 
around the holes: 
$$
{\cal A}_{G, S} \lra \prod_{\mbox{\rm holes of $S$}}({\Bbb A}^1)^{\prod}.
$$
Its fiber over zero is the space ${\rm Loc}^{\rm un}_{G, S}$. 
\end{proof}

\subsection{Duality Conjectures for decorated surfaces}
\la{secdualitycon}
\bd \la{loc.gl.s}
The moduli space ${\rm Loc}_{\G , S}$ parametrizes pairs $({\cal L},\gamma)$, where ${\cal L}$ is a twisted $\G $-local system on $S$, and $\gamma$ assigns a decoration on ${\cal L}$ to each boundary interval of 
$\widehat\partial (\ast S)$. 
\ed

{It is important to consider several different types of twisted $\G$-local system on $S$ 
which differ by the data assigned to the boundary. 
Recall that components of the punctured boundary $\widehat\partial (\ast S)$ are in bijection with the 
marked points of $S$. 
There are three options for the data at a given marked point, 
which could be either a special point, or a puncture:
 
1) No data. 

2) A decoration, 
that is a flat section of the associated decorated flag bundle ${\cal L}_{\cal A}$ near $m$. 

3) A framing, 
 that is a flat section of the associated flag bundle ${\cal L}_{\cal B}$ near $m$. 

In accordance to this, there are five different moduli spaces:

\begin{itemize}

\item ${\cal A}_{\G , S}$: decorations at both special points and punctures. 

\item ${\cal L}oc_{\G , S}$: no extra data. 

\item ${\rm Loc}_{\G , S}$: decorations at the special points only. No extra data at the punctures.

\item ${\cal P}_{\G , S}$: decorations at the special points, framings at the  punctures. 

\item ${\cal X}_{\G , S}$: framings at the special points and punctures.   
\end{itemize}

If $S$ does have special points, 
it is silly to consider ${\cal L}oc_{\G , S}$ since it ignores  
them. 

If $S$ has no punctures, then (besides ${\cal L}oc_{\G , S}$) there are three different moduli spaces:
$$
{\cal A}_{\G, S} = {\rm Loc}_{\G, S}, ~~~~{\cal P}_{\G , S}, ~~~~{\cal X}_{\G , S}. 
$$

If $S$ has no special points, i.e. it is a punctured surface, 
there are three different moduli spaces: 
$$
{\cal A}_{\G , S}, ~~~~{\cal L}oc_{\G , S}  = {\rm Loc}_{\G , S}, ~~~~{\cal P}_{\G , S} = {\cal X}_{\G , S}. 
$$

Duality Conjectures 
interchange a group $\G$ with the Langlands dual group $\G^L$, and a decorated surface $S$ with the 
dual decorated surface $\ast S$.\footnote{Although the decorated surface $\ast S$ is isomorphic to $S$, 
the isomorphism is not quite canonical.}  
Here are some examples.

If $S$ has no special points, the dual pairs look as follows:
$$
{\cal A}_{\G, S}~~~\mbox{\rm is dual to} ~~~{\cal P}_{\G^L, \ast S} = {\cal X}_{\G^L, \ast S}, ~~~~~~~~
({\cal A}_{\G, S}, {\cal W}) ~~~\mbox{\rm is dual to} ~~~{\cal L}oc_{\G^L, \ast S}  = {\rm Loc}_{\G^L, \ast S}.
$$

If $S$ does have special points, the moduli space ${\cal X}_{\G , S}$ plays a secondary role. 
The key dual pair is this: 
$$
({\cal A}_{\G, S}, {\cal W}) ~~~\mbox{\rm is dual to} ~~~{\rm Loc}_{\G^L, \ast S}.
$$
There are plenty of other dual pairs, obtained from this one by degenerating 
the potential, and simulateneously altering 
the dual space. Let us discuss some of them.

\paragraph{Generalisations.} Let us assign to each marked point $m$ of $S$ 
a subset ${I_m}\subset I$, possibly empty. 

First, let us define a new potential on the space ${\cal A}_{\G, S}$. 
Observe that any non-degenerate additive character  $\chi$ of ${\rm U}$ is naturally decomposed into a sum 
of characters parametrised by the set of positive simple roots: $\chi = \sum_{i\in I}\chi_i$. 
Then, replacing in the definition of the potential at a given marked point $m$ the nondegenerate 
character $\chi$ by the character $\sum_{i\in I_m}\chi_i$, we get a new function 
${\cal W}_{m, {I_m}}$ at $m$, and set  
\be \la{modpot}
{\cal W}_{\{{I_m}\}}:= \sum_{\mbox{\rm marked points $m$ on $S$}} {\cal W}_{m, {I_m}}. 
\ee

Next, let us define a modified moduli space 
${\cal P}^{\{{\rm I_m}\}}_{\G^L, \ast S}$.

Recall that for each simple positive root $\alpha_i$ there is a $\G$-invariant divisor in 
${\cal B} \times {\cal B}$. Let $D_i$ be its preimage in ${\cal A} \times {\cal A}$. 
We say that a pair $(A_1, A_2) \in {\cal A} \times {\cal A}$ is in position $I-I_m$ 
if $(A_1, A_2) \in {\cal A} \times {\cal A} - \cup_{i\in I-I_m}D_i$. 

Recall that $C_m$ is the boundary component of $\ast S$ matching a marked point $m$ on $S$. 
\bd \la{bulletdef} The moduli space 
${\cal P}^{\{{I_m}\}}_{\G^L, \ast S}$ paramatrizes twisted $\G^L$-local systems on $S$ plus 

a) A reduction of the structure group $\G^L$ near each puncture $m$ 
to the parabolic subgroup of type $I-I_m$.

b) A decoration at every boundary interval $C_m$ of $\ast S$ such that

\begin{itemize}

\item
The decorated flags 
at the ends of the boundary interval $C_m$ are in the position 
$I-I_m$. 
\end{itemize}
\ed
So if $I = I_m$, the data a) is empty, and  the condition b) is vacuous. 

\vskip 2mm
Finally, we consider  the largest subspace 
$$
{\cal A}^{\{{I_m}\}}_{\G, S} \subset {\cal A}_{\G, S}
$$
 on which the potential 
${\cal W}_{\{{I_m}\}}$ is regular. This condition is vacuous at punctures, and boils down to 
the $\bullet$-condition from Definition \ref{bulletdef} at boundary intervals of $\ast S$. 
So if $I_m = \emptyset$ at every special point $m$, then ${\cal A}^{\{{I_m}\}}_{\G, S} 
= {\cal A}_{\G, S}$. 

\bcon 
$
({\cal A}^{\{{I_m}\}}_{\G, S}, {\cal W}_{\{{I_m}\}}) ~~~\mbox{\rm is dual to} ~~~{\cal P}^{\{{I_m}\}}_{\G^L, \ast S}.
$
\econ
Let us now formulate what the Duality Conjecture tells about canonical bases 
for the most interesting moduli space ${\rm Loc}_{{\rm G}^L, S}$, leaving similar formulations 
in other cases as a straightforward exercise. }

\paragraph{Duality Conjecture for the space ${\rm Loc}_{{\rm G}^L, \ast S}$.} 
The group $\Gamma_S$ acts on the set ${\cal A}_{{\rm G}, S}^+(\Z^t)$, and on the 
space ${\cal O}({\rm Loc}_{{\rm G^L} , \ast S})$ of regular functions on ${\rm Loc}_{\G^L , \ast S}$.

\bcon \la{cbcon}
There is a canonical basis in the space ${\cal O}({\rm Loc}_{{\rm G}^L, \ast S})$ parametrized by 
the set ${\cal A}_{{\rm G}, S}^+(\Z^t)$. This parametrization is $\Gamma_S$-equivariant. 
\econ
\vskip 2mm

{\bf Example.}
 If $S$ is a disc $D_n$ with $n$ special points  
on the boundary, then $\Gamma_{D_n}=\Z/n\Z$. Theorem \ref{11.18.11.1} provides 
a $\Gamma_{D_n}$-equivariant canonical basis.  Thus Conjecture \ref{cbcon} is proved.
\vskip 2mm

If $\G={\rm SL}_2$ (or $\G={\rm PGL}_2$), then \cite{FG1} provides a concrete construction of the $\Gamma_S$-equivariant parametrization, using laminations.

 The following Theorem tells that the set ${\cal A}_{\G, S}^+(\Z^t)$ is of the right size.

\bt \la{cbcont}
Given an ideal triangulation $T$ of a decorated surface $S$, there is a linear basis  in  
${\cal O}({\rm Loc}_{{\rm G}^L, \ast S})$ parametrized by 
the set ${\cal A}_{{\rm G}, S}^+(\Z^t)$. 
\et

{\bf Remark.} The parametrization depends on the choice of the ideal triangulations. In particular, it is not $\Gamma_{S}$-equivariant. 

\begin{proof} 
The graph $\Gamma$ dual to the triangulation $T$ is a ribbon trivalent graph homotopy equivalent to $S$. An {\it end vertex} of $\Gamma$ is a univalent vertex of the graph. It corresponds to a boundary interval of $\widehat\partial 
S$. Let ${\rm Loc}_{{\rm G}^L, \Gamma}$ be the moduli space of pairs 
$({\cal L}, \gamma)$, where ${\cal L}$ is a ${\rm G}^L$-local system on $\Gamma$, and $\gamma$ is 
a flat section of the restriction of the local system  ${\cal L}_{\cal A}$ to the end vertices of $ \Gamma$. 

Choose an orientation of the edges of $\Gamma$. Let $V(\Gamma)$ and $E(\Gamma)$ be the sets of vertices and edges of $\Gamma$. 
Pick an edge $E = (v_1, v_2)$ of $\Gamma$, oriented from $v_1$ to $v_2$.  Given  a function $\lambda: E(\Gamma)\lra {\rm P}^+, $ 
we assign irreducible ${\rm G}^L$-modules to the two flags of $E$, denoted $V_{v, E}$:
$$
V_{(v_1, E)}:= V_{\lambda(E)}, ~~~V_{(v_2, E)}:= V_{-w_0(\lambda(E))}.
$$ 
According to \cite[Section 12.5, (12.30)]{FG1}, there is a canonical isomorphism 
\be \la{21}
{\cal O}({\rm Loc}_{{\rm G}^L, \Gamma}) = \bigoplus_{\{\lambda: E(\Gamma)\lra P^+\}}
\bigotimes_{v\in V(\Gamma)}\Bigl(\bigotimes_{(v, E)}V_{\lambda(v,E)}\Bigr)^{{\rm G}^L}
\ee
The second tensor product is over all flags incident to a given vertex $v$ of $\Gamma$. 
By Applying Theorem \ref{11.18.11.1} parametrizing a basis in the ${\rm G}^L$-invariants of the tensor product 
for each vertex of $\Gamma$, it follows that ${\cal O}({\rm Loc}_{{\rm G}^L, \Gamma}) $ admits a linear basis parametrized by ${\cal A}_{\G, S}^+(\Z^t)$. Note that the central extension  \eqref{cen.ext} is split. Following the remark after Definition \ref{moduliMGS}, the moduli space ${\rm Loc}_{{\rm G}^L, S}$ is isomorphic  to  ${\rm Loc}_{{\rm G}^L, \Gamma}.$ The Theorem is proved.
\end{proof}

\subsection{Canonical basis in the space of functions on ${\rm Loc}_{{\rm SL}_2, S}$}\la{sec10.3n}

Given any decorated surface $S$, there is a generalisation 
of integral laminations on $S$.
\bd \la{9.19.13.3} Let $S$ be a decorated  surface. An integral lamination $l$ on $S$ 
is a formal sum
\be \la{laml}
l = \sum_i n_i[\alpha_i] + \sum_j m_j [\beta_j], ~~~~n_i, m_j \in \Z_{>0}.  
\ee
where $\{\alpha_i\}$ is a collection  of simple  
nonisotopic 
loops, $\{\beta_j\}$ is a collection  of simple nonisotopic  
intervals ending inside of boundary intervals on $\partial S - \{s_1, ..., s_n\}$, 
such that the curves do not intersect, 
considered modulo isotopy. The set of integral laminations on $S$ is denoted by ${\cal L}_\Z({S})$.
\ed
\begin{figure}[ht]
\epsfxsize130pt
\centerline{\epsfbox{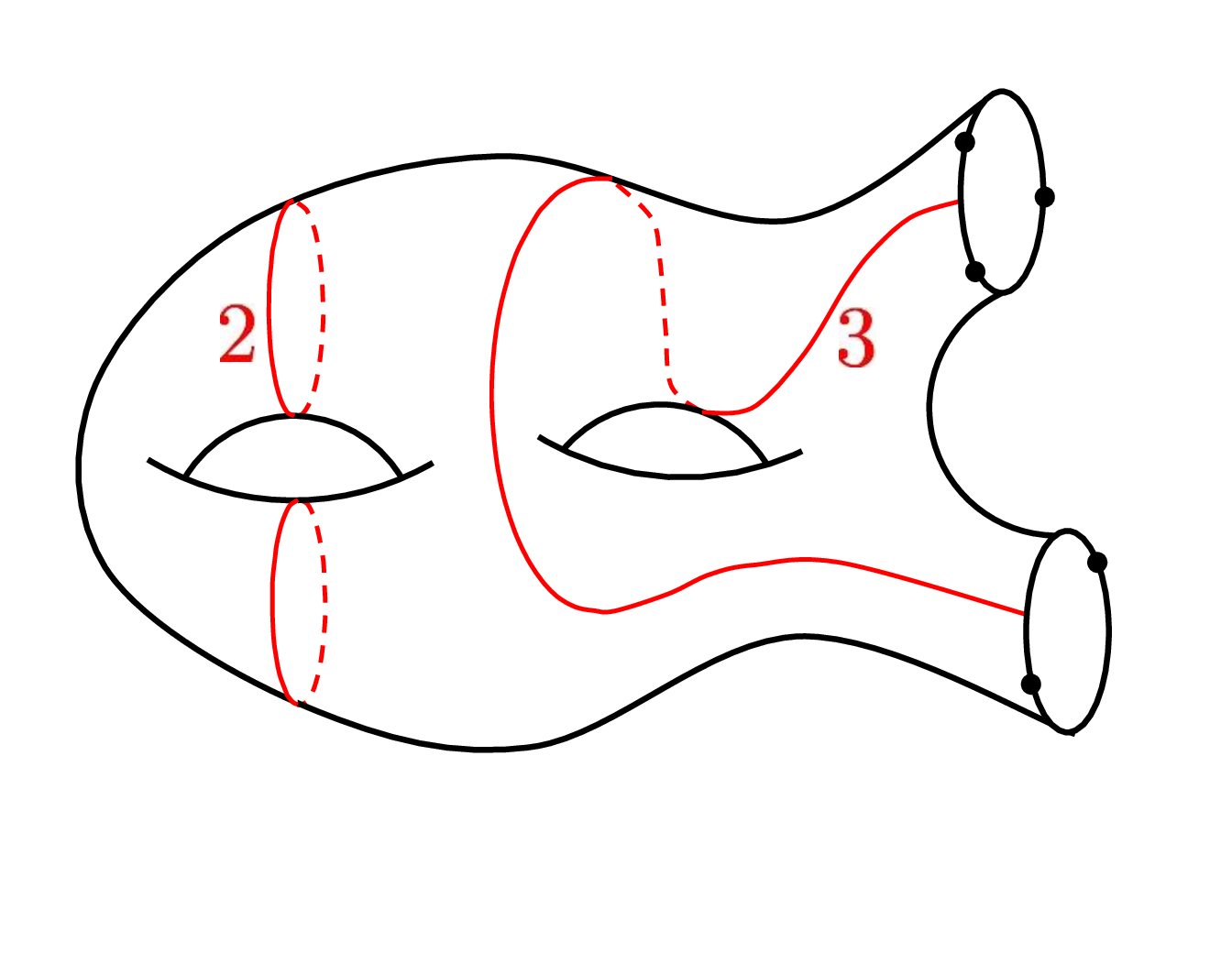}}
\caption{An integral lamination on a surface with two holes, with $2+3$ special points.}
\label{fish1}
\end{figure}
Let 
${\rm Mon}_{\alpha}({\cal L}, \alpha)$ be the monodromy of a twisted  ${\rm SL}_2$-local system 
$({\cal L}, \alpha)$ over a loop $\alpha$ on $S$.

Let us show that a simple path $\beta$ on $S$ connecting two points $x$ and $y$ on 
$\widehat \partial S$ gives rise to a regular function $\Delta_\beta$ on 
${\rm Loc}_{{\rm SL}_2, S}$. 

Let $({\cal L}, \alpha)$ be a decorated ${\rm SL}_2$-local system on $S$. The associated 
flat bundle ${\cal L}_{\cal A}$ is a two dimensional flat vector bundle without zero section. 
Let $v_x$ and $v_y$ be the tangent vectors to $\partial S$ 
at the points $x,y$.  
The decoration $\alpha$ at $x$ and $y$ provides 
vectors $l_x$ and $l_y$ in the fibers of ${\cal L}_{\cal A}$ over $v_x$ and $v_y$. The 
set $S_\beta$ of non-zero tangent vectors to $\beta$ is homotopy equivalent to a circle.  
Let us connect $v_x$ and $v_y$ by a path $p$ in $S_\beta$, and transform 
the vector $l_x$ at $v_x$ to the fiber of ${\cal L}_{\cal A}$ over $v_y$, getting there a vector 
$l_x'$. We claim that $\Delta(l_x', l_y)$ is independent of the choice of $p$. 
This uses crucially the fact that ${\cal L}$ is a twisted local system. 
So we arrive at a well defined number $\Delta(l_x', l_y)$ assigned to $({\cal L}, \alpha)$. 
We denote by $\Delta_\beta$ the obtained function on ${\rm Loc}_{{\rm SL}_2, S}$.

Given an integral lamination $l$ on $S$ as in (\ref{laml}), we a regular function $M_l$ on 
${\rm Loc}_{{\rm SL}_2, S}$ by 
$$
M_l({\cal L}, \alpha):= \prod_i {\rm Tr} ({\rm Mon}^{n_i}_{\alpha_i}({\cal L}, \alpha)) \prod_j\Delta_{\beta_j}^{m_j}({\cal L}, \alpha).  
$$

\bt \la{9.19.13.20} The functions $M_l$, $l\in {\cal L}_\Z({S})$, form a linear basis 
in the space ${\cal O}({\rm Loc}_{{\rm SL}_2, S})$. 
\et

\bt \la{9.19.13.21} For any decorated surface $S$, there is 
a canonical isomorphism
$$
{\cal A}^+_{{\rm PGL}_2, S}(\Z^t) = {\cal L}_\Z(S).
$$ 
\et

Theorem \ref{9.19.13.21} is proved similarly to Theorem 12.1 in \cite{FG1}. 
Notice that ${\cal A}_{{\rm PGL}_2, S}$ is a positive space for the adjoint group ${\rm PGL}_2$,  
the potential ${\cal W}$ lives on this space and is a positive function there. 
Theorem \ref{9.19.13.20} is proved by using arguments similar to the proof of 
Theorem \ref{cbcont} and \cite[Proposition 12.2]{FG1}. 

Combining Theorem \ref{9.19.13.20} and Theorem \ref{9.19.13.21} 
we arrive at a construction of the canonical basis predicted by Conjecture \ref{cbcon} for $\G={\rm PGL}_2$. 

\subsection{Surface
affine Grassmannian and amalgamation.} 
\la{sec11.2}

\paragraph{The surface affine Grassmannian ${\rm Gr}_{{\rm G},   S}$.}

Given a twisted 
right ${\rm G}({\cal K})$-local system ${\cal L}$ on $S$, there is the associated flat  affine Grassmannian bundle 
$
{\cal L}_{\rm Gr}:= {\cal L}\times_{{\rm G}({\cal K})}{\rm Gr}. 
$ 
Similarly to Definition \ref{moduliMGS}, we define 

\bd \la{srfag}
Let $S$ be a decorated surface. 
The moduli space ${\rm Gr}_{{\rm G},   S}$  parametrizes 
pairs $({\cal L}, \nu)$ where ${\cal L}$ is a twisted 
right ${\rm G}({\cal K})$-local system on $S$, and 
$\nu$ a flat section of the restriction of ${\cal L}_{\rm Gr}$ 
 to  the punctured boundary  $\widehat \partial (\ast S)$. 
\ed
 Abusing terminology, the data $\nu$ is given by the lattices $\lr_m$ at the marked points $m$ on $S$.

The moduli space $\widetilde {\rm Gr}_{{\rm G},   S}$  
parametrizes similar data 
$(\widetilde {\cal L}, \nu)$, where $\widetilde {\cal L}$ is a twisted 
${\rm G}({\cal K})$-local system on $S$ 
trivialized at a given point of $S$. So one has 
$
{\rm Gr}_{{\rm G},   S} = {\rm G}\backslash \widetilde {\rm Gr}_{{\rm G},   S}.
$
\vskip 2mm

{\bf Example}. Let $D_n$ be a disc with $n$ special points on the boundary. 
Then a choice of a special point provides isomorphisms
$$
{\rm Gr}_{{\rm G}, D_n}  = {\rm Conf}_n({\rm Gr}), ~~~ 
\widetilde {\rm Gr}_{{\rm G}, D_n}  = {\rm Gr}^n.
$$


\paragraph{Cutting and amalgamating decorated surfaces.} 

Let ${\rm I}$ be an ideal edge on a decorated surface $S$, i.e. a path connecting two marked points. 
Cutting $  S$ along the edge ${\rm I}$ we get a decorated surface $  S^*$. 
Denote by ${\rm I}'$ and ${\rm I}''$ the 
boundary intervals on $  S^*$ obtained by cutting along ${\rm I}$. 

Conversely, gluing boundary intervals ${\rm I}'$ and ${\rm I}''$ 
on a decorated surface $  S^*$,  we get a new decorated surface $  S$. 
We assume that the intervals ${\rm I}'$ and ${\rm I}''$ 
on $  S^*$ are oriented by the orientation of the surface, 
and the gluing preserves the orientations. 

More generally, let $  S$ be a decorated surface obtained from  decorated surfaces 
$  S_1, ...,   S_n$ by gluing pairs $\{{\rm I}'_1, {\rm I}''_1\}$, ..., $\{{\rm I}'_m, {\rm I}''_m\}$ 
of oriented boundary intervals. We say that 
$  S$ is the {\it amalgamation} of decorated surfaces $  S_1, ...,   S_n$, and use the notation 
$
  S =   S_1 \ast ... \ast   S_n. 
$ 
 Abusing notation, we do not specify 
the pairs  
$\{{\rm I}'_1, {\rm I}''_1\}, ..., \{{\rm I}'_m, {\rm I}''_m\}$.

\paragraph{Amalgamating surface affine Grassmannians.} 
There is a moduli space ${\rm Gr}_{{\rm G}, {\rm I}}$ related to an oriented closed interval ${\rm I}$, so that 
there is a canonical isomorphism of stacks
$$
{\rm Gr}_{{\rm G}, {\rm I}} = {\rm Conf}_2({\rm Gr}). 
$$
\bd \la{6.9.12.10}
Let ${\rm I}'$, ${\rm I}''$ be   boundary intervals on a decorated 
surface $  S^*$, perhaps disconnected. The 
{\rm amalgamation stack} ${\rm Gr}_{{\rm G},   S^*}({\rm I}'\ast {\rm I}'')$ parametrises triples 
$({\cal L}, \gamma, g)$, where $({\cal L}, \gamma)$ is the data parametrised by 
${\rm Gr}_{{\rm G},   S^*}$, and $g$ is a {\rm gluing data}, given by 
an equivalence of stacks
\be \la{6.9.12.101a}
g: {\rm Gr}_{{\rm G}, {\rm I}'} \stackrel{\sim}{\lra} {\rm Gr}_{{\rm G}, {\rm I}''}.
\ee
\ed

This immediately implies that there is a canonical equivalence of stacks:
\be \la{isostacks}
{\rm Gr}_{{\rm G},   S} \stackrel{\sim}{\lra} {\rm Gr}_{{\rm G},   S^*}({\rm I}'\ast {\rm I}'').
\ee

Given decorated surfaces $  S_1, ...,   S_n$ and 
a collection 
$\{{\rm I}'_1, {\rm I}''_1\}$, ..., $\{{\rm I}'_m, {\rm I}''_m\}$ of pairs of boundary intervals, 
generalising the construction from Definition \ref{6.9.12.10}, 
we get the amalgamation stack 
$$
{\rm Gr}_{{\rm G},   S_1\ast ...\ast   S_n} = {\rm Gr}_{{\rm G},   S_1\ast ...\ast   S_n}({\rm I}'_1\ast {\rm I}''_1, \ldots , {\rm I}'_m\ast {\rm I}''_m).
$$
Applying equivalences (\ref{isostacks}) we get 

\bl \la{6.9.12.100} There is a canonical equivalence of stacks:
\be \la{isostacks2}
{\rm Gr}_{{\rm G},   S} \stackrel{\sim}{\lra}
{\rm Gr}_{{\rm G},   S_1\ast ...\ast   S_n}({\rm I}'_1\ast {\rm I}''_1, \ldots , {\rm I}'_m\ast {\rm I}''_m).
\ee
\el

Let $T$ be an ideal triangulation of a decorated surface $  S$. 
Let $t_1, ..., t_n$ be the triangles of the triangulation. 
Abusing notation, denote by $t_i$ the decorated surface given by the triangle $t_i$, 
with the special points given by the vertices. 
Denote by ${\rm I}'_i$ and ${\rm I}''_i$ the pair of edges obtained by cutting an edge 
${\rm I}_i$ of the triangulation $t$, 
$i=1, ..., m$. 
Then one has an isomorphism of 
stacks
\be \la{isostacks3}
{\rm Gr}_{{\rm G},   S} = {\rm Gr}_{{\rm G}, t_1\ast ...\ast t_n}({\rm I}'_1\ast {\rm I}''_1, \ldots , 
{\rm I}'_m\ast {\rm I}''_m).
\ee

\subsection{Top components of the surface affine Grassmannian}
\la{sec11.3}
\subsubsection{Regularised dimensions} 
\la{sec11.3.1}
Recall that if a finite dimensional group ${\rm A}$ acts on a finite dimensional variety $X$, 
we define the dimension of the stack $X/{\rm A}$ by 
$$
{\rm dim}~X/{\rm A}:= {\rm dim}~X - {\rm dim}~{\rm A}. 
$$
Our goal is to generalise this definition to the case when $X$ and ${\rm A}$ could be infinite dimensional. 

\paragraph{Dimension torsors ${\bf t}^n$.}
Let us first define a rank one $\Z$-torsor ${\bf t}$. The kernel ${\Bbb N}$ of the evaluation map 
$\G({\cal O}) \to \G(\C)$ is a prounipotent algebraic group 
over $\C$. 
Let $N$ be its finite codimension normal subgroup. 
We assign to each such an $N$ a copy $\Z_{(N)}$ of $\Z$, and for each pair $N_1 \subset N_2$ 
such that $N_2/N_1$ is a finite dimensional, an isomorphism of $\Z$-torsors 
\be \la{in}
i_{N_1, N_2}: \Z_{(N_1)} \lra \Z_{(N_2)}, ~~~ x \lms x + {\rm dim}~N_2/N_1.
\ee
\bd
A $\Z$-torsor ${\bf t}$ is given by the collection of $\Z$-torsors $\Z_{(N)}$ and isomorphisms 
$i_{N_1, N_2}$. We set ${\bf t}^n:= {\bf t}^{\otimes n}$ for any $n \in \Z$. 
\ed
In particular, ${\bf t}^0=\Z$. To define an element of ${\bf t}^n$ means to exhibit a collection 
of integers $d_N$ assigned to the finite codimension  subgroups $N$ of ${\Bbb N}$ 
related by isomorphisms (\ref{in}).

\vskip 2mm

{\bf Example}. There is an element ${\bf dim}~{\rm G}({\cal O}) \in {\bf t}$, given by an assignment 
$$
{\bf dim}~{\rm G}({\cal O}):= \{N \lms {\rm dim}~{\rm G}({\cal O})/N\in \Z_{(N)}\}\in {\bf t}.
$$
More generally, there is an element 
$$
n~{\bf dim}~{\rm G}({\cal O}) := \{N \lms {\rm dim}~({\rm G}({\cal O})/N)^n\in \Z_{(N)}\} \in {\bf t}^n.
$$ 
For example, the stack $\ast/{\rm G}({\cal O})^n$, where $\ast={\rm Spec}(\C)$ is the point,  has dimension
$$
{\bf dim}~\ast/{\rm G}({\cal O})^n = - n~{\bf dim}~{\rm G}({\cal O}) \in {\bf t}^{-n}.
$$

\vskip 2mm
If $X$ and $Y$ have dimensions ${\bf dim}~X \in {\bf t}^n$ and ${\bf dim}~Y \in {\bf t}^m$, then 
${\bf dim}~X\times Y \in {\bf t}^{n+m}$.

\paragraph{Dimension torsors ${\bf t}_{\rm A}^n$.} We generalise this construction by replacing the group ${\rm G}({\cal O})$ by a 
pro-algebraic group ${\rm A}$, which 
has a finite codimension prounipotent normal subgroup.\footnote{Taking the quotient by a unipotent group does not affect the category of equivariant sheaves. This is why we require the prounipotence condition here.} Then there are 
the dimension torsor ${\bf t}_{\rm A}$, its tensor powers ${\bf t}_{\rm A}^n$, $n\in \Z$, and  
an element ${\bf dim}~{\rm A}\in {\bf t}_{\rm A}$. One has ${\bf t}_{{\rm A}^n}= {\bf t}^n_{{\rm A}}$. Moreover, 
$$
n~{\bf dim}~{\rm A} \in {\bf t}_{\rm A}^n, ~~~~{\bf t}_{\rm A}^n= \{m+ n~{\bf dim}~{\rm A}\}, m\in \Z.
$$

\paragraph{Regularised dimension.} Given such a group ${\rm A}$, we can define the dimension of a stack 
${\cal X}$   
under  the following assumptions.  

\begin{enumerate}


\item There is a finite codimension prounipotent subgroup ${\rm N} \subset {\rm A}$ such that $$
{\rm N}^n  ~~\mbox{acts freely on ${\cal X}$.}
$$ 

\item There is a finite dimensional stack ${\cal Y}$ and an action of the group 
${\rm A}^m$ on ${\cal Y}$ such that 
\be \la{canYX}
{\cal Y}/{\rm A}^m = {\cal X}/{\rm N}^n.
\ee

\item There exists a finite codimension normal prounipotent subgroup 
${\rm M} \subset {\rm A}$ such that the action of ${\rm A}^m$ on ${\cal Y}$ 
restricts to the trivial 
action of the subgroup ${\rm M}^m$  on ${\cal Y}$. 
\end{enumerate}
The last condition implies that we have a finite dimensional stack ${\cal Y}/({\rm A}/{\rm M}^m)$. 
The stack ${\cal Y}/{\rm A}^m$ is the quotient of the stack ${\cal Y}/({\rm A}/{\rm M}^m)$ 
by the trivial action of the group ${\rm M}^m$. 

In this case we define an element of the torsor ${\bf t}_{\rm A}^{n-m}$ by the assignment 
\be \la{9.30.12.1000}
({\rm N}, {\rm M})\lms {\rm dim}({\cal Y}/{\rm A}^m) + {\bf dim}~({\rm N}^n):= 
(n- m)~ {\bf dim}~{\rm A} +{\rm dim}~{\cal Y}- n ~
{\rm dim}({\rm A}/{\rm N})\in {\bf t}_{\rm A}^{n-m}.
\ee

\bd \la{regddef}
Assuming 1) -- 2), the assignment (\ref{9.30.12.1000}) defines the regularised dimension
$$
{\bf dim}~{\cal X} \in {\bf t}_{\rm A}^{n-m}.
$$
\ed

{\bf Remark}. Often an  infinite dimensional stack ${\cal X}$ does not have a canonical 
presentation (\ref{canYX}), but rather a collection of such presentations. 
For instance such a presentation of the stack ${\cal M}_l^\circ$ defined 
below depends on a choice of an 
ideal triangulation $T$ of $S$. 
Then we need to prove that the regularised dimension is independent of the choices.

\subsubsection{Top components of the stack ${\rm Gr}_{{\rm G},   S}$} \la{sec11.3.2}

Suppose that a decorated surface $  S$ is an amalgamation of decorated surfaces:
\be \la{6.9.12.1}
  S =   S_1 \ast \ldots \ast   S_n.
\ee
\bd
Given an amalgamation pattern 
(\ref{6.9.12.1}), define the amalgamation
$$
{\cal A}_{{\rm G},   S_1}(\Z^t)\ast \ldots \ast {\cal A}_{{\rm G},   S_n}(\Z^t):= 
\{(l_1, ..., l_n) \in {\cal A}_{{\rm G},   S_1}(\Z^t)\times \ldots \times {\cal A}_{{\rm G},   S_n}(\Z^t) ~~|~~ \mbox{(\ref{7.30.12.1}) holds} \}:
$$
\be \la{7.30.12.1}
\mbox{$\pi^t_{{\rm I}'_k}(l_i) = \pi^t_{{\rm I}''_k}(l_j)$ for any boundary intervals ${\rm I}'_k\subset   S_i$ and ${\rm I}''_k\subset   S_j$ 
glued in $  S$}. 
\ee

\ed
\bl \la{9.18.13.1}
Given an amalgamation pattern 
(\ref{6.9.12.1}), there are canonical isomorphism of sets 
$$
{\cal A}_{{\rm G},   S}(\Z^t) = {\cal A}_{{\rm G},   S_1}(\Z^t)\ast \ldots \ast {\cal A}_{{\rm G},   S_n}(\Z^t).
$$
$$
{\cal A}^+_{{\rm G},   S}(\Z^t) = {\cal A}^+_{{\rm G},   S_1}(\Z^t)\ast \ldots \ast {\cal A}^+_{{\rm G},   S_n}(\Z^t).
$$
\el

In this case we say that $l$ is presented as an amalgamation, and write $l=l_1 \ast \ldots \ast l_n$. 

Let us pick an ideal triangulation $T$ of $S$, and 
present $S$  as an amalgamation of the triangles:
\be \la{6.9.12.1a}
  S =   t_1 \ast \ldots \ast   t_n.
\ee
By Lemma \ref{9.18.13.1},  any $l\in {\cal A}_{{\rm G},   S}^+(\Z^t)$ is uniquely presented 
 as an amalgamation 
\be \la{6.9.12.1b}
l =   l_1 \ast \ldots \ast   l_n, ~~~~l_i\in {\cal A}_{{\rm G},   t_i}^+(\Z^t). 
\ee
Recall that given a polygon $D_n$, there are cycles 
$$
{\cal M}^\circ_l:= \kappa({\cal C}_l^\circ)\subset {\rm Gr}_{G, D_n}, ~~~~ 
l\in {\cal A}^+_{{\rm G},   D_n}(\Z^t). 
$$

\bd Given an ideal triangulation $T$ of $S$
and an $l\in {\cal A}_{{\rm G},   S}^+(\Z^t)$ we set, using amalgamations (\ref{6.9.12.1a}) and  
(\ref{6.9.12.1b}), 
$$
{\cal M}^\circ_{T, l} = {\cal M}^\circ_{t_1, l_1}\ast \ldots \ast {\cal M}^\circ_{t_n, l_n}, ~~~~
{\cal M}_{T, l}:= ~~\mbox{Zariski closure of} ~~{\cal M}^\circ_{T, l}.
$$
\ed

Thanks to Lemma \ref{9.21.17.56h}, the restriction to the boundary intervals of $S$ leads to a map of sets
$$
{\cal A}_{{\rm G},   S}^+(\Z^t) \lra {{\rm P}^+}^{\{\mbox{boundary intervals of $S$}\}}.
$$
It assigns to a point $l\in {\cal A}_{{\rm G},   S}^+(\Z^t)$ 
a collection of dominant coweights $\lambda_{{\rm I}_1}, ..., \lambda_{{\rm I}_n}\in {\rm P}^+$ 
at the boundary intervals ${\rm I}_1, ..., {\rm I}_n$ of $  S$. 

For any decorated subsurface $i: S' \subset   S$ there is a projection given by 
the restriction map for the surface affine Grassmannian: 
$
r_{\rm Gr}: {\rm Gr}_{{\rm G},   S} \lra {\rm Gr}_{{\rm G},   S'} .
$ 
There are two canonical projections:
\be \la{9.17.13.3}
\begin{array}{ccccccc}
{\cal A}^+_{{\rm G},   S}(\Z^t)& &&& & {\rm Gr}_{{\rm G},   S} \\
&&&&&&\\
r^t_{\cal A}\downarrow &&&& &\downarrow r_{\rm Gr}\\
&&&&&&\\
{\rm Conf}^+_{{\rm G},   S'}({\cal A})(\Z^t)&&&&& {\rm Gr}_{{\rm G},   S'} )
\end{array}
\ee

\bt \la{6.8.12.1}
 Let $  S$ be a decorated surface.

i) The stack ${\cal M}_{T, l}$ does not depend on the triangulation $T$. We denote it by ${\cal M}_l$. 

ii) Let $l\in {\cal A}_{\G,   S}^+(\Z^t)$. Let  
$\{{\rm I}_1, ..., {\rm I}_n\}$ be  the set of 
boundary intervals of  $  S$, 
and $\lambda_{{\rm I}_1}, ..., \lambda_{{\rm I}_n}$ are the dominant coweights 
assigned to them by $l$. Then 
\be \la{6.8.12.3}
{\bf dim}~{\cal M}_l = \langle \rho, \lambda_{{\rm I}_1} + \ldots + \lambda_{{\rm I}_n}\rangle -\chi(S) ~{\bf dim}~{\rm G}({\cal O})\in
{\bf t}^{-\chi(S)}. 
\ee

iii)  The stacks ${\cal M}_l$, $l\in {\cal A}^+_{{\rm G},   S}(\Z^t)$, 
 are top dimensional 
components of 
${\rm Gr}_{{\rm G},   S}$. 

iv) The map $l \lms {\cal M}_l$ provides a bijection 
$$
{\cal A}^+_{{\rm G},   S}(\Z^t) \stackrel{\sim}{\lra} \{\mbox{top dimensional components of the stack 
${\rm Gr}_{{\rm G},   S}$}\}.
$$
This isomorphism commutes with the restriction to decorated subsurfaces of $S$. 
\et

\begin{proof} 
Let us calculate first dimensions of the stacks 
${\cal M}^\circ_{T, l}$, and show that they are given by formula (\ref{6.8.12.3}). 
We present first a heuristic dimension count, 
and then fill the necessary details. 

\paragraph{Heuristic dimension count.} Let us present a decorated surface $  S$ 
as an amalgamation of a (possible disconnected) decorated surface 
along a pair of boundary intervals ${\rm I}', {\rm I}''$, 
as in Definition \ref{6.9.12.10}. 
The space of isomorphisms $g$ from (\ref{6.9.12.101a}) is a disjoint union $\G({\cal K})$-torsors parametrised by 
dominant coweights $\lambda$, since the latter parametrise $\G({\cal K})$-orbits on 
${\rm Gr}\times {\rm Gr}$. Pick one of them. 

Let $\lr'_0 \stackrel{{\lambda}}{\lra} \lr'_1$ (respectively 
$\lr_0'' \stackrel{\lambda}{\lra} \lr_1''$) be a pair of lattices assigned to 
the vertices of the interval 
${\rm I}'$ (respectively 
${\rm I}''$).
Then the gluing data is a map 
$
g: (\lr'_0, \lr'_1) \lra (\lr''_0, \lr''_1).
$ 
Let ${\rm G}_{\lambda}$ be the subgroup stabilising the pair 
$\lr'_0 \stackrel{\lambda}{\lra} \lr'_1$.  The space of gluings is a ${\rm G}_{\lambda}$-torsor. 
The group ${\rm G}_{\lambda}$
 is a subgroup of codimension $2 \langle \rho, \lambda\rangle$ in ${\rm Aut}~\lr_0 \stackrel{\sim}{=} {\rm G}({\cal O})$. 
So
$$
{\bf dim}~{\rm G}_{\lambda} = {\bf dim}~{\rm G}({\cal O}) -  2 \langle \rho, \lambda\rangle = 
 {\bf dim}~{\rm G}({\cal O}) - {\bf dim}~{\rm Gr}_{{\lambda, \lambda^{\vee}}}.
$$

Take the stack ${\cal M}^\circ_{t, l}$  
assigned to a triangle $t$ and a point $l\in {\rm Conf}_3^+({\cal A})(\Z^t)$. 
Let $\lambda_1, \lambda_2, \lambda_3$ be the 
dominant coweights assigned to 
the sides of the triangle by $l$. Then ${\cal M}^\circ_{t, l}$ is an open part of a component of the stack 
${\rm Gr}_{\lambda_1, \lambda_2, \lambda_3}/{\rm G}({\cal O})$. Thus 
\be \la{9.30.12.105}
{\bf dim}~{\cal M}^\circ_{t, l} = \langle \rho, \lambda_1+ \lambda_2+ \lambda_3 \rangle - 
{\bf dim}~{\rm G}({\cal O}) \in {\bf t}^{-1}.
\ee
  
Let us calculate now the dimension of the stack ${\cal M}^\circ_{T, l}$. 
Let $|{\cal T}|$ be the number of triangles, and ${\cal E}_{\rm int}$ 
(respectively ${\cal E}_{\rm ext}$) the set of the internal 
(respectively external) edges of the triangulation $T$.  
Then the  dimension of the product of stacks assigned to the triangles is 
$$
\sum_{E\in {\cal E}_{\rm ext}} \langle \rho, \lambda_{E} \rangle  + 2 \sum_{E\in {\cal E}_{\rm int}} \langle \rho, \lambda_{E} \rangle - 
|{\cal T}| ~{\bf dim}~{\rm G}({\cal O})\in {\bf t}^{-|{\cal T}|}.
$$

Gluing two boundary intervals into an internal edge $E$, with the 
dominant weights $\lambda_E$ associated to it,   
we have to add the dimension of the corresponding gluing data torsor, that is 
$$
{\bf dim}~{\rm G}({\cal O})- 2\langle \rho, \lambda_E\rangle\in {\bf t}.
$$ 
So, gluing all the intervals, we get 
$$
\sum_{E\in {\cal E}_{\rm ext}} \langle \rho, \lambda_{E} \rangle  + (|{\cal E}_{\rm int}|- |{\cal T}|) ~{\bf dim}~{\rm G}({\cal O}) = 
\sum_{E\in {\cal E}_{\rm ext}} \langle \rho, \lambda_{E} \rangle  -\chi(S)~{\bf dim}~{\rm G}({\cal O}) = (\ref{6.8.12.3}).
$$
Notice that 
$|{\cal E}_{\rm int}|- |{\cal T}| =-\chi(S)$. Indeed, the triangles $t$ 
with external sides removed cover the surface $S$ minus the boundary, 
which has the same Euler characteristic as $S$.

\paragraph{Rigorous dimension count.} For each of the triangles $t$ of the triangulation $T$ 
there are three dominant coweights $\underline {\lambda}(t):= \lambda_1(t), \lambda_2(t), \lambda_3(t)$ assigned  by $l$ to the sides 
of $t$. 
Pick a vertex $v(t)$ of the triangle $t$. We present the stack 
${\rm Gr}_{\G, t}$ as a quotient of the convolution variety 
\be \la{9.30.12.1a}
{\rm Gr}_{\G, t}= {\rm Gr}_{\underline {\lambda}(t)}/{\rm G}({\cal O}).
\ee
Namely, choose the lattice $\lr_{v(t)}$ at the vertex $v(t)$ to be the standard lattice 
$\lr_{v(t)} = {\rm G}({\cal O})$. 

There exists a finite codimension normal prounipotent subgroup 
$
N_{t, l} \subset {\rm G}({\cal O})
$   
acting trivially on ${\rm Gr}_{\underline{\lambda}(t)}$. 
It depends on the choice of coweights $\underline{\lambda}(t)$, 
and, via them,  on the choice of the $t$ and $l$. 
We assign to each finite codimension normal subgroup $N'_{t, l} \subset N_{t, l}$ a finite dimensional 
stack
$$
\frac{{\rm Gr}_{\underline {\lambda}(t)}}{{\rm G}({\cal O})/N'_{t, l}}.
$$
Its dimension is 
$
\langle \rho, \lambda_1+ \lambda_2+ \lambda_3 \rangle - 
{\rm dim}~{\rm G}({\cal O})/N'_{t, l}.
$ 
This just means that we have formula (\ref{9.30.12.105}). 
\vskip 3mm

There is a canonical surjective map of stacks
\be \la{9.30.12.2000}
{\rm Gr}_{G, S} \lra \prod_{t\in T}{\rm Gr}_{G, t} = 
\prod_{t\in T}{\rm Gr}_{\underline {\lambda}(t)}/{\rm G}({\cal O}).
\ee
Its fibers are torsors over the product 
over the set ${\cal E}_{\rm int}$ of internal edges $E$ of $T$ of certain  groups $G_{\lambda(E)}$ defined as follows. 
Let $\lambda(E)$ be the dominant coweight 
assigned to $E$  by $l$. Consider the pair $E', E''$  of edges of triangles 
glued into the edge $E$.  
For each of them, there is a pair of the lattices assigned to its vertices. We get two pairs of lattices: 
$$
(\lr_{E'}^- \stackrel{\lambda(E)}{\lra} \lr_{E'}^+) ~~~\mbox{and}~~~  (\lr_{E''}^-\stackrel{\lambda(E)}{\lra}  \lr_{E''}^+). 
$$
Choose one of the edges, say $E'$. 
Set 
$
\G_{\lambda(E)}:= {\rm Aut}~ (\lr_{E'}^- \stackrel{\lambda(E)}{\lra} \lr_{E'}^+).
$ 
Therefore we conclude that
$$
\mbox{The fibers of the map (\ref{9.30.12.2000}) are torsors over the group} ~~
\prod_{E \in {\cal E}_{\rm int}}\G_{\lambda(E)}.
$$

 For each $E$, choose a finite codimension subgroup 
$
N_{\lambda(E)} \subset \G_{\lambda(E)}.
$
Then we are in the situation discussed right before Definition \ref{regddef}, where
$$
{\cal X} = {\cal M}^\circ_l, ~~~~{\rm A} = {\G}({\cal O}), ~~~~
N:= \cap_{E \in {\cal E}_{\rm int}}N_{\lambda(E)}, ~~~~M=\cap_{t}N'_{t, l}, ~~~~n=|{\cal E}_{\rm int}|, ~~~~ 
m=|{\cal T}|.
$$
So we get the expected formula for the regularised dimension of ${\cal M}_{T, l}^\circ$. 

The resulting regularised dimension does not depend on the choice of ideal triangulation $T$ --
 the triangulation does not enter to the answer. 

Alternatively, one can see this  as follows.  
Any two ideal triangulations of $S$ are related by a sequence of flips. 
Let $T \lra T'$ be a flip at an edge $E$. Let $R_E$ be the unique rectangle of the triangulation $T$ 
with the diagonal $E$. 
Consider the restriction map $\pi: {\rm Gr}_{G, S} \lra {\rm Gr}_{R_E, S}$. So 
one can fiber   ${\cal M}^\circ_{l}$ over the component 
${\cal M}^\circ_{\pi^t(l)}$. 
The dimension of the latter does not depend on the choice of the triangulation of the rectangle.

A similar argument with a flip of triangulation proves i). Combining with the formula for 
 the regularised dimension of ${\cal M}_{T, l}^\circ$ we get ii). 

iii), iv). 
Present $  S$ as an amalgamation of the triangles of an ideal triangulation. It is known that 
the cycles ${\cal M}_l$ are the top dimensional components of the  convolution 
variety, and thus the stack ${\rm Gr}_{G, t}$, assigned to the triangle. 
It remains to use Lemma \ref{6.9.12.100}.

\end{proof}


\section{Cluster varieties, frozen variables and potentials} \la{seccluster}
\subsection{Basics of cluster varieties} 


\bd A quiver ${\bf q}$ is described by a data 
$
(\Lambda, \Lambda_0, \{e_i\}, (\ast, \ast)),
$ 
 where

\begin{enumerate}

\item $\Lambda$ is a lattice, $\Lambda_0$ is a sublattice of $\Lambda$, and 
$\{e_i\}$ is a basis  of $\Lambda$  such that $\Lambda_0$ is generated by a subset of 
{\it frozen basis vectors}; 

\item  $(\ast, \ast)$ is a skewsymmetric $\frac{1}{2}\Z$-valued bilinear form 
on $\Lambda$ with
$(e_i, e_j) \in \Z$ unless 
$e_i,e_j \in {\Lambda}_0$.
\end{enumerate}
\ed

Any non-frozen 
basis element $e_k$ provides  a {\it mutated in the 
direction $e_k$} quiver ${\mathbf q'}$. 
The  quiver 
${\bf q}'$ is defined by changing the 
basis $\{e_i\}$ only. 
The new basis $\{e'_i\}$ is defined via halfreflection of the $\{e_i\}$ along the hyperplane 
$(e_k, \cdot)=0$:
\begin{equation} \label{12.12.04.2a}
e'_i := 
\left\{ \begin{array}{lll} e_i + [\varepsilon_{ik}]_+e_k
& \mbox{ if } &  i\not = k\\
-e_k& \mbox{ if } &  i = k.\end{array}\right.
\end{equation}
Here $[\alpha]_+:= \alpha$ if $\alpha\geq 0$ 
and $[\alpha]_+:=0$ otherwise. The frozen/non-frozen basis vectors 
of the mutated quiver are the images of the  ones of the original quiver. 
The composition of two mutations in the same direction $k$ is  an isomorphism of quivers.

Set ${\varepsilon}_{ij} := (e_i, e_j)$. A {quiver}  can be described by 
  a data ${\bf q}=({\rm I}, {\rm I}_0,\varepsilon)$, where   
${\rm I}$ (respectively ${\rm I}_0$) is the set parametrising the basis vectors (respectively 
frozen vectors). Formula (\ref{12.12.04.2a}) 
amounts then to the Fomin-Zelevinsky formula telling how the $\varepsilon$-matrix changes under mutations. 
\begin{equation} \label{5.11.03.6}
 \varepsilon'_{ij} := \left\{ \begin{array}{lll} 
- \varepsilon_{ij} & \mbox{ if $k \in \{i,j\}$} \\ 
\varepsilon_{ij} & \mbox{ if $\varepsilon_{ik}
\varepsilon_{kj} \leq 0, \quad k \not \in \{i,j\}$} \\
\varepsilon_{ij} + |\varepsilon_{ik}| \cdot \varepsilon_{ kj}& 
\mbox{ if $\varepsilon_{ik}
\varepsilon_{kj} > 0, \quad k \not \in \{i,j\}.$}\end{array}\right.
\end{equation}

We assign to every quiver ${\bf q}$ two sets of coordinates, each
 parametrised by the set ${\rm I}$: the  
${\cal X}$-coordinates $\{X_i\}$, and the  ${\cal A}$-coordinates $\{A_i\}$. 
Given a mutation of quivers $\mu_k: {\bf q} \lms {\bf q}'$, 
the cluster coordinates assigned to these quivers are related as follows. 
Denote the cluster coordinates related to 
the quiver ${\bf q'}$ by $\{X'_i\}$ and $\{A'_i\}$. Then 
\begin{equation} \label{5.11.03.1a}
  A_{k}A'_{k} := \quad \prod_{j| \varepsilon_{kj} >0} 
A_{j}^{\varepsilon_{kj}} + \prod_{j| \varepsilon_{kj} <0} 
A_{j}^{-\varepsilon_{kj}}; \qquad A'_{i} =  
A_{i}, \quad i \not = k.
\end{equation}
If any of the sets $\{j| \varepsilon_{kj} >0\}$ or
$\{j| \varepsilon_{kj} < 0\}$ is empty, the corresponding monomial 
is $1$. 
\begin{equation} \label{5.11.03.1x}
X'_{i} := \left\{\begin{array}{ll} X_k^{-1}& \mbox{ if }  i=k \\
 X_i(1+X_k^{-{\rm sgn} (\varepsilon_{ik})})^{-\varepsilon_{ik}} & \mbox{ if }   i\neq k,
\end{array} \right.
\end{equation}
The tropicalizations of these transformations are
\begin{equation} \label{5.11.03.1atr}
  a'_{k} := - a_{k}+  {\rm min}\left\{\sum_{j| \varepsilon_{kj} >0} 
{\varepsilon_{kj}}a_{j}, \sum_{j| \varepsilon_{kj} <0} 
-\varepsilon_{kj}a_{j}\right\}; \qquad a'_{i} =  
a_{i}, \quad i \not = k.
\end{equation}
\begin{equation} \label{5.11.03.1xtr}
x'_{i} := \left\{\begin{array}{ll} -x_k& \mbox{ if }  i=k \\
 x_i-\varepsilon_{ik}{\rm min}\{0, -{\rm sgn} (\varepsilon_{ik})x_k\} & \mbox{ if }   i\neq k,
\end{array} \right.
\end{equation}

Cluster transformations are transformations of cluster 
coordinates obtained by composing mutations. 
Cluster ${\cal A}$-coordinates and mutation formulas (\ref{12.12.04.2a})
 and (\ref{5.11.03.1a}) 
are main ingredients of the definition of cluster algebras \cite{FZI}. 
Cluster ${\cal X}$-coordinates and mutation formulas (\ref{5.11.03.1x}) describe a dual object, 
introduced in \cite{FG2} under the name {\it cluster ${\cal X}$-variety}. 

\paragraph{The cluster volume forms \cite{FG5}.}  
Given a quiver ${\bf q}$, consider the volume forms
$$
{\rm Vol}^{\bf q}_{\cal A}:= d\log A_1 \wedge \ldots \wedge d\log A_n, ~~~~
{\rm Vol}^{\bf q}_{\cal X}:= d\log X_1 \wedge \ldots \wedge d\log X_n.
$$
Cluster transformations 
preserve them up to a sign: 
given a mutation ${\bf q} \lms {\bf q}'$, we have 
$$
{\rm Vol}^{\bf q'}_{\cal A} = - 
{\rm Vol}^{\bf q}_{\cal A}, \qquad {\rm Vol}^{\bf q'}_{\cal X} = - {\rm Vol}^{\bf q}_{\cal X}.
$$
Denote by ${\rm Or}_\Lambda$  the two element set of orientations  of a rank $n$ 
lattice $\Lambda$, given by expressions 
$l_1\wedge ...\wedge l_n$ where $\{l_i\}$ form a basis of $\Lambda$. {\it An orientation ${\rm or}_\Lambda$ of $\Lambda$} is 
a choice of one of its elements. 
Given a basis $\{e_i\}$ of $\Lambda$, we define its sign
${\rm sign}(e_1, ..., e_n)$ by
$
e_1\wedge  ...\wedge e_n = {\rm sign}(e_1, ..., e_n){\rm or}_\Lambda.
$ 
A quiver mutation changes the sign 
of the basis, and the sign of each of the cluster volume forms. 
So there is a definition of the cluster volume forms invariant under cluster transformations.

 \bd Choose an orientation ${\rm or}_\Lambda$ for a quiver ${\bf q}$. Then in 
 any quiver obtained by from ${\bf q}$ by mutations, 
the cluster volume forms 
 are  given by
$$
{\rm Vol}_{\mathcal A}= {\rm sign}(e_1, ..., e_n) d\log A_1 \wedge \ldots \wedge d\log A_n, 
~~~~{\rm Vol}_{\mathcal X}= {\rm sign}(e_1, ..., e_n) d\log X_1 \wedge \ldots \wedge d\log X_n.  
$$
\ed

\paragraph{Residues of the cluster volume form ${\rm Vol}_{\mathcal A}$ and frozen variables.} Take a space $M$ 
equipped with a cluster ${\cal A}$-coordinate system $\{A_i\}$. 

\bl \la{nonfr} Let us assume 
that $k\in {\rm I}-{\rm I}_0$ is nonfrozen,  and 
$\varepsilon_{kj}\not =0$ for some $j$. Then 
\be \la{reszero}
{\rm Res}_{A_k=0}({\rm Vol}_{\cal A})=0.
\ee
\el

\begin{proof} We have 
$
{\rm Res}_{A_k=0}({\rm Vol}_{\cal A}) = \pm \bigwedge_{i\not = k}d\log A_i.
$ 
Since $k$ is nonfrozen, there is 
an exchange relation 
(\ref{5.11.03.1a}). It  
implies a monomial relation on the locus $A_k=0$:
$\prod_{j} A_{j}^{\varepsilon_{kj}}= -1.$ 
Since $\varepsilon_{kj}$ is not identically zero, this monomial is nontrivial. 
Thus 
$\bigwedge_{i\not = k}d\log A_i =0$ at the $A_k=0$ locus. 
\end{proof}

 \bc \la{9.12.14.1} 
A coordinate $A_k$, with $\varepsilon_{kj}\not =0$ for some $j$, can be nonfrozen 
only if we have (\ref{reszero}), i.e. the functions 
$A_1, ..., \widehat A_k, ... , A_n$ become dependent on every component of the $A_k=0$ locus. 
\ec

If we define a cluster algebra axiomatically, without referring to a particular space 
on which it is realised, then any subset of an initial quiver can be declared to be 
the frozen subset. However if a cluster algebra is realised geometrically, we do not have 
much freedom  in the definition of frozen variables, as Corollary \ref{9.12.14.1} shows. 
This leads to the following geometric definition of the frozen coordinates. 

\bd \la{9.12.14.2} 
Let  $M$ be a space
equipped with a cluster ${\cal A}$-coordinate system. Then a cluster variable $\A$ is a frozen variable if and only if the residue form ${\rm Res}_{\A}({\rm Vol}_{\cal A})$ is not zero. 
\ed

\paragraph{Non-negative real points for a cluster algebra.}

The space of positive real points of any positive space is well defined. 
Let us define the space of non-negative real points 
for a cluster algebra.  

 Let 
$\{A_i^{\bf q}\}$, $i \in {\rm I}$, be the set of all cluster coordinates in a given quiver ${\bf q}$. 
The cluster algebra ${\cal O}_{\rm aff}({\cal A})$ is the algebra 
generated by the formal variables $\{A_i^{\bf q}\}$, 
for all quivers ${\bf q}$ related by mutations to a given one, 
modulo the ideal generated by exchange relations (\ref{5.11.03.1a}):
\be \la{affcv}
{\cal O}_{\rm aff}({\cal A}):= \frac{\Z[A_i^{\bf q}]}{(\mbox{\rm exchange relations})}.
\ee
This ring is not necessarily finitely generated. 
 Let ${\cal A}_{\rm aff}$ be its spectrum. 
Then the points of 
${\cal A}_{\rm aff}(\R_{\geq 0})$ are just the collections of positive real 
numbers $\{a_i^{\bf q}\in \R_{\geq 0}\}$ satisfying the exchange relations. The {\it positive boundary} 
is defined as the complement to the set of positive real points:
$$
 \partial{\cal A}_{\rm aff}(\R_{\geq 0}):= {\cal A}_{\rm aff}(\R_{\geq 0}) -  {\cal A}_{\rm aff}(\R_{> 0}).
$$

Let $A_{f}$ be a frozen variable. Then 
$\{A_{f} =0\} \cap \partial{\cal A}_{\rm aff}(\R_{\geq 0})$  is of real 
codimension one 
in ${\cal A}_{\rm aff}(\R_{\geq 0})$. 
Indeed, the frozen ${\cal A}$-cluster coordinates do not mutate, and so 
the codimension one domain given by the points with the coordinates 
$A_{f_t} =0, 
A^{\bf q}_{j} >0$ where $j$ is different then $f_t$ is a part of the intersection.

Let $A_k^{\bf q}$ be a non-frozen variable. 
It is likely, although we did not  prove this,   that in many cases
\be \la{claimposi}
\{A_k^{\bf q} =0\} \cap \partial{\cal A}_{\rm aff}(\R_{\geq 0}) ~~
\mbox{is of real codimension $\geq 2$ in} ~~{\cal A}_{\rm aff}(\R_{\geq 0}).
\ee
Indeed, 
the exchange relation for the $A_k^{\bf q}$, 
restricted to the $A_k^{\bf q} =0$ hyperplane, reads
$$
0 \cdot A_k^{\bf q'} = \prod_{j| \varepsilon_{kj} >0} A_{j}^{\varepsilon_{kj}} + \prod_{j| \varepsilon_{kj} <0} 
A_{j}^{-\varepsilon_{kj}}.
$$
So both monomials on the right, being non-negative, are zero, and 
each of them is non-empty: the empty one contributes $1$, violating 
$0$ on the left.  So we get at least two different cluster coordinates equal to zero. It is easy to see that then in any cluster coordinate system at 
least two of cluster coordinates are zero.


\subsection{Frozen variables, partial compactification
 $\widehat {\cal A}$,  and potential on the ${\cal X}$-space} \la{potfrv}

\paragraph{Potential on the ${\cal X}$-space}

\bl Any frozen $f\in {\rm I}_0$ gives rise to a tropical point $l_f\in {\cal A}(\Z^t)$ 
such that in any cluster ${\cal A}$-coordinate 
system  all tropical ${\cal A}$-coordinates except $a_f$ are zero, and $a_f=1$. 
\el

\begin{proof} Pick a cluster ${\cal A}$-coordinate system $\alpha=\{{\rm A}_f, \ldots\}$ starting from a coordinate $A_f$. Consider 
a tropical point in ${\cal A}(\Z^t)$ with the coordinates 
$(1,0,\ldots, 0)$.  
It is clear from (\ref{5.11.03.1atr}) that  the coordinates of this point are invariant under mutations 
at non-frozen vertices. 
Indeed, at least one of the two quantities we minimize in (\ref{5.11.03.1atr}) is zero, and the other 
must be non-negative. 
\end{proof}

\paragraph{The potential.} Let us assume that there there are canonical maps, implied by the cluster 
Duality Conjectures for the dual pair $({\cal A}, {\cal X}^\vee)$ of cluster varieties: 
$$
\mathbb{I}_{\cal A}:  ~~~{\cal A}(\Z^t)\stackrel{}{\lra}{\Bbb L}_+({\cal X}^\vee), ~~~~
\mathbb{I}_{\cal X}:  ~~~{\cal X}^\vee(\Z^t)\stackrel{}{\lra}{\Bbb L}_+({\cal A}).
$$
Here ${\Bbb L}_+({\cal X}^\vee)$ and ${\Bbb L}_+({\cal A})$ are the sets of 
universally Laurent functions. 

\bd Let us assume that for each frozen $f\in {\rm I}_0$  there is a function 
 $$
{\cal W}_{{\cal X}^\vee,f}:=\mathbb{I}_{{\cal A}}(l_f)\in {\Bbb L}_+({\cal X}^\vee)
$$
 predicted by the Duality Conjectures. Then the potential on the space  ${\cal X}$ is given by the sum 
$$
{\cal W}_{{\cal X}^\vee}:=\sum_{f\in {\rm I}_0}{\cal W}_{{\cal X}^\vee,f}. 
$$
\ed

\paragraph{Partial compactifications of the ${\cal A}$-space.} 
 Given any subset  ${\rm I}_0'\in {\rm I}_0$, we can define a partial completion 
${\cal A}\bigsqcup_{f\in {\rm I}_0'} {\rm D}_f$ of ${\cal A}$ 
by attaching to ${\cal A}$  
the divisor ${\rm D}_f$ corresponding to the equation $\A_f=0$ for each $f \in {\rm I}_0'$. 
The duality should look like
$$
({\cal A}\bigsqcup_{f\in {\rm I}_0'} {\rm D}_f) <=> ({\cal X}^\vee, \sum_{f\in {\rm I}_0'}{\cal W}_f).
$$
The order of pole of $\mathbb{I}_{\cal X}(l)$ at the divisor ${\rm D}_f$ should be equal to 
${\cal W}_f^t(l)$. In particular, $\mathbb{I}_{\cal X}(l)$ extends to ${\cal A}\bigsqcup {\rm D}_f$ 
if and only if it is in the  subset 
$\{l\in {\cal X}^\vee(\Z^t)~|~ {\cal W}_f^t(l)\geq 0\} \subset {\cal X}^\vee(\Z^t)$.

\paragraph{Canonical tropical points of the ${\cal X}$-space.}
Let $i \in {\rm I}$. 
Given a cluster ${\cal X}$-coordinate system,  consider 
a point $t_i \in {\cal X}(\Z^t)$ with the coordinates $\varepsilon_{ji}$, $j\in {\rm I}$.

\bl The point $t_i$  is invariant under mutations of cluster ${\cal X}$-coordinate systems.  
So there is a point $t_i \in {\cal X}(\Z^t)$ 
which in any cluster ${\cal X}$-coordinate system has 
coordinates   $\varepsilon_{ji}$, $j\in {\rm I}$.
\el

\begin{proof} Given a mutation in the direction of $k$, 
let us compare, using 
(\ref{5.11.03.1xtr}),  the rule how the ${\cal X}$-coordinates 
$\{\varepsilon_{ji}\}$, $j\in {\rm I}$ change with the mutation formulas (\ref{5.11.03.6}) for the matrix $\varepsilon_{ij}$.

Let us assume that  $k \not \in \{i,j\}$. Then, 
due to formula (\ref{5.11.03.1xtr}) for mutation of tropical ${\cal X}$-points, we have to prove that
\be \la{ef}
\varepsilon'_{ji} \stackrel{?}{=}\varepsilon_{ji}  - \varepsilon_{jk}{\rm min}\{0, -{\rm sgn} (\varepsilon_{jk}) \varepsilon_{ki}\}.
\ee

Let us assume now that  $\varepsilon_{jk}\varepsilon_{ki}<0$. Then ${\rm sgn} (-\varepsilon_{jk}) \varepsilon_{ki}>0$. So
 ${\rm min}\{0, {\rm sgn} (-\varepsilon_{jk}) \varepsilon_{ki}\}=0$, and the right hand side is 
$\varepsilon_{ji}$. This agrees with $\varepsilon'_{ij}= \varepsilon_{ij}$, see (\ref{5.11.03.6}), in this case. 

If $\varepsilon_{jk}\varepsilon_{ki}>0$, then  
${\rm sgn} (-\varepsilon_{jk}) \varepsilon_{ki}<0$. So the right hand side is 
$$
\varepsilon_{ji}  - \varepsilon_{jk}{\rm min}\{0, {\rm sgn} (-\varepsilon_{jk}) \varepsilon_{ki}\} = 
\varepsilon_{ji}  - \varepsilon_{jk}{\rm sgn} (-\varepsilon_{jk}) \varepsilon_{ki} = 
\varepsilon_{ji} + |\varepsilon_{jk}| \varepsilon_{ki}. 
$$
Comparing with (\ref{5.11.03.6}), we see that in both cases we get the expected formula (\ref{ef}). 

Finally, if $k \in \{i,j\}$, then $\varepsilon'_{ij} = -\varepsilon_{ij}$, and  
by formula (\ref{5.11.03.1xtr}), we also get $-\varepsilon_{ij}$. 
\end{proof}

Let us assume that, for each frozen $f\in {\rm I}_0$,  there is a function 
 $\mathbb{I}_{{\cal X}}(t_f)\in {\Bbb L}_+({\cal A}^\vee)$. 
 predicted by the duality conjectures. Then we conjecture that 
in many situations there exist monomials $M_f$  of frozen ${\cal A}$-coordinates  
such that the potential on the space  ${\cal A}$ is given by 
$$
{\cal W}_{{\cal A}^\vee}:= \sum_{f\in {\rm I}_0}M_f\cdot \mathbb{I}_{{\cal X}}(t_f). 
$$

\begin{appendices}
\section{Geometric crystal structure on ${\rm Conf}({\cal A}^{n}, {\cal B})$} \la{sec9}
We construct a  geometric crystal structure on ${\rm Conf}({\cal A}^{n}, {\cal B})$. See  \cite{BK3} for definition of geometric crystals. 
 By the positive birational isomorphism $p: {\rm Conf}({\cal A}^2, {\cal B})\ra \B^{-}$, it recovers the crystal structure on $\B^{-}$ defined in Example 1.10 of {\it loc.cit}.

 Using machinery of tropicalization, the subset ${\bf B}_{\lambda}^{\mu}\subset {\rm Conf}({\cal A}^2,{\cal B})(\Z^t)$ becomes a crystal basis. We refer the reader to Section 2 of {\it loc.cit.}  for details concerning the tropicalization of geometric crystals. 
Braverman and Gaitsgory \cite{BG} construct a crystal structure for the MV basis. As a direct consequence of this paper, the MV basis are parametrized by the set ${\bf B}_{\lambda}^{\mu}$. Therefore we give a direct isomorphism between the crystals of \cite[Theorem 6.15]{BK2}  and \cite{BG} without using the uniqueness theorem from \cite{J}.  

The tensor product of crystals can be interpreted as tropicalizations of the convolution product $*$ from Section \ref{sec7.1h}.
Given the geometric background of the configuration spaces, definitions/proofs become simple.

\subsection{Geometric crystal structure on ${\rm Conf}({\cal A}^n, {\cal B})$}
\la{sec7.01h}

Let $x=({\rm A}_1,\ldots, {\rm A}_{n}, {\rm B}_{n+1})\in {\rm Conf}({\cal A}^{n}, {\cal B})$. Let $i\in I$. Recall the following positive maps:
\begin{itemize}
\item
$p: {\rm Conf}({\cal A}^n,{\cal B})\ra {\B^-}$ such that $p(x)=b^{\A_1,\A_n}_{\B_{n+1}}$.
\item
$\mu: {\rm Conf}({\cal A}^{n},{\cal B})\ra {\rm H}$ such that $\mu(x)=\mu_{{\rm B}_{n+1}}^{{\rm A}_{1}, {\rm A}_{n}}.$
\item
$\varphi_i, \varepsilon_i, {\cal W}: {\rm Conf}({\cal A}^{n},{\cal B})\ra {\Bbb A}^1$ such that 
$$\varphi_i(x)={\cal L}_i(u_{{\rm B}_{n+1}, {\rm B}_{n}}^{{\rm A}_1}),  ~~\varepsilon_i(x)={\cal R}_i(u_{{\rm B}_{1}, {\B}_{n+1}}^{{\rm A}_{n}}), ~~{\cal W}(x)=\sum_{k=1}^{n}\chi(u_{{\rm B}_{k-1}, {\rm B}_{k+1}}^{{\rm A}_{k}}).$$
Here $k$ is modulo {\it n}+1.
\item
$e_i^{\cdot}: {\Bbb G}_m\times {\rm Conf}({\cal A}^{n},{\cal B})\ra {\rm Conf}({\cal A}^{n},{\cal B})$, where $e_i^c(\A_1,\ldots, \A_n,\B_{n+1})=(\A_1,\ldots, \A_n, \wt{\B}_{n+1})$ is such that 
$$
e_i^c(\A_1,\A_n, \B_{n+1})=(\A_1,\A_n,\wt{\B}_{n+1})
$$
The action $e_i$ on ${\rm Conf}({\cal A}^2,{\cal B})$ is defined by \eqref{13.4.22.2108h}.
\end{itemize}

\bt
The 6-tuple $\big({\rm Conf}({\cal A}^{n}, {\cal B}), \mu, {\cal W}, \varphi_i, \varepsilon_i, e_i^{\cdot}|i\in I\big)$ is a positive decorated geometric crystal.
\et

{\bf Warning.} The maps $\varphi_i$, $\varepsilon_i$ are the inverse of those $\varphi_i$, $\varepsilon_i$ in \cite[Definition 1.3]{BK3}.

\begin{proof}
By \cite[Definitions 1.3 $\&$ 2.7]{BK3}, it remains to show the following Lemmas.
\bl \la{8.13.4.7h}
Let $\alpha_i^\vee$ and $\alpha_i$ be the simple coroot and simple root corresponding to $i\in I$.  Let $x=(\A_1,\ldots, \A_n, \B_{n+1})$. Let $c\in {\Bbb G}_m$. Then
\begin{itemize}
\item[1.]$\mu(e_i^c(x))=\alpha_i^\vee (c)\mu(x)$.
\item[2.] $\varepsilon_i(x)\alpha_i(\mu(x))=\varphi_i(x)$.
\item[3.] $\varphi_i(e_i^c(x))=c\varphi_i(x)$, $\varepsilon_i(e_i^c(x))=c^{-1}\varepsilon_i(x)$.
\item[4.] ${\cal W}(e_i^c(x))={\cal W}(x)+(c-1)\varphi_i(x)+(c^{-1}-1)\varepsilon_i(x)$.
\end{itemize}
\el

\begin{proof} 
We  pick ${\A_{n+1}}$ such that $\pi(\A_{n+1})=\B_{n+1}.$ By \eqref{8.13.4a},
$$
\chi_{i^*}(u_{\B_n,\B_1}^{\A_{n+1}})=\frac{\alpha_{i}(h_{\A_1,\A_{n+1}})}{{\cal L}_i(u_{\B_{n+1},\B_n}^{\A_1})}=\frac{\alpha_i(h_{\A_n,\A_{n+1}})}{{\cal R}_i(u_{\B_1,\B_{n+1}}^{\A_n})}.
$$
Therefore,
\be \la{13.6.9.2059h}
\frac{\varphi_i(x)}{\varepsilon_i(x)}=\frac{{\cal L}_i(u_{\B_{n+1},\B_n}^{\A_1})}{{\cal R}_i(u_{\B_1,\B_{n+1}}^{\A_n})}=\alpha_{i}(h_{\A_1,\A_{n+1}}h_{\A_n,\A_{n+1}}^{-1})=\alpha_i(\mu(x)).
\ee
The last identity is due to Property 4 of Lemma \ref{12.12.11.h}.
Thus 2 follows.

Recall the proof of Lemma \ref{13.6.9.1548h}. Let ${\bf i}=(i_1,\ldots, i_m)$ be a reduced word for $w_0$ such that $i_1=i$. Assume $p(x)$ is expressed by \eqref{8.15.3.0h}.
Recall the positive coroots $\beta_k^{\bf i}$ in Lemma \ref{8.13.1.20h}. In particular $\beta_1^{\bf i}=\alpha_{i}^\vee$.
 By  Lemma \ref{8.13.1.20h} and property 4 of Lemma \ref{12.12.11.h}, we have
$$
\mu(x)=h_{\A_1, \A_n}\beta(u_{\B_1, \B_{n+1}}^{\A_n})=h\prod_{k=1}^m\beta_{k}^{\bf i}(b_k^{-1}).
$$
Similarly, by \eqref{13.4.22.1550h}, we have 
$$\mu(e_i^c(x))=h\beta_1^{\bf i}(cb_1^{-1})\prod_{k=2}^m\beta_{k}^{\bf i}(b_k^{-1})=\alpha_i^\vee(c)\mu(x).
$$
Thus 1 follows. By \eqref{13.4.22.1550h} and definitions of the functions $\varepsilon_i$, $\varphi_i$, ${\cal W}$,  we get 3 and 4.
\end{proof}

\bl \la{13.4.22.2051h}
For two different $i,j \in I$, set $a_{ij}=\langle\alpha_i, \alpha_j^{\vee}\rangle$. We have the following relation
\begin{align}
&e_i^{c_1}e_j^{c_2}=e_j^{c_2}e_i^{c_1}~~~\mbox{if }a_{ij}=0;\\
&e_i^{c_1}e_j^{c_1c_2}e_i^{c_2}=e_j^{c_2}e_{i}^{c_1c_2}e_{j}^{c_1}~~~\mbox{if }a_{ij}=a_{ji}=-1;\\
&e_i^{c_1}e_j^{c_1^2c_2}e_i^{c_1c_2}e_j^{c_2}=e_j^{c_2}e_i^{c_1c_2}e_{j}^{c_1^2c_2}e_{i}^{c_1}~~~\mbox{if } a_{ij}=-1, ~a_{ji}=-2;\\
&e_i^{c_1}e_j^{c_1^3c_2}e_i^{c_1^2c_2}e_j^{c_1^3c_2^2}e_i^{c_1c_2}e_j^{c_2}=e_j^{c_2}e_i^{c_1c_2}e_j^{c_1^3c_2^3}e_i^{c_1^2c_2}e_j^{c_1^3c_2}e_i^{c_1}~~~\mbox{if }a_{ij}=-1, ~a_{ji}=-3.
\end{align} 
\el
\begin{proof}
By the definition of the action $e_i^{\cdot}$, it is enough to prove the case when $n=2$, i.e. ${\rm Conf}({\cal A}, {\cal A}, {\cal B})$.
By \eqref{13.4.22.1550h}, we reduce the Lemma to the case when $\G$  is of rank 2.
The first identity is clear. 
For the second identity, one can check for $\G={\rm PGL}_3$ case directly.
The third and the fourth identities can be reduced to simple-laced case by ``folding".  
See \cite[Section 5.2]{BK1} for details.
\end{proof}

\end{proof}

\bt
\la{8.18h.con.pro}Let $a\in {\rm Conf}^*({\cal A}^{m+1},{\cal B})$ and let $b\in {\rm Conf}^*({\cal A}^{n+1},{\cal B})$.
Recall the convolution product $*$ from Section \ref{sec7.1h}.
The following identities hold
\begin{itemize}
\item[1.] $p(a*b)=p(a)p(b)$, $\mu(a*b)=\mu(a)\mu(b)$.
\item[2.] ${\cal W}(a*b)={\cal W}(a)+{\cal W}(b)$.
\item[3.] $\varphi_i(a*b)=\frac{\varphi_i(a)\varphi_i(b)}{\varepsilon_i(a)+\varphi_i(b)}$, $\varepsilon_i(a*b)=\frac{\varepsilon_i(a)\varepsilon_i(b)}{\varepsilon_i(a)+\varphi_i(b)}$.
\item[4.] $e_i^c(a*b)=e_i^{c_1}(a)*e_i^{c_2}(b)$, where $c_1=\frac{\varepsilon_i(a)+c\varphi_i(b)}{\varepsilon_i(a)+\varphi_i(b)}$, $c_2=\frac{\varepsilon_i(a)+\varphi_i(b)}{c^{-1}\varepsilon_i(a)+\varphi_i(b)}$.
\end{itemize}
\et

\begin{proof}
1-2.  Follow from Lemma \ref{LEMMA.4.2}.
\vskip 2mm
3. We prove the second formula. The first one follows similarly.
By Figure \ref{convmap}, it suffices to prove the case when  $a=(\A_1, \A_2, \B_4), b=(\A_2, \A_3, \B_4)\in {\rm Conf}({\cal A}^2,{\cal B})$.  Then $a*b=(\A_1,\A_2,\A_3,\B_4)$. 
Pick $\A_4\in{\cal A}$ such that $\pi(\A_4)=\B_4$. By \eqref{13.6.9.2059h}, $\alpha_i(h_{\A_2,\A_4})={\alpha_i(h_{\A_3,\A_4})\varphi_i(b)}/{\varepsilon_i(b)}.$ By \eqref{8.13.4a}, we have
$$
\chi_{i^*}(u_{\B_3,\B_2}^{\A_4})=\frac{\alpha_i(h_{\A_3,\A_4})}{\varepsilon_i(b)},~~
\chi_{i^*}(u_{\B_2,\B_1}^{\A_4})=\frac{\alpha_i(h_{\A_2,\A_4})}{\varepsilon_i(a)}=\frac{\alpha_i(h_{\A_3,\A_4})\varphi_i(b)}{\varepsilon_i(a)\varepsilon_i(b)},~~
\chi_{i^*}(u_{\B_3,\B_1}^{\A_4})=\frac{\alpha(h_{\A_3,\A_4})}{\varepsilon_i(a*b)}.
$$
Since $\chi_{i^*}(u_{\B_3,\B_1}^{\A_4})=\chi_{i^*}(u_{\B_3,\B_2}^{\A_4})+\chi_{i^*}(u_{\B_2,\B_1}^{\A_4})$, the formula follows.
\vskip 2mm

4. It suffices to prove the case when  $a=(\A_1, \A_2, \B_4), b=(\A_2, \A_3, \B_4)$. Let
$e_i^c(a*b)=(\A_1,\A_2,\A_3,{\B}_4')$.
Recall the definition of $e_i$ by cross ratio in Section \ref{sec7.01h}.
Let ${\rm P}\in {\cal P}_i $ be the parabolic subgroup containing $\B_4$ and $\B_4'$. 
Let $\B_1',\B_2', \B_3'\in {\cal B}_{\rm P}$ such that
$$
\B_4\stackrel{s_i}{\lra}{\B_k'}\stackrel{s_iw_0}{\lra}\pi(\A_k),~~~~k=1,2,3.
$$
 We have
\be \la{13.4.22.2139h}
c=r(\B_4, \B_4'; \B_1', \B_3' )=r(\B_4, \B_4'; \B_1', \B_2' )r(\B_4, \B_4'; \B_2', \B_3' )=c_1c_2.
\ee
Note that 
\be \la{13.4.22.2140h}
\varepsilon_i(a)+\varphi_i(b)=\chi(u_{\B_1', \B_4}^{\A_2})+\chi(u_{\B_4, \B_3'}^{\A_2})=\chi(u_{\B_1', \B_3'}^{\A_2})=
\chi(u_{\B_1', \B_4'}^{\A_2})+\chi(u_{\B_4', \B_3'}^{\A_2})=c_1^{-1}\varepsilon_i(a)+c_2\varphi_i(b).
\ee
Combining \eqref{13.4.22.2139h}\eqref{13.4.22.2140h}, we get 4. 
\end{proof}
{\bf Remark.} This Theorem recovers the properties in \cite[Lemma 3.9]{BK1}. It is analogous to the tensor product of Kashiwara's crystals.

\begin{figure}[ht]
\epsfxsize350pt
\centerline{\epsfbox{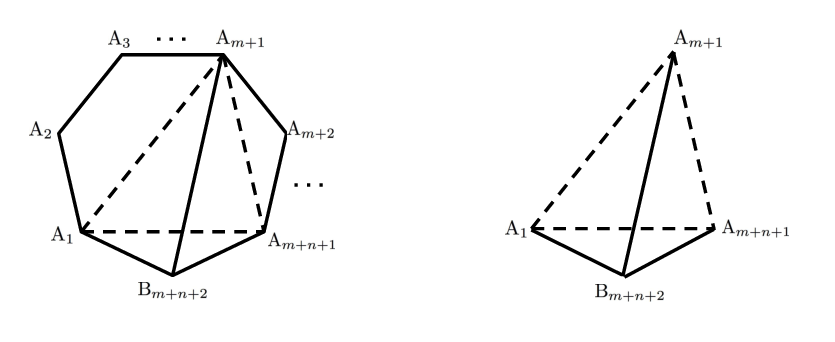}}
\caption{Convolution products of configurations.}
\label{convmap}
\end{figure}


\subsection{A simple illustration of tensor product  of crystals.} 
Recall the subsets ${\bf B}_{\underline{\lambda}}^{\mu}$ and ${\bf C}_{\underline{\lambda}}^{\nu}$. We tropicalize the map
$$
c_{1,n+1}: {\rm Conf}({\cal A}^{n+1}, {\cal B})\lra {\rm Conf}_{n+1}({\cal A}) \times {\rm Conf}({\cal A}^2,{\cal B}),$$
$$(\A_1,\ldots,\A_{n+1},\B_{n+2})\lms (\A_1,\ldots, \A_{n+1})\times (\A_1,\A_{n+1},\B_{n+2}).
$$
It provides a canonical decomposition
\be \la{12.12.18.3h18}
{\bf B}_{\underline{\lambda}}^{\mu}=\bigsqcup_{\nu} {\bf C}_{\underline{\lambda}}^{\nu}\times {\bf B}_{\nu}^{\mu}.
\ee
Recall the decomposition \eqref{13.2.1.7.01h}. As illustrated by Figure \ref{pd}, we get
\bt \la{12.18.thm8.9}
There is a canonical bijection
$$
\bigsqcup_{\mu_1+\ldots+\mu_{n}=\mu}{\bf B}_{\lambda_1}^{\mu_1} \times \ldots \times {\bf B}_{\lambda_{n}}^{\mu_{n}}=\bigsqcup_{\nu} {\bf C}_{\lambda_1,\ldots, \lambda_n}^{\nu}\times {\bf B}_{\nu}^{\mu}
$$
\et
 
 We show that ${\bf B}_{\lambda}^{\mu}$ parametrizes a crystal basis. Figure \ref{pd} illustrates the tensor product of crystals. When $n=2$, it  recovers \cite[Theorem 3.2]{BG}.
\begin{figure}[ht]
\epsfxsize400pt
\centerline{\epsfbox{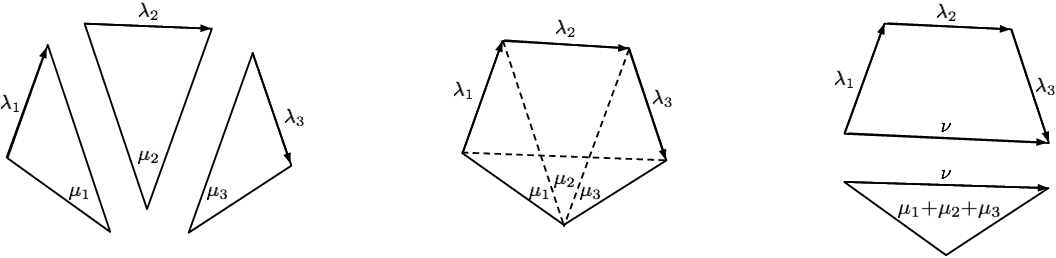}}
\caption{Tensor product structure of the crystal basis}
\label{pd}
\end{figure}


\section{Weight multiplicities and tensor product multiplicities}
\la{sec10}

 The following Theorem is due to \cite[Section 8]{L}. We provide a simple proof by using the set ${\bf B}_{\lambda}^\mu$.
\bt [\cite{L}] \la{8.21.wtm.h}
The weight multiplicity  $b_{\lambda}^{\mu}:={\rm dim}~V_{\lambda}^{\mu}$ is equal to the cardinality of the subset
\be \la{13.2.1.519h}
\{l\in {\bf A}_{\lambda-\mu}~|~ {\cal L}_i^t(l)\leq \langle \lambda, \alpha_{i^*}\rangle,~\forall i\in I\}\subset \U^+_{\chi}(\Z^t).
\ee
\et
\begin{proof}
Recall the positive birational isomorphism 
$$
\alpha_2=(\pi_{12}, p_2): {\rm Conf}({\cal A}^2;{\cal B})\lra {\rm H}\times \U,~~~(\A_1,\A_2,\B_3)\lms (h_{\A_1,\A_2}, u_{\B_1,\B_3}^{\A_2}).
$$
The potential ${\cal W}$ on ${\rm Conf}({\cal A}^2;{\cal B})$ induces a positive function
${\cal W}_{\alpha_2}={\cal W}\circ \alpha_2^{-1}: {\rm H}\times \U \ra {\Bbb A}^1.$
By Lemma \ref{lem1}, we get
$$
{\cal W}_{\alpha_2}(h,u)=\sum_{i\in I}\frac{\alpha_{i^*}(h)}{{\cal L}_i(u)}+\chi(u).
$$
Its tropicalization becomes
$$
{\cal W}_{\alpha_2}^{t}(\lambda, l)=\min_{i\in I}\{\langle \lambda, \alpha_{i^*}\rangle- {\cal L}_{i}^{t}(l), \chi^t(l)\}
$$
Therefore, under the map $p_2^t$, the set ${\bf B}_{\lambda}^{\mu}$ is identified with the set \eqref{13.2.1.519h}.
Note that the set ${\bf B}_{\lambda}^\mu$ parametrized the MV basis of the weight space $V_{\lambda}^{\mu}$ of $V_{\lambda}$. 
Therefore the weight multiplicity  $b_{\lambda}^{\mu}$ is equal to the cardinality of the set ${\bf B}_{\lambda}^\mu$.
\end{proof}

Below we give a (rather simple) proof of \cite[Corollary 3.4]{BZ} based on Proposition \ref{13.2.1.5.29h}. 
\bt[\cite{BZ}] \la{COMBZ} The tensor product multiplicity $c_{\lambda, \nu}^{\mu}$ is equal to the cardinality of the set
\be \la{13.2.1.6.13h}
\{l\in {\bf A}_{\lambda+\nu-\mu}~|~ {\cal L}_i^t(l)\leq \langle \lambda, \alpha_{i^*}\rangle \text{ and }  {\cal R}_i^t(l)\leq \langle \nu, \alpha_{i}\rangle\text{ for all } i\in I\}\subset \U^+_{\chi}(\Z^t)
\ee
\et

\begin{proof}
Recall the positive birational isomorphism
$$
\alpha_2=(\pi_{12}, p_2, \pi_{23}): {\rm Conf}_3({\cal A})\stackrel{\sim}{\lra} {\rm H}\times\U \times {\rm H},~~~(\A_1,\A_2,\A_3)\lra(h_{\A_1,\A_2},  u_{\B_1,\B_3}^{\A_2}, h_{\A_2,\A_3}). 
$$
By Theorem \ref{13.2.1.6.05h}, the function ${\cal W}_{\alpha_2}={\cal W}\circ \alpha_2^{-1}$ becomes
\be \la{25huh}
{\cal W}_{\alpha_2}(h_1,u,h_2)=\sum_{i\in I}\frac{\alpha_i(h_1)}{{\cal L}_{i^*}(u)}+\chi(u)+\sum_{i\in I}\frac{\alpha_i(h_2)}{{\cal R}_i(u)}.
\ee
We tropicalize (\ref{25huh}):
$$
{\cal W}_{\alpha_2}^t(\lambda, l, \nu)=\min \big\{\min_{i\in I}\{\langle \lambda, \alpha_{i^*}\rangle-{\cal L}_{i}^t(l)\},~\chi^t(l),~\min_{i\in I}\{\langle \nu, \alpha_i\rangle-{\cal R}_{i}^t(l)\}\big\}.
$$
Therefore, under $p_2^t$, the set ${\bf C}_{\lambda, \nu}^{\mu}$ is identified with \eqref{13.2.1.6.13h}. The rest is due  to Proposition \ref{13.2.1.5.29h}.
\end{proof}

\section{Cycles assigned to ${\rm Conf}^+_w({\cal A}^n, {\cal B}, {\cal B})(\Z^t)$} 
\la{sec2.3.3}

Given an element $w\in W$, there are moduli spaces
$$
{\rm Conf}_{w}({\cal A}^n, {\cal B}, {\cal B}) \subset 
{\rm Conf}({\cal A}^n, {\cal B}, {\cal B}), ~~~{\rm Conf}^{\cal O}_{w}({\cal A}^n, {\cal B}, {\cal B})\subset 
{\rm Conf}^{\cal O}({\cal A}^n, {\cal B}, {\cal B})
$$
determined by the condition that the last two flags belong to the 
${\rm G}$-orbit $({\cal B}\times {\cal B})_w \subset {\cal B}\times {\cal B}$ 
parametrised by $w$. So there are decompositions into disjoint unions
$$
{\rm Conf}({\cal A}^n, {\cal B}, {\cal B}) = \coprod_{w\in W}{\rm Conf}_{w}({\cal A}^n, {\cal B}, {\cal B}), ~~~~
{\rm Conf}^{\cal O}({\cal A}^n, {\cal B}, {\cal B}) = \coprod_{w\in W}
{\rm Conf}^{\cal O}_{w}({\cal A}^n, {\cal B}, {\cal B}).
$$
Similarly there are moduli space ${\rm Conf}_{w}({\rm Gr}^n, {\cal B}, {\cal B})$, and  a canonical map 
$$
\kappa_w: {\rm Conf}^{\cal O}_{w}({\cal A}^n, {\cal B}, {\cal B}) \lra 
{\rm Conf}_{w}({\rm Gr}^n, {\cal B}, {\cal B}). 
$$
The subgroup ${\rm B}_w:= {\rm B}\cap w{\rm B}w^{-1}$ is a 
stabiliser of the group ${\rm G}$ acting  on $({\cal B}\times {\cal B})_w$. So one has 
\be \la{stack10}
{\rm Conf}_{w}({\rm Gr}^n, {\cal B}, {\cal B})=
{\rm B}_w({\cal K})\backslash 
{\rm Gr}^{n}, ~~~~{\rm B}_w:= {\rm B}\cap w{\rm B}w^{-1}.
\ee

For $w=e$ we get 
\be \la{stack1}
{\rm Conf}_{e}({\rm Gr}^n, {\cal B}, {\cal B})= {\rm Conf}({\rm Gr}^n, {\cal B}).
\ee

The space ${\rm Conf}_{w}({\cal A}, {\cal B}, {\cal B})$ is birational isomorphic to a subgroup 
${\rm U}_{(w)}:= {\rm U} \cap w^{-1}B^-w \subset {\rm U}$. The latter has 
a positive structure defined by Lusztig \cite{L} using reduced decompositions of $w$. 
Combined with the standard construction, we arrive at a positive atlas on 
${\rm Conf}_{w}({\cal A}^n, {\cal B}, {\cal B})$.

There is a potential 
${\cal W}$ on ${\rm Conf}_{w}({\cal A}^n, {\cal B}, {\cal B})$, 
defined by restriction of the usual one. 
So the set ${\rm Conf}^+_{w}({\cal A}^n, {\cal B}, {\cal B})(\Z^t)$ is defined. 
The canonical map $\kappa_w$ 
provides a collection of cycles
\be \la{MVGCyy}
{\cal M}_l^\circ\subset {\rm Conf}_{w}({\rm Gr}^n, {\cal B}, {\cal B}), ~~~~l\in 
{\rm Conf}^+_{w}({\cal A}^n, {\cal B}, {\cal B})(\Z^t).
\ee 

\vskip 3mm
Notice that our approach  makes the  map $\kappa$ transparent, and allows to avoid any kind of explicit parametrisations in its definition.  It makes obvious a parametrisation of generalized MV cycles, defined as components of 
$\overline{{\rm S}_{e}^{\lambda}\cap {\rm S}_{w}^{\mu}}$ for arbitrary $w\in W$ --  one needs to use 
the whole configuration space 
${\rm Conf}({\cal A}, {\cal B}, {\cal B})$, not only its generic part. 

\paragraph{Constructible functions $D_F$.} Let 
$F$ be a 
rational function on ${\cal A}^{n}\times ({\cal B}\times {\cal B})_w$, invariant under the left diagonal 
action of ${\rm G}$. Using the isomorphism
$
\Q({\cal A}^{n}\times ({\cal B}\times {\cal B})_w)^{\rm G} = \Q({\cal A}^{n})^{{\rm B}_w}, 
$ 
we realize $F$ as an ${\rm B}_w$-invariant rational function on ${\cal A}^{n}$. 
Define a function $D_F$ on 
${\rm G}({\cal K})^{n}$ by 
\be \la{dff}
D_F(g_1(t), ..., g_{n}(t)):= {\rm val}~F(g_1(t)A_1, ..., g_{n}(t)A_n) ~~~\mbox{\rm for some $A_1, ..., A_{n}
\in {\cal A}(\C)$}. 
\ee
It is left ${\rm B}_w({\cal K})$-equivariant, and right ${\rm G}({\cal O})^{n}$-equivariant, 
and so descends to a function 
$$
D_F:  {\rm B}_w({\cal K})\backslash {\rm Gr}^{n}\lra \Z.
$$

{\bf Remark}. The function $D_F$ assigned to a positive rational function $F$ on 
${\rm Conf}({\cal A}^n, {\cal B}, {\cal B})$ is not a function on the whole space 
${\rm Conf}({\rm Gr}^n, {\cal B}, {\cal B})$, only on its generic part. 
One has 
$$
{\rm Conf}({\rm Gr}^n, {\cal B}, {\cal B}) =\coprod_{w\in W}{\rm Conf}_{w}({\rm Gr}^n, {\cal B}, {\cal B}),
$$
and one needs to use the positive rational functions on the strata ${\rm Conf}_{w}({\cal A}^n, {\cal B}, {\cal B})$ 
to define constructable functions on the strata ${\rm Conf}_{w}({\rm Gr}^n, {\cal B}, {\cal B})$.

\bt \la{5.8.10.45xw}
Let $l\in{\rm Conf}_w^+({\cal A}^{n}, {\cal B}, {\cal B})(\Z^t)$, and $F\in {\Q}_{+}({\rm Conf}_w({\cal A}^{n}, 
{\cal B}, {\cal B}))$. 
Then we have
$$
D_F\big({\cal M}_l^\circ\big)\equiv F^t(l).
$$
\et

\end{appendices}

\end{document}